\setlist[enumerate]{leftmargin=.5in}
\setlist[itemize]{leftmargin=.5in}
\crefname{hypothesis}{Hypothesis}{Hypotheses}
\title{Precise Asymptotics for Spectral Methods in Mixed Generalized Linear Models%
\thanks{Submitted to the editors \today.
\funding{Y.~Zhang and M.~Mondelli were partially supported by the 2019 Lopez-Loreta prize.}}}
\author{Yihan Zhang\thanks{School of Mathematics, University of Bristol (\email{yihan.zhang@bristol.ac.uk}).}
\and Marco Mondelli\thanks{Institute of Science and Technology Austria (\email{marco.mondelli@ist.ac.at}).}
\and Ramji Venkataramanan\thanks{Department of Engineering, University of Cambridge (\email{rv285@cam.ac.uk}).}}
\DeclareMathOperator{\diag}{diag}
\newcounter{assctr} 
\begin{document}

\maketitle

\begin{abstract}
In a mixed generalized linear model, the goal is to learn multiple signals from unlabeled observations: each sample comes from exactly one signal, but it is not known which one. We consider the prototypical problem of estimating two statistically independent signals in a mixed generalized linear model with Gaussian covariates. Spectral methods are a popular class of estimators which output the top two eigenvectors of a suitable data-dependent matrix. However, despite the wide applicability, their design is still obtained via heuristic considerations, and the number of samples $n$ needed to guarantee recovery is super-linear in the signal dimension $d$. In this paper, we develop exact asymptotics on spectral methods in the challenging proportional regime in which $n, d$ grow large and their ratio converges to a finite constant. This allows us optimize the design of the spectral method, and combine it with a simple linear estimator, to minimize the estimation error. Our characterization exploits a mix of tools from random matrices, free probability and the theory of approximate message passing algorithms. Numerical simulations for mixed linear regression and phase retrieval demonstrate the advantage enabled by our analysis over existing designs of spectral methods. 
\end{abstract}

\begin{keywords}
Spectral estimator, generalized linear models, mixed regression, high dimensional asymptotics, random matrix theory, Approximate Message Passing (AMP).
\end{keywords}

\begin{MSCcodes}
62E20, 62J05, 62J12.
\end{MSCcodes}

\section{Introduction}

We consider the problem of learning multiple $d$-dimensional vectors from $n$ unlabeled observations coming from a \emph{mixed} generalized linear model (GLM):
\begin{align}
y_i &= q\paren{\inprod{a_i}{x_{\upsilon_i}^*} ,\eps_i}, \qquad i\in [n]=\{1, \ldots, n\}.  \label{eqn:def-y-glmm-intro} 
\end{align}
Here, $x_1^*, \ldots, x_\ell^*\in\mathbb R^d$ are the $\ell$ signals (regression vectors) to be recovered from the observation vector $y=(y_1, \ldots, y_n)\in \mathbb R^n$ and the known design matrix $A=[a_1, \ldots, a_n]^\top\in \mathbb R^{n\times d}$. For $i \in [n]$, $\eps_i$  is a noise variable, and 
$\upsilon_i$ is an $[\ell]$-valued latent variable, i.e., it indicates which signal each observation comes from, and is unknown to the statistician. The notation $\inprod{\cdot}{\cdot}$ denotes the Euclidean inner product, and $q:\mathbb R^2\to\mathbb R$ is a known link function. For $\ell=1$, \Cref{eqn:def-y-glmm-intro} reduces to a generalized linear model \cite{mccullagh2018generalized}, which covers many widely studied problems in statistical estimation including linear regression, logistic regression,  
phase retrieval \cite{shechtman2015phase,fannjiang2020numerics}, and 1-bit compressed sensing \cite{boufounos20081}. The regression model with $\ell=1$ implicitly assumes a homogeneous population, in which a single regression vector suffices to capture the features of the entire sample. In practice, it is often the case that the observations may come from multiple sub-populations. Mixed GLMs offer a flexible solution in settings with unlabeled heterogeneous data, and have found applications in a variety of fields including biology, physics, and economics \cite{Mcl04,Gru07,Li19,Dev20}. When $q(g, \eps)=g+\eps$, \Cref{eqn:def-y-glmm-intro} reduces to the widely studied mixture of linear regressions  \cite{Vie02, Far10, Sta10, Cha13,  yi-2014-mixed-linear-regression, Zho16, She19, Zha20, Gho20}.

A natural approach to estimate the vectors $x_1^*, \ldots, x_\ell^*$ from $y$ and $A$ is via the maximum-likelihood estimator (assuming a statistical model  for $(\eps_i)_{i \in [n]}$ is available).
However, the corresponding optimization problem is non-convex and NP-hard \cite{yi-2014-mixed-linear-regression}. Thus, various low-complexity alternatives --- mostly focusing on mixed linear regression --- have been proposed: examples include expectation-maximization (EM) \cite{Kha07,Far10, Sta10}, alternating minimization \cite{yi-2014-mixed-linear-regression, She19, Gho20}, convex relaxation \cite{Che14},  moment descent methods \cite{Li18,Che20}, and the use of tractable non-convex objectives \cite{Zho16,Bar22}. Many of these methods are iterative in nature and require a ``warm start'' with an initial guess correlated with the ground truth. Spectral methods are a popular way to provide such initialization  \cite{yi-2014-mixed-linear-regression}. A variety of estimators based on the spectral decomposition of data-dependent  matrices or tensors have been proposed for mixed GLMs \cite{Cha13, yi-2014-mixed-linear-regression, sedghi2016provable}. In this paper, we focus on a spectral method that estimates the $\ell$ signals via the top-$\ell$ principal eigenvectors of the following data-dependent matrix:
\begin{align}
    D = \frac{1}{n} \sum_{i = 1}^n \cT(y_i) a_i a_i^\top \in\bbR^{d\times d} ,\label{eqn:def-mtx-t-and-d-intro} 
\end{align}
where $\mathcal T:\mathbb R\to\mathbb R$ is a suitably chosen preprocessing function. This spectral estimator with the preprocessing function $\cT(y) = y^2$ was studied for mixed linear regression by Yi et al.\ \cite{yi-2014-mixed-linear-regression}, who showed that the signals can be accurately recovered when the number of observations $n$ is of order $d\log d$. Furthermore,  existing theoretical results for all estimators (including spectral, alternating minimization and EM)  require 
$n$ to be  of order at least $d\log d$ to guarantee accurate recovery \cite{Cha13,yi-2014-mixed-linear-regression,sedghi2016provable,Li18,Che20}. This leads to the following natural questions:

\vspace{.5em}

\begin{center}

\emph{What is the optimal sample complexity of a spectral estimator based on \Cref{eqn:def-mtx-t-and-d-intro}? \\
\vspace{.25em}
Can we carry out a principled optimization of the preprocessing function $\cT$?}
\end{center}

\vspace{.5em}

\noindent A simpler alternative to obtain an initial estimate is to use the linear estimator
\begin{align}
    \frac{1}{n} \sum_{i = 1}^n \cL(y_i) a_i \ \in\bbR^d, \label{eqn:def-lin-estimator-intro}
\end{align}
where $\cL: \reals \to \reals$ is a suitable preprocessing function. 
The   performance analysis of this linear estimator  for the mixed GLM can be carried out similarly to that for the non-mixed case ($\ell=1$); the analysis for the latter is given in \cite[Proposition 1]{plan2017high} and  in \cite[Lemma 2.1]{mondelli2021optimalcombination}. Thus, another natural question is:

\vspace{.5em}

\begin{center}
\emph{What is the optimal way to combine a spectral estimator based on\! \Cref{eqn:def-mtx-t-and-d-intro} and the linear estimator in \Cref{eqn:def-lin-estimator-intro}?}
\end{center}

\vspace{.5em}


\subsection{Main contributions}
\label{sec:sum-results}

In this paper, we resolve the questions above for the 
recovery of two independent signals $x_1^*, x_2^*$ with a Gaussian design matrix $A$.
This is achieved by characterizing the high-dimensional limit of the joint empirical distribution of \emph{(i)} the signals $x_1^*, x_2^*$, \emph{(ii)} the linear estimator in \Cref{eqn:def-lin-estimator-intro}, and \emph{(iii)} spectral estimators based on the matrix in \Cref{eqn:def-mtx-t-and-d-intro}. Our analysis holds in the proportional setting where $n,d\to\infty$ with $n/d\to\delta\in(0,\infty)$.
That is, we consider the regime where the ratio between sample size and signal dimension tends to a constant, as opposed to most analyses of mixed GLMs in the literature which assume $n = \Omega (d \log d)$. 
Our major findings are summarized as follows. 

\vspace{.5em}

\begin{itemize}[leftmargin=3mm]

    \item Our master theorem (\Cref{thm:main-thm-joint-dist}) characterizes the joint distribution of the linear estimator, the spectral estimator, and the signals in the high-dimensional limit.  This joint distribution characterization holds for arbitrary preprocessing functions $ \cL,\cT\colon\bbR\to\bbR $  in \Cref{eqn:def-mtx-t-and-d-intro,eqn:def-lin-estimator-intro} (subject to 
    mild regularity conditions). 
    The limiting joint distribution is expressed as the law of a set of jointly Gaussian random variables whose covariance structure is explicitly derived in terms of the model and the preprocessing functions. 

\vspace{.25em}
    
    \item As an immediate consequence of the distributional characterization, we derive the normalized correlations (or `overlaps') between the linear/spectral estimator and the signals (\Cref{lem:linear-overlap}/\Cref{thm:overlap-spectral}).
    The linear estimator achieves a strictly positive overlap    with each signal for any $\delta > 0$, provided  a strictly positive overlap can be attained for some $\delta>0$. In contrast, for the spectral estimator, we identify a threshold (depending on the preprocessing function $\cT$) such that strictly positive overlap is attained as soon as $\delta$ exceeds this threshold. In general, there is no clear winner between the spectral and the linear estimator, and which one performs better depends on the setting. 

\vspace{.25em}
    
    \item In fact, it is best to combine the linear and spectral estimators: our master theorem also allows us to compute the limiting overlap of a class of such combinations. 
    In particular, the Bayes-optimal combination can be derived, which turns out to be linear in the two estimators due to the Gaussianity of their high-dimensional limits (\Cref{cor:opt-combo}). 
    
    \vspace{.25em}

    \item 
    We determine the optimal preprocessing functions $ \cL^*, \cT_1^*,\cT_2^*\colon\bbR\to\bbR $ for the linear and spectral estimators that maximize the overlap between the estimator and each signal (\Cref{lem:linear-optimal-overlap,thm:opt-spec}). 
    The optimal overlaps of linear and spectral estimators reveal intriguing behaviors of mixed models. 
    In particular, there is a \emph{single} function $\cL^*$ that simultaneously maximizes the overlap between the linear estimator and each signal. 
    In contrast, for the spectral method,  one needs to employ two \emph{different} functions $ \cT_1^*,\cT_2^* $ 
    in order to achieve the maximal overlaps with $x_1^*$, $x_2^*$, respectively. 
    Furthermore, the optimal overlap of the spectral estimator with each signal approaches $1$ --- the best possible value --- as the aspect ratio $\delta$ grows. 
    We remark that the same is not true for the  linear estimator: the optimal overlap with each signal remains strictly less than 1 even as $\delta \to \infty$, as long as there is a strictly positive fraction of observations corresponding to each signal.
    

    
\end{itemize}

    \vspace{.5em}

Our precise asymptotic analysis leads to a significant improvement over previous designs of spectral methods, as showcased in \Cref{fig:all} for noiseless mixed linear regression. The continuous lines correspond to our theoretical predictions (``pred.''), which closely match the points coming from the simulations (``sim.''). The following methods are compared: \emph{(i)} optimal spectral method (black), obtained from \Cref{thm:opt-spec}; \emph{(ii)} optimal linear method (blue), obtained from \Cref{lem:linear-optimal-overlap}; \emph{(iii)} combined estimator (``combo'') (red), obtained from \Cref{cor:opt-combo};  \emph{(iv)} spectral estimator for mixed linear regression proposed in \cite{yi-2014-mixed-linear-regression} (yellow); \emph{(v)} spectral estimator which optimizes the overlap in the non-mixed setting (green), proposed in \cite{luo-2019-opt-preprocessing}. The spectral methods resulting from our sharp analysis (red, black) significantly outperform existing methods (green, yellow), especially for low values of $\delta$. More details on the experimental setup and additional simulation results can be found in \Cref{sec:experiments}. 

\begin{figure}[tbp]
    \centering
    \begin{subfigure}{0.49\linewidth}
        \centering
        \includegraphics[width=\linewidth]{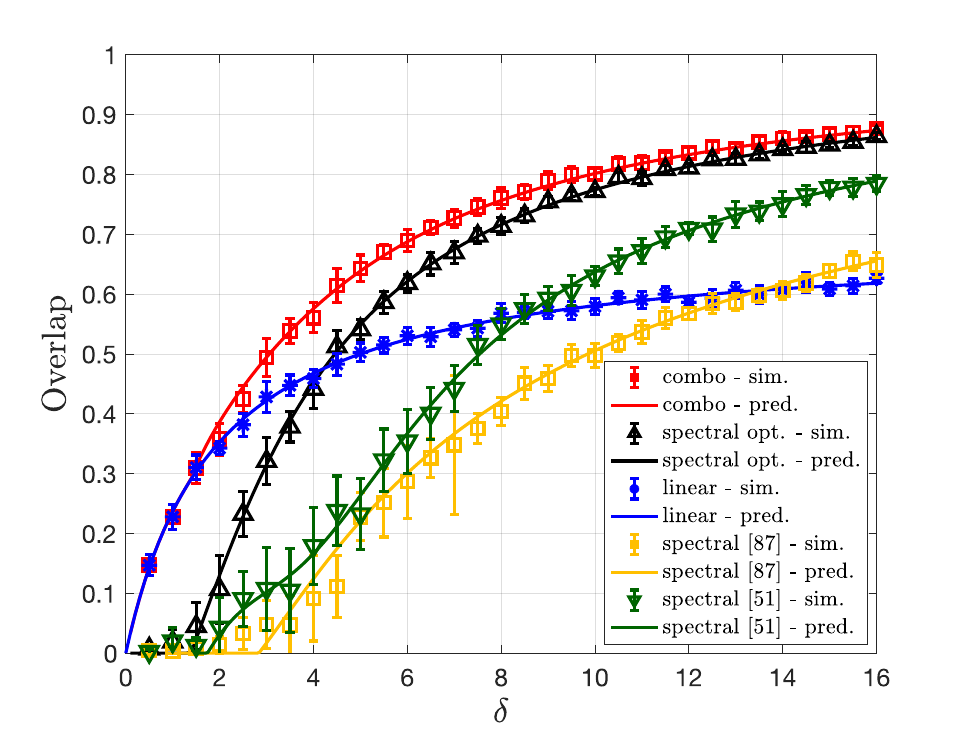}
        \caption{Recovery of $ x_1^* $}
        \label{fig:all1}
    \end{subfigure}
    \hfill
    \begin{subfigure}{0.49\linewidth}
        \centering
        \includegraphics[width=\linewidth]{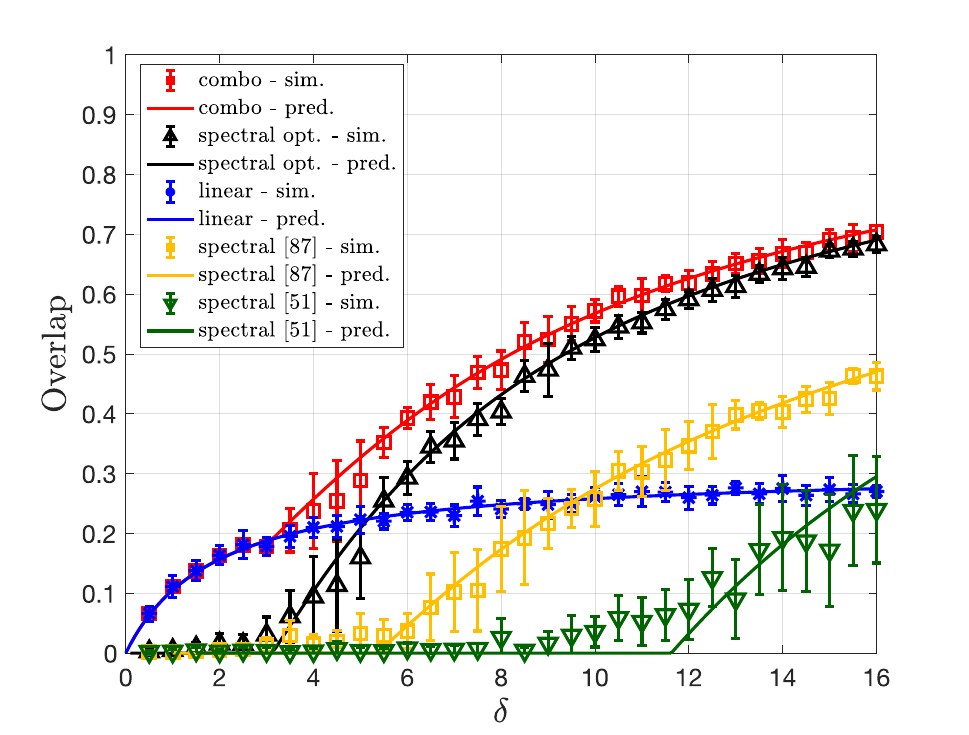}
        \caption{Recovery of $ x_2^* $}
        \label{fig:all2}
    \end{subfigure}
    \caption{Noiseless mixed linear regression with mixing parameter (i.e., probability that a sample corresponds to $x_1^*$) $ \alpha = 0.6 $. 
    Overlaps with the first signal $ x_1^*$ (left) and the second signal $ x_2^*$ (right),  computed via simulation (``sim.'') and the theoretical prediction (``pred.''),  are plotted as a function of the aspect ratio $ \delta =n/d$. The signal dimension is $d=2000$.
 We note that our optimal spectral estimator enables weak recovery of both signals at a smaller $\delta$ than existing spectral estimators designed for non-mixed data. 
    E.g., in the right panel, our optimal spectral estimator starts weakly recovering $ x_2^* $ when $ \delta > 3.1 $ while other spectral estimators require at least $ \delta > 5.3 $. 
}
    \label{fig:all}
\end{figure}

\vspace{.5em}

\paragraph{Proof techniques} We exploit a combination of tools from free probability, random matrices and the theory of approximate message passing (AMP).  Generalized approximate message passing (GAMP) refers to a family of iterative algorithms \cite{RanganGAMP} with the following key feature: 
the joint distribution of the iterates is accurately tracked by a simple deterministic recursion, called \emph{state evolution}. Our strategy to obtain the joint distribution of the linear/spectral estimators and the signals in the master theorem (\Cref{thm:main-thm-joint-dist}) is to design a GAMP that \emph{(i)} outputs the linear estimator as the first iterate and \emph{(ii)} then implements a power method, so that its fixed point corresponds to the spectral estimator. One challenge in the implementation of this strategy is that the state evolution of GAMP, in its original form for vanilla (non-mixed) GLMs, only records the correlation of its iterates with a single signal. To circumvent this issue, we equip GAMP with a state evolution recursion involving both signals, and run a \emph{pair} of GAMP iterations converging to the first and second top eigenvector of the spectral matrix $D$ in \Cref{eqn:def-mtx-t-and-d-intro}, respectively. A second, even more fundamental, challenge is that, for the power method to converge to the desired eigenvector, a spectral gap between the corresponding eigenvalue and the rest of the spectrum is required. For non-mixed GLMs, the spectral analysis was carried out in earlier work \cite{lu2020phase,mondelli-montanari-2018-fundamental}, which  characterized the limiting  eigenvalues of $D$ as well as the overlaps  using tools from random matrix theory. Here, the difficulty comes from the mixed effect of the model, leading to additional matrix terms which appear challenging to control.
Our approach is to decompose $D$ into the sum of two matrices, $D_1$ and $D_2$, each consisting of components only pertaining to the first and second signal, respectively. 
Now, $D_1,D_2$ can be individually viewed as generated from a non-mixed GLM, 
hence their 
limiting spectra are well understood.
The key observation is then that, by assuming both signals to be independent and uniformly distributed on the sphere, $D_1$ and $D_2$ become \emph{asymptotically free}\footnote{Asymptotic freeness can be thought of as the random matrix analogue of \emph{independence} of random variables.}. Thus, we are able to characterize the sum of these two spiked matrices by using techniques from free probability. 

\subsection{Related work}\label{subsec:rel}


 Mixtures of generalized linear models have been studied in machine learning  as `hierarchical mixtures of experts' 
 \cite{Jor04}. Bayesian methods for this problem
 were investigated in \cite{Pen96,  Wat95,Vie02}. 
Khalili and Chen \cite{Kha07} proposed a penalized likelihood approach for variable selection in mixed GLMs, 
showing consistency 
in the low-dimensional setting (where the dimension $d$ is fixed as $n$ grows). St\"{a}dler et al.\ \cite{Sta10} analyzed $\ell_1$-penalized estimators for high-dimensional mixed linear regression (MLR). Zhang et al.\ \cite{Zha20} considered MLR 
with two sparse components, 
when the mixing proportion and the covariance structure of the covariates are unknown.  The works \cite{Kha07, Sta10, Zha20} all use variants of the EM algorithm for optimizing a suitable penalized likelihood function.
Balakrishnan et al.\ \cite{Bal17} and Klusowski et al.\ \cite{Klu19} obtained statistical guarantees on the performance of EM for a class of problems, including
symmetric MLR with $x_1^* = - x_2^*$. Variants of 
EM 
for symmetric MLR 
were also analyzed in \cite{Wan15,Yi15,Zhu17}. Minimax lower bounds 
were obtained in \cite{Fan18}, and statistical-computational gaps 
were recently studied in \cite{arpino2023statistical}.
Kong et al.\ \cite{Kon20} studied MLR as a canonical example of meta-learning: in the setting where the number of signals ($\ell$) is large,  they derived conditions under which a large number of signals with a few observations can compensate for the lack of signals with abundantly many observations. 
The prediction error of MLR in the non-realizable setting, where no generative model is assumed for the data, was studied in \cite{Pal22}.
Chandrasekher et al.\ \cite{CPT21} analyzed the performance of 
iterative algorithms (not including AMP) for mixtures of GLMs. They provide a sharp characterization of the  per-iteration error with sample-splitting in the regime $n \asymp d\, \mathrm{polylog}(d)$, assuming  a Gaussian design and a random initialization. 
An AMP estimator for mixed GLMs was recently studied in  \cite{tan2023mixed}. We emphasize that the focus of the current paper is not on using the AMP algorithm as an estimator for mixed GLMs. Rather, we use AMP as a proof technique to obtain a precise distributional characterization of the spectral estimator, and use this characterization to optimize its accuracy.

Spectral methods based on \Cref{eqn:def-mtx-t-and-d-intro} were introduced in \cite{li1992principal} for standard  GLMs  (non-mixed, with $\ell=1$). For the special case of phase retrieval, a series of works has provided increasingly refined bounds on the number of samples needed  to guarantee signal recovery via the spectral method \cite{netrapalli2013phase,candes2013phaselift,chen2015solving}. This type of analysis is based on matrix concentration inequalities, a technique that typically does not return exact values for the overlap between the signal and the estimate.
More recently, an exact high-dimensional analysis for generalized linear models 
was carried out in \cite{lu2020phase,mondelli-montanari-2018-fundamental}. These works focus on the regime of interest in this paper: $n$ and $d$ growing at a proportional rate $\delta$. 
This sharp analysis allows for the optimization of the preprocessing function: the choice of $\cT$ minimizing the value of $\delta$ (and, hence, the amount of data) needed to achieve a strictly positive overlap was provided in \cite{mondelli-montanari-2018-fundamental}; furthermore, the choice of $\cT$ maximizing the overlap was provided in \cite{luo-2019-opt-preprocessing}. Going beyond the proportional regime in which $n$ is linear in $d$, bounds on the sample complexity required for moment methods (including spectral) to achieve non-vanishing overlap were recently  obtained in \cite{damian2024computational}. The aforementioned analyses 
assume a Gaussian design matrix.  
Beyond this assumption, \cite{dudeja2020analysis} provides precise asymptotics for design matrices sampled from the Haar distribution, and \cite{maillard2020construction} studies rotationally invariant designs.
Moving to the mixed regression setting ($\ell>1$), 
Yi et al.\ \cite{yi-2014-mixed-linear-regression} proposed a spectral estimator based on \Cref{eqn:def-mtx-t-and-d-intro} with $\cT(y) = y^2$. The analysis is based on concentration inequalities and requires the number of samples $n$ to be of order $d\log d$ for accurate recovery.  Estimators based on spectral decomposition of data-dependent tensors were proposed for MLR in \cite{Cha13} and for mixed GLMs in \cite{sedghi2016provable}. However, these methods require $n$ to be of order at least $d^3$ for accurate recovery. Our work is the \emph{first} to establish exact asymptotics for a mixed GLM in the linear sample-size regime: $n, d\to\infty$ with $n/d\to\delta\in (0, \infty)$. To achieve this goal, our strategy differs from analyses of spectral methods in the non-mixed setting \cite{lu2020phase,mondelli-montanari-2018-fundamental} which reduce the study of the spectrum of $D$ to that of a rank-1 perturbation. In contrast, our analysis is based on a combination of techniques from free probability and approximate message passing (AMP).



AMP is a family of iterative algorithms that has been applied to several problems in high-dimensional statistics, including estimation in linear models \cite{DMM09, BM-MPCS-2011, krzakala2012}, generalized linear models \cite{RanganGAMP,schniter2014compressive,sur2019modern}, and low-rank matrix estimation \cite{deshpande2014information, RanganFletcherGoyal, lesieur2017constrained}, see also the review 
\cite{Feng22AMPTutorial}. A key feature of AMP algorithms is that under suitable model assumptions, the empirical joint distribution of their iterates can be exactly characterized in the high-dimensional limit, in  terms of a simple scalar recursion called \emph{state evolution}. By taking advantage of this characterization, AMP methods have been used to derive exact high-dimensional asymptotics for convex penalized estimators such as LASSO \cite{BayatiMontanariLASSO}, M-estimators \cite{donoho2016high}, logistic regression \cite{sur2019modern}, and SLOPE \cite{bu2020algorithmic}. AMP algorithms have been initialized via spectral methods in the context of low-rank matrix estimation \cite{montanari2017estimation} and generalized linear models \cite{mondelli-2021-amp-spec-glm}. Furthermore, they have been used -- in a non-mixed setting -- to combine linear and spectral estimators \cite{mondelli2021optimalcombination}. 
A finite-sample analysis which allows the number of iterations to grow roughly as $\log n$ ($n$ being the ambient dimension) was put forward in \cite{rush-venkat-finite-sample}, and the recent paper \cite{li2022non} improves this guarantee to a linear (in $n$) number of iterations. This could potentially allow to study settings in which $\delta=n/d$ approaches the spectral threshold. The works on AMP discussed above  all assume i.i.d. Gaussian matrices. A number of recent papers have proposed generalizations of AMP for the much broader class of rotationally invariant matrices, e.g., \cite{opper2016theory,ma2017orthogonal, rangan2019vector, takeuchi2020rigorous,Zho21,Fan22,mondelli2021pca,venkataramanan2021estimation}.

\hl{Finally, we mention the recent paper \mbox{\cite{Kovacevic}} 
that derived precise asymptotics of spectral estimators for multi-index models by generalizing the techniques in \mbox{\cite{lu2020phase,mondelli-montanari-2018-fundamental}}.
However, \mbox{\cite{Kovacevic}} did not derive the \emph{joint} distribution of spectral and linear estimators, or the (optimal) combination of the two. 
To the best of our knowledge, such results do not follow immediately from the pure random matrix theory-type results in \mbox{\cite{Kovacevic}}, but require additional work to handle the correlation between spectral and linear estimators. 
These are achieved in the present work using a mix of tools from free probability and Approximate Message Passing (AMP).}

\section{Preliminaries}
\label{sec:prelim}

The $i$-th element in $a\in\bbR^p$ is denoted by $a_i$. 
If a vector has multiple subscripts, the component index is the last one.  For a symmetric $ M\in\bbR^{p\times p} $, we denote by $ \mu_M $ its empirical spectral distribution. 
The (real) eigenvalues of $M$ are $ \lambda_1(M)\ge\lambda_2(M)\ge\cdots\ge\lambda_p(M) $, and the corresponding eigenvectors 
are 
$ v_1(M),v_2(M),\cdots,v_p(M) $. 
The $(i,j)$-th entry of $M$ is denoted by $ M_{i,j} $. 
For a random variable $X$, $ \supp(X) $ denotes the support of its density function. 
The orthogonal group in dimension $p$ is $ \bbO(p)\coloneqq\curbrkt{O\in\bbR^{p\times p} : OO^\top = O^\top O = I_p} $. 
The unit sphere in dimension $p$ is $ \bbS^{p-1} \coloneqq \curbrkt{x\in\bbR^p : \normtwo{x} = 1} $. 
For two distributions $P$ and $Q$, $P\ot Q$ is their product distribution, and 
$P^{\ot k}$ is the $k$-fold product distribution of $P$. 

\vspace{.5em}

\paragraph{Model}
We consider a two-component 
mixed GLM with signal vectors 
$ x_1^*,x_2^*\in\bbS^{d-1} $, covariate vectors 
$ a_1,\cdots,a_n\in\bbR^d $ , and a known link function $ q\colon\bbR^2\to\bbR $.
 Let $ P_\eps $ be a noise distribution over $ \bbR $. 
The $n$ observations $ y_1,\cdots,y_n\in\bbR $ are generated as:
\begin{align}
y_i &= q\paren{\inprod{a_i}{\eta_ix_1^* + (1-\eta_i) x_2^*} ,\, \eps_i}, \qquad i \in [n] .  \label{eqn:def-y-glmm} 
\end{align}
Here, the vector of latent variables $\veta \coloneqq (\eta_1,\cdots,\eta_n) \sim \bern(\alpha)^\tn $ indicates which signal is selected by each observation, and is \emph{unobserved}. The latent variable vector $\veta$, the signals $x_1^*, x_2^*$, the covariate vectors $a_1, \ldots, a_n$, and 
the noise vector $\veps \coloneqq (\eps_1,\cdots,\eps_n)\sim P_\eps^\tn$ are mutually independent. Then, \Cref{eqn:def-y-glmm} is equivalent to
\begin{equation}
    y_i\mid \inprod{a_i}{\eta_ix_1^* + (1-\eta_i) x_2^*} \sim p(\cdot \mid \inprod{a_i}{\eta_ix_1^* + (1-\eta_i) x_2^*}),\label{eq:condlaw}
\end{equation}
where $p(\cdot |g)$ denotes the distribution of $q(g, \eps)$ for a fixed $g\in\mathbb R$ and $\eps\sim P_\eps$ independent of $g$. The design matrix 
is $A=[a_1^\top, \ldots, a_n^\top]^\top\in\bbR^{n\times d}$. 
Given $A$, upon observing $ y=( y_1,\cdots,y_n)\in\bbR^n $, our goal is to estimate $ x_1^* $ and $ x_2^* $. 
Given a pair of estimators $\wh{x}_1 = \wh{x}_1(y,A), \, \wh{x}_2 = \wh{x}_2(y,A)\in\bbR^d$, we measure the  performance via their \emph{overlap} with the respective signals: 
\begin{align}
    \lim_{d\to\infty} \frac{\abs{\inprod{\wh{x}_1}{x_1^*}}}{\normtwo{\wh{x}_1}\normtwo{x_1^*}} , \quad 
    \lim_{d\to\infty} \frac{\abs{\inprod{\wh{x}_2}{x_2^*}}}{\normtwo{\wh{x}_2}\normtwo{x_2^*}} . \notag
\end{align}
%
Throughout the paper, the following assumptions are imposed. 

\vspace{.25em}

\begin{enumerate}[label=(A\arabic*)]
\setcounter{enumi}{\value{assctr}}
    \item \label[ass]{itm:assump-signal-distr} $ x_1^*,x_2^* $ are independent and uniform on the unit sphere, $ (x_1^*,x_2^*)\sim\unif(\bbS^{d-1})^{\ot2} $. 
\vspace{.25em}
    \item \label[ass]{itm:assump-alpha} $ \alpha\in (1/2,1) $. 
\vspace{.25em}
    \item \label[ass]{itm:assump-noise-distr} The noise sequence $\veps\in\bbR^n$ is i.i.d.\ according to $ \veps \sim P_\eps^{\ot n} $, and $P_\eps$ has finite second moment. 
\vspace{.25em}
    \item \label[ass]{itm:assump-gaussian-design} $ a_1,\cdots,a_n\in\bbR^d $ are i.i.d., each distributed according to $ a_i\iid\cN(0_d,I_d) $. 
\vspace{.25em}
    \item \label[ass]{itm:assump-proportional} We consider the proportional regime where $ n,d\to\infty $ and $ n/d\to\delta $ for some constant $ \delta> 0 $ which we call \emph{aspect ratio}. 
\setcounter{assctr}{\value{enumi}}
\end{enumerate}

\vspace{.25em}

As for \Cref{itm:assump-signal-distr}, 
choosing signals uniform 
on the sphere corresponds to having 
no structural information about them. 
This requirement is natural, since 
spectral methods 
are typically unable to exploit prior information about the signal. 
\hl{We expect that all results of the present paper hold for the more relaxed setting where the signals $ x_1^*, x_2^* $ are independent of the design matrix $A$ and the noise vector $\veps$, and satisfy }
\begin{align}
&&
    \lim_{d\to\infty} \normtwo{x_1^*} = \lim_{d\to\infty} \normtwo{x_2^*} &= 1 , & 
    \lim_{d\to\infty} \inprod{x_1^*}{x_2^*} &= 0 . & 
& \notag 
\end{align}
\hl{In particular, we believe that the assumption that the  signals are uniformly distributed on the sphere is not required. 
In that case, \mbox{\Cref{eqn:use-indep-signal-tmp}} in the argument of the reduction to the free sum of two random matrices is no longer an exact equality (in distribution). 
However, we still expect asymptotic freeness to hold in the proportional limit. 
We leave its formal justification to future work. }
Understanding the effect of correlation on the performance of spectral estimators and the design of the optimal preprocessing function is an exciting 
future direction. 
\Cref{itm:assump-alpha} 
is without loss of generality: 
if $0<\alpha<1/2$, one can simply interchange the roles of $x_1^*$ and $x_2^*$. When $\alpha = 1/2$, 
the top two eigenvectors given by the spectral method correspond to the same limiting eigenvalue as $n\to \infty$. These eigenvectors provide an estimate on the space spanned by $x_1^*, x_2^*$, and in order to estimate the individual signals, an additional $1$-dimensional grid search is required. Provided this extra step is carried out, our results 
still apply, 
see \Cref{rk:alpha-half-master,rk:alpha-half-rmt,rk:alpha-half-amp}. 
\Cref{itm:assump-gaussian-design} is common in the related literature 
\cite{yi-2014-mixed-linear-regression,mondelli-montanari-2018-fundamental,lu2020phase,luo-2019-opt-preprocessing}, and the potential universality beyond 
Gaussian design matrices is discussed in \Cref{sec:discussion}.

\vspace{.5em}




\paragraph{Linear estimator}
Given the preprocessing function $ \cL\colon\bbR\to\bbR $, 
the \emph{linear estimator} is 
\begin{align}
    \xlin &\coloneqq \frac{1}{n} A^\top \cL(y) = \frac{1}{n} \sum_{i = 1}^n \cL(y_i) a_i \in\bbR^d , \label{eqn:def-lin-estimator}
\end{align}
where $\cL$ is applied component-wise, 
i.e., $\cL(y) = (\cL(y_1),\cdots,\cL(y_n))$. Let 
$ Y $ be defined as  
\begin{align}
    Y=q(G,\eps), \quad \text{ where } (G,\eps)\sim\cN(0,1)\otimes P_\eps . \label{eqn:def-rv-y}
\end{align}
We make the following assumption on 
$\cL$. 

\vspace{.25em}

\begin{enumerate}[label=(A\arabic*)]
\setcounter{enumi}{\value{assctr}}
    \item \label[ass]{itm:assump-preproc-lin} $ \cL\colon\bbR\to\bbR $ is Lipschitz and satisfies 
    \begin{align}
        \expt{G\cL(Y)} &\ne 0 , \quad 
        \expt{|G\cL(Y)|} < \infty . \notag 
    \end{align}
\setcounter{assctr}{\value{enumi}}
\end{enumerate}

The first condition guarantees that the linear method w.r.t.\ $\cL$ attains positive overlaps with both signals, and the second condition is rather mild and purely technical. 

\vspace{.5em}

\paragraph{Spectral estimator}
Let $ \cT\colon\bbR\to\bbR $ be a preprocessing function, and consider
\begin{align}
    T &\coloneqq \diag(\cT(y)) \in\bbR^{n\times n} , \quad
    D \coloneqq \frac{1}{n} A^\top TA = \frac{1}{n} \sum_{i = 1}^n \cT(y_i) a_ia_i^\top \in\bbR^{d\times d} , \label{eqn:def-mtx-t-and-d} 
\end{align}
where 
$\cT(y) = (\cT(y_1),\cdots,\cT(y_n))$. Then, the spectral method computes the top two eigenvectors $ v_1(D),v_2(D) $ of $D$ as estimates of $ x_1^*,x_2^* $. 
We make the following assumption on 
$\cT$. 

\vspace{.25em}

\begin{enumerate}[label=(A\arabic*)]
\setcounter{enumi}{\value{assctr}}
    \item \label[ass]{itm:assump-preproc-spec}  Let $Y$ be defined in \Cref{eqn:def-rv-y}. Then,  $\cT(Y)$ is not almost surely zero, i.e.,  $ \prob{\cT(Y) = 0}<1 $, 
$\cT$ is Lipschitz and satisfies  
    \begin{align}
        \inf_{y\in\supp(Y)} \cT(y)  > -\infty , \quad  \text{and} \quad
        0 < \sup_{y\in\supp(Y)} \cT(y) < \infty . \notag
    \end{align}
\setcounter{assctr}{\value{enumi}}
\end{enumerate}

\vspace{.25em}

In words, we 
require $\cT$ to be bounded, with strictly positive 
upper edge of its range. A bounded preprocessing function is also required in the non-mixed setting \cite{mondelli-montanari-2018-fundamental,lu2020phase}. The requirement on the $\sup$ to be strictly positive is purely technical, and it simply  
 rules out the trivial cases in which the spectral matrix $D$ is all-zero with high probability. 
 \Cref{itm:assump-preproc-spec} is satisfied by the preprocessing function that maximizes the overlap (cf.\ \Cref{thm:opt-spec}). 



\section{Main results}
\label{sec:results}


We start by defining a few auxiliary quantities. 
Let $ \delta_1 = \alpha\delta,\delta_2 = (1-\alpha)\delta $, and $Z = \cT(Y)$, with $Y$ as defined in \Cref{eqn:def-rv-y}.  Define $ \phi\colon(\sup\supp(Z),\infty)\to\bbR $ and $ \psi\colon(\sup\supp(Z),\infty)\times(0,\infty)\to\bbR $ as 
\begin{align}
\phi(\lambda) &\coloneqq \lambda\,\expt{\frac{ZG^2}{\lambda - Z}} ,  \label{eqn:def-phi-fn} \\
\psi(\lambda;\Delta) &\coloneqq \lambda\paren{\frac{1}{\Delta} + \expt{\frac{Z}{\lambda - Z}}} . \label{eqn:def-psi-fn} 
\end{align}
In what follows, we will set the second argument $\Delta$ of $\psi$ to $\delta, \delta_1$ and $\delta_2$. For $ \Delta\in\{\delta, \delta_1, \delta_2\}$, let $ \ol\lambda(\Delta)>\sup\supp(Z) $ be the minimum point of $ \psi(\cdot;\Delta) $, i.e., 
\begin{align}
\ol\lambda(\Delta) &\coloneqq \argmin_{\lambda>\sup\supp(Z)} \psi(\lambda;\Delta) . \label{eqn:def-lam-bar} 
\end{align}
Since $\psi$ is convex in its first argument (see \Cref{lem:properties-phi-psi}), this minimum point is obtained by setting the derivative to $0$. Furthermore, define 
$\zeta\colon(\sup\supp(Z),\infty)\times(0,\infty)\to\bbR $ as
\begin{align}
\zeta(\lambda; \Delta) &\coloneqq \psi(\max\{\lambda,\ol\lambda(\Delta)\}; \Delta) . \label{eqn:def-zeta-i}
\end{align}
Finally, for $i\in\{1, 2\}$, by \cite[Lemma 2]{mondelli-montanari-2018-fundamental}, the equation $ \zeta(\lambda;\delta_i) = \phi(\lambda) $ admits a unique solution in $ \lambda\in(\sup\supp(Z),\infty) $ which we call $ \lambda^*(\delta_i) $. \label{def:lambdastard}
The functions $ \psi(\lambda;\Delta),\phi(\lambda),\zeta(\lambda;\Delta) $ together with the parameters $ \lambda^*(\Delta),\ol\lambda(\Delta) $ are plotted in \Cref{fig:psi-phi-zeta} for $ \Delta\in\{\delta,\delta_1,\delta_2\} $. 
Some convexity and monotonicity properties 
of these functions can be found in \Cref{lem:properties-phi-psi}. 

\begin{figure}[htbp]
    \centering
    \includegraphics[width=0.5\linewidth]{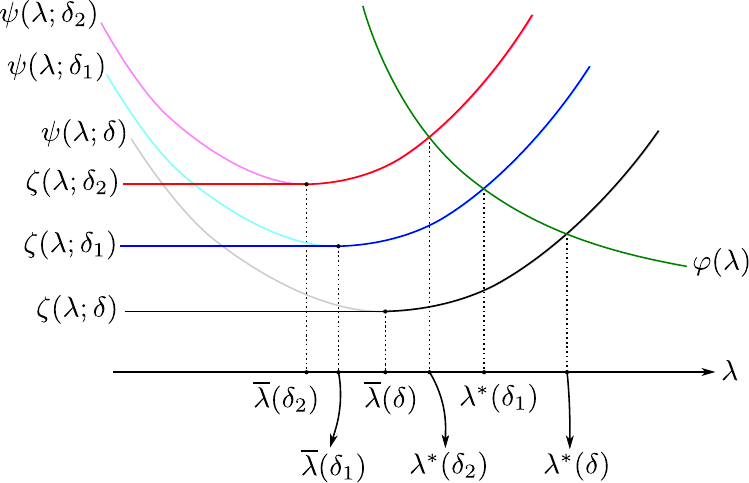}
    \caption{Plot of $ \psi(\lambda;\Delta),\phi(\lambda),\zeta(\lambda;\Delta) $ as functions of $\lambda$ with $ \Delta\in\{\delta,\delta_1,\delta_2\} $. }
    \label{fig:psi-phi-zeta}
\end{figure}

The empirical distribution of  a vector $u \in \reals^d$ is given by $ \frac{1}{d}\sum_{i=1}^d \delta_{u_i}$, where $\delta_{u_i}$ denotes a Dirac delta mass on $u_i$. Similarly, the joint empirical  distribution of the rows of a matrix $(u^1, u^2, \ldots, u^t) \in \reals^{d \times t}$ is $\frac{1}{d} \sum_{i=1}^d \delta_{(u^1_i, \ldots, u^t_i)}$. Our master theorem is an exact characterization in the high-dimensional limit of the joint empirical distribution of the rows of the signals, the linear estimator, and the spectral estimators. In particular, we show that this joint empirical distribution converges to the law of a Gaussian random vector with a specified covariance matrix. The result is stated in terms of the  following parameters: the asymptotic correlations $\rho_1^{\lin}, \rho_2^{\lin}$ between the linear estimator and the two signals;  the asymptotic normalized Euclidean norm $n^{\lin}$  of the linear estimator; and the asymptotic correlations $\rho_1^{\spec}, \rho_2^{\spec}$ between the spectral estimators and the two signals. The formulas for these quantities are:
\begin{subequations}
\begin{align}
& n^{\lin} \coloneqq \bigg( (\alpha^2+(1-\alpha)^2) \expt{G\cL(Y)}^2 + \frac{\expt{\cL(Y)^2}}{\delta} \bigg)^{\frac{1}{2}}, \\
& \rho_1^{\lin} \coloneqq \frac{\alpha \expt{G\cL(Y)}}{n^{\lin}}, \quad \rho_2^{\lin} \coloneqq  \frac{ (1-\alpha) \expt{G\cL(Y)}}{n^{\lin}}, \label{eqn:def-rho-lin-12} \\
\begin{split}
& \rho_1^{\spec} \coloneqq \paren{ \frac{\frac{1}{\delta} - \expt{\paren{\frac{Z}{\lambda^*(\delta_1) - Z}}^2}}{\frac{1}{\delta} + \alpha\expt{\paren{\frac{Z}{\lambda^*(\delta_1) - Z}}^2(G^2 - 1)}} }^{\frac{1}{2}} , \\
&\rho_2^{\spec} \coloneqq \paren{ \frac{\frac{1}{\delta} - \expt{\paren{\frac{Z}{\lambda^*(\delta_2) - Z}}^2}}{\frac{1}{\delta} + (1-\alpha)\expt{\paren{\frac{Z}{\lambda^*(\delta_2) - Z}}^2(G^2 - 1)}} }^{\frac{1}{2}} . 
\end{split}
\label{eqn:def-rho-spec-12}
\end{align}
\label{eqn:def-asymp-corr}
\end{subequations}
\Cref{thm:main-thm-joint-dist} is stated in terms of \emph{pseudo-Lipschitz} test functions.
%
A function $\Psi: \reals^m \to \reals$ is pseudo-Lipschitz of order $k \geq 1$, denoted $\Psi \in \PL(k)$, if there is a constant $C > 0$  such that 
\begin{equation}
\normtwo{\Psi(x)-\Psi(y)} \le C (1 + \| x \|_2^{k-1} + \| y \|_2^{k-1} )\normtwo{x - y},
\label{eq:PL2_prop} 
\end{equation}
for all $x,y \in \mathbb{R}^m$. Examples of pseudo-Lipschitz functions of order two are: $\Psi(u)=u^2$ and $\Psi(u,v) = \abs{uv}$, for $u, v \in \reals$.
We consider pseudo-Lipschitz test functions of order two, as 
those suffice to compute the asymptotic overlaps between the signals and the various estimators. 
One could extend \Cref{thm:main-thm-joint-dist} to test functions in $\PL(k)$ for $k>2$, at the cost of a more involved argument and an additional assumption on the finiteness of the moments of $P_\eps$.


\begin{theorem}[Master theorem on joint distribution]
\label{thm:main-thm-joint-dist}
Consider the setting of \Cref{sec:prelim}, and let \Cref{itm:assump-signal-distr,itm:assump-alpha,itm:assump-noise-distr,itm:assump-gaussian-design,itm:assump-proportional,itm:assump-preproc-lin,itm:assump-preproc-spec} hold.
Define the following rescaled vectors of Euclidean norm $\sqrt{d}$: $x^{\lin} = \sqrt{d} \, \xlin/\normtwo{\xlin}$, and for $i\in \{1, 2\}$,
$\ol{x}^*_i = \sqrt{d} x_i^*$, $x^{\spec}_i=s_i\sqrt{d} \, v_i(D)$, 
where the sign $ s_i\in\{-1,1\} $ is chosen such that $ \inprod{s_iv_i(D)}{x_i^*}\ge0$.
Then, the following holds almost surely for any $\PL(2)$ function $\Psi: \reals^3 \to \reals$. 
If $ \lambda^*(\delta_1) > \ol\lambda(\delta) $, then
\begin{align}
& \lim_{d \to \infty} \,   \frac{1}{d} \sum_{i=1}^d \Psi(\ol{x}^*_{1,i}, x^{\lin}_i,  x^{\spec}_{1,i}) = \expt{\Psi(X_1,
\, \rho_1^{\lin} X_1 + \rho_2^{\lin} X_2 + W^{\lin}
,\,  \rho_1^{\spec} X_1 + W_1^{\spec}) }.  \label{eq:psiX1joint} 
\end{align}
Similarly, if $ \lambda^*(\delta_2) > \ol\lambda(\delta) $, then
\begin{align}
& \lim_{d \to \infty} \,   \frac{1}{d} \sum_{i=1}^d \Psi(\ol{x}^*_{2,i}, x^{\lin}_i,  x^{\spec}_{2,i}) = \expt{ \Psi(X_2,
\, \rho_1^{\lin} X_1 + \rho_2^{\lin} X_2 + W^{\lin}
,\,  \rho_2^{\spec} X_2 + W_2^{\spec}) }. \label{eq:psiX2joint}    
\end{align}
Here $(X_1, X_2) \sim \cN(0,1)^{\ot2}$, the pairs $(W^{\lin}, W_1^{\spec})$ and $(W^{\lin}, W_2^{\spec})$ are independent of $(X_1, X_2)$ and each pair is 
jointly Gaussian with zero mean and covariance given by 
\begin{equation}
    \begin{split}
        & \E[(W^{\lin})^2]=1-(\rho_1^{\lin})^2-(\rho_2^{\lin})^2, \quad \E[(W_1^{\spec})^2]=1-(\rho_1^{\spec})^2, \quad \E[(W_2^{\spec})^2]=1-(\rho_2^{\spec})^2 , \\ 
        & \E[W^{\lin} W_1^{\spec}] = \frac{\alpha \rho_1^{\spec}}{n^{\lin}} \expt{  \frac{ G\cL(Y) Z}{\lambda^*(\delta_1) -Z}},  \quad  
        \E[W^{\lin} W_2^{\spec}] = \frac{(1-\alpha) \rho_2^{\spec}}{n^{\lin}} \expt{ \frac{G\cL(Y) Z}{\lambda^*(\delta_2) -Z}}.
    \end{split} \notag
\end{equation} 
\end{theorem}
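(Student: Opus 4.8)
The plan is to reduce the whole statement to the analysis of a single \emph{generalized approximate message passing} (GAMP) algorithm whose iterates simultaneously track the linear estimator, a power iteration converging to the spectral estimator, and the two signals. Concretely, I would design a GAMP recursion on $\bbR^d\times\bbR^n$ with denoisers chosen so that (i) its first $\bbR^d$-iterate equals a deterministic rescaling of $\xlin$ from \eqref{eqn:def-lin-estimator}, and (ii) from the second iterate onward the $\bbR^d$-iterate obeys $u^{t+1}\approx D u^t$ up to the Onsager correction, i.e.\ it is a power method on (a shifted copy of) $D$ from \eqref{eqn:def-mtx-t-and-d}. Since GAMP is thereby initialized along a direction correlated with $v_1(D)$ whenever $\lambda^*(\delta_1)>\ol\lambda(\delta)$, the power-method part converges to $v_1(D)$ as $t\to\infty$; running a second, Gram--Schmidt--deflated copy of the same recursion yields $v_2(D)$. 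The payoff is that the \emph{state evolution} of GAMP characterizes exactly, in the proportional limit, the joint empirical distribution of all iterates together with $x_1^*,x_2^*$, and letting $t\to\infty$ there produces the claimed Gaussian law and covariances. Two modifications of the textbook GAMP machinery are needed: the state evolution must be enlarged to record correlations with \emph{both} signals (a two-dimensional ``signal'' parameter), and this enlarged recursion must be proved valid for the mixed GLM \eqref{eqn:def-y-glmm} — the content of the deferred section on state evolution for mixed GLMs.

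Before the GAMP analysis can run, a random-matrix input is required: the top of the spectrum of $D$ must be located and a spectral gap exhibited, since the power-method argument collapses without one. I would obtain this by splitting $D = D_1 + D_2$ with $D_j \coloneqq \frac1n\sum_{i:\eta_i = j}\cT(y_i)a_i a_i^\top$ (identifying $\eta_i=1$ with signal $1$). Conditionally on $\veta$, the matrices $D_1,D_2$ are built from \emph{disjoint} rows of $A$ and hence are \emph{independent}; each $D_j$ is exactly a spectral matrix for a non-mixed GLM with $\delta_j$ samples per dimension, whose limiting spectrum is a deformed Marchenko--Pastur-type bulk plus a single outlier along $x_j^*$ with location and alignment governed by $\lambda^*(\delta_j)$ and $\rho_j^{\spec}$, via \cite{lu2020phase,mondelli-montanari-2018-fundamental,luo-2019-opt-preprocessing}. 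Because $x_1^*,x_2^*$ are independent and Haar-distributed (\Cref{itm:assump-signal-distr}), one summand can be conjugated by an independent Haar orthogonal matrix without changing the pair's law, so $D_1$ and $D_2$ are \emph{asymptotically free}. Free additive convolution, via the subordination functions recalled in \Cref{app:belinschi}, then yields the bulk edge of $D$ in terms of $\ol\lambda(\delta)$, and an outlier analysis for the free sum of two spiked models shows that $\lambda_1(D),\lambda_2(D)$ converge to the values dictated by $\lambda^*(\delta_1),\lambda^*(\delta_2)$ while $\lambda_3(D)$ sticks to the bulk edge; the condition $\lambda^*(\delta_i)>\ol\lambda(\delta)$ is precisely the Baik--Ben Arous--P\'ech\'e threshold for the $i$-th outlier to detach, and $\alpha>1/2$ (\Cref{itm:assump-alpha}) keeps $\lambda_1(D)$ strictly above $\lambda_2(D)$. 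The same analysis shows $v_i(D)$ lies asymptotically in $\mathrm{span}(x_1^*,x_2^*)$ up to a vanishing orthogonal part, which is what lets the two-signal state evolution close.

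With the spectral gap in hand, I would: (a) prove the enlarged state evolution by the standard conditioning argument on the relevant $\sigma$-algebras, adapted to the mixed observations; (b) identify the state-evolution fixed point of the power-method part with the system $\zeta(\lambda;\delta_i)=\phi(\lambda)$, so that the limiting overlap of the $i$-th GAMP sequence with $x_i^*$ is $\rho_i^{\spec}$ of \eqref{eqn:def-rho-spec-12} and its overlap with $x_{3-i}^*$ vanishes; (c) verify convergence of the iteration to this fixed point, using the spectral gap to control the transient contribution of non-top eigenvectors and a uniform-in-$t$ argument (in the spirit of finite-sample AMP bounds) to exchange $\lim_d\lim_t$ with $\lim_t\lim_d$; and (d) read off from the cross-terms of the state-evolution covariance the quantities $n^{\lin}$, $\rho_i^{\lin}$ and $\E[W^{\lin}W_i^{\spec}]$, which are exactly the inner products produced when the correlation of the first iterate ($\propto\xlin$) is propagated through the power iteration. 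The rescalings $x^{\lin}=\sqrt d\,\xlin/\normtwo{\xlin}$, $x^{\spec}_i=s_i\sqrt d\,v_i(D)$ and the sign choice $s_i$ are then handled by a routine continuous-mapping step, since $\PL(2)$ convergence of the unnormalized iterates gives convergence of their norms and mutual inner products.

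The main obstacle is the random-matrix step for the \emph{mixed} matrix $D=D_1+D_2$. Unlike the non-mixed case, $D$ is not a low-rank perturbation of a single well-understood matrix: both summands carry a spike, and their bulks must first be combined by free convolution before any outlier analysis can begin, so BBP theory cannot be invoked off the shelf. Showing that \emph{exactly} two eigenvalues leave the bulk, that they land at the locations prescribed by $\lambda^*(\delta_1)$ and $\lambda^*(\delta_2)$, that they are separated from each other (this is where $\alpha>1/2$ is used) and from $\lambda_3(D)$, and that the associated eigenvectors are correctly aligned with $x_1^*$ and $x_2^*$, requires a delicate deformation/outlier analysis layered on top of the free-convolution machinery — this is the technically heaviest part and the place where the mixed structure genuinely complicates matters relative to \cite{lu2020phase,mondelli-montanari-2018-fundamental}.
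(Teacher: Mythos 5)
Your overall architecture matches the paper's: a GAMP whose first iterate is the linear estimator and whose later iterates emulate a power method, a two-signal state evolution, and a free-probability analysis of $D=D_1+D_2$ (Haar conjugation justified by the uniform signal prior, free additive convolution and subordination for the bulk edge and outliers, with $\alpha>1/2$ separating the top two outliers). However, your mechanism for the \emph{second} eigenvector is a genuine gap. You propose a ``Gram--Schmidt--deflated copy of the same recursion,'' but a projection against (an estimate of) $v_1(D)$ is a global, non-separable operation that depends on another AMP trajectory, and the state-evolution theory you invoke only covers row-wise denoisers; nothing in your plan extends it to cover deflation. Moreover, even granting convergence of the deflated iteration to $v_2(D)$ as a matter of linear algebra, the state-evolution fixed point of the \emph{same} recursion is tied to $\lambda^*(\delta_1)$, so you could not read off $\rho_2^{\spec}$ (which involves $\lambda^*(\delta_2)$) nor $\E[W^{\lin}W_2^{\spec}]$ from it. The paper avoids deflation entirely: it runs a \emph{second} GAMP with a different denoiser $\cF_2(y)=\cT(y)/(\lambda^*(\delta_2)-\cT(y))$ and initialization $f(\ol x_1^*,\ol x_2^*)=\ol x_2^*$, so that under choice 2 the iterates are exactly uncorrelated with $x_1^*$ in the limit and the fixed-point equation coincides with the eigen-equation for $\lambda_2(\Dbar)$; convergence then follows from the three-way eigenvalue separation. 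Relatedly, seeding the power method with $x^{\lin}$ (as you suggest) rather than with the signals forces a two-dimensional $(\chi_{1,t},\chi_{2,t})$ recursion with both components nonzero and no clean fixed-point identification; the paper's device of initializing at the unknown signal is legitimate precisely because GAMP is only an analysis tool, and it is what makes the limits $\tilde\chi_i,\tilde\sigma_i$ and the cross-covariance $\E[W^{\lin}W_i^{\spec}]$ computable.

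Two further claims in your random-matrix step are incorrect and should be dropped. First, $v_i(D)$ does \emph{not} lie asymptotically in $\mathrm{span}(x_1^*,x_2^*)$: its component orthogonal to the signal span has macroscopic norm $\sqrt{1-(\rho_i^{\spec})^2}$ (this is exactly $W_i^{\spec}$), and the state evolution closes for other reasons. Second, the alignment $\rho_j^{\spec}$ is \emph{not} inherited from the non-mixed analysis of $D_j$ alone: the non-mixed overlap at aspect ratio $\delta_j$ is a different quantity, and the free addition of the other summand dilutes the alignment (note the appearance of both $\delta$ and $\delta_j$ in \eqref{eqn:def-rho-spec-12}). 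The paper deliberately extracts only the \emph{eigenvalues} (and the spectral gap) from the free-convolution machinery of \cite{belinschi-outliers}, and obtains all eigenvector overlaps and the joint law exclusively through GAMP; your plan to get ``correctly aligned'' eigenvectors from the RMT side would require an eigenvector-projection analysis for the free sum of two spiked models that you have not supplied and that the argument does not need. Finally, the interchange of $\lim_t$ and $\lim_d$ does not require uniform-in-$t$ finite-sample AMP bounds: one bounds the discrepancy between the empirical average at $x_i^{\spec}$ and at $\sqrt{\delta}\,v^t$ by the residual norm, takes $d\to\infty$ at fixed $t$, and then lets $t\to\infty$ on deterministic state-evolution quantities.
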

The outline of the argument is presented in \Cref{sec:sum-techniques}.
The full proof is given in \Cref{sec:overlap-gamp}, and it relies on the characterization of the eigenvalues of 
$D$ carried out in \Cref{thm:main-thm-eigval}, which is stated and proved in \Cref{sec:eigval-rmt}.  

\begin{remark}[Equivalence to convergence of empirical distribution]
\Cref{eq:psiX1joint} is equivalent to the statement that the joint empirical distribution of $(\ol{x}^*_{1}, x^{\lin},  x^{\spec}_{1})$ converges in Wasserstein-2 distance to the joint law of $(X_2, \, \rho_1^{\lin} X_1 + \rho_2^{\lin} X_2 + W^{\lin},\,  \rho_1^{\spec} X_1 + W_1^{\spec})$. 
The equivalence between convergence of empirical distributions in Wasserstein distance and convergence of  empirical averages of pseudo-Lipschitz functions is proved in \cite[Corollary 7.21]{Feng22AMPTutorial}.
\end{remark}

\begin{remark}[What if either the linear or spectral estimator is ineffective]
\label{rk:one-fails}
The validity of the description of the joint law of the first 
signal and the linear/spectral estimators in \Cref{eq:psiX1joint}  
relies on two assumptions:  $\expt{G\cL(Y)}\ne0 $ for the linear estimator, and $ \lambda^*(\delta_1)>\ol\lambda(\delta) $ for the spectral one. 
They guarantee that both estimators achieve non-zero asymptotic overlaps with $x_1^*$, i.e., 
$\rho_1^{\lin}\ne0$ and $ \rho_1^{\spec}>0 $. 
If either condition is not satisfied, 
a conclusion similar to \Cref{eq:psiX1joint} 
still holds with $\Psi\colon\bbR\times\bbR\to\bbR$ only taking $x_1^*$ 
and the non-trivial estimator as inputs. 
Specifically, if only the linear estimator is effective, then we terminate GAMP in \Cref{eq:gamp-eqn-general-tmp} after one step $t=0$ and obtain the distributional characterization; 
if only the spectral estimator is effective, then the initializer in \Cref{eq:F1x1-tmp} ensures that the same proof goes through without modifications, again leading to the desired conclusion. 
An analogous argument holds for the second signal.  
\end{remark}

\begin{remark}[$ v_i(D) $ estimates $ x_i^* $]
\hl{We have been using $ v_i(D) $ to estimate $ x_i^* $, for $ i\in\{1,2\} $. 
In fact, $ v_1(D) $  is asymptotically uncorrelated with $ x_2^* $ (and $ v_2(D) $  asymptotically uncorrelated with $ x_1^* $), according to the characterization of the asymptotic distribution of the top two eigenvectors in \mbox{\Cref{sec:GAMP_as_method}}. 
Intuitively, this phenomenon arises due to the orthogonality between the two signals, and fails to hold otherwise. 
For instance, if $ \lim_{d\to\infty} \inprod{x_1^*}{x_2^*} = \rho \ne 0 $, then both the first and second eigenvectors are asymptotically correlated with both signals. 
This can be formally justified by specializing \mbox{\cite[Theorem 4.2]{Kovacevic}} to mixtures of GLMs and is numerically corroborated in \mbox{\cite[Figure 2]{Kovacevic}}. }
\end{remark}


\begin{remark}[Sign calibration of spectral estimator]
\label{rk:sign-spectral}
As the eigenvectors of a matrix are insensitive to sign flip, the spectral estimators $ x^{\spec}_1,x^{\spec}_2 $ are defined up to a change of sign. In \Cref{thm:main-thm-joint-dist}, we pick the signs so that the resulting overlaps $\rho_1^{\spec}, \rho_2^{\spec}$ are positive. In practice, there is a simple way to resolve the sign ambiguity: one can match the sign of $\expt{(\rho_1^{\lin} X_1 + \rho_2^{\lin} X_2 + W^{\lin})\, (\rho_i^{\spec} X_i + W_i^{\spec})}$ with that of the scalar product $\inprod{x^{\lin}}{x^{\spec}_i}$, as the latter can be computed empirically  (without knowing $ x_1^*,x_2^* $).
\end{remark}

\begin{remark}[Master theorem for $\alpha = 1/2$]
\label{rk:alpha-half-master}
Even though we 
assume $ \alpha\in(1/2,1) $  (see \Cref{itm:assump-alpha}), 
the conclusion of \Cref{thm:main-thm-joint-dist} still holds for $ \alpha = 1/2 $ with a slight modification in the definition of the spectral estimators.
In this case, as $n\to \infty$ the top two eigenvectors given by the spectral method correspond to the same limiting eigenvalue. These eigenvectors, $v_1(D)$ and $v_2(D)$, estimate the subspace spanned by $x_1^*, x_2^*$. To estimate each individual signal, we search for a vector in $ \spn\curbrkt{v_1(D),v_2(D)} $ whose correlation with $ x^{\lin} $ is closest to the theoretical prediction from \Cref{thm:main-thm-joint-dist}.  Indeed, let $ x_1^{\spec},x_2^{\spec} $ be defined as
\begin{align}
    x_i^{\spec} &\coloneqq \argmin_{v\,\in\,\spn\curbrkt{v_1(D),v_2(D)}\cap\sqrt{d}\,\bbS^{d-1}} \abs{\frac{\inprod{v}{x^{\lin}}}{\sqrt{d}} - \paren{\rho_i^{\lin} \rho_i^{\spec} + \expt{W^{\lin} W_i^{\spec}}}} , \quad \text{for } i\in\{1,2\} . \label{eqn:xspec-alpha-half} 
\end{align}
Then, \Cref{eq:psiX1joint,eq:psiX2joint} hold, provided $ \expt{G\cL(Y)} \ne 0 $ (which guarantees that the linear estimator attains nonzero overlaps; see \Cref{itm:assump-preproc-lin} and \Cref{eqn:def-rho-lin-12}). 
We stress that \Cref{eqn:xspec-alpha-half} is  computable  in practice since it only involves $x^{\lin}$ and theoretical predictions. 
If $x^{\lin}$ is ineffective (which is the case, for example, in mixed phase retrieval, as mentioned in \Cref{sec:results-examples}), 
a similar grid search can still be performed if the statistician is given as side information a vector with known correlation with a signal. 
The reader is referred to \Cref{rk:alpha-half-rmt,rk:alpha-half-amp} for the adaptation of our proofs to the case $\alpha=1/2$. 
\end{remark}



Equipped with \Cref{thm:main-thm-joint-dist}, we can  combine the linear and spectral estimators to improve the performance in the recovery of 
$x_1^*$ and $x_2^*$. 
Formally, consider the (rescaled) linear and spectral estimators $ x^{\lin}\in\sqrt{d}\,\bbS^{d-1} $ and $ x_1^{\spec},x_2^{\spec}\in\sqrt{d}\,\bbS^{d-1} $. 
Define 
\begin{align}
    X^{\lin}\coloneqq\rho_1^{\lin} X_1 + \rho_2^{\lin} X_2 + W^{\lin} , \quad
    X_1^{\spec}\coloneqq\rho_1^{\spec} X_1 + W_1^{\spec} , \quad 
    X_2^{\spec}\coloneqq\rho_2^{\spec} X_2 + W_2^{\spec} . 
    \label{eqn:def-rv-xlin-xspec}
\end{align}
\Cref{thm:main-thm-joint-dist} states that  the joint  empirical distribution of the estimators $(x^{\lin},x_1^{\spec},x_2^{\spec})$ converges to the  law of  $(X^{\lin}, X_1^{\spec}, X_2^{\spec} )$. 
For $i\in\{1,2\}$, define the set of functions
\begin{align}
    \cC_i &\coloneqq \curbrkt{C_i\colon\bbR\times\bbR\to\bbR \mbox{ s.t. } \expt{C_i(X^{\lin}, X_i^{\spec})^2}\in(0,\infty)} . \label{eqn:set-comb-fun-1} 
\end{align}
Then, for any $C_i\in\cC_i$, the \emph{combined estimator} $ x_i^{\comb}$ is defined as 
\begin{align}
    x^{\comb}_i \coloneqq C_i(x^{\lin}, x_i^{\spec}), 
    \label{eqn:def-combo-estimator} 
\end{align}
where $C_i$ acts on its inputs component-wise, i.e., $ x_{i,j}^{\comb} = C_i(x_j^{\lin}, x_{i,j}^{\spec}) $ for any $j\in[d]$. 
Now, \Cref{eq:psiX1joint} reduces the vector problem of estimating $x_i^*$ given $(x^{\lin}, x^{\spec}_i)$ to the scalar problem of estimating $X_i$ from $ X^{\lin} $ and $ X_i^{\spec} $.  The Bayes-optimal combined estimator that minimizes the expected squared error for this scalar problem is $\expt{X_i \mid X^{\lin}, X_i^{\spec}}$.  Recalling from \Cref{thm:main-thm-joint-dist} that $(X_i, X^{\lin}, X_i^{\spec})$ are jointly Gaussian, 
the Bayes-optimal combined estimator  is a  linear combination of $(X^{\lin}, X_i^{\spec})$. %
The performance of this combined estimator is  formalized in the following corollary, whose proof is given in \Cref{app:bayes-opt-comb}.



\begin{corollary}[Bayes-optimal linear-spectral combination]
\label{cor:opt-combo}
Consider the setting of \Cref{thm:main-thm-joint-dist}. 
For $i\in\{1,2\}$, define $ C_i^*\colon\bbR\times\bbR\to\bbR $ as follows:
\begin{align}
    C^*_i(X^{\lin}, X_i^{\spec}) &\coloneqq \expt{X_i \mid X^{\lin}, \, X_i^{\spec}} 
    = \frac{1}{1-\nu_i^2} \, \paren{\xi_i X^{\lin}  \, + \, \zeta_i X_i^{\spec}}, \label{eqn:opt-combinator-1} 
    %
\end{align} 
where
\begin{align}
    \nu_i \coloneqq \rho_i^{\lin} \rho_i^{\spec} + \expt{W^{\lin} W_i^{\spec}} , \quad  
    \xi_i \coloneqq \rho_i^{\lin} - \rho_i^{\spec}\nu_i , \quad 
    \zeta_i \coloneqq \rho_i^{\spec} - \rho_i^{\lin}\nu_i . \notag
\end{align}
For $ i\in\{1,2\} $, let $ x_i^{\comb} $ be the combined estimators defined in \Cref{eqn:def-combo-estimator} w.r.t.\ $ C_i^* $, respectively. 
Then, almost surely we have
\begin{align}
    \lim_{d\to\infty} \frac{\abs{\inprod{x_i^{\comb}}{x_i^*}}}{\normtwo{x_i^{\comb}}\normtwo{x_i^*}}
    &= \frac{1}{1 - \nu_i^2} \paren{\xi_i^2 + \zeta_i^2 + 2\xi_i\zeta_i\paren{\rho_i^{\lin}\rho_i^{\spec} + \expt{W^{\lin}W_i^{\spec}}}}^{1/2}
    \eqqcolon \mathsf{OL}_i^{\comb} . \notag 
\end{align}
Furthermore, for any $ (C_1,C_2)\in\cC_1\times\cC_2 $, the corresponding combined estimators $\wt{x}_1^{\comb},\wt{x}_2^{\comb}$ defined w.r.t.\ $ C_1,C_2 $ through \Cref{eqn:def-combo-estimator}, satisfy
\begin{align}
    \lim_{d\to\infty} \frac{\abs{\inprod{\wt{x}_i^{\comb}}{x_i^*}}}{\normtwo{\wt{x}_i^{\comb}}\normtwo{x_i^*}} &= \frac{\abs{\expt{X_i C_i(X^{\lin}, X_i^{\spec})}}}{\sqrt{\expt{C_i(X^{\lin}, X_i^{\spec})^2}}} \le \mathsf{OL}_i^{\comb} , \qquad i\in\{1,2\}. \notag 
\end{align}
\end{corollary}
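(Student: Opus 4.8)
The plan is to use \Cref{thm:main-thm-joint-dist} to collapse the overlap of any combined estimator into a one-dimensional quantity involving the jointly Gaussian limit, and then to solve the resulting scalar estimation problem by orthogonal projection in $L^2$. Fix $i\in\{1,2\}$ and $C_i\in\cC_i$. Since $x^{\comb}_i$ acts componentwise and $\ol x^*_i=\sqrt d\,x_i^*$, we have $\inprod{x^{\comb}_i}{x_i^*}=\frac{1}{\sqrt d}\sum_{j=1}^d \ol x^*_{i,j}\,C_i(x^{\lin}_j,x^{\spec}_{i,j})$, $\normtwo{x^{\comb}_i}^2=\sum_{j=1}^d C_i(x^{\lin}_j,x^{\spec}_{i,j})^2$, and $\normtwo{x_i^*}=1$. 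Applying \Cref{thm:main-thm-joint-dist} — in the form \Cref{eq:psiX1joint} for $i=1$ and \Cref{eq:psiX2joint} for $i=2$ — with the test functions $\Psi(u,v,w)=u\,C_i(v,w)$ and $\Psi(u,v,w)=C_i(v,w)^2$, which are $\PL(2)$ whenever $C_i$ is Lipschitz, and recalling $X^{\lin},X^{\spec}_i$ from \Cref{eqn:def-rv-xlin-xspec}, the factors of $\sqrt d$ cancel and almost surely
\[
\lim_{d\to\infty}\frac{\abs{\inprod{x^{\comb}_i}{x_i^*}}}{\normtwo{x^{\comb}_i}\,\normtwo{x_i^*}}=\frac{\abs{\expt{X_i\,C_i(X^{\lin},X^{\spec}_i)}}}{\sqrt{\expt{C_i(X^{\lin},X^{\spec}_i)^2}}}.
\]
The passage from Lipschitz $C_i$ to an arbitrary $C_i\in\cC_i$ is then obtained by approximating $C_i$ in $L^2$ of the limiting Gaussian law by Lipschitz functions, using the $W_2$-convergence of the joint empirical distribution provided by \Cref{thm:main-thm-joint-dist} together with a truncation step; this is routine but is the only non-algebraic part of the argument.

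\textbf{Optimality.} Write $g_i\coloneqq\expt{X_i\mid X^{\lin},X^{\spec}_i}$. For any $C_i\in\cC_i$, since $C_i(X^{\lin},X^{\spec}_i)$ is measurable with respect to $(X^{\lin},X^{\spec}_i)$, the tower rule gives $\expt{X_i\,C_i(X^{\lin},X^{\spec}_i)}=\expt{g_i\,C_i(X^{\lin},X^{\spec}_i)}$, and Cauchy--Schwarz yields $\abs{\expt{X_i\,C_i(X^{\lin},X^{\spec}_i)}}\le\sqrt{\expt{g_i^2}}\,\sqrt{\expt{C_i(X^{\lin},X^{\spec}_i)^2}}$, with equality precisely when $C_i(X^{\lin},X^{\spec}_i)$ is almost surely a nonzero scalar multiple of $g_i$. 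Combined with the display above, every $C_i\in\cC_i$ has overlap at most $\sqrt{\expt{g_i^2}}$, attained by $C_i^*\coloneqq g_i$; this $C_i^*$ lies in $\cC_i$ under the hypotheses of \Cref{thm:main-thm-joint-dist}, since $\rho_i^{\lin}\neq0$ and $\rho_i^{\spec}>0$ force $g_i\not\equiv0$ while the covariance of $(X^{\lin},X^{\spec}_i)$ is non-degenerate. Finally $\expt{X_i\,C_i^*}=\expt{g_i^2}\ge0$, so the absolute value is vacuous for $C_i^*$ and its overlap equals $\sqrt{\expt{(C_i^*)^2}}$.

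\textbf{Explicit formula.} By \Cref{thm:main-thm-joint-dist}, $(X_i,X^{\lin},X^{\spec}_i)$ is centered Gaussian; from the stated covariances one reads off that all three coordinates have unit variance and that $\expt{X_iX^{\lin}}=\rho_i^{\lin}$, $\expt{X_iX^{\spec}_i}=\rho_i^{\spec}$, and $\expt{X^{\lin}X^{\spec}_i}=\rho_i^{\lin}\rho_i^{\spec}+\expt{W^{\lin}W^{\spec}_i}=\nu_i$. The standard Gaussian conditioning formula then gives
\[
g_i=\bigl(\rho_i^{\lin},\ \rho_i^{\spec}\bigr)\begin{pmatrix}1&\nu_i\\\nu_i&1\end{pmatrix}^{-1}\begin{pmatrix}X^{\lin}\\X^{\spec}_i\end{pmatrix}=\frac{1}{1-\nu_i^2}\bigl(\xi_i X^{\lin}+\zeta_i X^{\spec}_i\bigr),
\]
which is exactly \Cref{eqn:opt-combinator-1}, and $\expt{g_i^2}=\frac{1}{(1-\nu_i^2)^2}\bigl(\xi_i^2+\zeta_i^2+2\xi_i\zeta_i\nu_i\bigr)$. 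Taking the square root and substituting $\nu_i=\rho_i^{\lin}\rho_i^{\spec}+\expt{W^{\lin}W^{\spec}_i}$ yields the claimed value $\mathsf{OL}_i^{\comb}$, completing the proof.

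\textbf{Main obstacle.} The algebra in the last two steps is routine; the delicate point is the first step's extension from Lipschitz to arbitrary $C_i\in\cC_i$, i.e., transferring the convergence of the empirical averages of $C_i$ and $C_i^2$ from the $\PL(2)$ class covered by \Cref{thm:main-thm-joint-dist} to merely square-integrable $C_i$. This is where one has to invoke $W_2$-convergence of the relevant two-dimensional empirical law and an $L^2$-density/truncation argument; everything else reduces to the tower property, Cauchy--Schwarz, and a $2\times2$ Gaussian conditioning computation.
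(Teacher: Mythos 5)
Your proposal is correct and follows essentially the same route as the paper's own proof: invoke \Cref{thm:main-thm-joint-dist} to reduce the overlap of any combined estimator to a scalar quantity over the limiting Gaussian law, apply the tower property and Cauchy--Schwarz to identify the conditional expectation as optimal, and compute it explicitly via the $2\times2$ Gaussian conditioning formula (the paper's \Cref{lem:cond-distr-gauss}). The only difference is that you explicitly flag and sketch the extension from $\PL(2)$ test functions to general $C_i\in\cC_i$, a point the paper's proof passes over without comment.
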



\subsection{Linear estimator}
\label{sec:results-lin}

\Cref{thm:main-thm-joint-dist} allows us to derive the asymptotic overlap of  each signal with the linear estimator  in \Cref{eqn:def-lin-estimator}. 

\begin{corollary}[Overlaps, linear]
\label{lem:linear-overlap}
Consider the setting of \Cref{sec:prelim}, and let \Cref{itm:assump-signal-distr,itm:assump-alpha,itm:assump-noise-distr,itm:assump-gaussian-design,itm:assump-proportional,itm:assump-preproc-lin} hold.
Then,  almost surely, 
\begin{align}
    \lim_{d \to \infty} \, \frac{\inprod{\xlin}{x_i^*}}{\normtwo{\xlin}\normtwo{x_i^*}} & = \rho_i^{\lin} \, , \quad  i\in\{1,2\}. \label{eqn:overlap-lin-1-main} 
%
\end{align}
\end{corollary}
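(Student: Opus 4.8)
Two routes are available. The quick one is to recognize that \Cref{eqn:overlap-lin-1-main} is exactly \Cref{thm:main-thm-joint-dist} applied to the $\PL(2)$ test function $\Psi(a,b,c)=ab$: since $\ol{x}_i^*=\sqrt d\,x_i^*$, $x^{\lin}=\sqrt d\,\xlin/\normtwo{\xlin}$ and $\normtwo{x_i^*}=1$, the left-hand side of \Cref{eq:psiX1joint} equals $\inprod{\xlin}{x_i^*}/(\normtwo{\xlin}\normtwo{x_i^*})$, while the right-hand side is $\expt{X_1(\rho_1^{\lin}X_1+\rho_2^{\lin}X_2+W^{\lin})}=\rho_1^{\lin}$, because $\expt{X_1X_2}=0$ and $(W^{\lin},W_1^{\spec})$ is independent of $(X_1,X_2)$; similarly for $i=2$. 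Since \Cref{lem:linear-overlap} does not assume \Cref{itm:assump-preproc-spec}, one must invoke here only the portion of the proof of \Cref{thm:main-thm-joint-dist} that identifies the limiting joint law of $(\ol{x}_i^*,x^{\lin})$ alone; as noted in \Cref{rk:one-fails}, this uses just the first iterate of the GAMP algorithm and requires neither \Cref{itm:assump-preproc-spec} nor the spectral-gap condition $\lambda^*(\delta_i)>\ol\lambda(\delta)$.

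Since the linear estimator does not really need the AMP machinery, the route I would actually write out is an elementary second-moment computation, paralleling the non-mixed analyses of \cite[Proposition 1]{plan2017high} and \cite[Lemma 2.1]{mondelli2021optimalcombination}, carried out conditionally on the latent label vector $\veta$. By rotational invariance of the Gaussian design (\Cref{itm:assump-gaussian-design}) I condition on $(x_1^*,x_2^*)$; almost surely $\inprod{x_1^*}{x_2^*}=O(d^{-1/2})$, so the two signals are asymptotically orthonormal. Writing $g_{1,i}=\inprod{a_i}{x_1^*}$ and $g_{2,i}=\inprod{a_i}{x_2^*}$, the observation $y_i$ depends on $a_i$ only through $g_{1,i}$ when $\eta_i=1$ and only through $g_{2,i}$ when $\eta_i=0$; in the latter ``mismatched'' case $g_{1,i}$ is, up to its $O(d^{-1/2})$ coupling with $g_{2,i}$, a standard Gaussian independent of $y_i$.

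For the numerator, split $\inprod{\xlin}{x_1^*}=\frac1n\sum_{i:\eta_i=1}\cL(y_i)g_{1,i}+\frac1n\sum_{i:\eta_i=0}\cL(y_i)g_{1,i}$: the first sum has $\approx\alpha n$ conditionally i.i.d.\ terms and converges a.s.\ to $\alpha\,\expt{G\cL(Y)}$ by the strong law of large numbers (finiteness of the mean being exactly the condition $\expt{\abs{G\cL(Y)}}<\infty$ in \Cref{itm:assump-preproc-lin}), while the second has mean $O(d^{-1/2})$ and variance $O(1/n)$ and hence vanishes; thus $\inprod{\xlin}{x_1^*}\to\alpha\,\expt{G\cL(Y)}$, and symmetrically $\inprod{\xlin}{x_2^*}\to(1-\alpha)\,\expt{G\cL(Y)}$. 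For the norm, expand $\normtwo{\xlin}^2=\frac1{n^2}\sum_i\cL(y_i)^2\normtwo{a_i}^2+\frac1{n^2}\sum_{i\ne j}\cL(y_i)\cL(y_j)\inprod{a_i}{a_j}$; the diagonal part tends to $\frac1\delta\expt{\cL(Y)^2}$, and in the cross term one decomposes $\inprod{a_i}{a_j}$ into its projection onto $\spn\{x_1^*,x_2^*\}$, equal to $g_{1,i}g_{1,j}+g_{2,i}g_{2,j}+O(d^{-1/2})$, plus the complementary part, which is a centered double sum of variance $O(1/n)$. The surviving contribution is $\frac1{n^2}\big[(\sum_i\cL(y_i)g_{1,i})^2-\sum_i(\cL(y_i)g_{1,i})^2\big]+(\text{same with }g_{2,i})\to(\alpha^2+(1-\alpha)^2)\expt{G\cL(Y)}^2$ by the numerator computation (diagonal corrections and mixed $g_{1,i}g_{2,j}$ terms being negligible). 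Hence $\normtwo{\xlin}^2\to\frac1\delta\expt{\cL(Y)^2}+(\alpha^2+(1-\alpha)^2)\expt{G\cL(Y)}^2=(n^{\lin})^2$, and the continuous mapping theorem, with $n^{\lin}>0$ guaranteed by $\expt{G\cL(Y)}\ne0$, gives $\inprod{\xlin}{x_i^*}/(\normtwo{\xlin}\normtwo{x_i^*})\to\rho_i^{\lin}$ almost surely.

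The main obstacle is the bookkeeping for the off-diagonal term of $\normtwo{\xlin}^2$: the part of $\inprod{a_i}{a_j}$ orthogonal to $\spn\{x_1^*,x_2^*\}$ has magnitude $\Theta(\sqrt d)$ yet appears $\Theta(n^2)$ times, so one must rigorously extract the cancellation coming from its being mean-zero and conditionally uncorrelated across pairs $(i,j)$, and the $O(d^{-1/2})$ non-orthogonality of the two random signals must be controlled uniformly over all $n^2$ pairs. Upgrading convergence in probability to the stated almost-sure limit additionally calls for a Gaussian concentration inequality for Lipschitz functions of the design $A$ together with a routine interpolation over non-integer values of $n/d$.
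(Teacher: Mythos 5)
Your first route is precisely the paper's own proof: the paper establishes \Cref{lem:linear-overlap} by taking $\Psi(a,b,c)=ab$ in \Cref{thm:main-thm-joint-dist}, using $\normtwo{x^{\lin}}=\normtwo{\ol{x}^*_i}=\sqrt{d}$ and the independence of $W^{\lin}$ from $(X_1,X_2)$ to read off $\rho_i^{\lin}$, and your handling of the missing spectral assumption via \Cref{rk:one-fails} (only the first GAMP iterate, hence no need for \Cref{itm:assump-preproc-spec} or a spectral gap) is exactly the caveat the paper intends. That part is complete and correct, so the corollary is proved.

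The second, elementary route you say you would actually write out is genuinely different in spirit: it bypasses GAMP entirely and mirrors the non-mixed computations of \cite{plan2017high} and \cite[Lemma 2.1]{mondelli2021optimalcombination}, conditioning on $\veta$ and on the (asymptotically orthonormal) signals. Your limits for the numerator and for $\normtwo{\xlin}^2$ are the right ones and reproduce $n^{\lin}$ and $\rho_i^{\lin}$. What this buys is a self-contained proof of the linear-estimator overlap with weaker regularity demands (Lipschitzness of $\cL$ plays no real role there); what it costs is exactly the bookkeeping you flag yourself: the mean-zero off-diagonal term $\frac{1}{n^2}\sum_{i\ne j}\cL(y_i)\cL(y_j)\langle P^{\perp}a_i,P^{\perp}a_j\rangle$, the uniform control of the $O(d^{-1/2})$ signal non-orthogonality over $n^2$ pairs, and the upgrade from convergence in probability to almost-sure convergence. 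These are standard (fourth-moment/Borel--Cantelli or Gaussian concentration arguments) but are not carried out in your sketch, whereas the master-theorem route inherits the almost-sure statement for free from the state evolution of \Cref{prop:GAMP_SE}. As submitted, the proposal stands on the first route; the second would need those estimates written out to be a proof on its own.
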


\begin{proof}
Choose $ \Psi(a,b,c) = ab $, and  
note that $ \Psi\in\PL(2) $. 
Then, as $\normtwo{\xlin} = \normtwo{\ol{x}^*_i}=  \sqrt{d}$,  the left side of \Cref{eq:psiX1joint,eq:psiX2joint} recovers the overlaps in \Cref{eqn:overlap-lin-1-main} for $i=1,2$, and 
the right sides of \Cref{eq:psiX1joint,eq:psiX2joint} become $ \rho_1^{\lin},\rho_2^{\lin} $ (defined in \Cref{eqn:def-rho-lin-12}).
\end{proof}

\begin{remark}[Overlap of linear estimator does not approach $1$]
\label{rk:lin-overlap-not-approach-1}
 From \Cref{eqn:overlap-lin-1-main} and the definitions of $\rho_1^{\lin},  \rho_2^{\lin} $ in \Cref{eqn:def-rho-lin-12}, we have that 
 the linear estimator 
achieves positive overlap with each signal for \emph{any} positive $\delta$, as long as $ \expt{G\cL(Y)}>0 $. As $\delta\to\infty$, the limiting overlaps  approach
    $\sqrt{\frac{\alpha^2}{\alpha^2 + (1-\alpha)^2}}$ and  
    $\sqrt{\frac{(1-\alpha)^2}{\alpha^2 + (1-\alpha)^2}}$, 
and they are strictly less than $1$ for any $ \alpha\in (1/2,1) $. In contrast,  the overlap of the spectral estimator becomes positive only when $\delta$ exceeds a certain threshold (see \Cref{rk:univ-lb-spec-thr}). However, once this threshold is exceeded, the  spectral estimator yields overlaps approaching $1$ as $\delta$ grows (see \Cref{rk:overlap-approach-1}). We also note that beyond the spectral threshold,  the  Bayes-optimal combination of the linear and spectral estimators has a larger overlap than either of the individual estimators (see Figure \ref{fig:all}). 
\end{remark}

Using the limiting overlap of a linear estimator in \Cref{lem:linear-overlap},
we can optimize the performance over the choice of $\cL$ (subject to \Cref{itm:assump-preproc-lin}). 
Let
\begin{align}
   \cI &\coloneqq \curbrkt{\cL\colon\bbR\to\bbR \, \text{ Lipschitz}\mbox { s.t. } \expt{G\cL(Y)}\ne0, \;\expt{\abs{G\cL(Y)}}<\infty} \label{eqn:def_I} 
\end{align}
be the set of functions $\cL$ satisfying \Cref{itm:assump-preproc-lin}. 
For $i\in\{1,2\}$ and $\delta\in(0,\infty)$, define the \emph{optimal overlaps} among linear estimators as
\begin{align}
    \mathsf{OL}_i^{\lin} &\coloneqq \sup_{\cL\in\cI} \rho_i^{\lin} . \notag
\end{align}
Furthermore, if $\cI= \emptyset$, we  set $\mathsf{OL}_1^{\lin}=\mathsf{OL}_2^{\lin}=0$. 
In words, $\mathsf{OL}_i^{\lin}$ ($i\in\{1,2\}$) is the largest overlap with the $i$-th signal that can be achieved by a linear estimator. 
Then, we have the following characterization of the optimal overlaps. 
The proof is contained in \Cref{sec:linear-estimator-pf}.


\begin{proposition}[Optimal linear estimator]
\label{lem:linear-optimal-overlap}
Consider the setting of \Cref{sec:prelim}, and let \Cref{itm:assump-signal-distr,itm:assump-alpha,itm:assump-noise-distr,itm:assump-gaussian-design,itm:assump-proportional} hold. Assume further that
\begin{align}
    \int_{\supp(Y)} \frac{\expt{Gp(y|G)}^2}{\expt{p(y|G)}} \diff y &\in (0,\infty) , \label{eqn:cond-lin-eff} 
\end{align}
where $ p(y|g) $ is the conditional law in \Cref{eq:condlaw} and the expectation is taken w.r.t.\ $G\sim\cN(0,1) $.
Then, for any $ \delta\in(0,\infty) $, writing $\alpha_1:=\alpha$ and $\alpha_2:=(1-\alpha)$, we have
\begin{align}
    \mathsf{OL}_i^{\lin} &= \paren{\frac{\alpha_1^2+ \alpha_2^2}{\alpha_i^2} + \frac{1}{\alpha_i^2\delta} \cdot \frac{1}{\int_{\supp(Y)} \frac{\expt{Gp(y|G)}^2}{\expt{p(y|G)}} \diff y}}^{-1/2}, \qquad i \in \{1,2\}. \label{eq:RHSinsp}
\end{align}
Moreover, define $ \cL^*\colon\bbR\to\bbR $ as
\begin{align}
    \cL^*(y) &= \frac{\expt{Gp(y|G)}}{\expt{p(y|G)}} . \notag 
\end{align}
Then, $ \cL^*\in\cI $ and for any $ \delta\in(0,\infty) $, both $ \mathsf{OL}_1^{\lin},\mathsf{OL}_2^{\lin} $ are simultaneously achieved by $ \cL^* $. 
\end{proposition}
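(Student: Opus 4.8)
The plan is to use \Cref{lem:linear-overlap} to turn the optimization over $\cL$ into a one-dimensional variational problem, solve that problem with Cauchy--Schwarz, and then reinstate the Lipschitz constraint by approximation. Concretely, by \Cref{lem:linear-overlap} together with \Cref{eqn:def-asymp-corr}, for every $\cL\in\cI$ one has
\begin{align}
\frac{1}{(\rho_1^{\lin})^2} = \frac{\alpha^2+(1-\alpha)^2}{\alpha^2} + \frac{1}{\alpha^2\delta}\cdot\frac{\expt{\cL(Y)^2}}{\expt{G\cL(Y)}^2} , \notag
\end{align}
and the same identity for $\rho_2^{\lin}$ with $\alpha$ replaced by $1-\alpha$ in both prefactors. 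Hence $(\rho_1^{\lin})^2$ and $(\rho_2^{\lin})^2$ are both strictly increasing functions of the single scale-invariant ratio $R(\cL) \coloneqq \expt{G\cL(Y)}^2/\expt{\cL(Y)^2}$, which immediately explains why a \emph{single} preprocessing function optimizes the overlap with both signals: it is enough to maximize $R(\cL)$ over $\cI$, and then substitute $\sup_{\cL\in\cI}R(\cL)$ into the two displays above to read off \Cref{eq:RHSinsp}.

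To compute $\sup_{\cL}R(\cL)$ I would write $\expt{G\cL(Y)} = \expt{\expt{G\mid Y}\,\cL(Y)}$ and apply Cauchy--Schwarz in $L^2(P_Y)$, giving $R(\cL)\le \expt{\expt{G\mid Y}^2}$ with equality precisely when $\cL(Y)$ is proportional to $\expt{G\mid Y}$. Bayes' rule with $G\sim\cN(0,1)$ identifies $\expt{G\mid Y=y} = \expt{Gp(y\mid G)}/\expt{p(y\mid G)} = \cL^*(y)$, and a direct integration against the marginal density $y\mapsto\expt{p(y\mid G)}$ gives $\expt{\expt{G\mid Y}^2} = \int_{\supp(Y)}\expt{Gp(y\mid G)}^2/\expt{p(y\mid G)}\,\diff y$, i.e.\ exactly the quantity assumed finite and positive in \Cref{eqn:cond-lin-eff}. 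This already yields the formulas in \Cref{eq:RHSinsp} at the level of the unconstrained optimum.

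It remains to handle the Lipschitz requirement in the definition of $\cI$. One part is to verify $\cL^*\in\cI$: positivity and finiteness of $\expt{G\cL^*(Y)} = \expt{\cL^*(Y)^2} = \int_{\supp(Y)}\expt{Gp(y\mid G)}^2/\expt{p(y\mid G)}\,\diff y$ are exactly \Cref{eqn:cond-lin-eff}, while the Lipschitz property of $y\mapsto\expt{G\mid Y=y}$ follows from the regularity of the conditional law $p(\cdot\mid\cdot)$ (and is immediate in the worked examples, e.g.\ $\cL^*$ is linear for mixed linear regression with Gaussian noise). The other, and genuinely delicate, part is to show that restricting to the Lipschitz class does not lower the supremum: I would approximate $\cL^*$ by bounded Lipschitz functions $\cL_k$ (truncate and then mollify), check that $\cL_k\in\cI$ for $k$ large — the only nontrivial point being that $\expt{G\cL_k(Y)}$ stays bounded away from $0$ — and use dominated convergence (with the moment bound from \Cref{itm:assump-noise-distr} and \Cref{eqn:cond-lin-eff} providing the dominating functions) to conclude $R(\cL_k)\to R(\cL^*)$, hence $\sup_{\cL\in\cI}R(\cL) = \expt{\expt{G\mid Y}^2}$.

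The main obstacle is this last approximation step: the optimization in the first two paragraphs is essentially a one-line Cauchy--Schwarz argument once \Cref{lem:linear-overlap} is available, but ensuring that the mollified truncations remain admissible and that all the relevant expectations converge requires some care with integrability. If one is willing to assume enough smoothness on $p(\cdot\mid\cdot)$ so that $\cL^*$ is itself Lipschitz, this step collapses and the proposition is essentially immediate.
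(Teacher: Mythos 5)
Your proposal is correct and follows essentially the same route as the paper's proof: \Cref{lem:linear-overlap} reduces both overlaps to the single scale-invariant ratio $\expt{G\cL(Y)}^2/\expt{\cL(Y)^2}$, whose maximization is a Cauchy--Schwarz argument (the paper delegates this step to Appendix C.1 of \cite{mondelli2021optimalcombination}) with the same optimizer $\cL^*(y)=\expt{Gp(y|G)}/\expt{p(y|G)}$ and the same resulting value $\int_{\supp(Y)}\expt{Gp(y|G)}^2/\expt{p(y|G)}\,\diff y$. Your additional truncation/mollification step to reconcile the supremum with the Lipschitz requirement in $\cI$ is in fact more explicit than the paper, which carries out the variational problem over all admissible $\cL$ and does not separately verify Lipschitzness of $\cL^*$.
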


\begin{remark}[When linear estimator is ineffective]
\label{rk:lin-ineff}
\Cref{eqn:cond-lin-eff} ensures that the linear estimator asymptotically achieves strictly positive overlap with the signals. In fact, 
if
\begin{align}
    \int_{\supp(Y)} \frac{\expt{Gp(y|G)}^2}{\expt{p(y|G)}} \diff y &= 0 , \notag 
\end{align}
then, from the RHS of \Cref{eq:RHSinsp}, we obtain that $\mathsf{OL}_1^{\lin}=\mathsf{OL}_2^{\lin}=0$ for any $ \delta\in(0,\infty) $.
This is the case for mixed phase retrieval, as mentioned in \Cref{sec:results-examples}. 
We note that the condition in \Cref{eqn:cond-lin-eff} also appears in the non-mixed setting (see Appendix C.1 of \cite{mondelli2021optimalcombination}). 
\end{remark}


\subsection{Spectral estimator}
\label{sec:results-spec}

The limiting value of the overlaps for the spectral estimator can  be obtained similarly to \Cref{lem:linear-overlap}.
%
\begin{corollary}[Overlaps, spectral]
\label{thm:overlap-spectral}
Consider the setting of \Cref{sec:prelim}, and let \Cref{itm:assump-signal-distr,itm:assump-alpha,itm:assump-noise-distr,itm:assump-gaussian-design,itm:assump-proportional,itm:assump-preproc-spec} hold.
Then, for $i\in\{1,2\}$, if $ \lambda^*(\delta_i) > \ol\lambda(\delta) $, we have that, almost surely,
\begin{align}
    \lim_{d\to\infty} \frac{\abs{\inprod{v_i(D)}{x_i^*}}}{\normtwo{v_i(D)}\normtwo{x_i^*}} 
            &= \rho_i^{\spec} . 
            \label{eqn:overlap-spectral-1-main} 
\end{align}
\end{corollary}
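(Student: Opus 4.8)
The plan is to deduce \Cref{thm:overlap-spectral} directly from the master theorem \Cref{thm:main-thm-joint-dist} by plugging in a suitable pseudo-Lipschitz test function. Since the statement involves only the overlap between $v_i(D)$ and $x_i^*$ — and \Cref{itm:assump-preproc-lin} is \emph{not} among the hypotheses of the corollary — I would first pass to the two-variable form of the master theorem restricted to the pair $(\ol{x}_i^*, x_i^{\spec})$, which holds under the sole assumption $\lambda^*(\delta_i) > \ol\lambda(\delta)$ by the adaptation noted in \Cref{rk:one-fails}. Fix $i \in \{1,2\}$ and take $\Psi\colon\bbR^2\to\bbR$ given by $\Psi(a,c) = ac$; the identity $a_1 c_1 - a_2 c_2 = a_1(c_1 - c_2) + c_2(a_1 - a_2)$ yields $\abs{\Psi(a_1,c_1) - \Psi(a_2,c_2)} \le (\abs{a_1} + \abs{c_2})\normtwo{(a_1,c_1)-(a_2,c_2)} \le (1 + \normtwo{(a_1,c_1)} + \normtwo{(a_2,c_2)})\normtwo{(a_1,c_1)-(a_2,c_2)}$, so $\Psi \in \PL(2)$ and the master theorem applies.

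Next I would evaluate the two sides of the resulting identity. On the left, by definition $\ol{x}_i^* = \sqrt{d}\,x_i^*$ and $x_i^{\spec} = s_i\sqrt{d}\,v_i(D)$ with $s_i \in \{-1,1\}$ chosen so that $\inprod{s_i v_i(D)}{x_i^*} \ge 0$; hence $\frac{1}{d}\sum_{j=1}^d \ol{x}^*_{i,j}\,x^{\spec}_{i,j} = s_i \inprod{v_i(D)}{x_i^*} = \abs{\inprod{v_i(D)}{x_i^*}}$, which — since $v_i(D)$ has unit norm and $x_i^* \in \bbS^{d-1}$ — is exactly the normalized overlap appearing in \Cref{eqn:overlap-spectral-1-main}. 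On the right, \Cref{thm:main-thm-joint-dist} evaluates the limit to $\expt{\Psi(X_i,\, \rho_i^{\spec}X_i + W_i^{\spec})} = \rho_i^{\spec}\expt{X_i^2} + \expt{X_i W_i^{\spec}} = \rho_i^{\spec}$, using $\expt{X_i^2} = 1$ and the independence of $W_i^{\spec}$ from $(X_1,X_2)$, which annihilates the cross term. Combining the two evaluations gives \Cref{eqn:overlap-spectral-1-main}. The hypothesis $\lambda^*(\delta_i) > \ol\lambda(\delta)$ is precisely the condition needed to invoke \Cref{eq:psiX1joint} (for $i=1$) and \Cref{eq:psiX2joint} (for $i=2$); it moreover forces $\rho_i^{\spec} > 0$, so that the sign $s_i$ is unambiguously determined in the limit.

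I do not anticipate any genuine obstacle here, since all the substantive content — the random matrix analysis of the spectrum of $D$ from \Cref{thm:main-thm-eigval} and the GAMP/state-evolution machinery — is already packaged inside \Cref{thm:main-thm-joint-dist}. The only points requiring care are the sign and normalization bookkeeping relating the rescaled vectors $\ol{x}_i^*$, $x_i^{\spec}$ to the raw inner product $\inprod{v_i(D)}{x_i^*}$, and the observation that the cross-covariance term $\expt{X_i W_i^{\spec}}$ vanishes by the independence asserted in the master theorem; everything else is a direct substitution.
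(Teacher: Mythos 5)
Your proposal is correct and follows essentially the same route as the paper, which (as with \Cref{lem:linear-overlap}) obtains the spectral overlap by plugging the product test function into the master theorem and using the sign/normalization conventions together with the independence of $W_i^{\spec}$ from $(X_1,X_2)$. Your use of \Cref{rk:one-fails} to dispense with the linear-preprocessing assumption \Cref{itm:assump-preproc-lin}, which is absent from the corollary's hypotheses, is exactly the adaptation the paper intends.
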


\begin{remark}[Condition for vanishing overlap]\label{rk:cnvan}
We focus here on the recovery of the first signal, and an analogous discussion is valid for the second one. As  $\lambda^*(\delta_1)$ approaches $\ol\lambda(\delta)$  from above, the RHS of \Cref{eqn:overlap-spectral-1-main} tends to $0$. 
Indeed, as $ \lambda^*(\delta_1)\searrow \ol\lambda(\delta) $, one can readily verify that $ \expt{\paren{\frac{Z}{\lambda^*(\delta_1) - Z}}^2}\nearrow \frac{1}{\delta} $ 
and consequently the numerator of $ \rho_1^{\spec} $ (cf.\ \Cref{eqn:def-rho-spec-12}) decreases to $0$. 
Furthermore, in the non-mixed setting ($\alpha=1$), the analysis of \cite{lu2020phase,mondelli-montanari-2018-fundamental} gives that, when 
$ \lambda^*(\delta) < \ol\lambda(\delta) $, the corresponding overlap vanishes. While we do not formally prove that the condition $ \lambda^*(\delta_1) > \ol\lambda(\delta) $ is \emph{necessary}  for the spectral method to have non-vanishing overlap, these two observations point strongly in that direction. A third piece of supporting evidence is provided in \Cref{rk:pseig}. 
\end{remark}

Equipped with \Cref{thm:overlap-spectral}, we can optimize both \emph{(i)} the spectral threshold, namely, the minimum value of $\delta$ needed to satisfy the condition $ \lambda^*(\delta_1) > \ol\lambda(\delta) $ which gives a strictly positive overlap, and \emph{(ii)} the limiting overlap  given by the right side of \Cref{eqn:overlap-spectral-1-main}. 
Formally, for $i\in\{1,2\}$ and $ \delta\in(0,\infty) $, let
\begin{align}
    \cH_i &\coloneqq \curbrkt{\cT\colon\bbR\to\bbR \text{ Lipschitz} \mbox{ s.t. } \begin{array}{c}
        \displaystyle \inf_{y\in\supp(Y)} \cT(y) > -\infty , \;
        \displaystyle 0 < \sup_{y\in\supp(Y)} \cT(y) < \infty , \\
        \prob{\cT(Y) = 0} < 1 , \;
        \lambda^*(\delta_i) > \ol\lambda(\delta)
    \end{array}
    } \label{eqn:def_Hi} 
\end{align}
be the set of functions $\cT$ satisfying \Cref{itm:assump-preproc-spec} such that $ \lambda^*(\delta_i)>\ol\lambda(\delta) $ holds. 
We recall that $ \delta_1 = \alpha\delta, \delta_2 = (1-\alpha)\delta $ and $\lambda^*(\cdot), \ol\lambda(\cdot)$ depend on the choice of the preprocessing function. 
Noting that $ \cH_i $ depends on $\delta$, we can define the \emph{spectral threshold} for the $i$-th signal  as  
\begin{align}
    \delta_i^{\spec} &\coloneqq \inf\curbrkt{\delta\in(0,\infty) : \cH_i \ne \emptyset} \, , \quad  i\in\{1,2\}. \notag 
\end{align}
In words, this is the smallest $\delta$ such that there exists a preprocessing function satisfying $ \lambda^*(\delta_i) > \ol\lambda(\delta) $ (and, hence, leading to non-vanishing limiting overlap). 
Furthermore, for $i\in\{1,2\}$ and $\delta>\delta_i^{\spec}$, define the \emph{optimal overlap} as 
\begin{align}
    \mathsf{OL}_i^{\spec} &\coloneqq \sup_{\cT\in\cH_i} \rho_i^{\spec}. \notag 
\end{align}
In words, for a given $\delta>\delta_i^{\spec}$, $\mathsf{OL}_i^{\spec}$ is the largest overlap with preprocessing functions that satisfy $ \lambda^*(\delta_i) > \ol\lambda(\delta) $. We note that the supremum is guaranteed to be over a nonempty set as $ \delta>\delta_i^{\spec} $. 
At this point, we can state the following result whose proof is given in \Cref{sec:spec-estimator-pf}. 

\begin{proposition}[Optimal spectral estimator]
\label{thm:opt-spec}
Consider the setting of \Cref{sec:prelim}, and let \Cref{itm:assump-signal-distr,itm:assump-alpha,itm:assump-noise-distr,itm:assump-gaussian-design,itm:assump-proportional} hold. Let $\alpha_1:=\alpha$ and $\alpha_2:=(1-\alpha)$.
Then, for $i \in  \{1,2\}$ we have
\begin{align}
    \delta_i^{\spec} &= \frac{1}{\alpha_i^2\int_{\supp(Y)} \frac{\expt{p(y|G)(G^2 - 1)}^2}{\expt{p(y|G)}} \diff y}  \, , 
    \label{eqn:spec-bound-1} 
\end{align}
and for $ \delta>\delta_i^{\spec} $,
\begin{align}
    \mathsf{OL}_i^{\spec} &= \frac{1}{\sqrt{\beta_i^*(\delta,\alpha) + \alpha_i}} \, , 
    \label{eqn:opt-overlap-1}
\end{align}
where  $ \beta_i^*(\delta,\alpha)\in(1-\alpha_i,\infty) $  are the unique solutions to the following pair of fixed point equations:
\begin{align}
    (\beta_i^*(\delta,\alpha) - (1-\alpha_i)) \int_{\supp(Y)} \frac{\expt{p(y|G)(G^2-1)}^2}{\alpha_i \expt{p(y|G)G^2} + \beta_i^*(\delta,\alpha) \expt{p(y|G)}} \diff y &= \frac{1}{\alpha_i^2 \delta} ,  \quad i \in\{1, 2\}.\label{eqn:beta1-fp} 
    %
\end{align}
Finally, for $i \in  \{1,2\}$, define $\cT_i^*\colon\bbR\to\bbR$ as 
\begin{align}
    \cT_i^*(y) &= 1 - \frac{1}{\alpha_i\cdot \frac{\expt{p(y|G)G^2}}{\expt{p(y|G)}} + (1-\alpha_i)} , \quad \text{ where } G\sim\cN(0,1).
    \label{eqn:opt-preprocessor}
\end{align}
Then, for  $ \delta>\delta_i^{\spec} $, we have: \emph{(i)} $ \cT_i^*\in\cH_i $, and \emph{(ii)} the value of $ \mathsf{OL}_i^{\spec} $ is achieved by $\cT_i^*$. 
\end{proposition}


\begin{remark}[Jointly optimal $ \cT,\cL $ for $ C_i^* $]
\hl{Finding the spectral and linear estimators that jointly maximize the overlap between the optimal combination of the two and the $i$-th signal (where $ i\in\{1,2\} $) amounts to solving the following constrained optimization problem over a pair of functions $ \cT, \cL $: }
\begin{align}
    & \sup_{(\cT, \cL) \in \cH_i \times \cI} \mathsf{OL}_i^{\mathrm{comb}} . \label{eqn:opt}
\end{align}
\hl{In the above display, $ \cH_i $ (defined in \mbox{\Cref{eqn:def_Hi}}) is the set of spectral preprocessing functions that satisfy \mbox{\Cref{itm:assump-preproc-spec}} and are effective for estimating $ x_i^* $ (i.e., $ \lambda^*(\delta_i) > \ol{\lambda}(\delta) $); 
$ \cI $ (defined in \mbox{\Cref{eqn:def_I}}) is the set of linear preprocessing functions that satisfy \mbox{\Cref{itm:assump-preproc-lin}} (and therefore are effective for estimating both signals); 
$ \mathsf{OL}_i^{\mathrm{comb}} $ (defined in \mbox{\Cref{cor:opt-combo}}) is the asymptotic overlap between the optimally combined estimator (with respect to fixed $ \cT, \cL $) and $ x_i^* $.
\mbox{\Cref{eqn:opt}} is an explicit yet challenging functional optimization problem that remains open. 
Note that in the special cases of $ \alpha =0$ or $\alpha=1$, \mbox{\Cref{eqn:opt}} reduces to an analogous optimization problem for (non-mixed) GLMs whose resolution was left open in \mbox{\cite[Section C.4]{mondelli2021optimalcombination}}.}
\end{remark}

\begin{remark}[Universal lower bounds on spectral thresholds]
\label{rk:univ-lb-spec-thr}
In  \Cref{sec:lower-bound-spec-thr}, we show that the spectral thresholds $\delta_1^{\spec}$ and $\delta_2^{\spec}$ are always at least $\delta_1^*\coloneqq  \frac{1}{2\alpha^2}$ and $\delta_2^*\coloneqq \frac{1}{2(1-\alpha)^2} $, for any conditional law $p(\cdot \mid g)$ in \Cref{eq:condlaw} (i.e., regardless of the model). 
These lower bounds coincide with the spectral thresholds for  both noiseless linear regression and noiseless phase retrieval, see \Cref{rk:lin-regr-phase-retrieval-coincide}. Thus, unlike the linear estimator, the spectral estimator (even the optimal one) does not achieve weak recovery for all $\delta>0$; it gives positive overlaps only when the aspect ratio $\delta$ exceeds a certain value. 
We highlight that the threshold associated to our proposed optimal spectral estimator is significantly lower than that corresponding to spectral estimators proposed earlier in the literature \cite{yi-2014-mixed-linear-regression,luo-2019-opt-preprocessing}, see \Cref{fig:threshold}. 
 
\end{remark}

\begin{figure}[tbp]
    \centering
    \begin{subfigure}{0.49\linewidth}
        \centering
        \includegraphics[width=\linewidth]{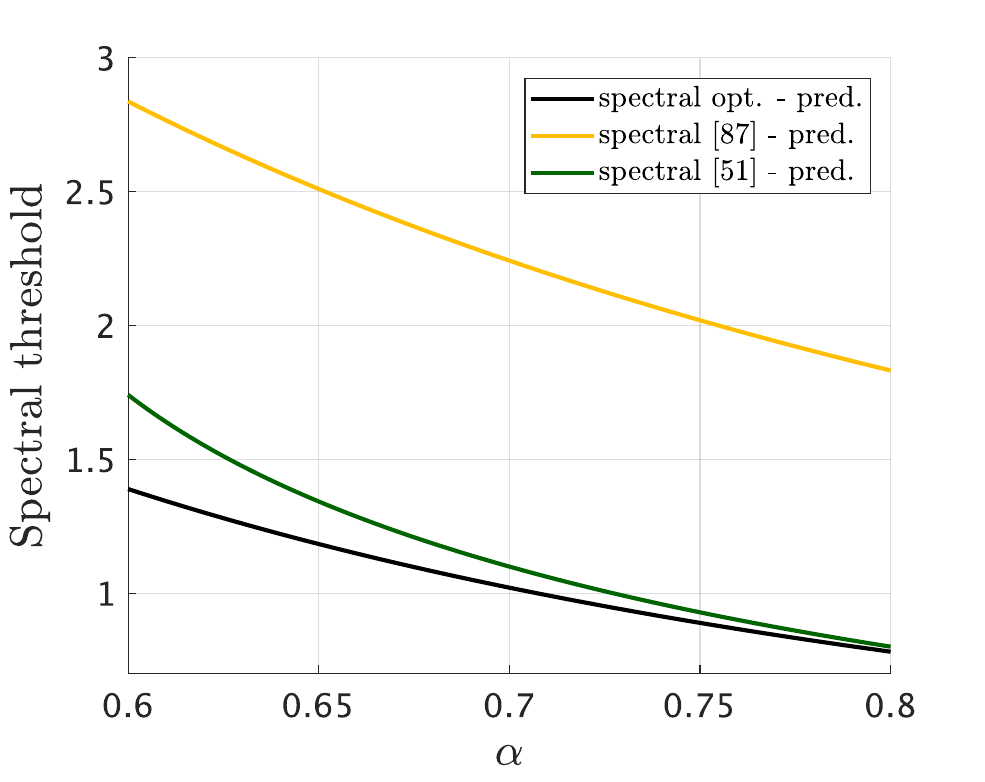}
        \caption{Recovery of $ x_1^* $}
        \label{fig:threshold1}
    \end{subfigure}
    \hfill
    \begin{subfigure}{0.49\linewidth}
        \centering
        \includegraphics[width=\linewidth]{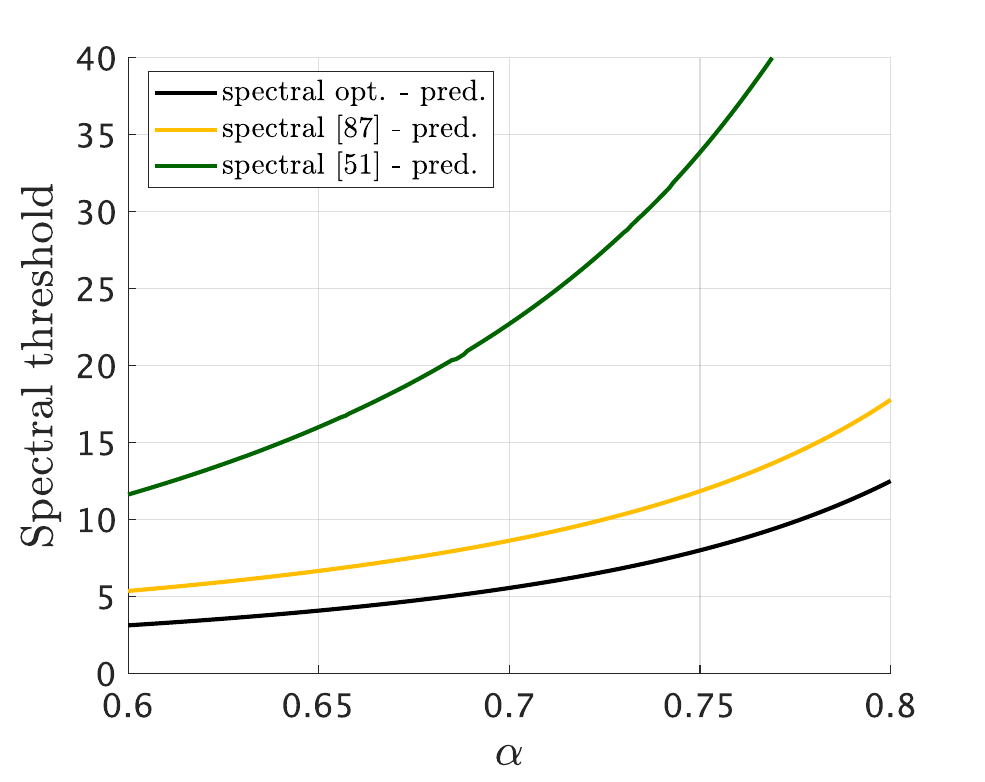}
        \caption{Recovery of $ x_2^* $}
        \label{fig:threshold2}
    \end{subfigure}
    \caption{Smallest $ \delta $ required by different spectral estimators to weakly recover signals for noiseless mixed phase retrieval. The spectral threshold is plotted as a function of a varying mixing parameter $ \alpha \in [0.6, 0.8] $. Our optimal spectral estimator always attains the lowest threshold. We note that these thresholds remain the same for noiseless mixed linear regression, due to the design of the corresponding estimators.}
    \label{fig:threshold}
\end{figure}

\begin{remark}[Overlap of spectral estimator approaches $1$]
\label{rk:overlap-approach-1}
The optimal limiting overlaps in \Cref{eqn:opt-overlap-1} approach $ 1 $ as $ \delta\to\infty $ provided
\begin{align}
    \int_{\supp(Y)} \frac{\expt{p(y|G)(G^2-1)}^2}{\alpha \expt{p(y|G)G^2} + (1-\alpha) \expt{p(y|G)}} \diff y& \, \in (0,\infty) . \label{eqn:overlap-approach-1-cond} 
\end{align}
To show this,  consider the optimal  limiting overlap between the spectral estimator and the first signal, which by \Cref{eqn:opt-overlap-1} equals $ \frac{1}{\sqrt{\beta_1^*(\delta,\alpha) + \alpha}} $. 
To show the claim, it suffices to show $ \beta_1^*(\delta,\alpha)\xrightarrow{\delta\to\infty}1-\alpha $. 
From \Cref{eqn:beta1-fp},  the fixed point equation defining $ \beta_1^*(\infty,\alpha) $ becomes 
\begin{align}
    (\beta_1^*(\infty,\alpha) - (1-\alpha)) \int_{\supp(Y)} \frac{\expt{p(y|G)(G^2-1)}^2}{\alpha \expt{p(y|G)G^2} + \beta_1^*(\infty,\alpha) \expt{p(y|G)}} \diff y &= 0 , \label{eqn:beta1-fp-delta-infty}
\end{align}
as $\delta\to\infty$. 
Since \Cref{eqn:overlap-approach-1-cond} holds, the unique solution to \Cref{eqn:beta1-fp-delta-infty} has to be $ \beta_1^*(\infty,\alpha) = 1-\alpha $. 
This proves the claim. 
We note that the condition \Cref{eqn:overlap-approach-1-cond} is satisfied by the mixed linear regression model. 
\end{remark}

\section{Numerical experiments}
\label{sec:experiments}

The experimental results in \Cref{fig:all,fig:noiseless-lin-regr,fig:lin-regr-vs-phase-retr} show that the performance of the various estimators (linear, spectral and combined)   closely match the asymptotic predictions in various settings. Furthermore, \Cref{fig:all_corr} shows that our estimators exhibit improvements over existing spectral estimators designed for non-mixed data even when the signals have mild correlation. In all plots, the signal dimension is $d=2000$, and  the vertical and horizontal axes represent the overlap and the aspect ratio $\delta$. 
The solid curves correspond to the theoretical predictions whose analytic expressions are in \Cref{sec:results-examples}. 
Discrete points (little squares, triangles, asterisks, etc.) are computed using synthetic data. Each of these points is the mean of $10$ i.i.d.\ trials together with error bars at $1$ standard deviation. Additional comments on experimental setup and results are deferred to \Cref{app:discuss-experiment}.  

\begin{figure}[tbp]
    \centering
    \includegraphics[width=0.5\linewidth]{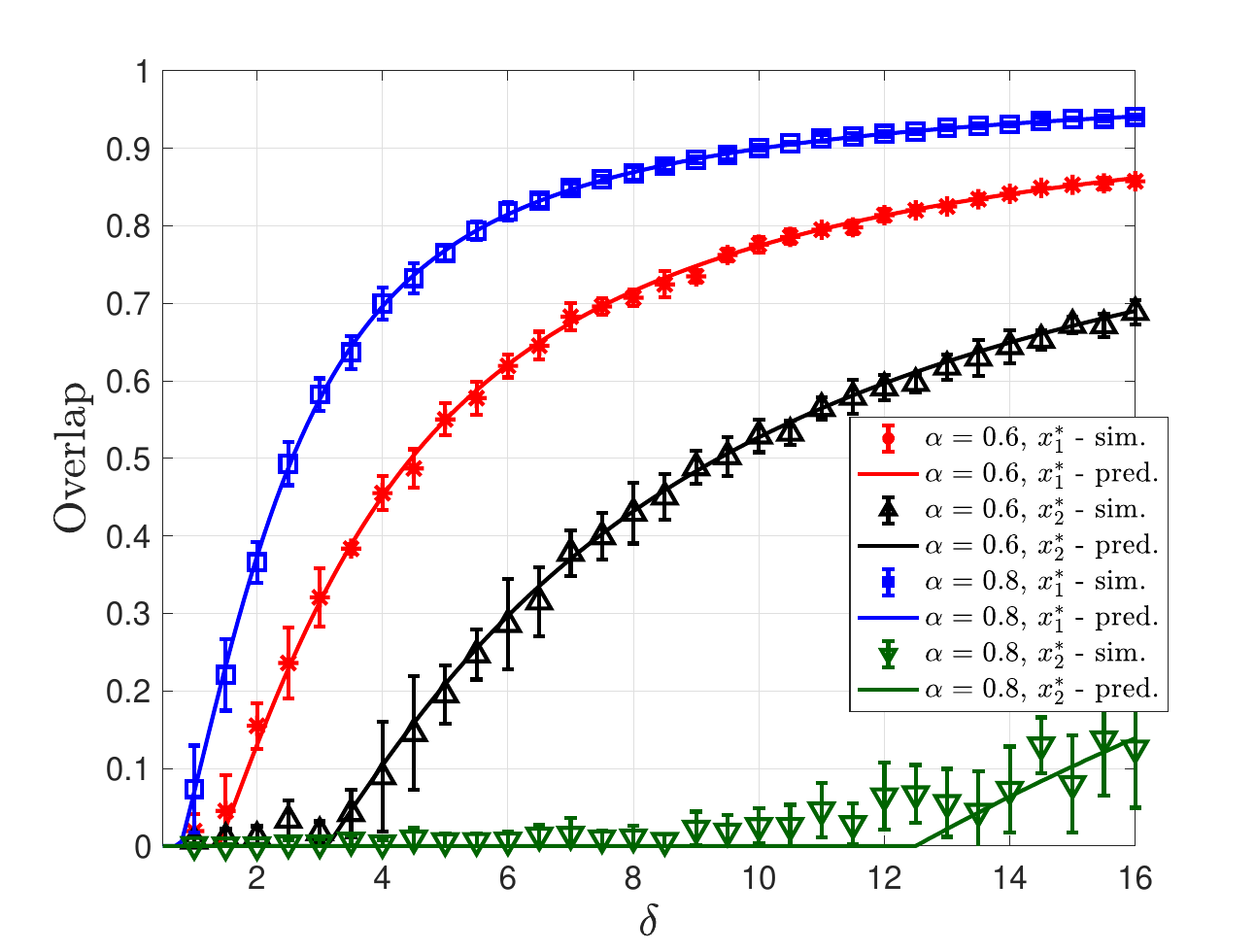}
    \caption{Spectral estimators for noiseless mixed linear regression, with mixing parameter $\alpha \in \{ 0.6, 0.8\}$. 
    Optimal spectral estimators given by \Cref{eqn:rk-spec-preproc-same} are used. 
    Overlaps with both signals $x_1^*,x_2^*$, computed from simulation (``sim.'') and prediction (``pred.''),   are plotted as a function of the aspect ratio $\delta$. 
    Same numerics apply to noiseless phase retrieval (see \Cref{rk:lin-regr-phase-retrieval-coincide}).} 
    \label{fig:noiseless-lin-regr}
\end{figure}

\begin{figure}[tbp]
    \centering
    \begin{subfigure}{0.49\linewidth}
        \centering
        \includegraphics[width=\linewidth]{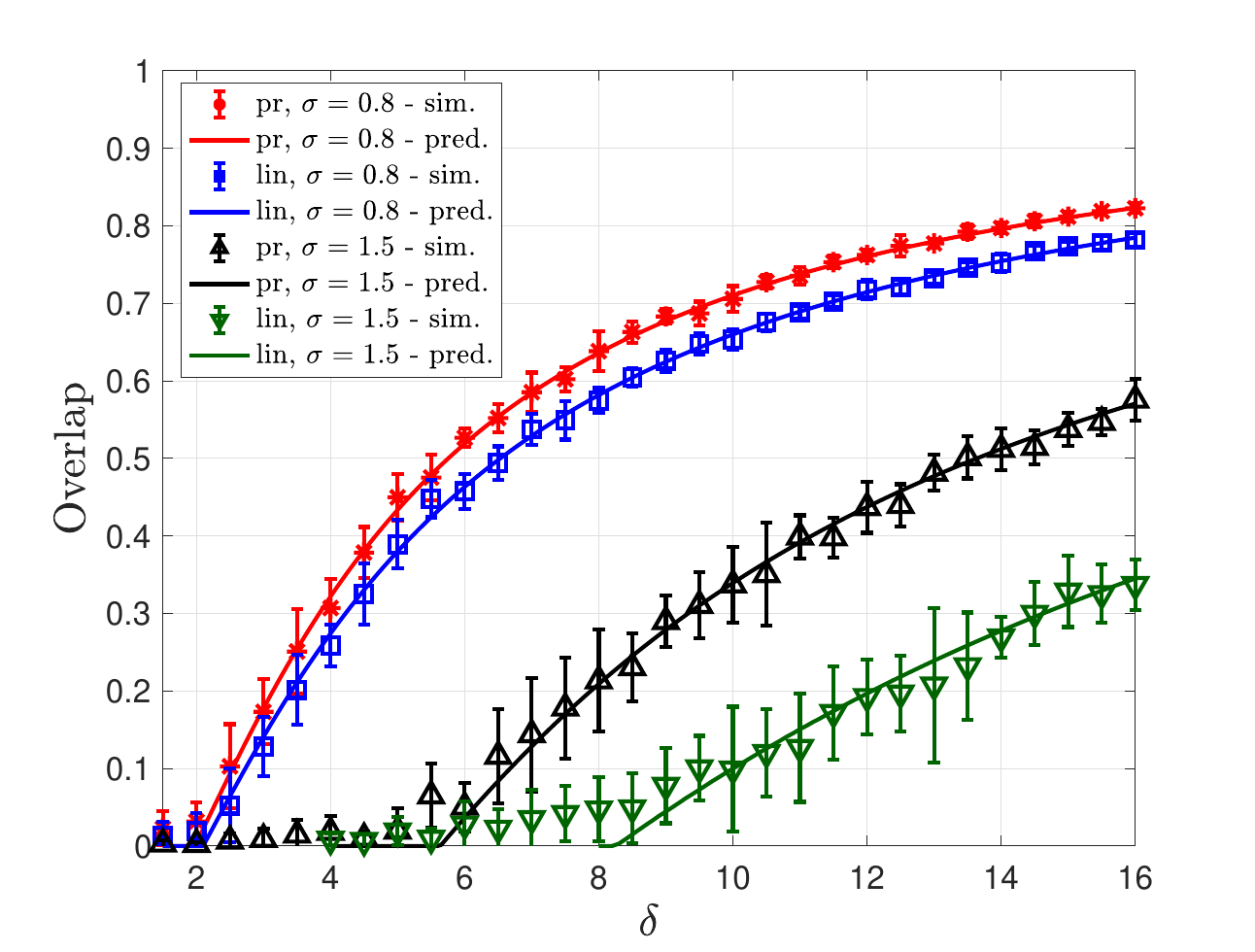}
        \caption{$ \alpha = 0.8 $ and  $ \sigma \in \{0.8,1.5\}$.}
        \label{fig:lin-regr-vs-phase-retr-diff-noise}
    \end{subfigure}
    \hfill
    \begin{subfigure}{0.49\linewidth}
        \centering
        \includegraphics[width=\linewidth]{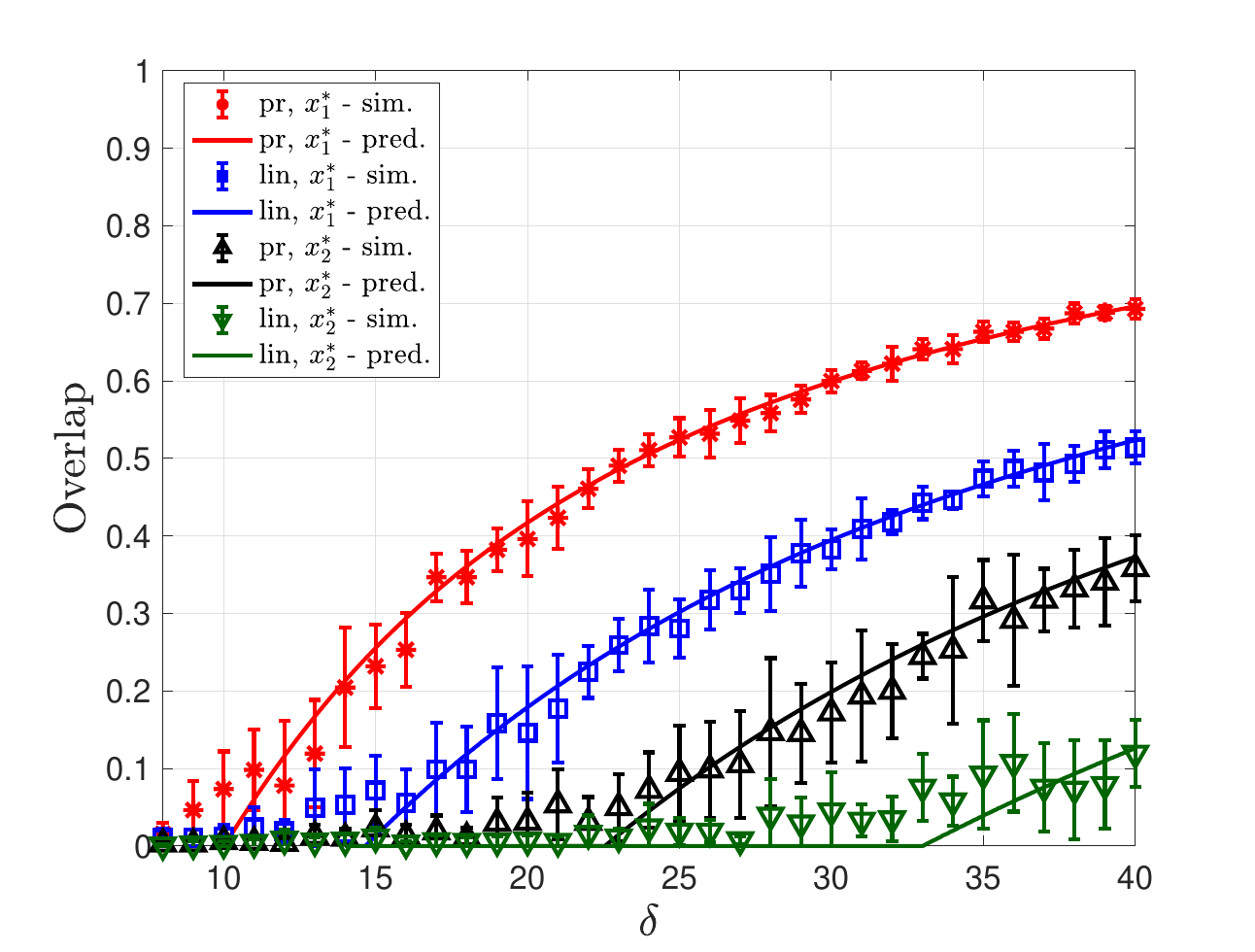}
        \caption{$ \alpha = 0.6 $ and $ \sigma = 1.5 $.}
        \label{fig:lin-regr-vs-phase-retr-both-signal}
    \end{subfigure}
    \caption{Spectral estimators for mixed linear regression  and mixed phase retrieval. 
    Optimal spectral estimators (\Cref{eqn:opt-prec-spec-lin-regr-main,eqn:opt-prec-spec-phase-retr-main}) are used.
    Overlaps with the first signal $ x_1^*$ (left plot) and with both signals $ x_1^*,x_2^* $ (right plot), computed from simulation (``sim.'') and prediction (``pred.''), are plotted as a function of the aspect ratio $\delta$.}
    \label{fig:lin-regr-vs-phase-retr}
\end{figure}

\begin{figure}[tbp]
    \centering
    \begin{subfigure}{0.49\linewidth}
        \centering
        \includegraphics[width=\linewidth]{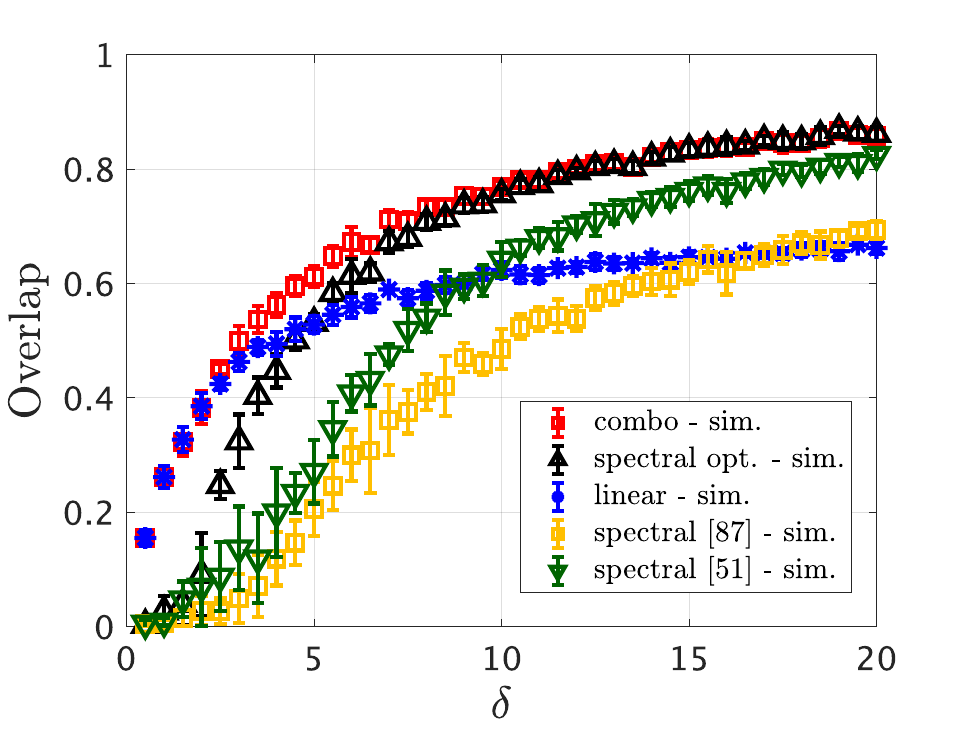}
        \caption{Recovery of $ x_1^* $}
        \label{fig:all1_corr}
    \end{subfigure}
    \hfill
    \begin{subfigure}{0.49\linewidth}
        \centering
        \includegraphics[width=\linewidth]{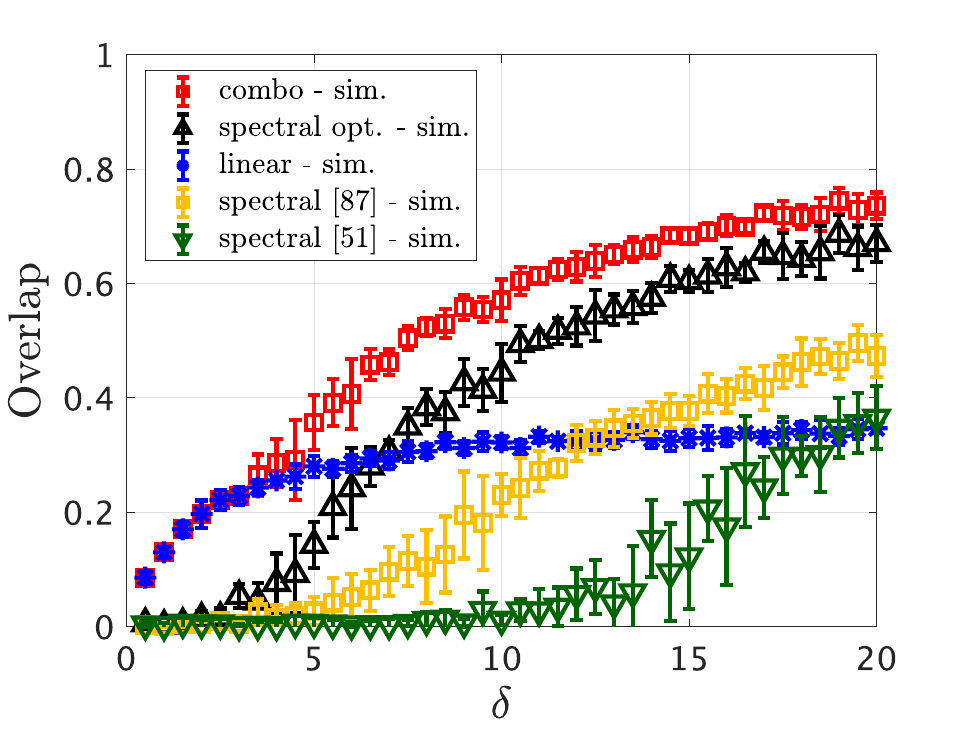}
        \caption{Recovery of $ x_2^* $}
        \label{fig:all2_corr}
    \end{subfigure}
    \caption{Performance comparison for correlated signals. The setting is noiseless mixed linear regression with mixing parameter $ \alpha = 0.6 $ and signal correlation $ \inprod{x_1^*}{x_2^*} = \rho $ where $ \rho = 0.1 $. 
    Overlaps with $ x_1^*$ (left) and $ x_2^*$ (right) are plotted as a function of the aspect ratio $ \delta $. The signal dimension is $d=2000$.
    }
    \label{fig:all_corr}
\end{figure}

\section{Proof outline}
\label{sec:sum-techniques}
The proof of 
\Cref{thm:main-thm-joint-dist} combines AMP with random matrix theory (RMT) tools. 
We now outline the high-level ideas in the analysis.

\vspace{.5em}

\paragraph{Eigenvalues via random matrix theory}
The first step is to understand the spectrum of $D$, in particular, the right edge of the bulk and the outlier(s). 
This involves the following challenges: 

\vspace{.5em}

\begin{itemize}[leftmargin=3mm]
    \item The matrix $D$ in \Cref{eqn:def-mtx-t-and-d} can be thought of as an instance of \emph{spiked matrix model}. 
    Its structure is, however, more sophisticated than the canonical ``signal plus noise'' model. 
    Indeed, the potential spikes of $D$ result from two signals through the composition of the link function $q$ and the spectral preprocessing function $ \cT $.

\vspace{.25em}
    
    \item The analysis of the limiting spectrum for non-mixed GLMs  is provided in \cite{lu2020phase,mondelli-montanari-2018-fundamental}. 
    In our mixed setting, applying the strategy of \cite{lu2020phase,mondelli-montanari-2018-fundamental} to analyze the spectrum of $D$ results in additional matrix terms which are hard to bound. 
\end{itemize}

\vspace{.5em}

The key idea is to decompose $D$ into the sum of two asymptotically free random matrices, consisting of the observations corresponding to the first and second signal. 
To be more specific, let us condition on $ \eta_1, \cdots, \eta_n $ and assume for notational convenience that $ \eta_i = 1 $ for $ 1\le i\le n_1 $ and $ \eta_i = 0 $ for $ n_1+1\le i\le n $, for some $ 0\le n_1\le n $. 
Let $ n_2 = n - n_1 $. Note that, almost surely, $ n_1/d\to\delta_1, n_2/d\to\delta_2 $. 
Now, we can write the matrices of interest in block form:
\begin{align}
A &= \begin{bmatrix}
A_1 \\ A_2
\end{bmatrix} , \quad 
T = \begin{bmatrix}
T_1 & 0_{n_1\times n_2} \\
0_{ n_2\times n_1} & T_2
\end{bmatrix} , 
\label{eqn:decomp-a-t} 
\end{align}
where $ A_1\in\bbR^{n_1\times d}, A_2\in\bbR^{ n_2\times d} $ and $ T_1\in\bbR^{n_1\times n_1}, T_2\in\bbR^{ n_2\times n_2} $. 
We also let $ \veps_1 = (\eps_1,\cdots,\eps_{n_1})$ and $\veps_2 = (\eps_{n_1+1},\cdots,\eps_n) $. 
Then,
\begin{align}
A^\top TA &= \begin{bmatrix}
A_1^\top & A_2^\top
\end{bmatrix} \begin{bmatrix}
T_1 & 0_{n_1\times n_2} \\
0_{ n_2\times n_1} & T_2
\end{bmatrix} \begin{bmatrix}
A_1 \\ A_2
\end{bmatrix}
= A_1^\top T_1 A_1 + A_2^\top T_2 A_2 . 
\label{eqn:decomp-d}  
\end{align}
Note that, for $ i\in \{1,2\} $, 
    $A_i^\top T_i A_i = A_i^\top \diag(\cT(q(A_i x_i^*, \veps_i))) A_i $. 
Since $ A_1,x_1^*,\veps_1 $ and $ A_2,x_2^*,\veps_2 $ are mutually independent, $ A_1^\top T_1 A_1 $ is independent of $ A_2^\top T_2 A_2 $. 
However, $ A_1 $ and $ T_1 $ are \emph{not} independent, neither are $ A_2 $ and $ T_2 $. 
When considered in isolation, $A_1^\top T_1 A_1$ and $A_2^\top T_2 A_2$ are obtained from a non-mixed GLM with aspect ratio discounted by $\alpha$ and $1-\alpha$, respectively. 
Thanks to \cite{lu2020phase,mondelli-montanari-2018-fundamental}, their limiting spectra are well understood.
Now, the crucial observation is that
$ A_1^\top T_1 A_1 $ and $ A_2^\top T_2 A_2 $ are \emph{asymptotically free}. 
Indeed, let $ O\sim \haar(\bbO(d)) $ be a matrix sampled uniformly from the orthogonal group $\bbO(d)$ and independent of everything else. 
Then, 
\begin{align}
A_1^\top T_1 A_1 &+ A_2^\top T_2 A_2 = 
A_1^\top \diag(\cT(q(A_1 x_1^*, \veps_1))) A_1 + A_2^\top \diag(\cT(q(A_2 x_2^*, \veps_2))) A_2 \notag \\
&\eqqlaw A_1^\top \diag(\cT(q(A_1 x_1^*, \veps_1))) A_1 + (A_2O)^\top \diag(\cT(q((A_2O)x_2^*, \veps_2))) (A_2O) \label{eqn:rotate-tmp} \\
&= A_1^\top \diag(\cT(q(A_1 x_1^*, \veps_1))) A_1 + O^\top A_2^\top \diag(\cT(q(A_2 (Ox_2^*), \veps_2))) A_2O \notag \\
&\eqqlaw A_1^\top \diag(\cT(q(A_1 x_1^*, \veps_1))) A_1 + O^\top A_2^\top \diag(\cT(q(A_2x_2^*, \veps_2))) A_2O \label{eqn:use-indep-signal-tmp} \\
&= A_1^\top T_1 A_1 + O^\top A_2^\top T_2 A_2O . \label{eqn:asymp-free}
\end{align}
\Cref{eqn:rotate-tmp} follows from the independence of $ A_1, A_2 $, and from the rotational invariance of isotropic Gaussians. 
\Cref{eqn:use-indep-signal-tmp} follows since $ O $ and $ Ox_2^* $ are independent if $ O\sim \haar(\bbO(d)) $ and $ x_2^*\sim\unif(\bbS^{d-1}) $. 
In this step, we crucially use the assumption that $ x_1^* $ and $ x_2^* $ are independent and each uniformly distributed over $ \bbS^{d-1} $. 

\label{page:free-add-conv}

The asymptotic freeness shown in \Cref{eqn:asymp-free} allows us to study the (free) sum of $ A_1^\top T_1 A_1 $ and $ A_2^\top T_2 A_2 $ using the tools developed in \cite{belinschi-outliers}.  
Indeed,  the analysis carried out in \Cref{sec:bulk,sec:outliers} implies the following characterization of the top three limiting eigenvalues of $D$ (see \Cref{thm:main-thm-eigval}): 
\begin{align}
    \lim_{d\to\infty} \lambda_1(D) &= \zeta(\lambda^*(\delta_1); \delta) , \; 
    \lim_{d\to\infty} \lambda_2(D) = \zeta(\lambda^*(\delta_2); \delta) , \; 
    \lim_{d\to\infty} \lambda_3(D) = \zeta(\ol\lambda(\delta); \delta) . \label{eqn:eigval-tmp}
\end{align}
Here, it is helpful to recall the definitions of $ \zeta(\cdot;\cdot) $ (see \Cref{eqn:def-zeta-i}), $ \lambda^*(\cdot) $ (see page \pageref{def:lambdastard}) and $ \ol{\lambda}(\cdot) $ (see \Cref{eqn:def-lam-bar}). 
Moreover, using the convexity of the function $ \zeta(\cdot; \delta) $, it can be shown that for $i\in\{1,2\}$, $ \lambda_i(D) $ is strictly larger than $ \lambda_3(D) $ in the high-dimensional limit if $ \lambda^*(\delta_i) > \ol{\lambda}(\delta) $, meaning that $\lambda_i(D)$ is detached from the bulk spectrum of $D$ and becomes an outlier eigenvalue. 
Therefore, $D$ exhibits a spectral gap between the $i$-th eigenvalue and the right edge of the bulk. 
In that case, the limiting eigenvalues admit the more explicit expressions reported in \Cref{rk:explicit-formula-eigval}. 
The existence of a spectral gap will be crucially used in proving the convergence of GAMP iterates to spectral estimators, as discussed below.

\vspace{.5em}

\paragraph{Joint distribution via GAMP} 

The convergence results in \Cref{eq:psiX1joint,eq:psiX2joint} are obtained using a generalized approximate message passing (GAMP) algorithm \cite{RanganGAMP}. 
In a mixed GLM, since the observations $(y_i)_{i \in [n]}$ are unlabeled (i.e., it is unknown to the estimator whether each $y_i$ is generated from the first or the second signal), estimating \emph{both} signals is more challenging than estimating each one from an individual non-mixed GLM.
However, the existing state evolution result for GAMP \cite{RanganGAMP}, \cite[Sec. 4]{Feng22AMPTutorial} is derived for a non-mixed model, and only keeps track of the effect of a single signal.
We generalize the GAMP state evolution result to mixed GLMs (see \Cref{prop:GAMP_SE}), so that the state evolution recursion tracks the effect of both signals. 
For convenience, 
let us work with the following rescalings:
\begin{align}
    \Abar  &\coloneqq \frac{1}{\sqrt{d}}\, A , \quad 
    \xone \coloneqq \sqrt{d}\, x_1^* , \quad 
    \xtwo \coloneqq \sqrt{d}\, x_2^* , \quad 
    \Dbar \coloneqq \Abar^\top \, T\, \Abar  = \frac{n}{d} A^\top TA . 
    \label{eqn:rescaled-tmp} 
\end{align}
Given two sequences of \emph{denoising functions} $ f_{t+1}\colon\bbR^3\to\bbR, g_t\colon\bbR^2\to\bbR $ (for each iteration $ t\ge0 $), GAMP maintains a pair of iterates $ u^t\in\bbR^n, v^{t+1}\in\bbR^d $ according to 
\begin{align}
\begin{split}
    u^t &= \frac{1}{\sqrt{\delta}} \Abar  \tv^t - \sfb_t \tu^{t-1}, \quad  \tu^t = g_{t}(u^{t};y) ,  \\
    v^{t+1} &= \frac{1}{\sqrt{\delta}} \Abar^\top \tu^t - \sfc_t  \tv^t, \quad  \tv^{t+1}=f_{t+1}(v^{t+1}; \, \xone, \xtwo) ,  
\end{split}
\label{eq:gamp-eqn-general-tmp}
\end{align}
where $ f_{t+1},g_t $ are applied component-wise, i.e., $f_{t+1}(v^{t+1}; \, \xone, \xtwo)=(f_{t+1}(v^{t+1}_1; \, \ol{x}_{1,1}^*, \ol{x}_{2,1}^*)$, $\ldots, f_{t+1}(v^{t+1}_d; \, \ol{x}_{1,d}^*, \ol{x}_{2,d}^*)), g_t(u^t; y)=(g_t(u^t_1; y_1), \ldots, g_t(u^t_n; y_n))$.
The scalars $\sfb_t, \sfc_t$ are defined as
\begin{equation}
\sfb_t =\frac{1}{n}\sum_{i=1}^d f_t'(v_i^t; \, \ol{x}_{1,i}^*, \ol{x}_{2,i}^*), \qquad
\sfc_t = \frac{1}{n}\sum_{i=1}^n g_t'(u_i^t; y_i), \notag 
\end{equation}
where $f_t'$ and $g_t'$ each denote the derivative with respect to the first argument. 
The iteration is initialized with a given $\tv^0 \in \bbR^d$ and $\tu^{-1}=0_n$.
Under the assumption that the design matrix is Gaussian (indeed $ \Abar _{i,j} \iid \cN(0,1/d) $ according to \Cref{itm:assump-gaussian-design}), the joint empirical distribution of $ u^t,v^{t+1} $ converges (as $n,d \to \infty$ with $n/d \to \delta$) to the law of a pair of jointly Gaussian random variables $ U_t,V_{t+1} $: 
\begin{equation}
    U_t \coloneqq \mu_{1,t} G_1 + \mu_{2,t} G_2 + W_{U,t} , \quad 
    V_{t+1} \coloneqq \chi_{1,t+1} X_1 + \chi_{2,t+1} X_2 +  W_{V,t+1} ,
    \notag 
\end{equation}
where $(G_1, G_2, W_{U,t}) \sim \normal(0,1) \otimes \normal(0,1) \otimes \normal(0, \sigma_{U,t}^2) $, and 
$(X_1, X_2, W_{V,t+1})  \sim \normal(0,1) \otimes \normal(0,1) \otimes \normal(0,\sigma_{V,t+1}^2)$.
The covariance structure of these jointly Gaussian random variables is described by a set of recursions called \emph{state evolution}:
\begin{gather}
    \mu_{1,t} = \frac{1}{\sqrt{\delta}} \E [ X_1 f_t(V_t; \, X_1, X_2) ], \quad
    \mu_{2,t} = \frac{1}{\sqrt{\delta}} \E[ X_2 f_t(V_t; \, X_1, X_2) ],  \nonumber \\
    \sigma_{U,t}^2 = \frac{1}{\delta}\E[ f_t(V_t; \, X_1, X_2)^2 ] - \mu_{1,t}^2 - \mu_{2,t}^2 \, , \nonumber \\
    \chi_{1,t+1} = \sqrt{\delta} \left( \E[G_1 g_t(U_t; \tY) ] - \E[ g_t'(U_t; \tY)] \mu_{1,t} \right), \nonumber \\
    \chi_{2,t+1} = \sqrt{\delta} \left( \E [ G_2 g_t(U_t; \tY) ] - \E [g_t'(U_t; \tY)] \mu_{2,t} \right), \nonumber \\
    \qquad \sigma_{V,t+1}^2 = \E[g_t(U_t; \tY)^2 ] , \nonumber 
\end{gather} 
where the random variable $\tY$ is given by 
\begin{equation}
    \tY = q(\eta G_1 + (1-\eta)G_2, \, \eps), \text{ with } (G_1, G_2, \eta, \eps) \sim \normal(0,1) \ot \normal(0,1) \ot \text{Bern}(\alpha) \ot P_{\eps}.
    \notag 
\end{equation}
The recursion is initialized as
\begin{align}
    &\mu_{1,0} =\frac{1}{\sqrt{\delta}} \lim_{d \to \infty} \frac{\langle \xone \, , \, \tv^0   \rangle}{d} , 
    \ \ 
    \mu_{2,0} =\frac{1}{\sqrt{\delta}} \lim_{d \to \infty} \frac{\langle \xtwo \, , \, \tv^0   \rangle}{d} , 
    \ \ 
    \sigma_{U,0}^2= \frac{1}{\delta} \lim_{d \to \infty} \frac{\normtwo{\tv^0}^2}{d} - \mu_{1,0}^2 - \mu_{2,0}^2 . 
    \notag 
\end{align}
The proof of convergence of the empirical distributions of $ u^t,v^{t+1} $ to the laws of $ U_t,V_{t+1} $, given in \Cref{sec:se-gamp-mixed-glm}, uses a reduction to an abstract AMP recursion with matrix-valued iterates for which a state evolution result was established in \cite{javanmard2013state,Feng22AMPTutorial}.
For details, see the formal statements in \Cref{prop:GAMP_SE} which track the joint empirical distribution of \emph{all} iterates. 



At this point, the linear estimator is readily obtained via the iterate of GAMP run for one step ($t=0$).
For $t\ge1 $, we tailor the denoisers $ (f_{t+1}, g_t)_{t\ge1} $ so that the iterates of GAMP implement a power method, which for large enough $t$, gives the first and second eigenvector of the spectral matrix $D$ (defined in \Cref{eqn:def-mtx-t-and-d}). 
Specifically, consider the GAMP iteration in \Cref{eq:gamp-eqn-general-tmp} with the initializer $ \tv^0 = 0_d $, and the following choice of denoisers:
\begin{equation}
\begin{split}
& g_0(u^0; y) = \sqrt{\delta} \cL(y),  \qquad f_1(v; \, \xone, \xtwo)= f(\xone, \xtwo), \\
& g_t(u; \, y) = \sqrt{\delta} \, u \, \cF(y), \quad f_{t+1}(v; \, \xone, \xtwo) = \frac{v}{\beta_{t+1}}, \quad t \ge 1,
\end{split}
\label{eq:ft_gt_choice-tmp}
\end{equation}
where $\cF: \reals \to \reals$ is bounded and Lipschitz,  $f: \reals^2 \to \reals$ is Lipschitz, and $\beta_{t+1} \coloneqq \sqrt{\chi_{1,t+1}^2 + \chi_{2,t+1}^2 + \sigma_{V,t+1}^2}$. 
To prove \Cref{thm:main-thm-joint-dist}, we select two pairs of functions $(f, \cF)$, in terms of the spectral preprocessing function $\cT$ (see \Cref{eq:F1x1-tmp,eq:F2x2-tmp}).
With the above choice of $ (f_{t+1})_{t\ge1}, (g_t)_{t\ge1} $, the GAMP iteration becomes
\begin{equation}
    \begin{split}
    & u^0 =0_n, \quad v^1= \Abar^\top \cL(y), \\
    %
    & u^{t} = \frac{1}{\sqrt{\delta} \, \beta_{t}} \paren{\Abar  v^{t} \, -  \, F u^{t-1}},  \quad  
    v^{t+1} =  \Abar^\top F u^t - \frac{\sqrt{\delta}}{\beta_t} \, \E[ \cF(\tY) ] \,  v^t, \qquad 
    t \ge 2,
    \end{split}
    \label{eq:newGAMP-tmp}
\end{equation}
where $F = \diag(\cF(y_1), \ldots, \cF(y_n))$.
First, note that the iterate $v^1$ coincides with the linear estimator $\xlin$ in \Cref{eqn:def-lin-estimator}.
Furthermore, we show that in the high-dimensional limit, as $t\to\infty$, the iterate $v^t$ is aligned with an eigenvector of the matrix 
\begin{equation}\label{eq:newmat}
 \Abar^\top F(\sqrt{\delta} \beta_{\infty}I_n + F)^{-1} \Abar,
\end{equation}
where $ \beta_{\infty} = \lim\limits_{t \to \infty} \beta_t $.
To justify the claim,
assume the iterates $u^t, v^{t+1}$ converge to the limits $u^\infty, v^\infty$ in the sense that 
$\lim\limits_{t \to \infty} \lim\limits_{d \to \infty} \frac{1}{d} \normtwo{u^t - u^{\infty}}^2 =0$ and  $\lim\limits_{t \to \infty} \lim\limits_{d \to \infty} \frac{1}{d} \normtwo{v^t - v^{\infty}}^2 =0$. 
Then, from \Cref{eq:newGAMP-tmp} we can derive
\begin{equation}
    v^{\infty}\left( 1 + \frac{\sqrt{\delta}}{\beta_\infty} \E[ \cF(\tY)] \right) = \Abar^{\top}F (\sqrt{\delta}\beta_{\infty}I_n + F)^{-1}\Abar  v^{\infty}.
    \label{eq:evector_eqn-tmp}
\end{equation}
Therefore, $v^{\infty}$ is an eigenvector of the matrix in \Cref{eq:newmat}, and the GAMP iteration of \Cref{eq:newGAMP-tmp} is effectively a power method.

Recall that our goal is to obtain via GAMP the two leading eigenvectors of 
$\Abar^{\top}T\Abar $.  Hence, 
we 
pick $\cF$ so that $F (\sqrt{\delta}\beta_{\infty}I_n + F)^{-1} = c\, T$, for some constant $c$. To this end, we analyze the iteration in \Cref{eq:newGAMP-tmp} with two choices for the function $\cF(y)$ and initialization $\tv^0$. 
\begin{align}
    &\text{Choice 1}: \quad \cF_1(y) \coloneqq \frac{\cT(y)}{\lambda^*(\delta_1) - \cT(y)}, \quad f(\xone, \xtwo)=\xone, 
    \label{eq:F1x1-tmp} 
    \\
    &\text{Choice 2}: \quad  \cF_2(y) \coloneqq \frac{\cT(y)}{\lambda^*(\delta_2) - \cT(y)}, \quad f(\xone, \xtwo)=\xtwo, 
    \label{eq:F2x2-tmp}
\end{align}
where 
for $i\in\{1, 2\}$, $ \lambda^*(\delta_i) $ is the unique solution of $ \zeta(\lambda;\delta_i) = \phi(\lambda) $ (see page \pageref{def:lambdastard}).
The above two choices are motivated by the characterization of the limiting eigenvalues in \Cref{eqn:eigval-tmp} and \Cref{rk:explicit-formula-eigval}. 
As outlined below, choice 1 (resp.\ choice 2) ensures that \Cref{eq:evector_eqn-tmp} becomes an eigen-equation for the first (resp.\ second) eigenvalue of $\Dbar$. 

\Cref{lem:SE_fixed_pts} shows that the state evolution parameters $ (\chi_{1,t},\chi_{2,t},\sigma_{V,t}^2) $ for choice 1 satisfy
\begin{align}
\lim_{t\to\infty} \chi_{1,t} &= \frac{\rho_1^{\spec}}{\sqrt{\delta}} , \quad 
\lim_{t\to\infty} \sigma_{V,t}^2 = \frac{1- (\rho_1^{\spec})^2}{\delta} ,  \quad \text{ and }  \ 
  \chi_{2,t} = 0 ,  \ \forall t\ge2,    \notag 
\end{align}
where the quantity $ \rho_1^{\spec} $  was defined in \Cref{eqn:def-rho-spec-12}. 
Hence,
\begin{align}
\beta_\infty &= \lim_{t\to\infty} \sqrt{\chi_{1,t}^2 + \chi_{2,t}^2 + \sigma_{V,t}^2} = \frac{1}{\sqrt{\delta}} , \notag 
\end{align}
and  \Cref{eq:evector_eqn-tmp} becomes:
\begin{equation}
    v^{\infty}\left( 1 +  \delta \E\left[ \frac{\cT(Y)}{\lambda^*(\delta_1) - \cT(Y)} \right] \right) 
    = \frac{1}{\lambda^*(\delta_1)} \Abar^{\top} T \Abar v^\infty .
    \label{eq:evector_eqn_1-tmp}
\end{equation}
With choice 1, \Cref{eq:evector_eqn_1-tmp} gives that the GAMP iterate converges to an eigenvector of $\Dbar = \Abar^\top T \Abar $ corresponding to the eigenvalue  $\lambda^*(\delta_1)\left( 1 +  \delta \E\left[ \frac{\cT(Y)}{\lambda^*(\delta_1) - \cT(Y)} \right] \right)$. 
Similarly, with choice 2, the GAMP iterate converges to an eigenvector of $\Dbar$ corresponding to the eigenvalue  $\lambda^*(\delta_2)\left( 1 +  \delta \E\left[ \frac{\cT(Y)}{\lambda^*(\delta_2) - \cT(Y)} \right] \right)$.
These 
claims match the rigorous eigenvalue characterization in \Cref{eqn:eigval-tmp} and \Cref{rk:explicit-formula-eigval}. 
At this point, note that power methods (and therefore our GAMP iterations in \Cref{eq:newGAMP-tmp}) crucially require a spectral gap to converge to the desired eigenvector. This spectral gap is guaranteed precisely by \Cref{eqn:eigval-tmp} 
provided $ \lambda^*(\delta_1) > \ol{\lambda}(\delta) $ (resp.\ $ \lambda^*(\delta_2) > \ol{\lambda}(\delta) $), which gives that $ \lambda_1(\Dbar) $ (resp.\ $ \lambda_2(\Dbar) $) is asymptotically an outlier in the spectrum of $\Dbar$. As a consequence, we can rigorously prove the convergence of the GAMP iterates under choice 1 (resp.\ choice 2) to $ v_1(\Dbar) $ (resp.\ $ v_2(\Dbar) $). 

To conclude, the iterate $ v^1 $ in the GAMP iteration in \Cref{eq:newGAMP-tmp} equals the linear estimator, and $ v^{t+1} $ asymptotically aligns with the spectral estimator. 
Since the state evolution tracks the limiting joint distribution of all iterates, the characterization 
in \Cref{eq:psiX1joint,eq:psiX2joint} follows. 
We stress that GAMP in our argument is used only as a tool for analysis and is not part of the estimators. 
The actual estimators  (spectral and linear) can be computed by a combination of the following simple operations: \emph{(i)} applying a component-wise nonlinearity, \emph{(ii)} matrix-vector/-matrix multiplication, \emph{(iii)} computation of eigenvectors.

\vspace{.5em}

\paragraph{Optimal linear and spectral estimators}
The master theorem (\Cref{thm:main-thm-joint-dist}) holds for arbitrary linear and spectral preprocessing functions $ \cL,\cT $ satisfying the stated assumptions. 
Specializing \Cref{thm:main-thm-joint-dist} to linear and spectral estimators alone and using the explicit formulas for their limiting overlaps (given in \Cref{lem:linear-overlap,thm:overlap-spectral}), we find the optimal preprocessing functions $ \cL^*,\cT_1^*,\cT_2^* $ that maximize the limiting overlaps. This is done in \Cref{lem:linear-optimal-overlap,thm:opt-spec} by casting the optimization problem as a  variational problem and solving it explicitly.





\section{Discussion}
\label{sec:discussion}
\paragraph{Universality beyond the Gaussian design matrix} A natural question is whether the predictions obtained under an i.i.d.\ Gaussian design are valid more generally. 
This topic has been investigated in the random matrix theory literature \cite{tao-vu-univ,erdos-yau-yin-univ,tulino-univ,farrell-univ,ANDERSON-univ}, and a recent line of research has focused on AMP \cite{bayati-universality,chen-lam-universality,dudeja-universality-linearized,Wang22AMPUniversality,dudeja2022universality,dudeja2022spectral-universality}. We note that none of these results is directly applicable to our setting, and the problem also remains open in the non-mixed (i.e., $\alpha=1$) setup. However, the aforementioned body of work suggests that the Gaussian predictions may hold for much more general -- even ``almost deterministic'' -- design matrices.

\vspace{.5em}

\paragraph{Mixed GLM with multiple components}
We focus on the mixed GLM with two components, but 
our approach is 
well suited to handle mixed GLMs with \emph{multiple} components. 
We now briefly sketch how to generalize our main \Cref{thm:main-thm-joint-dist}. 
The other results 
(overlaps for linear, spectral and combined estimators, and their optimization) are generalized in a similar fashion. 

Let $ x_1^*,\cdots,x_\ell^*\in\bbR^d $ be $\ell$ signal vectors, and let the observation $ y = (y_1,\cdots,y_n)\in\bbR^n $ be generated as
 $   y_i = q\paren{\inprod{a_i}{x_{\upsilon_i}^*}, \eps_i}$. 
The latent vector $\underline{\upsilon} = (\upsilon_1,\cdots,\upsilon_n) $ is a sequence of i.i.d.\ mixing random variables s.t.\ 
 $   \prob{\upsilon_i = j} = \alpha_j$ for all $i\in [n]$ and $j\in [\ell]$.
We assume that $ x_1^*,\cdots,x_\ell^*$ are i.i.d.\ and uniform on the unit sphere, and $1>\alpha_1>\alpha_2>\cdots>\alpha_\ell>0 $ (corresponding to \Cref{itm:assump-signal-distr,itm:assump-alpha}). We also impose our previous  \Cref{itm:assump-noise-distr,itm:assump-gaussian-design,itm:assump-proportional}, and assume that $\ell$ is a constant (independent of $n,d$). 
For $i\in[\ell]$, let $ \delta_i \coloneqq \alpha_i\delta $ and  
\begin{align}
    n^{\lin} &\coloneqq \paren{\paren{\sum_{k = 1}^\ell \alpha_k^2} \expt{G\cL(Y)}^2 + \frac{\expt{\cL(Y)^2}}{\delta}}^{1/2} , \notag \\
    \rho_i^{\lin} &\coloneqq \frac{\alpha_i \expt{G\cL(Y)}}{n^{\lin}} , \qquad 
    \rho_i^{\spec} \coloneqq \paren{\frac{\frac{1}{\delta} - \expt{\paren{\frac{Z}{\lambda^*(\delta_i) - Z}}^2}}{\frac{1}{\delta} + \alpha_i \expt{\paren{\frac{Z}{\lambda^*(\delta_i) - Z}}^2 (G^2 - 1)}}}^{1/2} . \notag
\end{align}
Here $Z = \cT(Y)$, $Y= q(G, \eps)$, and $G \sim \cN(0,1)$, as before. Then, under the same setting of \Cref{thm:main-thm-joint-dist} with $ \ol{x}_i^* $ and $ x_i^{\spec} $ defined similarly for $i\in [\ell]$, we have that, if $ \lambda^*(\delta_i) > \ol\lambda(\delta) $, 
\begin{align}
    \lim_{d\to\infty} \frac{1}{d} \sum_{j = 1}^d \Psi(\ol{x}_{i,j}^*, x_j^{\lin}, x_{i,j}^{\spec}) 
    &= \expt{\Psi\paren{X_i, \, \sum_{k = 1}^\ell \rho_k^{\lin} X_k + W^{\lin}, \,  \rho_i^{\spec} X_i + W_i^{\spec}}} , \notag 
\end{align}
where $ (X_1,\cdots,X_\ell) \sim \cN(0,1)^{\ot\ell} $, $ (W^{\lin}, W_i^{\spec}) $ is independent of $ (X_1,\cdots,X_\ell) $ and is jointly Gaussian with zero mean and covariance given by
\begin{align}
    \expt{(W^{\lin})^2} &= 1 - \sum_{k = 1}^\ell (\rho_k^{\lin})^2 , \; 
    \expt{(W_i^{\spec})^2} = 1 - (\rho_i^{\spec})^2 , \notag \\
    \expt{W^{\lin} W_i^{\spec}} &= \frac{\alpha_i \rho_i^{\spec}}{n^{\lin}} \expt{\frac{G\cL(Y)Z}{\lambda^*(\delta_i) - Z}} . \notag 
\end{align}

The result on the eigenvalues of the spectral matrix $D$ can be obtained by following the strategy detailed in \Cref{sec:eigval-rmt} (and sketched in \Cref{sec:sum-techniques}). Indeed, $D$ can be decomposed into the (asymptotically) free sum of the $\ell$ components associated to each of the signals, and \cite[Theorem 2.1]{belinschi-outliers} is well equipped  to characterize its top $\ell+1$ eigenvalues. 
To derive the limiting joint empirical law of the $i$-th signal and the linear and spectral estimators, we can then run a GAMP algorithm similar to \Cref{eq:ft_gt_choice-tmp} with denoisers tailored for the $i$-th signal. 
The condition $ \lambda^*(\delta_i) > \ol\lambda(\delta) $ guarantees the existence of a spectral gap between the $i$-th largest eigenvalue of $D$ and the rest of its spectrum, which in turn is leveraged to argue the convergence of GAMP to the desired eigenvector. 
This yields results analogous to \Cref{thm:main-thm-joint-dist} with $\alpha$ underlying \Cref{eq:psiX1joint} therein replaced with $\alpha_i$, for $1\le i \le \ell$.

\vspace{.5em}

\paragraph{Lower bounds for inference in mixed GLMs}

In the non-mixed setting, 
\cite{mondelli-montanari-2018-fundamental} 
 derives an information-theoretic threshold $\delta^{\infthr}$ such that, for $\delta < \delta^{\infthr}$, no estimation method gives a non-trivial estimate of the signal.\footnote{More formally, it is proved that the minimum mean squared error achieved by the Bayes-optimal estimator coincides with the error of a trivial estimator which always outputs the all-0 vector.} Furthermore, for 
  noiseless phase retrieval, 
  $\delta^{\infthr}=1/2$, which matches the threshold achieved by a spectral method. 
In the mixed setting, 
denoting by $\delta^{\infthr}_1,\delta^{\infthr}_2 $ the information-theoretic thresholds corresponding to the two signals, we have 
\begin{align}
    \delta^{\infthr}_1 \ge \frac{1}{\alpha} \delta^{\infthr}, \qquad \delta^{\infthr}_2 \ge \frac{1}{1-\alpha} \delta^{\infthr} . 
    \label{eqn:it-thr-trivial}
\end{align}
To see this, 
note that, if a genie reveals the values of the  mixing variables $(\eta_1,\cdots,\eta_n) $, 
then 
the estimation problem given mixed data with aspect ratio $\delta$ can be decoupled into two non-mixed ones with aspect ratios $ \alpha\delta$ and $(1-\alpha)\delta $. 
We also remark that adapting the second moment method of 
\cite{mondelli-montanari-2018-fundamental} to our mixed setting does not improve the bound in \Cref{eqn:it-thr-trivial} (hence, this derivation is omitted). 
Following the strategy of \cite{barbier-mmse-glm-pnas} -- which establishes the exact asymptotics of the minimum mean squared error and, thus, gives a tight bound in the non-mixed case -- requires additional ideas beyond the scope of this paper, so it is left for future research. 

As a final remark, let us contrast 
\Cref{eqn:it-thr-trivial} with the spectral bounds mentioned in \Cref{rk:univ-lb-spec-thr}, which are universal in the sense that they hold for \emph{any} mixed GLM. In particular, we note that the former scales as $(1/\alpha, 1/(1-\alpha))$, while the latter as $(1/\alpha^2, 1/(1-\alpha)^2)$, which suggests a gap between what is achievable information-theoretically and algorithmically. The possibility of a statistical-computational trade-off is also suggested by the fact that, for $\alpha=1/2$ and antipodal signals ($x_1^*=-x_2^*$), mixed linear regression reduces to phase retrieval, which is widely believed to have such a gap, see e.g.\ \cite{maillard2020phase,brennan-bresler-reducibility,celentano2020estimation,arpino2023statistical}. Closing the gap or understanding its fundamental nature remains an intriguing open question for future investigation. 



\bibliographystyle{siamplain}
\bibliography{ref}

\newpage

\appendix







\paragraph{Organization of the supplementary material}
The supplementary material is organized as follows. 
Two illustrative examples of our main results in \Cref{sec:results} are given in \Cref{sec:results-examples}. 
Discussions on numerical simulations in \Cref{sec:experiments} are provided in \Cref{app:discuss-experiment}. 
The proof of the master theorem (\Cref{thm:main-thm-joint-dist}) is divided across two sections. 
\Cref{sec:eigval-rmt} contains a characterization of the top three eigenvalues of the matrix $D$ which is used in the analysis of GAMP in the following section. 
The limiting joint law of the signal,  the linear and the spectral estimators in \Cref{thm:main-thm-joint-dist} is then proved in \Cref{sec:overlap-gamp} using a GAMP algorithm and its characterization via state evolution.  The proof of the state evolution characterization is deferred to \Cref{sec:se-gamp-mixed-glm}. 
\Cref{app:bayes-opt-comb,sec:linear-estimator-pf,sec:spec-estimator-pf,sec:lower-bound-spec-thr} contain the proofs of various consequences of the master theorem. 
Several auxiliary lemmas are in \Cref{sec:aux}. 

\section{Two illustrative examples}
\label{sec:results-examples}

We specialize the results in \Cref{sec:results-spec,sec:results-lin} to two prototypical examples of mixed GLMs: 
the \emph{mixed linear regression} model where
\begin{align}
    q(g,\eps) &= g + \eps , \quad 
    \veps\sim \cN(0,\sigma^2 I_n) , \label{eqn:noisy-lin-regr-model}
\end{align}
and the \emph{mixed phase retrieval} model where
\begin{align}
    q(g,\eps) &= |g| + \eps , \quad 
    \veps \sim\cN(0,\sigma^2 I_n) . \label{eqn:noisy-phase-retrieval-model}
\end{align}
The explicit formulas for the optimal preprocessing functions, the optimal overlaps, and the thresholds (for spectral estimators) are collected in the following corollaries.  Throughout this section, for brevity we write $\alpha_1=\alpha$ and $\alpha_2=(1-\alpha)$. 
Let us first consider linear estimators. 
\begin{corollary}[Mixed linear regression, linear estimator]
\label{cor:noisy-mixed-linear-regr-lin}
Consider the mixed linear regression model  in \Cref{eqn:noisy-lin-regr-model}, 
and let \Cref{itm:assump-signal-distr,itm:assump-alpha,itm:assump-gaussian-design,itm:assump-proportional} hold. Then, the optimal preprocessing function 
$\cL^*$ defined in \Cref{lem:linear-optimal-overlap} is given by
\begin{align}
    \cL^*(y) &= \frac{y}{1+\sigma^2}. \label{eqn:opt-lin-for-lin-regr-main} 
\end{align}
Recalling that $ \xlin \coloneqq \frac{1}{n} A^\top \cL^*(y)$,  we almost surely have:
\begin{align}
    \lim_{d \to \infty} \, \frac{\inprod{\xlin}{x_i^*}}{\normtwo{\xlin}\normtwo{x_1^*}} &= \paren{\frac{\alpha_1^2 + \alpha_2^2}{\alpha_i^2} + \frac{1+\sigma^2}{\alpha_i^2\delta}}^{-1} , \qquad  i \in 
    \{1,2\}. \notag 
\end{align}
\end{corollary}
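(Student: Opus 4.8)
The plan is to apply \Cref{lem:linear-optimal-overlap} directly, with the conditional law $p(\cdot\mid g)$ specialized to the mixed linear regression model in \Cref{eqn:noisy-lin-regr-model}. For that model, $Y = G + \eps$ with $G\sim\cN(0,1)$ and $\eps\sim\cN(0,\sigma^2)$ independent, so $p(y\mid g) = \frac{1}{\sqrt{2\pi\sigma^2}}\exp\!\big(-(y-g)^2/(2\sigma^2)\big)$. The first step is to compute the two basic expectations appearing in the formula for $\cL^*$, namely $\expt{p(y\mid G)}$ and $\expt{G\,p(y\mid G)}$, where the expectation is over $G\sim\cN(0,1)$. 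Both are Gaussian integrals: $\expt{p(y\mid G)}$ is the density at $y$ of $G+\eps\sim\cN(0,1+\sigma^2)$, and $\expt{G\,p(y\mid G)}$ can be obtained either by completing the square or by Stein's identity / differentiating under the integral sign. The ratio simplifies, and I expect it to yield exactly $\cL^*(y) = \frac{\expt{G p(y\mid G)}}{\expt{p(y\mid G)}} = \frac{y}{1+\sigma^2}$, which is \Cref{eqn:opt-lin-for-lin-regr-main}. A quick sanity check: this is the classical MMSE-style linear shrinkage, matching the non-mixed case in \cite{mondelli2021optimalcombination}.

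The second step is to evaluate the Fisher-information-type functional $\int_{\supp(Y)} \frac{\expt{Gp(y|G)}^2}{\expt{p(y|G)}}\diff y$ that governs the optimal overlaps in \Cref{eq:RHSinsp}. Using the expressions from the first step, the integrand becomes $\frac{(y/(1+\sigma^2))^2 \,\expt{p(y\mid G)}^2}{\expt{p(y\mid G)}} = \frac{y^2}{(1+\sigma^2)^2}\,\expt{p(y\mid G)}$, and since $\expt{p(y\mid G)}$ is the $\cN(0,1+\sigma^2)$ density, the integral is $\frac{1}{(1+\sigma^2)^2}\cdot\E[\,Y^2\,]$ with $Y\sim\cN(0,1+\sigma^2)$, i.e.\ $\frac{1+\sigma^2}{(1+\sigma^2)^2} = \frac{1}{1+\sigma^2}$. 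One should also check that this value lies in $(0,\infty)$, so that the hypothesis \Cref{eqn:cond-lin-eff} of \Cref{lem:linear-optimal-overlap} is satisfied and the linear estimator is effective; here it clearly is, for any finite $\sigma^2\ge 0$.

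The third step is purely bookkeeping: substitute $\int \frac{\expt{Gp(y|G)}^2}{\expt{p(y|G)}}\diff y = \frac{1}{1+\sigma^2}$ into \Cref{eq:RHSinsp}. Then $\mathsf{OL}_1^{\lin} = \big(\frac{\alpha^2+(1-\alpha)^2}{\alpha^2} + \frac{1+\sigma^2}{\alpha^2\delta}\big)^{-1/2}$ and similarly for $\mathsf{OL}_2^{\lin}$ with $\alpha$ replaced by $1-\alpha$. Finally, by \Cref{lem:linear-optimal-overlap}, these optimal overlaps are simultaneously achieved by $\cL^*$, so with $\xlin = \frac1n A^\top\cL^*(y)$, \Cref{lem:linear-overlap} (or directly the master theorem) gives that the limiting overlaps equal $\rho_i^{\lin}$ evaluated at $\cL^*$, which are precisely $\mathsf{OL}_i^{\lin}$; squaring the displayed expressions (the corollary states the overlaps without the outer square root, consistent with the $(\cdots)^{-1}$ in the statement rather than $(\cdots)^{-1/2}$ — note the stated corollary writes the exponent as $-1$) matches the claimed formulas. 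I should double-check whether the corollary is reporting the overlap or its square, and align the exponent accordingly; this is the only place a genuine discrepancy could creep in.

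I do not anticipate a real obstacle here — this corollary is a direct specialization, and the only mild subtlety is verifying that $\cL^*$ as defined is Lipschitz and satisfies \Cref{itm:assump-preproc-lin} (it is linear, hence Lipschitz, and $\expt{G\cL^*(Y)} = \frac{1}{1+\sigma^2}\E[GY] = \frac{1}{1+\sigma^2}\neq 0$, with $\expt{|G\cL^*(Y)|}<\infty$ since all moments are finite). Everything else is Gaussian-integral computation plus substitution into \Cref{lem:linear-optimal-overlap}.
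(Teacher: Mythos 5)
Your proposal is correct and follows essentially the same route as the paper's proof: compute $m_0(y)=\E[p(y|G)]$ (the $\cN(0,1+\sigma^2)$ density) and $m_1(y)=\E[Gp(y|G)]=m_0(y)\,y/(1+\sigma^2)$ by Gaussian integration, conclude $\cL^*(y)=y/(1+\sigma^2)$, evaluate $\int m_1^2/m_0\,\diff y = 1/(1+\sigma^2)\in(0,\infty)$ so that \Cref{eqn:cond-lin-eff} holds, and substitute into \Cref{lem:linear-optimal-overlap}. Your flag about the exponent is also warranted: the paper's appendix derives the displayed quantities as the \emph{squared} overlaps $(\mathsf{OL}_i^{\lin})^2$, so the right-hand sides in the corollary statement are the squares of the limiting overlaps (the exponent for the overlap itself should be $-1/2$), exactly as your derivation indicates.
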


For mixed phase retrieval, one can readily 
check 
that  the overlap of  the linear estimator with each signal is always vanishing regardless of the choice of the preprocessing function. 
Next, we consider spectral estimators. 

\begin{corollary}[Mixed linear regression, spectral estimator]
\label{cor:noisy-mixed-linear-regr-spec}
Consider the mixed linear regression model 
and let \Cref{itm:assump-signal-distr,itm:assump-alpha,itm:assump-gaussian-design,itm:assump-proportional} hold. 
Then, for $i \in \{1,2 \}$, the optimal preprocessing function $\cT_i^*$ defined in \Cref{thm:opt-spec} is:
\begin{align}
    \cT_i^*(y) &= 1 - \frac{1}{\alpha_i \cdot \frac{y^2 + \sigma^2 + \sigma^4}{(1+\sigma^2)^2} + (1-\alpha_i)}.
    \label{eqn:opt-prec-spec-lin-regr-main}
\end{align}
Let $ T_i^* = \diag(\cT_i^*(y)) $ and $ D_i^* = \frac{1}{n}A^\top T_i^* A $, for $i \in \{1,2\}$. 
Denote by $ v_1(D_i^*),v_2(D_i^*) $ the eigenvectors of $D_i^*$ corresponding to the  two largest eigenvalues. Then for $ \delta > \frac{(1+\sigma^2)^2}{2\alpha_i^2} $, we almost surely have
\begin{align}
    \lim_{d\to\infty} \frac{\abs{\inprod{v_i(D_i^*)}{x_i^*}}}{\normtwo{v_i(D_i^*)}\normtwo{x_i^*}}
    &= \frac{1}{\sqrt{\beta_i^*(\delta,\alpha,\sigma) + \alpha_i}} , 
    \notag
\end{align}
where $ \beta_i^*(\delta,\alpha,\sigma) $  is the unique solution  in $ (1-\alpha_i,\infty) $ to the following fixed point equation:
\begin{multline}
    (\beta_i^*(\delta,\alpha,\sigma) - (1-\alpha_i)) \left[ -\frac{\alpha_i + \beta_i^*(\delta,\alpha,\sigma)}{\alpha_i^2} 
    + \paren{\frac{\alpha_i+\beta_i^*(\delta,\alpha,\sigma)}{\alpha_i}}^2 \right.\\
    \left. \times \sqrt{\frac{ \pi(1+\sigma^2)^2}{2\alpha_i(\sigma^2\alpha_i +(1+\sigma^2)\beta_i^*(\delta,\alpha,\sigma))}} \right. 
    \left. \exp\paren{\frac{\sigma^2\alpha_i + (1+\sigma^2)\beta_i^*(\delta,\alpha,\sigma)}{2\alpha_i}} \right. \\
    \left. \times \erfc\paren{\sqrt{\frac{\sigma^2\alpha_i + (1+\sigma^2)\beta_i^*(\delta,\alpha,\sigma)}{2\alpha_i}}} \right] = \frac{1}{\alpha_i^2\delta} . \notag 
\end{multline}
\end{corollary}

\begin{corollary}[Mixed phase retrieval, spectral]
\label{cor:noisy-mixed-phase-retrieval-spec}
Consider the mixed phase retrieval model 
in \Cref{eqn:noisy-phase-retrieval-model},  and let \Cref{itm:assump-signal-distr,itm:assump-alpha,itm:assump-gaussian-design,itm:assump-proportional} hold. Then, for $i \in \{1,2 \}$, the optimal preprocessing function $\cT_i^*$ defined in \Cref{thm:opt-spec} is:
\begin{align}
    \cT_i^*(y) &= 1 - \frac{1}{\alpha_i \Delta(y) + (1-\alpha_i)} , 
    \label{eqn:opt-prec-spec-phase-retr-main} 
\end{align}
where the auxiliary function $ \Delta\colon\bbR\to\bbR $ is defined as
\begin{align}
    \Delta(y) &\coloneqq \frac{y^2 + \sigma^2 + \sigma^4}{(1+\sigma^2)^2} + \sqrt{\frac{2}{\pi}} \cdot \frac{\sigma y \exp\paren{-\frac{y^2}{2\sigma^2(1+\sigma^2)}}}{(1+\sigma^2)^{3/2}}\sqrbrkt{1 + \erf\paren{\frac{y}{\sqrt{2\sigma^2(1+\sigma^2)}}}}^{-1} . \notag 
\end{align}
Let $ T_i^* = \diag(\cT_i^*(y))\in\bbR^{n\times n} $ and $ D_i^* = \frac{1}{n}A^\top T_i^* A\in\bbR^{d\times d} $, for $i\in\{1,2\}$. 
Denote by $ v_1(D_i^*),v_2(D_i^*) $ the eigenvectors of $D_i^*$ corresponding to the two largest eigenvalues
Let 
\begin{align}
    \delta^*_i &= \frac{1}{\alpha_i^2} \paren{ \frac{2}{(1+\sigma^2)^2} + \frac{4\sigma^5 h(\sigma^2)}{\pi^{3/2}(1+\sigma^2)^2} }^{-1} ,  \quad \text{ where } h(\sigma^2) \coloneqq \int_\bbR \frac{\exp\paren{-(2+\sigma^2)z^2} z^2}{1+\erf(z)} \diff z.
\end{align}
Define the functions $ m_0,m_1\colon\bbR\to\bbR $ and $ I\colon[1/2,1]\times(0,\infty)\to\bbR $ as
\begin{align}
    m_0(y) &\coloneqq \frac{1}{\sqrt{2\pi(1+\sigma^2)}} \exp\paren{-\frac{y^2}{2(1+\sigma^2)}} \sqrbrkt{1 + \erf\paren{\frac{y}{\sqrt{2\sigma^2(1+\sigma^2)}}}} , \notag \\
    m_1(y) &\coloneqq m_0(y) \frac{y^2 + \sigma^2 + \sigma^4}{(1+\sigma^2)^2} 
    + \frac{\sigma y}{\pi(1+\sigma^2)^2} \exp\paren{-\frac{y^2}{2\sigma^2}} , \notag \\
    I(\alpha,\beta) &\coloneqq \int_{\supp(Y)} \frac{(m_2(y) - m_0(y))^2}{\alpha m_2(y) + \beta m_0(y)} \diff y . \notag 
\end{align}
Then for $i\in\{1,2\}$ if $ \delta > \delta^*_i $, we almost surely have:
\begin{align}
    \lim_{d\to\infty} \frac{\abs{\inprod{v_i(D_i^*)}{x_i^*}}}{\normtwo{v_i(D_i^*)}\normtwo{x_i^*}}
    &= \frac{1}{\sqrt{\beta_i^*(\delta,\alpha,\sigma) + \alpha_i}} \,  , 
    \notag
\end{align}
where $ \beta_i^*(\delta,\alpha,\sigma) $ is the unique solution in $ (1-\alpha_i,\infty) $ to the fixed point equation:
\begin{align}
    (\beta_i^*(\delta,\alpha,\sigma) - (1-\alpha_i)) I(\alpha_i, \beta_i^*(\delta,\alpha,\sigma)) &= \frac{1}{\alpha_i^2\delta} . \notag 
\end{align}
\end{corollary}

\begin{remark}[Linear regression vs.\ phase retrieval]
\label{rk:lin-regr-phase-retrieval-coincide}
We note that the performance of the optimal spectral estimators given in \Cref{cor:noisy-mixed-linear-regr-spec,cor:noisy-mixed-phase-retrieval-spec} coincides for mixed \emph{noiseless} linear regression and mixed \emph{noiseless} phase retrieval. 
Specifically, for both models, when $\sigma^2=0$, the spectral thresholds and the optimal preprocessing functions are:
\begin{align}
    \delta_i^* &= \frac{1}{2\alpha_i^2} , \quad 
    \cT_i^*(y) = 1-\frac{1}{\alpha_i y^2 + (1-\alpha_i)} \, ,   
    \label{eqn:rk-spec-preproc-same}
\end{align}
and the corresponding overlaps are
$    \frac{1}{\sqrt{\beta_i^*(\delta,\alpha,0) + \alpha_i}}$,  for $i \in \{1,2\}$. Here $\beta_i^*(\delta,\alpha,0)$ is the solution to the fixed point equation in \Cref{cor:noisy-mixed-linear-regr-spec} with $\sigma=0$.
%
In fact, one can verify that even the first-order dependence of the spectral thresholds on the noise variance $\sigma$ coincides for the noisy versions of these two problems: $ \delta^*_i = \frac{1 + 2\sigma^2}{2\alpha_i^2} + \cO(\sigma^4) $ for $i \in \{1,2\}$. 
This phenomenon is because the optimal preprocessing functions $\cT_i^*(y)$  in both models depend only on $y^2$, and are therefore invariant to the signs of the observations $(y_1, \ldots, y_n)$. 
\end{remark}


\section{Discussion on numerical experiments}
\label{app:discuss-experiment}

We make a few remarks on the numerical results in \Cref{sec:experiments}. 

\begin{itemize}
    \item \Cref{fig:all} shows numerical results for the recovery of the first and second signal, respectively, from a noiseless linear regression model (i.e., the model in \Cref{eqn:noisy-lin-regr-model} with $ \sigma = 0$) with mixing parameter $\alpha = 0.6$.  We plot overlaps obtained via 
    \emph{(i)} the optimal spectral estimator in \Cref{eqn:rk-spec-preproc-same},
    \emph{(ii)} the optimal linear estimator in \Cref{eqn:opt-lin-for-lin-regr-main},
    \emph{(iii)} the Bayes-optimal linear combination of the estimators in \emph{(i)} and \emph{(ii)} (as per \Cref{cor:opt-combo}), 
    \emph{(iv)} the spectral estimator proposed in \cite{yi-2014-mixed-linear-regression} whose preprocessing function  $ \cT^{\mathrm{YCS}} $ is:
    \begin{align}
        \cT^{\mathrm{YCS}}(y) = \min\curbrkt{y^2, 10} , \label{eqn:ycs-truncated}
    \end{align}
    and \emph{(v)} the spectral estimator proposed in \cite{luo-2019-opt-preprocessing} whose preprocessing function $ \cT^{\mathrm{LAL}}$ is:
    \begin{align}
        \cT^{\mathrm{LAL}}(y) = \max\curbrkt{1 - \frac{1}{y^2}, -10} . \label{eqn:lal-truncated}
    \end{align}
    The estimators in  \Cref{eqn:ycs-truncated,eqn:lal-truncated} are truncated at $+10$ and $-10$, respectively, in order  to compute our theoretical predictions. Choosing a larger value in magnitude for the truncation does not  lead to improved empirical performance. This is because these choices are not optimal for estimation from mixed models. 
Our combined estimator  and our optimal design of the spectral method  yield substantially larger  overlaps compared to existing heuristic choices, such as those in \Cref{eqn:ycs-truncated,eqn:lal-truncated}. 

    \item In \Cref{fig:noiseless-lin-regr}, we consider the recovery of both signals for noiseless mixed linear regression (link function given by \Cref{eqn:noisy-lin-regr-model} with $\sigma=0$), using the spectral estimator with optimal preprocessing functions given by \Cref{eqn:rk-spec-preproc-same}. 
    Overlaps are plotted for two values of the mixing parameter $\alpha\in\{0.6, 0.8\}$. 
    The results for noiseless phase retrieval are identical (for both simulations with $d=2000$ and the asymptotic prediction),
    as noted in \Cref{rk:lin-regr-phase-retrieval-coincide}. 
    
    \item In \Cref{fig:lin-regr-vs-phase-retr-diff-noise}, we compare mixed linear regression  and mixed phase retrieval (\Cref{eqn:noisy-lin-regr-model,eqn:noisy-phase-retrieval-model}), under their respective optimal spectral estimators (\Cref{eqn:opt-prec-spec-lin-regr-main,eqn:opt-prec-spec-phase-retr-main}). For each model, we plot the overlap with the first signal for two different values of the noise standard deviation  $ \sigma \in \{0.8, 1.5\}$. 
    In all cases, the mixing parameter is fixed to be $\alpha = 0.8$. 
    Though for $\sigma = 0$ the curves for both models coincide, 
    the gap between phase retrieval and linear regression grows with $\sigma$, with increasingly better performance for phase retrieval. 
    
    \item In \Cref{fig:lin-regr-vs-phase-retr-both-signal}, we consider the recovery of both signals for mixed linear regression and mixed phase retrieval with mixing parameter $\alpha = 0.6$ and noise standard deviation  $\sigma = 1.5$. 
   The overlaps for linear regression are noticeably lower compared to phase retrieval, showing how the model noise makes the latter problem easier  for spectral estimation. 

   \item In \Cref{fig:all_corr}, we test our estimators against those in \Cref{eqn:ycs-truncated,eqn:lal-truncated} under the same setting of \Cref{fig:all} with a change in the prior: each signal is uniform on $ \bbS^{d-1} $ with their correlation being $ \inprod{x_1^*}{x_2^*} = \rho = 0.1 $. 
   The results show that our estimators retain their superiority. 
   Similar improvements are observed for $\rho = 0.3$, verifying the robustness of our estimators to mild signal correlation. 

\end{itemize}

\section{Eigenvalues via random matrix theory}
\label{sec:eigval-rmt}

The characterization of the limiting joint law of spectral and linear estimators in \Cref{thm:main-thm-joint-dist} is based on the analysis of a Generalized Approximate Message Passing (GAMP) algorithm. 
The proof of convergence of the GAMP iteration to the desired high-dimensional limit, whenever the conditions $ \lambda^*(\delta_1) > \ol\lambda(\delta) $ and/or $ \lambda^*(\delta_2) > \ol\lambda(\delta) $ are satisfied, crucially relies on the existence of an eigengap in  the matrix $D$ (defined in \Cref{eqn:def-mtx-t-and-d}).  In this section, we derive the limits of the top three eigenvalues of $D$. This result, stated as \Cref{thm:main-thm-eigval} below,  is then used in \Cref{sec:overlap-gamp} to prove \Cref{thm:main-thm-joint-dist}. 
\begin{theorem}[Eigenvalues]
\label{thm:main-thm-eigval}
Consider the setting of \Cref{sec:prelim} and let \Cref{itm:assump-signal-distr,itm:assump-alpha,itm:assump-noise-distr,itm:assump-gaussian-design,itm:assump-proportional,itm:assump-preproc-spec} hold. 
Then we have 
\begin{align}
    \lim_{d\to\infty} \lambda_1(D) &= \zeta(\lambda^*(\delta_1); \delta) , \quad 
    \lim_{d\to\infty} \lambda_2(D) = \zeta(\lambda^*(\delta_2); \delta) , \quad 
    \lim_{d\to\infty} \lambda_3(D) = \zeta(\ol\lambda(\delta); \delta) , \notag
\end{align}
almost surely. 
Furthermore, 
\begin{enumerate}
    \item \label{itm:mono-2} If $ \lambda^*(\delta_1) > \lambda^*(\delta_2) > \ol\lambda(\delta) $, then 
    \begin{align}
        \zeta(\lambda^*(\delta_1); \delta) > \zeta(\lambda^*(\delta_2); \delta) > \zeta(\ol\lambda(\delta); \delta) ; \notag 
    \end{align}
    
    \item \label{itm:mono-1} If $ \lambda^*(\delta_1) > \ol\lambda(\delta) \ge \lambda^*(\delta_2) $, then 
    \begin{align}
        \zeta(\lambda^*(\delta_1); \delta) > \zeta(\lambda^*(\delta_2); \delta) = \zeta(\ol\lambda(\delta); \delta) ; \notag 
    \end{align}
    
    \item \label{itm:mono-0} If $ \ol\lambda(\delta) \ge \lambda^*(\delta_1) > \lambda^*(\delta_2) $, then 
    \begin{align}
        \zeta(\lambda^*(\delta_1); \delta) = \zeta(\lambda^*(\delta_2); \delta) = \zeta(\ol\lambda(\delta); \delta) . \notag 
    \end{align}
\end{enumerate}
\end{theorem}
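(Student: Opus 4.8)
\textbf{Proof proposal for \Cref{thm:main-thm-eigval}.}

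The plan is to realize $D$ as an (asymptotically) free additive perturbation of two independent spiked random matrices and then invoke the outlier theory of Belinschi et al.\ \cite{belinschi-outliers}. First I would partition $[n]$ into the index set $S_1=\{i:\eta_i=1\}$ corresponding to the first signal and $S_2=[n]\setminus S_1$, and write $D=D_1+D_2$ where $D_k=\frac1n\sum_{i\in S_k}\cT(y_i)a_ia_i^\top$. Conditionally on $\veta$, $D_1$ depends only on $x_1^*$ and $D_2$ only on $x_2^*$; since $|S_1|/d\to\delta_1=\alpha\delta$ and $|S_2|/d\to\delta_2=(1-\alpha)\delta$ by \Cref{itm:assump-alpha} and \Cref{itm:assump-proportional} and the law of large numbers, each $D_k$ is, on its own, exactly the spectral matrix studied for a \emph{non-mixed} GLM with aspect ratio $\delta_k$. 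Hence by the analysis of \cite{lu2020phase,mondelli-montanari-2018-fundamental} (together with the regularity afforded by \Cref{itm:assump-preproc-spec}), each $D_k$ has a limiting bulk whose upper edge is $\zeta(\ol\lambda(\delta_k);\delta)$-type data described via $\phi,\psi$, and a single outlier at $\phi(\lambda^*(\delta_k))=\zeta(\lambda^*(\delta_k);\delta_k)$ precisely when $\lambda^*(\delta_k)>\ol\lambda(\delta_k)$. I would record these facts, including the rank-one structure of the spike and the alignment of the spike eigenvector with $x_k^*$, as the input data for the free-probability step; these should be collected as auxiliary lemmas in \Cref{sec:eigval-rmt} before the main argument.

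The next step is asymptotic freeness. By \Cref{itm:assump-gaussian-design} the design is rotationally invariant, and by \Cref{itm:assump-signal-distr} the signals $x_1^*,x_2^*$ are independent and Haar-uniform on the sphere. Writing $A_k$ for the rows indexed by $S_k$, one can insert an independent Haar orthogonal matrix $O$ so that $D_2\overset{d}{=}\frac1n A_2 O T_2 O^\top A_2^\top$ — more carefully, one conditions on $\veta$ and uses that the pair $(x_1^*,x_2^*)$ has the same law as $(x_1^*, O x_2^*)$ for $O\sim\unif(\bbO(d))$ independent of everything. This makes $D_1$ and $D_2$ \emph{asymptotically free} (the standard argument for freeness of a fixed matrix and a randomly rotated one; see the background in \Cref{app:rmt-free-prob-background}). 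Consequently the limiting spectral distribution $\mu_D$ is the free convolution $\mu_{D_1}\boxplus\mu_{D_2}$ of the two bulk laws, and — crucially — the outliers of $D$ are governed by the subordination/outlier machinery of \cite{belinschi-outliers}: an outlier of $D$ outside $\supp(\mu_{D_1}\boxplus\mu_{D_2})$ sits at a location determined by the subordination functions $\omega_1,\omega_2$ evaluated at the spike positions of $D_1,D_2$. Here one has to handle the slightly delicate point that each $D_k$ itself is already ``bulk plus spike'' (a deformed model), so one applies the Belinschi et al.\ result with the \emph{deformed} laws $\mu_{D_k}$ as the free atoms; the $\zeta(\,\cdot\,;\delta)$ functions in the statement are exactly the images of $\lambda^*(\delta_1),\lambda^*(\delta_2),\ol\lambda(\delta)$ under the relevant subordination map, and verifying this identification — matching the abstract fixed-point equations of \cite{belinschi-outliers} with the explicit $\phi,\psi,\zeta$ of \Cref{eqn:def-phi-fn,eqn:def-psi-fn,eqn:def-zeta-i} — is the computational heart of the proof. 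The monotonicity claims in items \ref{itm:mono-2}--\ref{itm:mono-0} then follow from the monotonicity of $\zeta(\cdot;\delta)$ established in \Cref{lem:properties-phi-psi}, together with the defining relations $\zeta(\lambda;\delta)=\psi(\max\{\lambda,\ol\lambda(\delta)\};\delta)$: when $\lambda^*(\delta_i)\le\ol\lambda(\delta)$ the corresponding ``outlier'' is absorbed into the bulk and collapses onto $\zeta(\ol\lambda(\delta);\delta)$, which is precisely what the three cases encode.

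The main obstacle I anticipate is the precise outlier bookkeeping when the spikes of $D_1$ and $D_2$ interact near the bulk edge — i.e., justifying that $D$ has \emph{exactly} the claimed top-three eigenvalues with no spurious additional outliers, and that the third-largest eigenvalue converges to the bulk edge $\zeta(\ol\lambda(\delta);\delta)$ rather than to something strictly larger. This requires (i) checking the non-degeneracy/regularity hypotheses of \cite{belinschi-outliers} for our deformed laws (absence of atoms, square-root edge behaviour, the relevant derivative conditions on subordination functions), and (ii) ruling out outliers created by the \emph{interaction} of the two spikes — a phenomenon that can occur in free additive convolution. One also needs a matching lower bound showing the claimed outliers are actually attained; this can be done either via a test-vector argument using $x_1^*,x_2^*$ (lower bounding $\lambda_1(D),\lambda_2(D)$ through the quadratic forms $\langle x_k^*, D x_k^*\rangle$ and the known alignment), or by appealing directly to the two-sided conclusions of \cite{belinschi-outliers}. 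The treatment of the borderline case $\lambda^*(\delta_i)=\ol\lambda(\delta)$, and the $\alpha=1/2$ case where $\lambda^*(\delta_1)=\lambda^*(\delta_2)$ (deferred via \Cref{rk:alpha-half-rmt}), needs a short separate continuity argument. Everything else — the non-mixed input data, asymptotic freeness, and the monotonicity of $\zeta$ — is either already available in the cited literature or elementary given \Cref{lem:properties-phi-psi}.
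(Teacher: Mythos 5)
Your proposal follows essentially the same route as the paper: decompose $D=D_1+D_2$, invoke the non-mixed results of \cite{lu2020phase,mondelli-montanari-2018-fundamental} for each summand, use the independence and rotational invariance of the Haar-uniform signals to get asymptotic freeness, apply the outlier/subordination theorem of \cite{belinschi-outliers}, identify the subordination images with $\psi,\zeta$ via the explicit inverse Stieltjes transform, and deduce the ordering from the convexity/monotonicity properties of $\psi$. Your anticipated obstacle about spike-interaction outliers is handled exactly by the two-sided conclusions of \cite{belinschi-outliers} (as your own fallback suggests), which is also what the paper does, so there is no substantive gap.
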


\begin{remark}[Phase transition for eigenvalues]
\label{rk:pseig}
\Cref{thm:main-thm-eigval} shows a \emph{phase transition} phenomenon for the top three eigenvalues of $D$: \emph{(i)} the top two eigenvalues escape from the bulk of $D$ if $ \lambda^*(\delta_1) > \lambda^*(\delta_2) > \ol\lambda(\delta) $; \emph{(ii)}  
only the largest eigenvalue escapes from the bulk if $ \lambda^*(\delta_1) > \ol\lambda(\delta) \ge \lambda^*(\delta_2) $; \emph{(iii)} 
no outlier eigenvalue exists if $ \ol\lambda(\delta)\ge\lambda^*(\delta_1) > \lambda^*(\delta_2) $. 
See \Cref{fig:psi-phi-zeta} on page \pageref{fig:psi-phi-zeta}. 
In words, the condition $\lambda^*(\delta_i)> \ol\lambda(\delta)$ is \emph{necessary and sufficient} for the $i$-th eigenvalue to escape the bulk of the spectrum. This provides an additional piece of evidence (see also \Cref{rk:cnvan}) suggesting that such condition is also necessary and sufficient for the corresponding eigenvector to have non-vanishing overlap with the signal. In fact, phase transitions in the behavior of eigenvalues typically correspond to phase transitions in the behavior of the related eigenvectors, see e.g.\ \cite{benaych2011-pca-thr-square,benaych2012-pca-thr-rect, mondelli-montanari-2018-fundamental,lu2020phase}. 
%
\end{remark}

\begin{remark}[Eigenvalues for $\alpha=1/2$]
\label{rk:alpha-half-rmt}
Similar results to \Cref{thm:main-thm-eigval} hold for $\alpha = 1/2$. 
In this case, the limits of the first and second eigenvalues of $D$ coincide and equal $ \zeta(\lambda^*(\delta/2); \delta) $. 
The limit of the right edge of the bulk of $D$ does not depend on $\alpha$ and remains the same ($\ol\lambda(\delta)$) as in \Cref{thm:main-thm-eigval}. 
Therefore, we get two cases: 
\emph{(i)} if $ \lambda^*(\delta/2) > \ol\lambda(\delta) $, the top two eigenvalues of $D$ are repeated outliers; 
otherwise, \emph{(ii)} $ \ol\lambda(\delta) \ge \lambda^*(\delta/2) $ and there is no outlier eigenvalue in the limiting spectrum of $D$. 
\end{remark}

\begin{remark}[Explicit formulas]
\label{rk:explicit-formula-eigval}
By the definition of $\zeta(\lambda;\delta)$ (cf.\ \Cref{eqn:def-zeta-i}), we can write the limits of the eigenvalues in the following more explicit form, which will be convenient in \Cref{sec:overlap-gamp}: 
\begin{align}
    \zeta(\lambda^*(\delta_1); \delta) &= 
    \begin{cases}
    \lambda^*(\delta_1)\paren{\frac{1}{\delta} + \expt{\frac{Z}{\lambda^*(\delta_1) - Z}}} , & \lambda^*(\delta_1) > \ol\lambda(\delta) \\
    \ol\lambda(\delta)\paren{\frac{1}{\delta} + \expt{\frac{Z}{\ol\lambda(\delta) - Z}}} , & \lambda^*(\delta_1) \le \ol\lambda(\delta) 
    \end{cases}, \notag \\
    \zeta(\lambda^*(\delta_2); \delta) &= 
    \begin{cases}
    \lambda^*(\delta_2)\paren{\frac{1}{\delta} + \expt{\frac{Z}{\lambda^*(\delta_2) - Z}}} , & \lambda^*(\delta_2) > \ol\lambda(\delta) \\
    \ol\lambda(\delta)\paren{\frac{1}{\delta} + \expt{\frac{Z}{\ol\lambda(\delta) - Z}}} , & \lambda^*(\delta_2) \le \ol\lambda(\delta) 
    \end{cases}, \notag \\
    \zeta(\ol\lambda(\delta); \delta) &= \ol\lambda(\delta)\paren{\frac{1}{\delta} + \expt{\frac{Z}{\ol\lambda(\delta) - Z}}} . \notag 
\end{align}
\end{remark}

\begin{proof}[Proof of \Cref{thm:main-thm-eigval}]
The proof is divided into three steps. 
Specifically, we first condition on $ \eta_1, \cdots, \eta_n $ and write $D$ as the sum of two asymptotically free spiked random matrices as on page \pageref{page:free-add-conv} of the main text. 
Then, the limit of $ \lambda_3(D) $ is determined in \Cref{lem:bulk-of-sum}. 
Finally, the limits of $ \lambda_1(D),\lambda_2(D) $ and the monotonicity properties of the limiting eigenvalues in \Cref{itm:mono-2,itm:mono-1,itm:mono-0} of the theorem are given by \Cref{lem:outlier}. 
\end{proof}

\subsection{Right edge of the bulk of $ D $}
\label{sec:bulk}

Before proceeding to the analysis, let us introduce some more notation. 
Let 
\begin{align}
D_1 = \frac{1}{n} A_1^\top T_1A_1, \quad 
D_2=\frac{1}{n}A_2^\top T_2A_2 . 
\label{eqn:d1-d2}
\end{align}
Therefore $ D=D_1+D_2 $ according to \Cref{eqn:decomp-d}. 
We first calculate the limiting value of the right edge of the bulk of the spectrum of $D$. 
\begin{lemma}
\label{lem:bulk-of-sum}
Consider the setting of \Cref{sec:prelim}. Let \Cref{itm:assump-signal-distr,itm:assump-alpha,itm:assump-noise-distr,itm:assump-gaussian-design,itm:assump-proportional,itm:assump-preproc-spec} hold. 
Denote by $\mu_D$ the empirical spectral distribution of $D$. 
Then 
\begin{align}
\lim_{d\to\infty} \sup\supp(\mu_{D}) &= \frac{1}{\delta} \cdot s^{-1}_{\mu_1\boxplus\mu_2}(-1/\ol\lambda(\delta)) , \label{eqn:lim-bulk}
\end{align}
almost surely, 
where $ \ol\lambda(\delta) $ is the solution to
\begin{align}
\expt{\paren{\frac{Z}{\ol\lambda(\delta) - Z}}^2} &= \frac{1}{\delta} \label{eqn:lam-bar-rmt} 
\end{align}
and the function $ s_{\mu_1\boxplus\mu_2}^{-1} $ is defined as
\begin{align}
s_{\mu_1\boxplus\mu_2}^{-1}(z) &= -\frac{1}{z} + \delta\, \expt{\frac{Z}{1+zZ}} . \label{eqn:s-sum-inv-def} 
\end{align}
\end{lemma}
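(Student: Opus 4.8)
The plan is to build on the reduction in \Cref{sec:free-add-conv}: we have $D = D_1 + D_2$ with $D_1$ and the Haar-rotated copy $O^\top D_2 O$ asymptotically free, so the limiting bulk of $D$ is governed by the free additive convolution $\mu_1 \boxplus \mu_2$, where $\mu_i \coloneqq \lim_{d\to\infty}\mu_{D_i}$. The first task is to identify $\mu_1,\mu_2$. Writing $D_i = \frac1n A_i^\top T_i A_i = \frac{n_i}{n}\cdot\frac{1}{n_i}A_i^\top T_i A_i$, up to the deterministic scalar $n_i/n \to \delta_i/\delta$ this is the spectral matrix of a \emph{non-mixed} GLM with aspect ratio $n_i/d \to \delta_i$ and preprocessing $\cT$. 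Although $T_i$ is not independent of $A_i$, the analysis of \cite{mondelli-montanari-2018-fundamental} (\Cref{thm:glm-spec-thr-nonmixed}) shows that this dependence contributes only an $O(1)$-rank perturbation, so $\mu_i$ is the deformed Marchenko--Pastur law with aspect ratio $\delta_i$ and population distribution equal to the law of $Z = \cT(Y)$. Since $\cT$ is bounded (\Cref{itm:assump-preproc-spec}), $Z$ is bounded and all the measures involved are compactly supported; in particular $\mu_i$ has a closed-form Stieltjes transform obeying a Marchenko--Pastur self-consistency, and an $R$-transform that is linear in $\delta_i$ with a slope depending only on the law of $Z$.

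Next I would form the free convolution and read off its edge. By asymptotic freeness, $\mu_D$ converges weakly (a.s.) to a deterministic measure, and $R$-transforms add: $R_{\mu_1\boxplus\mu_2} = R_{\mu_1} + R_{\mu_2}$. Since these are linear in $\delta_1$ and $\delta_2$ with a common slope and $\delta_1 + \delta_2 = \delta$, they add up to the $R$-transform of a \emph{single} deformed Marchenko--Pastur law with aspect ratio $\delta$; after accounting for the normalization --- which is the source of the prefactor $1/\delta$ in \Cref{eqn:lim-bulk} --- its Stieltjes inverse is exactly \Cref{eqn:s-sum-inv-def}. Now the right edge of a compactly supported measure is the value of its Stieltjes inverse at the critical point of that inverse along the real branch lying beyond the support. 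Differentiating \Cref{eqn:s-sum-inv-def} gives $\frac{\diff}{\diff z}\,s^{-1}_{\mu_1\boxplus\mu_2}(z) = 1/z^2 - \delta\,\expt{Z^2/(1+zZ)^2}$, so the critical point solves $\delta\,\expt{(zZ/(1+zZ))^2} = 1$, which under $z = -1/\lambda$ becomes $\expt{(Z/(\lambda-Z))^2} = 1/\delta$ --- that is, \Cref{eqn:lam-bar-rmt}. Its solution $\ol\lambda(\delta) > \sup\supp(Z)$ is unique because $\lambda \mapsto \expt{(Z/(\lambda-Z))^2}$ is strictly decreasing from $+\infty$ to $0$ on $(\sup\supp(Z),\infty)$; this also reconciles the present characterization with the earlier definition $\ol\lambda(\delta) = \argmin_\lambda \psi(\lambda;\delta)$ in \Cref{eqn:def-lam-bar}, since $\psi(\cdot;\delta)$ is convex (\Cref{lem:properties-phi-psi}) and $\partial_\lambda\psi(\lambda;\delta) = 1/\delta - \expt{(Z/(\lambda-Z))^2}$. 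Evaluating at $z = -1/\ol\lambda(\delta)$ then yields $\frac1\delta\,s^{-1}_{\mu_1\boxplus\mu_2}(-1/\ol\lambda(\delta)) = \ol\lambda(\delta)\paren{1/\delta + \expt{Z/(\ol\lambda(\delta)-Z)}} = \psi(\ol\lambda(\delta);\delta)$, the right-hand side of \Cref{eqn:lim-bulk}.

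The step I expect to be the main obstacle is passing from this statement about the limiting measure to one about the \emph{random} spectrum: weak convergence of $\mu_D$ does not by itself control $\sup\supp(\mu_D)$, and $D$ additionally carries up to two genuine outliers that must be kept apart from the bulk. I would handle this through the free-probabilistic edge/outlier theory of \cite{belinschi-outliers} (\Cref{thm:belinschi}), applied with the summands $D_1$ and $O^\top D_2 O$, each of which \cite{mondelli-montanari-2018-fundamental} describes as a bulk plus at most one spike; specializing its subordination formulas to these explicit inputs both reproduces the edge computed above and certifies that, almost surely, no eigenvalue of $D$ lies strictly above it except for the finitely many outliers. The catch is that this machinery presupposes a priori control of the \emph{individual} summands --- convergence with a hard right edge, and absence of stray eigenvalues --- which is delicate precisely because $T_i$ is correlated with $A_i$; establishing that for spectral purposes $T_i$ may be replaced by an i.i.d.\ diagonal with law $Z$ (again relying on \cite{mondelli-montanari-2018-fundamental}) is the real heart of the argument.
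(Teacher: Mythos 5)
Your proposal is correct and follows essentially the same route as the paper: decompose $D$ into the asymptotically free summands $D_1,D_2$, identify each marginal law via the non-mixed analysis of \cite{mondelli-montanari-2018-fundamental} (which handles the $T_i$--$A_i$ dependence by an interlacing/low-rank deflation argument, exactly the point you flag as the heart), add $R$-transforms to obtain \Cref{eqn:s-sum-inv-def}, and read off the edge from the critical point of $s^{-1}_{\mu_1\boxplus\mu_2}(-1/\lambda)$ \`a la Silverstein--Choi/Bai--Yao, recovering \Cref{eqn:lam-bar-rmt} and $\psi(\ol\lambda(\delta);\delta)$. Your final remark that weak convergence alone does not control the extreme eigenvalues and that \Cref{thm:belinschi} is needed to rule out stray eigenvalues is consistent with the paper, which carries out that step in \Cref{lem:outlier} rather than inside this lemma's proof.
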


\begin{remark}
\label{rk:stieltjes-equals-psi}
The function $ s_{\mu_1\boxplus\mu_2}^{-1} $ is the inverse \emph{Stieltjes transform} of the free additive convolution of the limiting spectral distributions $ \mu_1 $ of $\frac{n}{d}D_1$ and $ \mu_2 $ of $\frac{n}{d}D_2$. 
Furthermore,  $ s_{\mu_1\boxplus\mu_2}^{-1}(\lambda) $ 
is precisely $ \delta \psi(\lambda;\delta)$, where $\psi(\lambda;\delta) $ defined in \Cref{eqn:def-psi-fn}. 
We note also that the parameter $ \ol\lambda(\delta) $ defined in \Cref{eqn:lam-bar-rmt} is the same as that defined through \Cref{eqn:def-lam-bar}. 
(See \Cref{rk:explicit-formulas}.)
The connection shall become more transparent in the proof below. 
\end{remark}

\begin{proof}[Proof of \Cref{lem:bulk-of-sum}]
First note that the scaling factor $ \frac{1}{d} $ in \cite{mondelli-montanari-2018-fundamental} is different from our scaling $ \frac{1}{n} $ in the definition of $D$ (cf.\ \Cref{eqn:def-mtx-t-and-d}). 
We therefore consider $ \wt{D} = \frac{1}{d}A^\top TA $ for the convenience of applying Lemma 3 in \cite{mondelli-montanari-2018-fundamental}. 
All results regarding the matrix $ \wt{D} $ can be translated to $ D $ by inserting a factor $ \frac{d}{n}\to\frac{1}{\delta} $ at proper places, since $ D = \frac{d}{n}\wt{D} $. 

Let 
\begin{align}
    \wt{D}_1 = \frac{1}{d} A_1^\top T_1  A_1 , \quad 
    \wt{D}_2 = \frac{1}{d} A_2^\top T_2  A_2 . \label{eqn:def-di-tilde}
\end{align}
By \Cref{eqn:decomp-d}, $ \wt{D}=\wt{D}_1+\wt{D}_2 $. 
Let $ \mu_1 $ and $ \mu_2 $ be the limiting spectral distributions of $ \wt{D}_1 $ and $ \wt{D}_2 $, respectively, as $n_1,n_2,d\to\infty$ with $n_1/d\to\delta_1 = \alpha\delta$ and $ n_2/d\to\delta_2 = (1-\alpha)\delta $. 
As argued on page \pageref{page:free-add-conv},
$ \wt{D}_1 $ and $ \wt{D}_2 $ are asymptotically free. Hence, 
the limiting spectral distribution of $ \wt{D} $ is given by the free additive convolution of $ \mu_1,\mu_2 $,
denoted by $ \mu_1\boxplus\mu_2 $ \cite{voiculescu-free,speicher-93}. 
It remains to 
compute $ \sup\supp(\mu_1\boxplus\mu_2) $.

A careful inspection of the proof of \cite[Lemma 2]{mondelli-montanari-2018-fundamental} shows that the bulk of the spectrum of $ \wt{D}_i $, i.e., $ \lambda_2(\wt{D}_i)\ge\cdots\ge\lambda_d(\wt{D}_i) $, interlaces the spectrum of 
    $E_i \coloneqq \frac{1}{d} \wt{A}_i^\top T_i \wt{A}_i\in\bbR^{(d-1)\times(d-1)}$ 
for $i\in\{1,2\}$, respectively. 
Specifically, 
\begin{align}
    \lambda_1(E_i) \ge \lambda_2(\wt{D}_i) \ge \lambda_2(E_i) \ge \lambda_3(\wt{D}_i)
    \ge \cdots \ge \lambda_{d-2}(E_i) \ge \lambda_{d-1}(\wt{D}_i) \ge \lambda_{d-1}(E_i) \ge \lambda_d(\wt{D}_i) . \label{eqn:interlace} 
\end{align}
Here, $ T_i = \diag(\cT(q(A_i x_i^*,\veps_i))) $ (recall \Cref{eqn:decomp-a-t}) and $ \wt{A}_i\in\bbR^{n_i\times(d-1)} $ is an \emph{independent} matrix with i.i.d.\ $ \cN(0,1) $ entries. 
In particular, $ T_i $ and $ \wt{A}_i $ are independent. 
We also define, for $i\in \{1,2\}$,
    $ \wt{E}_i \coloneqq \frac{1}{d-1} \wt{A}_i^\top T_i \wt{A}_i $. 
Note that $ E_i = \frac{d-1}{d} \wt{E}_i $, $ n_1/(d-1)\to\delta_1 $ and $ n_2/(d-1)\to\delta_2 $. 

Since each $y_i$ (for $1\le i\le n$) is i.i.d., the limiting spectral distributions of $ T_1 $ and $ T_2 $ are in fact the same and both equal the law of $Z$. 
Thus, Lemma 3 in \cite{mondelli-montanari-2018-fundamental} provides us with a characterization of the limiting spectral distribution of $ \wt{E}_i $:
\begin{align}
    \mu_{\wt{E}_1} &\to \wt{\mu}_1 , \quad 
    \mu_{\wt{E}_2} \to \wt{\mu}_2 , \notag
\end{align}
weakly as $n_1,n_2,d\to\infty$ with $ n_1/(d-1)\to\delta_1,n_2/(d-1)\to\delta_2 $. 
Furthermore, the limiting spectral distributions admit the following explicit description through the inverse Stieltjes transform: 
\begin{align}
s_{\wt{\mu}_1}^{-1}(z) &= -\frac{1}{z} + \delta_1 \expt{\frac{Z}{1+zZ}} , \quad 
s_{\wt{\mu}_2}^{-1}(z) = -\frac{1}{z} + \delta_2 \expt{\frac{Z}{1+zZ}} . \label{eqn:s-inv-mui}  
\end{align}
In view of the scaling factor $ \frac{d-1}{d}\to1 $, the limiting spectral distributions of $ E_1,E_2 $ are also given by $\wt{\mu}_1,\wt{\mu}_2$, respectively. 
Recall that the bulks of the spectra of $ \wt{D}_1,\wt{D}_2 $ interlace the spectra of $E_1,E_2$, respectively (cf.\ \Cref{eqn:interlace}).
Since $ \wt{D}_1 $ and $ \wt{D}_2 $ can each have at most one outlier eigenvalue by Lemma 2 in \cite{mondelli-montanari-2018-fundamental}, the limiting spectral distributions $\mu_1,\mu_2$ of $ \wt{D}_1,\wt{D}_2 $, respectively, are the same as $ \wt{\mu}_1,\wt{\mu}_2 $ whose inverse Stieltjes transforms are shown in \Cref{eqn:s-inv-mui}. 

Let $R_\mu(z) \coloneqq s_\mu^{-1}(-z) - \frac{1}{z}$ denote the \emph{$R$-transform} \cite{Voiculescu-addition} of $\mu$. Then, a well-known fact in free probability theory is that $R_{\mu_1\boxplus\mu_2}(z) = R_{\mu_1}(z) + R_{\mu_2}(z)$. Thus,
\begin{align}
s_{\mu_1\boxplus\mu_2}^{-1}(z) 
&= s_{\mu_1}^{-1}(z) + s_{\mu_2}^{-1}(z) + \frac{1}{z} 
= s_{\wt{\mu}_1}^{-1}(z) + s_{\wt{\mu}_2}^{-1}(z) + \frac{1}{z} 
= -\frac{1}{z} + \delta \expt{\frac{Z}{1+zZ}} . \label{eqn:s-inv-sum} 
\end{align}
Given $ s_{\mu_1\boxplus\mu_2}^{-1} $, one can calculate $ \sup\supp(\mu_1\boxplus\mu_2) $ which is in turn the limiting value of $ \sup\supp(\mu_{\wt{D}}) $, where $ \mu_{\wt{D}} $ denotes the empirical spectral distribution of $ \wt{D} $. 
This can be accomplished thanks to the results in \cite[Lemma 3.1]{bai-yao-2012} (see also \cite[Sec.\ 4]{silverstein-choi-95}):
\begin{align}
\lim_{d\to\infty} \sup\supp(\mu_{\wt{D}})
&= \sup\supp(\mu_1\boxplus\mu_2) \label{eqn:convg-to-add-convl} \\
&= \min_{\lambda>\sup\supp(Z)} s_{\mu_1\boxplus\mu_2}^{-1}(-1/\lambda) \notag \\
&= \min_{\lambda>\sup\supp(Z)} \lambda + \delta\expt{\frac{Z}{1 - Z/\lambda}} . \label{eqn:min-s-sum-inv}
\end{align}
The convergence in \Cref{eqn:convg-to-add-convl} holds almost surely since 
\begin{align}
    \mu_{\wt{D}} &= \mu_{\wt{D}_1 + \wt{D}_2}
    \to \mu_1 \boxplus \mu_2 \notag 
\end{align}
weakly 
\cite{voiculescu-free,speicher-93}. 
To solve the minimization problem in \Cref{eqn:min-s-sum-inv}, we observe that the function $s_{\mu_1\boxplus\mu_2}^{-1}(-1/\lambda)$ can be written in terms of $ \psi(\lambda;\delta) $ defined in \Cref{eqn:def-psi-fn}: 
\begin{align}
    s_{\mu_1\boxplus\mu_2}^{-1}(-1/\lambda)
    &= \lambda + \delta\expt{\frac{Z}{1 - Z/\lambda}}
    = \delta \lambda \paren{\frac{1}{\delta} + \expt{\frac{Z}{\lambda - Z}}}
    = \delta \cdot \psi(\lambda;\delta) . \notag 
\end{align}
Since $ \psi(\lambda;\delta) $ is convex in the first argument (cf.\ \Cref{lem:properties-phi-psi}), so is $ s_{\mu_1\boxplus\mu_2}^{-1}(-1/\lambda) $ as a function of $\lambda$. 
As a result, the minimizer $ \ol\lambda(\delta) $ in  \Cref{eqn:min-s-sum-inv} is the critical point of $ s_{\mu_1\boxplus\mu_2}^{-1}(-1/\lambda) $. 
That is, 
\begin{align}
\left. \frac{\diff}{\diff\lambda}  s_{\mu_1\boxplus\mu_2}^{-1}(-1/\lambda)\right|_{\lambda = \ol\lambda(\delta)} &= 1 - \delta\expt{\paren{\frac{Z}{\ol\lambda(\delta) - Z}}^2} = 0 , \notag 
\end{align}
i.e., $ \ol\lambda(\delta) $ is the solution to the following equation
\begin{align}
\expt{\paren{\frac{Z}{\ol\lambda(\delta) - Z}}^2} &= \frac{1}{\delta} . \label{eqn:fp-taustar} 
\end{align}
The minimum value in \Cref{eqn:min-s-sum-inv} is therefore
\begin{align}
s_{\mu_1\boxplus\mu_2}^{-1}(-1/\ol\lambda(\delta)) &= \ol\lambda(\delta)\paren{1 + \delta\expt{\frac{Z}{\ol\lambda(\delta) - Z}}} . \label{eqn:bulk-of-free-sum-formula} 
\end{align}

At this point, we have successfully computed the limiting value of $\sup\supp(\mu_{\wt{D}})$. 
However, recall that the original matrix we are interested in is $D = \frac{d}{n} (\wt{D}_1 + \wt{D}_2)$. 
Therefore, 
\begin{align}
\lim_{d\to\infty} \sup\supp(\mu_D)
&= \lim_{d\to\infty} \frac{d}{n} \sup\supp(\mu_{\wt{D}})
= \ol\lambda(\delta)\paren{\frac{1}{\delta} + \expt{\frac{Z}{\ol\lambda(\delta) - Z}}} , \notag 
\end{align}
where $ \ol\lambda(\delta) $ satisfies \Cref{eqn:fp-taustar}. 
This concludes the proof. 
\end{proof}

\subsection{Outlier eigenvalues of $D$}
\label{sec:outliers}

Finally, we need to understand the outliers in the spectrum of $ D $. 
\begin{lemma}
\label{lem:outlier}
Consider the setting of \Cref{sec:prelim}. Let \Cref{itm:assump-signal-distr,itm:assump-alpha,itm:assump-noise-distr,itm:assump-gaussian-design,itm:assump-proportional,itm:assump-preproc-spec} hold. 
Let the function $ s_{\mu_1\boxplus\mu_2}^{-1}(-1/\lambda) $ be given by \Cref{eqn:s-sum-inv-def}. 
Then, the following statements hold. 
\begin{enumerate}
    \item \label{itm:outlier-concl-1} $ \lambda^*(\delta_1) > \lambda^*(\delta_2) $; 
    
    \item \label{itm:outlier-concl-2} If $ \lambda^*(\delta_1) > \lambda^*(\delta_2) > \ol\lambda(\delta) $, then $ s_{\mu_1\boxplus\mu_2}^{-1}(-1/\lambda^*(\delta_1)) > s_{\mu_1\boxplus\mu_2}^{-1}(-1/\lambda^*(\delta_2)) $; 
    
    \item \label{itm:outlier-concl-3} For $ i\in \{1,2\} $, if $ \lambda^*(\delta_i) > \ol\lambda(\delta) $, then $ s_{\mu_1\boxplus\mu_2}^{-1}(-1/\lambda^*(\delta_i)) > s_{\mu_1\boxplus\mu_2}^{-1}(-1/\ol\lambda(\delta)) $; 
    
    \item \label{itm:outlier-concl-4}
    We have that, almost surely,
    \begin{align}
        \lim_{d\to\infty} \lambda_1(D) &= \frac{1}{\delta} \cdot s_{\mu_1\boxplus\mu_2}^{-1}(-1/\max\{\lambda^*(\delta_1),\ol\lambda(\delta)\}) , \label{eqn:lim-lam1-d-stieltjes} \\
        \lim_{d\to\infty} \lambda_2(D) &= \frac{1}{\delta} \cdot s_{\mu_1\boxplus\mu_2}^{-1}(-1/\max\{\lambda^*(\delta_2),\ol\lambda(\delta)\}) , \label{eqn:lim-lam2-d-stieltjes} \\
        \lim_{d\to\infty} \lambda_3(D) &= \frac{1}{\delta} \cdot \sup\supp(\mu_1\boxplus\mu_2) = \frac{1}{\delta} \cdot s_{\mu_1\boxplus\mu_2}^{-1}(-1/\ol\lambda(\delta)) . \label{eqn:lim-lam3-d-stieltjes} 
    \end{align}
\end{enumerate}
\end{lemma}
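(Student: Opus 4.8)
The plan is to establish the four claims by pairing the non-mixed GLM spectral result of \cite{mondelli-montanari-2018-fundamental} (\Cref{thm:glm-spec-thr-nonmixed}), applied separately to $\wt{D}_1$ and $\wt{D}_2$, with the free-probability outlier theorem of \cite{belinschi-outliers} (\Cref{thm:belinschi}), applied to the asymptotically free sum $\wt{D}_1 + O^\top \wt{D}_2 O$ constructed in \Cref{sec:free-add-conv}. The one computational identity I would use repeatedly is $s^{-1}_{\mu_1\boxplus\mu_2}(-1/\lambda) = \delta\,\psi(\lambda;\delta)$, immediate from \Cref{eqn:s-sum-inv-def} (cf.\ \Cref{rk:stieltjes-equals-psi}), together with the convexity and monotonicity properties of $\psi,\phi,\zeta,\ol\lambda$ from \Cref{lem:properties-phi-psi}, and the scaling relation $D = \frac{d}{n}(\wt{D}_1 + O^\top\wt{D}_2 O)$ in distribution with $d/n\to1/\delta$.

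First I would dispatch the three monotonicity statements, which are purely analytic. For \Cref{itm:outlier-concl-1}, note that $\lambda\mapsto\zeta(\lambda;\Delta)$ is non-decreasing (constant on $(\sup\supp(Z),\ol\lambda(\Delta)]$, then equal to the increasing function $\psi(\cdot;\Delta)$), that $\phi$ is strictly decreasing in $\lambda$ and independent of $\Delta$, and that $\zeta(\lambda;\Delta)$ is strictly decreasing in $\Delta$ for each fixed $\lambda$: on the branch $\lambda>\ol\lambda(\Delta)$ because $\partial_\Delta\psi(\lambda;\Delta)=-\lambda/\Delta^2<0$, and on the flat branch by the envelope theorem for $\min_\mu\psi(\mu;\Delta)$. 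Hence $\lambda\mapsto\zeta(\lambda;\Delta)-\phi(\lambda)$ is strictly increasing with the unique zero $\lambda^*(\Delta)$ strictly increasing in $\Delta$; since $\delta_1=\alpha\delta>(1-\alpha)\delta=\delta_2$ by \Cref{itm:assump-alpha}, we get $\lambda^*(\delta_1)>\lambda^*(\delta_2)$. For \Cref{itm:outlier-concl-2,itm:outlier-concl-3}, I would use that $\psi(\cdot;\delta)$ is convex with minimizer $\ol\lambda(\delta)$ and strictly increasing on $(\ol\lambda(\delta),\infty)$, so $\lambda\mapsto s^{-1}_{\mu_1\boxplus\mu_2}(-1/\lambda)=\delta\,\psi(\lambda;\delta)$ is strictly increasing for $\lambda>\ol\lambda(\delta)$ and equals $\sup\supp(\mu_1\boxplus\mu_2)$ at $\ol\lambda(\delta)$ by \Cref{lem:bulk-of-sum}: thus $\lambda^*(\delta_i)>\ol\lambda(\delta)$ gives \Cref{itm:outlier-concl-3}, and $\lambda^*(\delta_1)>\lambda^*(\delta_2)>\ol\lambda(\delta)$ puts both arguments on the increasing branch, giving \Cref{itm:outlier-concl-2}.

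For \Cref{itm:outlier-concl-4}, by \Cref{thm:glm-spec-thr-nonmixed} each $\wt{D}_i$ has limiting spectral distribution $\mu_i$ and, almost surely, exactly one eigenvalue leaves the bulk iff $\lambda^*(\delta_i)>\ol\lambda(\delta_i)$, in which case it converges to $\theta_i\coloneqq s^{-1}_{\mu_i}(-1/\lambda^*(\delta_i))$, so $s_{\mu_i}(\theta_i)=-1/\lambda^*(\delta_i)$. I would then invoke \Cref{thm:belinschi} for $\wt{D}_1 + O^\top\wt{D}_2 O$: its limiting spectrum is $\mu_1\boxplus\mu_2$ plus at most one outlier inherited from each summand, the one coming from $\theta_i$ being located at $\omega_i^{-1}(\theta_i)$, where $\omega_i$ is the subordination function of $\mu_1\boxplus\mu_2$ relative to $\mu_i$ (i.e.\ $s_{\mu_1\boxplus\mu_2}=s_{\mu_i}\circ\omega_i$), and surviving as a genuine outlier iff $\omega_i^{-1}(\theta_i)>\sup\supp(\mu_1\boxplus\mu_2)$. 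Since $\omega_i^{-1}(\theta_i)=s^{-1}_{\mu_1\boxplus\mu_2}(s_{\mu_i}(\theta_i))=s^{-1}_{\mu_1\boxplus\mu_2}(-1/\lambda^*(\delta_i))=\delta\,\psi(\lambda^*(\delta_i);\delta)$ (using the subordination relation and the explicit form of $s^{-1}_{\mu_1\boxplus\mu_2}$ from \Cref{sec:bulk}), the monotonicity above shows this exceeds $\sup\supp(\mu_1\boxplus\mu_2)$ precisely when $\lambda^*(\delta_i)>\ol\lambda(\delta)$; and since $\ol\lambda(\delta)>\ol\lambda(\delta_i)$, this is compatible with the existence condition $\lambda^*(\delta_i)>\ol\lambda(\delta_i)$ for the outlier of $\wt{D}_i$ alone. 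Ordering the at most two outliers via \Cref{itm:outlier-concl-1,itm:outlier-concl-2} and noting that the largest \emph{bulk} eigenvalue converges to $\sup\supp(\mu_1\boxplus\mu_2)=s^{-1}_{\mu_1\boxplus\mu_2}(-1/\ol\lambda(\delta))$ by \Cref{lem:bulk-of-sum} (there being no further outliers), the three largest eigenvalues of $\wt{D}_1+O^\top\wt{D}_2 O$ converge respectively to $s^{-1}_{\mu_1\boxplus\mu_2}(-1/\max\{\lambda^*(\delta_1),\ol\lambda(\delta)\})$, $s^{-1}_{\mu_1\boxplus\mu_2}(-1/\max\{\lambda^*(\delta_2),\ol\lambda(\delta)\})$ and $s^{-1}_{\mu_1\boxplus\mu_2}(-1/\ol\lambda(\delta))$; multiplying by $d/n\to1/\delta$ yields \Cref{eqn:lim-lam1-d-stieltjes,eqn:lim-lam2-d-stieltjes,eqn:lim-lam3-d-stieltjes}.

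The main obstacle I anticipate is the rigorous application of \Cref{thm:belinschi}: one must verify that its regularity hypotheses hold for $\mu_1\boxplus\mu_2$ (in particular that its right edge is a regular edge and that the relevant subordination function extends analytically past the support, so that both the transported-outlier formula and the ``survives iff outside the support'' dichotomy are valid in the almost-sure sense), and then carry out the subordination bookkeeping that pins the transported location to $s^{-1}_{\mu_1\boxplus\mu_2}(-1/\lambda^*(\delta_i))$. The remaining ingredients — the monotonicity of $\lambda^*,\ol\lambda,\zeta$ and the $d/n$ scaling — are routine given \Cref{lem:properties-phi-psi,lem:bulk-of-sum}.
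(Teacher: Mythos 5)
Your proposal is correct and follows essentially the same route as the paper: decompose $D$ into the asymptotically free sum, apply the non-mixed result (\Cref{thm:glm-spec-thr-nonmixed}) to each $\wt{D}_i$, transport the outliers $\theta_i$ via the subordination relation in \Cref{thm:belinschi} to $s^{-1}_{\mu_1\boxplus\mu_2}(-1/\lambda^*(\delta_i))=\delta\,\psi(\lambda^*(\delta_i);\delta)$, and settle the ordering claims through the convexity/monotonicity of $\psi,\phi,\zeta$ from \Cref{lem:properties-phi-psi}. Your item-1 argument (strict decrease of $\zeta(\lambda;\cdot)$ in $\Delta$ via $\partial_\Delta\psi=-\lambda/\Delta^2<0$ and the envelope theorem, which implicitly needs $\sup\supp(Z)>0$ from \Cref{itm:assump-preproc-spec}) is a slightly streamlined version of the paper's four-step case analysis, but the overall proof is the same.
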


\begin{remark}
\label{rk:eigval-order}
Recalling the definition $ \zeta(\lambda;\delta) = \psi(\max\{\lambda,\ol\lambda(\delta)\};\delta) $ (cf.\ \Cref{eqn:def-zeta-i}) and the relation $ \frac{1}{\delta}\cdot s_{\mu_1\boxplus\mu_2}^{-1}(-1/\lambda) = \psi(\lambda;\delta) $ (cf.\ \Cref{rk:stieltjes-equals-psi}), we can write the limiting values of $ \lambda_1(D),\lambda_2(D),\lambda_3(D) $ in \Cref{eqn:lim-lam1-d-stieltjes,eqn:lim-lam2-d-stieltjes,eqn:lim-lam3-d-stieltjes} as 
\begin{align}
    \zeta(\lambda^*(\delta_1);\delta) \ge 
    \zeta(\lambda^*(\delta_2);\delta) \ge 
    \zeta(\ol\lambda(\delta);\delta), \label{eqn:order}
\end{align}
respectively. 
To see why the above chain of inequalities holds, note that by \Cref{itm:outlier-concl-3} of \Cref{lem:outlier}, $ \zeta(\lambda^*(\delta_i); \delta)>\zeta(\ol\lambda(\delta);\delta) $ if $ \lambda^*(\delta_i)>\ol\lambda(\delta) $ and $ \zeta(\lambda^*(\delta_i); \delta)=\zeta(\ol\lambda(\delta);\delta) $ otherwise. 
So 
\begin{align}
    \zeta(\lambda^*(\delta_i); \delta)\ge\zeta(\ol\lambda(\delta);\delta) \label{eqn:order-1}
\end{align}
is always true for $i\in \{1,2\}$. 
Also, by \Cref{itm:outlier-concl-2} of \Cref{lem:outlier}, $ \zeta(\lambda^*(\delta_1);\delta)\ge\zeta(\lambda^*(\delta_2);\delta)>\zeta(\ol\lambda(\delta);\delta) $ if $ \lambda^*(\delta_1)\ge\lambda^*(\delta_2)>\ol\lambda(\delta) $. 
If $ \lambda^*(\delta_1)\ge\ol\lambda(\delta)\ge\lambda^*(\delta_2) $, $ \zeta(\lambda^*(\delta_2);\delta) = \zeta(\ol\lambda(\delta); \delta) \le \zeta(\lambda^*(\delta_1);\delta) $ by \Cref{eqn:order-1}. 
If $ \ol\lambda(\delta)\ge\lambda^*(\delta_1)\ge\lambda^*(\delta_2) $, $ \zeta(\lambda^*(\delta_1);\delta) = \zeta(\lambda^*(\delta_2);\delta) = \zeta(\ol\lambda(\delta); \delta) $. 
All cases have been exhausted in light of \Cref{itm:outlier-concl-1} of \Cref{lem:outlier}. 
In any case, 
\begin{align}
    \zeta(\lambda^*(\delta_1);\delta)\ge\zeta(\lambda^*(\delta_2);\delta) \label{eqn:order-2}
\end{align}
holds. 
\Cref{eqn:order} then follows from \Cref{eqn:order-1,eqn:order-2}. 
\end{remark}

\begin{proof}[Proof of \Cref{lem:outlier}]
The proof is divided into three parts. 
We first explicitly evaluate the theoretical predictions of the limiting values of the top three eigenvalues of $D$. 
The convergence of the outlier eigenvalues and the right edge of the bulk to the respective predictions is then formally justified in the second part. 
Finally, several properties concerning the spectral threshold and the limiting eigenvalues are proved in the third part. 

\paragraph{Limiting eigenvalues}
To understand the outlier eigenvalues of $D = D_1 + D_2$, we need to first understand the outlier eigenvalues of $D_1$ and $D_2$ individually. 
To calibrate the scaling, let us define
\begin{align}
D_1' \coloneqq \frac{1}{n_1}A_1^\top T_1 A_1 ,\quad 
D_2' \coloneqq \frac{1}{n_2}A_2^\top T_2 A_2 .\notag
\end{align}
Lemma 2 in \cite{mondelli-montanari-2018-fundamental} applies to the above matrices $D_1',D_2'$ and implies that each of $ D_1' $ and $ D_2' $ has a potential outlier eigenvalue $ \lambda_1(D_1') $ and $ \lambda_1(D_2') $, respectively. 
As $n_1,n_2,d\to\infty$ with $ n_1/d\to\delta_1 $ and $ n_2/d\to\delta_2 $, they converge almost surely to the following limiting values:
\begin{align}
\lim_{d\to\infty} \lambda_1(D_1') &= \zeta(\lambda^*(\delta_1); \delta_1) , \quad 
\lim_{d\to\infty} \lambda_1(D_2') = \zeta(\lambda^*(\delta_2); \delta_2) , \notag 
\end{align}
where $ \lambda^*(\delta_1) $ and $ \lambda^*(\delta_2) $ are the solutions to
\begin{align}
\zeta(\lambda^*(\delta_1); \delta_1) &= \phi(\lambda^*(\delta_1)) , \quad
\zeta(\lambda^*(\delta_2); \delta_2) = \phi(\lambda^*(\delta_2)) , \notag
\end{align}
respectively. 
For $ i\in\{1,2\} $, let us assume that $ \lambda_1(D_i') $ is indeed an outlier eigenvalue of $ D_i' $, that is, its limiting value $ \zeta(\lambda^*(\delta_i);\delta_i) $ lies outside the bulk of the limiting spectrum of $ D_i' $. 
According to Lemma 2 in \cite{mondelli-montanari-2018-fundamental}, this happens if and only if $ \lambda^*(\delta_i) > \ol\lambda(\delta_i) $. 
In this case, the limiting value of the outlier eigenvalue can be written more explicitly as 
\begin{align}
\zeta(\lambda^*(\delta_i);\delta_i) = \psi(\lambda^*(\delta_i);\delta_i) = \lambda^*(\delta_i) \paren{\frac{1}{\delta_i} + \expt{\frac{Z}{\lambda^*(\delta_i) - Z}}} , \label{eqn:outlier-of-summand} 
\end{align}
where $ \lambda^*(\delta_i) $ is the solution to 
\begin{align}
\expt{\frac{Z(G^2 - 1)}{\lambda^*(\delta_i) - Z}} &= \frac{1}{\delta_i} . \label{eqn:def-lam-star-explicit} 
\end{align}

Let us first translate the above result (i.e., \Cref{eqn:outlier-of-summand,eqn:def-lam-star-explicit}) regarding $ D_1',D_2' $ to $ \wt{D}_1,\wt{D}_2 $ defined in \Cref{eqn:def-di-tilde}. 
Since $ n_1/d\to\delta_1,n_2/d\to\delta_2 $ and $ \wt{D}_1 = \frac{n_1}{d}D_1',\wt{D}_2 = \frac{n_2}{d}D_2' $, we have that, almost surely,
\begin{align}
\lim_{d\to\infty} \lambda_1(\wt D_i) &= \delta_i\lambda^*(\delta_i)\paren{\frac{1}{\delta_i} + \expt{\frac{Z}{\lambda^*(\delta_i) - Z}}} 
= \lambda^*(\delta_i)\paren{1 + \delta_i\expt{\frac{Z}{\lambda^*(\delta_i) - Z}}}
\eqqcolon\theta_i , \notag 
\end{align}
where we have denoted the limiting value of $ \lambda_1(\wt D_i) $ by $ \theta_i $. 
In view of the definition of $s_{\mu_i}^{-1}$ in \Cref{eqn:s-inv-mui}, we recognize that 
\begin{align}
\theta_i = s_{\mu_i}^{-1}(-1/\lambda^*(\delta_i)) . \label{eqn:thetai-equals-si-inv}
\end{align}

Provided with the individual outlier of $\wt D_i$ (cf.\ \Cref{eqn:thetai-equals-si-inv}), 
we now invoke \cite[Theorem 2.1]{belinschi-outliers} 
to determine how an outlier of $\wt D_i$ is mapped to the spectrum of $\wt D$ by the free additive convolution. 
Specifically, 
the limiting value, denoted by $ \rho_i $, of the potential outlier of $ \wt{D} = \wt{D}_1+\wt{D}_2 $ resulting from $ \theta_i $ is given by 
\begin{align}
\rho_i \coloneqq w_i^{-1}(\theta_i) , \label{eqn:outliers-sum}
\end{align}
where $ w_1,w_2 $ are the pair of \emph{subordination functions} associated with the pair of distributions $ \mu_1,\mu_2 $. 

As the name suggests, $ w_1,w_2 $ enjoy the following subordination property (cf.\ \cite[Sec.\ 3.4.1]{belinschi-outliers}): 
\begin{align}
s_{\mu_1\boxplus\mu_2}(z) &= s_{\mu_1}(w_1(z))
= s_{\mu_2}(w_2(z)) = \frac{1}{z - (w_1(z) + w_2(z))} . \label{eqn:subord-prop} 
\end{align}
To understand the value of $ \rho_i = w_i^{-1}(\theta_i) $ (cf.\ \Cref{eqn:outliers-sum}), let us compute
\begin{align}
s_{\mu_1\boxplus\mu_2}(w_i^{-1}(\theta_i)) 
&= s_{\mu_i}(\theta_i) 
= -1/\lambda^*(\delta_i). \label{eqn:implies-wi-inv}
\end{align}
The first equality is by the subordination property (\Cref{eqn:subord-prop}) and the second one by the observation in \Cref{eqn:thetai-equals-si-inv}. 
\Cref{eqn:implies-wi-inv} then gives
\begin{align}
\rho_i &= w_i^{-1}(\theta_i) = s_{\mu_1\boxplus\mu_2}^{-1}(-1/\lambda^*(\delta_i)) . \label{eqn:rhoi} 
\end{align}

To translate the result in \Cref{eqn:rhoi} regarding $ \wt{D} $ to $ D $ in \Cref{eqn:d1-d2}, we simply note that $ D = \frac{d}{n}\wt{D} $ and $ d/n\to1/\delta $. 
Therefore, the limiting eigenvalue of $ D $ resulting from the outlier eigenvalue of $D_i$ is given by
\begin{align}
\frac{1}{\delta} \cdot \rho_i = \frac{1}{\delta} \cdot s_{\mu_1\boxplus\mu_2}^{-1}(-1/\lambda^*(\delta_i)) , \label{eqn:outlier-d-ref} 
\end{align}
almost surely.

\paragraph{Convergence of eigenvalues}
We then formally justify that the right edge of the bulk and the outlier eigenvalues of $D$ indeed converge to the theoretical predictions in \Cref{eqn:lim-bulk,eqn:outlier-d-ref}, respectively, as $d\to\infty$, therefore confirming the validity of the latter formulas. 
Let $ \cK_0 \coloneqq\supp(\mu_1\boxplus\mu_2) $. 
For $i\in\{1,2\}$, let $ \cK_i $ be the singleton set $ \{\rho_i\} $ if $ \theta_i\notin\supp(\mu_i) $ and $ \emptyset $ otherwise. 
Let $ \cK\coloneqq\cK_0\cup\cK_1\cup\cK_2 $. 
Then the first statement of \cite[Theorem 2.1]{belinschi-outliers} 
guarantees that for any $\eps>0$, 
\begin{align}
    \prob{\exists d_0,\,\forall d>d_0,\,\{\lambda_i(\wt D)\}_{i=1}^d\subset \cK_\eps} &= 1 , \label{eqn:use-belinschi-1}
\end{align}
where $ \cK_\eps $ denotes the \emph{$\eps$-enlargement} of $\cK$, i.e., 
\begin{align}
    \cK_\eps \coloneqq \curbrkt{\rho\in\bbR : \inf_{\rho'\in\cK} |\rho - \rho'| \le \eps} . \notag 
\end{align}
In words, \Cref{eqn:use-belinschi-1} says that almost surely for every sufficiently large dimension $d$, the spectrum of $\wt D$ is contained in an arbitrarily small neighbourhood of $\cK$. 
Furthermore, suppose $ \rho\in\cK_1\cup\cK_2 $ and $ \rho\notin\cK_0 $, that is, $\rho$ is an outlier in the limiting spectrum of $\wt D$.
Assume also that $ \eps>0 $ is sufficiently small so that $ (\rho - 2\eps,\rho+2\eps) \cap \cK = \{\rho\} $. 
Then
\begin{align}
    \prob{\exists d_0,\,\forall d>d_0,\,\abs{\{\lambda_i(\wt D)\}_{i=1}^d \cap (\rho-2\eps,\rho+2\eps)} = \indicator{w_1(\rho) = \theta_1} + \indicator{w_2(\rho) = \theta_2}} &= 1 . 
    \label{eqn:use-belinschi-2}
\end{align}
In words, \Cref{eqn:use-belinschi-2} says that almost surely for every sufficiently large dimension $d$, the outlier $ \theta_1 $ (resp.\ $\theta_2$) in the limiting spectrum of $\wt D_1$ (resp.\ $\wt D_2$) is mapped to $ w_1^{-1}(\theta_1) $ (resp.\ $w_2^{-1}(\theta_2)$) in the limiting spectrum of $\wt D$. 
Since $ D,D_1,D_2 $ and $ \wt D,\wt D_1,\wt D_2 $ only differ by a $\delta$ factor, similar statements hold true for $ D,D_1,D_2 $ as well. 

Combining \Cref{eqn:outlier-d-ref,eqn:use-belinschi-1,eqn:use-belinschi-2} yields \Cref{eqn:lim-lam1-d-stieltjes,eqn:lim-lam2-d-stieltjes,eqn:lim-lam3-d-stieltjes} in \Cref{itm:outlier-concl-4} of \Cref{lem:outlier}. 

\paragraph{Properties of spectral threshold and limiting eigenvalues}
We identify under what condition $ \rho_i = w_i^{-1}(\theta_i) $ is an outlier in the limiting spectrum of $\wt D$. 
For this to be the case, $\theta_i$ is necessarily an outlier in the limiting spectrum of $\wt D_i$, which is assumed in the preceding derivations. 
As Lemma 2 in \cite{mondelli-montanari-2018-fundamental} guaranteed, a sufficient and necessary condition for this event is $ \lambda^*(\delta_i) > \ol\lambda(\delta_i) $. 
Under the free additive convolution, the outlier $\theta_i$ of $\wt D_i$ is then mapped to $w_i^{-1}(\theta_i)$. 
Let us compare $w_i^{-1}(\theta_i)$ with $ \sup\supp(\mu_1\boxplus\mu_2) $, i.e., the right edge of the bulk of the limiting spectral distribution of $ \wt D = \wt D_1 + \wt D_2 $. 
The former quantity equals $ s_{\mu_1\boxplus\mu_2}^{-1}(-1/\lambda^*(\delta_i)) $ (as derived in \Cref{eqn:rhoi}) and the latter one equals $ s_{\mu_1\boxplus\mu_2}^{-1}(-1/\ol\lambda(\delta)) $ (see \Cref{eqn:bulk-of-free-sum-formula} in the proof of \Cref{lem:bulk-of-sum}). 
Recall the following two facts: 
\begin{enumerate}
    \item $ s_{\mu_1\boxplus\mu_2}^{-1}(-1/\lambda) = \delta \cdot \psi(\lambda;\delta) $ (as observed in \Cref{rk:stieltjes-equals-psi});
    \item $ \psi(\lambda;\delta) $ is convex in $\lambda$ and increasing for $ \lambda\in[\ol\lambda(\delta),\infty) $ (proved in \Cref{lem:properties-phi-psi}). 
\end{enumerate}
We therefore conclude that $s_{\mu_1\boxplus\mu_2}^{-1}(-1/\lambda^*(\delta_i)) > s_{\mu_1\boxplus\mu_2}^{-1}(-1/\ol\lambda(\delta))$ if $ \lambda^*(\delta_i) > \ol\lambda(\delta) $. 
This establishes \Cref{itm:outlier-concl-3} of \Cref{lem:outlier}. 
This condition is more stringent than the previous one $ \lambda^*(\delta_i) > \ol\lambda(\delta_i) $. 
This can be seen by inspecting the definitions (see, e.g., \Cref{eqn:def-lam-bar-explicit} in \Cref{rk:explicit-formulas}) of $\ol\lambda(\delta_i)$ and $ \ol\lambda(\delta) $:
\begin{align}
    \expt{\paren{\frac{Z}{\ol\lambda(\delta) - Z}}^2} &= \frac{1}{\delta} , \quad 
    \expt{\paren{\frac{Z}{\ol\lambda(\delta_i) - Z}}^2} = \frac{1}{\delta_i} , \label{eqn:lambda-bar-order}
\end{align}
respectively, and realizing that $ \ol\lambda(\delta) > \ol\lambda(\delta_i) $ since $ \delta > \delta_i $. 

We pause and make the following remark regarding the effect of the free additive convolution on the outliers in the spectra of the addends. 
Comparing \Cref{eqn:rhoi} with the limiting value of the right edge of the bulk (cf.\ \Cref{eqn:bulk-of-free-sum-formula}), we note the following: $ \lambda_1(\wt{D}_i) $ being an outlier eigenvalue of $ \wt{D}_i $ does \emph{not} imply that its image $\rho_i$ under the free additive convolution is also an outlier eigenvalue of $ \wt{D} = \wt{D}_1+\wt{D}_2 $. 
In fact, it can be buried strictly inside the bulk, which happens if $ \ol\lambda(\delta_i) < \lambda^*(\delta_i) < \ol\lambda(\delta) $. 

We then show $ \lambda^*(\delta_1) > \lambda^*(\delta_2) $ in \Cref{itm:outlier-concl-1} of \Cref{lem:outlier}. 
Recall that $ \lambda^*(\delta_1) $ and $ \lambda^*(\delta_2) $ are the unique solutions to $ \zeta(\lambda^*(\delta_1); \delta_1) = \phi(\lambda^*(\delta_1)) $ and $ \zeta(\lambda^*(\delta_2); \delta_2) = \phi(\lambda^*(\delta_2)) $, respectively. 
Since $ \zeta(\cdot;\delta_1),\zeta(\cdot;\delta_2) $ are non-decreasing and $ \phi(\cdot) $ is strictly decreasing, it suffices to show 
\begin{align}
   \zeta(\lambda;\delta_1) < \zeta(\lambda;\delta_2) \label{eqn:zeta-toshow}
\end{align}
for any $ \lambda>\sup\supp(Z) $. 
We do so in four steps. 
(The following arguments are best understood with \Cref{fig:psi-phi-zeta} in mind.)
\begin{enumerate}
    \item \label{itm:step1} First we claim that $ \ol\lambda(\delta_1) > \ol\lambda(\delta_2) $. 
    This follows from a similar observation as in \Cref{eqn:lambda-bar-order} and the assumption $ \alpha>1/2 $ (cf.\ \Cref{itm:assump-alpha}) which implies $ \delta_1>\delta_2 $. 
    
    \item \label{itm:step2} Second we claim that $ \psi(\ol\lambda(\delta_1); \delta_1) < \psi(\ol\lambda(\delta_2); \delta_2) $. 
    Indeed,
    \begin{align}
        \psi(\ol\lambda(\delta_1); \delta_1) &< \psi(\ol\lambda(\delta_2); \delta_1)
        < \psi(\ol\lambda(\delta_2); \delta_2) . \notag 
    \end{align}
    The first inequality follows since $ \psi(\lambda;\delta_1) $ is strictly decreasing for $ \lambda\le\ol\lambda(\delta_1) $ (see \Cref{itm:property-2} of \Cref{lem:properties-phi-psi}) and $\ol\lambda(\delta_1) > \ol\lambda(\delta_2)$ as shown in \Cref{itm:step1} above. 
    The second inequality follows since 
    \begin{align}
        \psi(\cdot;\delta_1)<\psi(\cdot;\delta_2) \label{eqn:psi-order}
    \end{align}
    for any $ \lambda>\sup\supp(Z) $ (see the definition of $\psi$ in \Cref{eqn:def-psi-fn} and also \Cref{itm:property-3} of \Cref{lem:properties-phi-psi}). 
    Note that in this step we use $ \sup\supp(Z)>0 $ in \Cref{itm:assump-preproc-spec}. 
    This shows that \Cref{eqn:zeta-toshow} holds for any $ \lambda\le\ol\lambda(\delta_2) $. 
    
    \item \label{itm:step3} We then claim that \Cref{eqn:zeta-toshow} holds for any $ \lambda\ge\ol\lambda(\delta_1) $. 
    This is because, in this regime, we have
    \begin{align}
        \zeta(\lambda; \delta_1) = \psi(\lambda; \delta_1)
        < \psi(\lambda; \delta_2)
        = \zeta(\lambda; \delta_2) \notag 
    \end{align}
    using the definition of $\zeta(\cdot;\delta_i)$ (cf.\ \Cref{eqn:def-zeta-i}) and \Cref{eqn:psi-order}. 
    
    \item \label{itm:step4} Finally, it remains to verify that \Cref{eqn:zeta-toshow} holds for $ \ol\lambda(\delta_2)\le\lambda\le\ol\lambda(\delta_1) $. 
    Indeed, we have
    \begin{align}
    \zeta(\lambda; \delta_1) &= \psi(\ol\lambda(\delta_1); \delta_1)
    <\psi(\ol\lambda(\delta_2); \delta_2)
    <\psi(\lambda; \delta_2) . \notag 
    \end{align}
    The equality is by definition of $\zeta(\cdot;\delta_1)$. 
    The first inequality is by \Cref{itm:step2} above. 
    The second inequality follows since $\psi(\cdot;\delta_2)$ is strictly increasing for $ \lambda\ge\ol\lambda(\delta_2) $ (see \Cref{itm:property-2} of \Cref{lem:properties-phi-psi}). 
\end{enumerate}
Combining \Cref{itm:step1,itm:step2,itm:step3,itm:step4} above then proves \Cref{eqn:zeta-toshow} which implies \Cref{itm:outlier-concl-1} of \Cref{lem:outlier}. 

Since $\ol\lambda(\delta)$ is the (unique) critical point of $s_{\mu_1\boxplus\mu_2}^{-1}(-1/\lambda)$ which is increasing for $ \lambda\ge\ol\lambda(\delta) $, \Cref{itm:outlier-concl-2} of \Cref{lem:outlier} then follows. This concludes the argument.
\end{proof}

\section{Joint distribution via Approximate Message Passing}
\label{sec:overlap-gamp}

The limiting joint distribution in \Cref{thm:main-thm-joint-dist} is obtained via a \emph{generalized approximate message passing} (GAMP) algorithm whose iterates converge to the top two  eigenvectors of $D = A^\top T A $.
Within this section, we adopt the following rescaling for the convenience of applying the GAMP machinery:
\begin{align}
    \Abar  &\coloneqq \frac{1}{\sqrt{d}}\, A , \quad 
    \xone \coloneqq \sqrt{d}\, x_1^* , \quad 
    \xtwo \coloneqq \sqrt{d}\, x_2^* , \quad 
    \Dbar \coloneqq \Abar^\top \, T\, \Abar  = \frac{n}{d} A^\top TA . \label{eqn:rescaled} 
\end{align}
Due to \Cref{itm:assump-signal-distr,itm:assump-gaussian-design}, we have $ \Abar _{i,j} \iid \cN(0,1/d) $ and $\xone,\xtwo \iid \unif(\sqrt{d}\,\bbS^{d-1})$. 
Let $ \ol{a}_i^\top\in\bbR^d $ denote the $i$-th row of $\Abar$. 
Then, we have
\begin{align}
    y_i &= q\paren{\inprod{a_i}{\eta_i x_1^* + (1-\eta_i) x_2^*}, \eps_i}
    = q\paren{\inprod{\ol{a}_i}{\eta_i \xone + (1-\eta_i) \xtwo}, \eps_i} . \notag 
\end{align}
Therefore, $y\in\bbR^n$ and related quantities such as $T\in\bbR^{n\times n}$ (defined in \Cref{eqn:def-mtx-t-and-d}) do not have to be rescaled. 
The overlaps are invariant under rescaling of $D$. 
Furthermore, since $n/d\to\delta$, the limiting eigenvalues of $\Dbar$ are equal to those of $D$ multiplied by $\delta$ in view of \Cref{eqn:rescaled}. 



We first extend the GAMP algorithm for the non-mixed GLM \cite{RanganGAMP} and its associated state evolution analysis to the mixed GLM model.  The GAMP algorithm is defined in terms of a sequence of Lipschitz functions $g_t:\mathbb{R}^2 \to \mathbb R$ and  $f_{t+1}:\mathbb{R}^3 \to \mathbb R$, for $t \geq 0$.  For $t\ge 0$, the algorithm iteratively computes $u^t, \tu^t \in \bbR^n$ and $v^{t+1}, \tv^{t+1} \in \bbR^d$ as follows:
\begin{align}
\begin{split}
    u^t &= \frac{1}{\sqrt{\delta}} \Abar  \tv^t - \sfb_t \tu^{t-1}, \quad  \tu^t = g_{t}(u^{t};y) ,  \\
    v^{t+1} &= \frac{1}{\sqrt{\delta}} \Abar^\top \tu^t - \sfc_t  \tv^t, \quad  \tv^{t+1}=f_{t+1}(v^{t+1}; \, \xone, \xtwo). 
\end{split}
\label{eq:gamp-eqn-general}
\end{align}
The iteration is initialized with a given $\tv^0 \in \bbR^d$ and $\tu^{-1}=0_n$. The functions $f_t$ and $g_t$ are  applied component-wise, i.e., $f_t(v^t; \, \xone, \xtwo)=(f_t(v^t_1; \, \ol{x}_{1,1}^*, \ol{x}_{2,1}^*)$, $\ldots, f_t(v^t_d; \, \ol{x}_{1,d}^*, \ol{x}_{2,d}^*))$ and $g_t(u^t; y)=(g_t(u^t_1; y_1), \ldots, g_t(u^t_n; y_n))$. The scalars $\sfb_t, \sfc_t$ are defined as
\begin{equation}
\sfb_t =\frac{1}{n}\sum_{i=1}^d f_t'(v_i^t; \, \ol{x}_{1,i}^*, \ol{x}_{2,i}^*), \qquad
\sfc_t = \frac{1}{n}\sum_{i=1}^n g_t'(u_i^t; y_i),
\label{eq:GAMP_onsager}
\end{equation}
where $f_t'$ and $g_t'$ each denote the derivative with respect to the first argument. 

An important feature of the GAMP algorithm is that as $d \to \infty$, the empirical distributions of the iterates $u^t$ and $v^{t+1}$ converge to the laws of well-defined scalar random variables $U_t$ and $V_{t+1}$, respectively. Specifically, for $t \ge 0$, let 
\begin{equation}
    U_t \coloneqq \mu_{1,t} G_1 + \mu_{2,t} G_2 + W_{U,t} , \quad 
    V_{t+1} \coloneqq \chi_{1,t+1} X_1 + \chi_{2,t+1} X_2 +  W_{V,t+1},
    \label{eq:UtVt_def}
\end{equation}
where $(G_1, G_2, W_{U,t}) \sim \normal(0,1) \otimes \normal(0,1) \otimes \normal(0, \sigma_{U,t}^2) $, and 
$(X_1, X_2, W_{V,t+1})  \sim \normal(0,1) \otimes \normal(0,1) \otimes \normal(0,\sigma_{V,t+1}^2)$. The random variables $X_1, X_2$ are distributed according to limiting laws of the signals $\xone, \xtwo$, and $G_1, G_2$ according to the limiting laws of $\Abar \xone, \Abar \xtwo$. Since $\xone, \xtwo$ are independent and uniformly distributed on the sphere, we have 
$X_1, X_2\iid\normal(0, 1)$.
The deterministic coefficients 
$(\mu_{1,t}, \mu_{2,t}, \sigma_{U,t}, \chi_{1,t+1}, \chi_{2, t+1}, \sigma_{V, t+1})$ are computed using the following state evolution recursion:
\begin{align}
   &  \mu_{1,t} = \frac{1}{\sqrt{\delta}} \E [ X_1 f_t(V_t; \, X_1, X_2) ], \quad
    \mu_{2,t} = \frac{1}{\sqrt{\delta}} \E[ X_2 f_t(V_t; \, X_1, X_2) ],  \label{eq:muU_update} \\
   &  \sigma_{U,t}^2 = \frac{1}{\delta}\E[ f_t(V_t; \, X_1, X_2)^2 ] - \mu_{1,t}^2 - \mu_{2,t}^2 \, ,  \nonumber \\
   &  \chi_{1,t+1} =  \sqrt{\delta} \left( \E[G_1 g_t(U_t; \tY) ] - \E[ g_t'(U_t; \tY)]  \mu_{1,t} \right),  \notag \\
   & \chi_{2,t+1} =  \sqrt{\delta} \left( \E [ G_2 g_t(U_t; \tY) ] - \E [g_t'(U_t; \tY)]  \mu_{2,t} \right), \nonumber \\
    & \sigma_{V,t+1}^2 = \E[g_t(U_t; \tY)^2 ]. \nonumber 
\end{align}
Here the random variable $\tY$ is given by 
\begin{equation}
    \tY= q(\eta G_1 + (1-\eta)G_2, \, \eps), \text{ where } (G_1, G_2, \eta, \eps) \sim \normal(0,1) \ot \normal(0,1) \ot \text{Bern}(\alpha) \ot P_{\eps}.
    \label{eq:G1G2Y_joint}
\end{equation}
The state evolution recursion is initialized in terms of the limiting correlation of the initializer $\tv^0$ with each of the signals $\xone$ and $\xtwo$. The existence of these limiting correlations is guaranteed by imposing the following condition on  $\tv^0$:

\begin{enumerate}[label=(A\arabic*)]
\setcounter{enumi}{\value{assctr}}
    \item\label[ass]{ass:gamp-init} The initializer $\tv^0 \in \bbR^d$ is independent of $\Abar $. Furthermore, there exists a Lipschitz $F_0: \bbR^2 \to \bbR$ such that  
    \begin{align}
        \lim_{d \to \infty}  \frac{\langle  \tv^0, \,  \Phi(\xone, \xtwo) \rangle}{d} 
        = \E[ F_0(X_1, X_2) \Phi(X_1, X_2) ] \ \text{ almost surely},
    \end{align}
    for any Lipschitz $\Phi:\reals^2 \to \reals$. 
    Here $X_1, X_2 \iid \normal(0,1)$.
\setcounter{assctr}{\value{enumi}}
\end{enumerate}

This assumption is typical in AMP algorithms \cite{Feng22AMPTutorial}, and our initializer for proving \Cref{thm:main-thm-joint-dist}  will be $\tv^0=0_d$, which trivially satisfies \Cref{ass:gamp-init}.
\Cref{ass:gamp-init} allows us to initialize the state evolution recursion as:
\begin{align}
\begin{split}
    &\mu_{1,0} =\frac{1}{\sqrt{\delta}} \lim_{d \to \infty} \frac{\langle \xone \, , \, \tv^0   \rangle}{d} = \frac{1}{\sqrt{\delta}} \E[ F_0(X_1, X_2) X_1 ], \\
    &\mu_{2,0} =\frac{1}{\sqrt{\delta}} \lim_{d \to \infty} \frac{\langle \xtwo \, , \, \tv^0   \rangle}{d} = \frac{1}{\sqrt{\delta}} \E[ F_0(X_1, X_2) X_2 ], \\
    &\sigma_{U,0}^2= \frac{1}{\delta} \lim_{d \to \infty} \frac{\normtwo{\tv^0}^2}{d} - \mu_{1,0}^2 - \mu_{2,0}^2 =  \frac{1}{\delta} \E[F_0(X_1, X_2)^2]  - \mu_{1,0}^2 - \mu_{2,0}^2.
\end{split}
    \label{eq:SE_gen_init}
\end{align}

 The sequences of random variables $(W_{U,t})_{t \geq 0}$ and 
$(W_{V,t+1})_{t \geq 0}$ in \Cref{eq:UtVt_def} are each jointly Gaussian with zero mean and the following covariance structure:
\begin{equation}
\E[W_{U,0} W_{U,t}]  = 
\frac{1}{\delta} \E[F_0(X_1, X_2) \, f_t(V_t; X_1, X_2) ] - \mu_{1,0} \mu_{1,t} - \mu_{1,0} \mu_{2,t}, \qquad t \geq 1, 
\label{eq:WV_corr_init}
\end{equation}
and for $r, t \geq 1$: 
\begin{align}
\label{eq:WV_corr}
& \E [ W_{V,r} W_{V,t} ]  = 
\E\left[g_{r-1}(U_{r-1}; \tY) \, g_{t-1}(U_{t-1}; \tY) \right],  \\
& \E[W_{U,r} W_{U,t}]   = 
 \frac{1}{\delta}\E [  f_r(V_r; \, X_1, X_2) f_t(V_t; \, X_1, X_2) ] - \mu_{1,r} \mu_{1,t} - \mu_{1,r} \mu_{2,t}.
\label{eq:WU_corr} 
\end{align}
Note that for $r=t$ we have $\E [ W_{U,t}^2 ] = \sigma_{U,t}^2$ and  $\E [W_{V,t+1}^2 ]=\sigma_{V,t}^2$.

The state evolution result for the GAMP  is stated in terms of pseudo-Lipschitz test functions (see \Cref{eq:PL2_prop}). 

\begin{proposition}[State evolution]
Consider the setup of \Cref{thm:main-thm-joint-dist}  and the GAMP iteration in \Cref{eq:gamp-eqn-general}, 
with initialization $\tv^0$ that satisfies \Cref{ass:gamp-init}. Assume that for $t \ge 0$, the functions $g_t: \reals^2 \to \reals$ and $f_{t+1}: \reals^3 \to \reals$ are Lipschitz.  Let $ g_1 \coloneqq \Abar \xone, g_2\coloneqq \Abar \xtwo $. 
Then, the following holds almost surely for any $\PL(2)$ function $\Psi: \reals^{t+3}  \to \reals$, for $t \geq 0$:
\begin{align}
& \lim_{n \to \infty}\frac{1}{n} \sum_{i=1}^n \Psi( g_{1, i}, g_{2, i}, u^t_i, u^{t-1}_i, \ldots, u^0_{i} ) = \E[ \Psi( G_1, G_2, \, U_t, \, U_{t-1}, \, \ldots, U_0) ], \label{eq:psiG} \\
& \lim_{d \to \infty} \frac{1}{d} \sum_{i=1}^d \Psi(\ol{x}_{1,i}^*, \ol{x}_{2,i}^*, v^{t+1}_i, v^{t}_i, \ldots, v^1_i) = \E[ \Psi(X_1, \, X_2, \, V_{t+1}, \, V_{t}, \, \ldots, V_1) ], \label{eq:psiX}
\end{align}
where the  distributions of the random vectors $( G_1, G_2, U_t, \ldots, U_{0})$ and $(X_1, X_2, V_{t+1}, \ldots, V_1)$ are given by the state evolution recursion in \Cref{eq:UtVt_def} to \eqref{eq:WU_corr}.
\label{prop:GAMP_SE}
\end{proposition}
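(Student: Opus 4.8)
The plan is to prove \Cref{prop:GAMP_SE} by recognizing the GAMP iteration in \Cref{eq:gamp-eqn-general} as a particular instance of the \emph{abstract AMP} recursion with matrix-valued iterates and memory, for which a rigorous state evolution result is available (see \cite{javanmard2013state} and \cite[Sec.~6]{Feng22AMPTutorial}). The point of departure from the classical GLM analysis is that the standard GAMP state evolution only tracks the correlation of the iterates with a \emph{single} signal, whereas here we must carry the pair $(\xone,\xtwo)$ through the recursion simultaneously; this is exactly what the matrix-valued abstract AMP framework accommodates, by letting the ``signal'' directions be two-dimensional.

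First I would symmetrize the asymmetric two-block iteration: stacking $(\tu^t,\tv^t)\in\bbR^{n+d}$ and absorbing the rescaled rectangular design $\tfrac{1}{\sqrt\delta}\Abar$ into the off-diagonal blocks of a symmetric $(n+d)\times(n+d)$ Gaussian matrix turns \Cref{eq:gamp-eqn-general} into a memory-one symmetric AMP, with the scalars $\sb_t,\sc_t$ of \Cref{eq:GAMP_onsager} playing the role of the Onsager debiasing coefficients prescribed by the abstract recursion (the derivatives being taken with respect to the first arguments of $f_t,g_t$). The key remaining issue is that the observation vector $y$, and hence the denoiser $g_t(\cdot\,;y)$, depends on $\Abar$ through $g_1=\Abar\xone$ and $g_2=\Abar\xtwo$. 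Following the conditioning technique of Bayati--Montanari, I would write $\Abar$ conditioned on $\{\Abar\xone=g_1,\ \Abar\xtwo=g_2\}$ as a rank-two correction plus a fresh Gaussian matrix supported on the orthogonal complement of $\spn\{\xone,\xtwo\}$; since $\xone,\xtwo$ are asymptotically orthonormal by \Cref{itm:assump-signal-distr}, the correction term contributes precisely the two deterministic signal directions $X_1,X_2$ appearing in \Cref{eq:UtVt_def}, while the orthogonal Gaussian part produces the noise components $W_{U,t},W_{V,t+1}$.

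Treating $(g_1,g_2)$ — equivalently the triple $(G_1,G_2,\eta,\eps)$ and the induced observation $\tY$ of \Cref{eq:G1G2Y_joint} — as exogenous side information, the abstract-AMP master theorem then yields joint convergence of the empirical distributions of $(g_1,g_2,u^0,\dots,u^t)$ and of $(\xone,\xtwo,v^1,\dots,v^{t+1})$ to the jointly Gaussian laws in \Cref{eq:psiG,eq:psiX}. A direct computation of the limiting first and second moments — using the independence of $X_1$ and $X_2$ (respectively $G_1$ and $G_2$), Lipschitzness of $f_t,g_t$, and Gaussian integration by parts for the Onsager-cancelled cross terms — identifies the coefficients $(\mu_{1,t},\mu_{2,t},\sigma_{U,t},\chi_{1,t+1},\chi_{2,t+1},\sigma_{V,t+1})$ and the covariances in \Cref{eq:muU_update,eq:WV_corr_init,eq:WV_corr,eq:WU_corr}; the initializations \Cref{eq:SE_gen_init,eq:WV_corr_init} come from \Cref{ass:gamp-init} applied to $\tv^0$. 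Finally, upgrading from Lipschitz to $\PL(2)$ test functions follows the standard route once one has $L^2$ convergence of the relevant empirical moments (equivalently, Wasserstein-$2$ convergence), which uses that $P_\eps$ has bounded second moment (\Cref{itm:assump-noise-distr}).

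The main obstacle will be the bookkeeping in the reduction to the abstract matrix-valued AMP with two signals: one must verify that embedding $(\xone,\xtwo)$ as a pair of deterministic directions, conditioning on both $g_1$ and $g_2$ at once, and matching the resulting state-evolution covariance to the claimed two-signal recursion all go through with no hidden cross-correlations — in particular that the $\eta$-dependent mixing in $\tY$ enters only through $g_t(\cdot\,;\tY)$ and does not couple the noise $W_{U,t}$ with $(X_1,X_2)$. A secondary technical point is preserving almost-sure (rather than in-probability) convergence through the conditioning argument, which proceeds via the by-now-standard concentration estimates for AMP iterates.
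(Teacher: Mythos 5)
Your strategy is viable but genuinely different from the paper's, and it has one soft spot worth flagging. The paper does \emph{not} condition on $\{\Abar\xone=g_1,\ \Abar\xtwo=g_2\}$ at all: instead it absorbs the dependence of $y$ on $\Abar$ by enlarging the iterate, defining $e^t=[g_1\ g_2\ u^t]$ and $h^{t+1}=v^{t+1}-\chi_{1,t+1}\xone-\chi_{2,t+1}\xtwo$ as in \Cref{eq:etht1_def}, and choosing $\brf_t$ so that its first two output columns are $\sqrt{\delta}\,\xone$ and $\sqrt{\delta}\,\xtwo$; the design then regenerates $g_1,g_2$ at every iteration, and the extra Onsager terms for these two columns (the entries $\E[\partial_1 g_t]$, $\E[\partial_2 g_t]$ of $\sC_t$ in \Cref{eq:sBsC_def}, identified with $\chi_{1,t+1},\chi_{2,t+1}$ via Stein's lemma) are exactly cancelled by the recentering in the definition of $h^{t+1}$. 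This makes \Cref{eq:gamp-eqn-general} literally a special case of the matrix-valued abstract AMP of \cite{javanmard2013state}, \cite[Sec.~6.7]{Feng22AMPTutorial}, so state evolution is quoted rather than re-proved; what the embedding buys is that the driving matrix stays i.i.d.\ Gaussian and the observation enters only through the third column of $e^t$, with no conditioning step anywhere.

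Your route --- symmetrize, condition \`a la Bayati--Montanari on both $\Abar\xone$ and $\Abar\xtwo$, treat $(g_1,g_2,\veta,\veps)$ as exogenous side information, and then match moments --- can be pushed through, but not by a bare citation of the abstract master theorem. After conditioning, the matrix driving the recursion is $\wt{A}P^{\perp}$ (a Gaussian matrix killed on $\spn\{\xone,\xtwo\}$) plus a rank-two drift $\E[\Abar\mid g_1,g_2]$ applied to the \emph{current} iterate, so the ensemble is no longer i.i.d.\ and the drift is iterate-dependent rather than exogenous; neither feature is covered by the hypotheses of the abstract AMP result you intend to invoke. Making this rigorous means either showing the projection and drift effects are tracked/negligible through an induction (essentially redoing a slice of Bolthausen's argument, iteration by iteration, while preserving almost-sure convergence), or re-deriving a state evolution for this conditioned ensemble --- which is precisely the bookkeeping you flag as the main obstacle, and precisely what the paper's column-embedding trick is designed to avoid. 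Your moment identification (the cancellation producing $\chi_{i,t+1}=\sqrt{\delta}(\E[G_i g_t(U_t;\tY)]-\E[g_t'(U_t;\tY)]\mu_{i,t})$, the initialization from \Cref{ass:gamp-init}, and the PL(2) upgrade) is consistent with \Cref{eq:UtVt_def}--\eqref{eq:WU_corr}, so the gap is one of justification of the reduction step, not of the target formulas.
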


The proof of the proposition, given in \Cref{sec:se-gamp-mixed-glm}, uses a reduction to an abstract AMP recursion with matrix-valued iterates for which a state evolution result was established in \cite{javanmard2013state}.

The result in \Cref{eq:psiX} is equivalent to the statement that the joint empirical  distribution of the rows of $(\xone, \xtwo, v^t, \ldots, v^1)$ converges in Wasserstein-$2$ distance to the joint law of 
$(X_1, X_2, V_t, \ldots, V_1)$ (see \cite[Corollary 7.21]{Feng22AMPTutorial}). A similar equivalence holds for the result in \Cref{eq:psiG}.

\begin{remark}
\label{rem:det_Onsager}
 The result in \Cref{prop:GAMP_SE} also applies to the  GAMP algorithm in which the memory coefficients $(\sfb_t, \sfc_t)$ in \Cref{eq:GAMP_onsager} are replaced with their deterministic limits $\bar{\sfb}_t, \bar{\sfc}_t$ computed via state evolution: 
 \begin{equation}
     \bar{\sfb}_t = \frac{1}{\delta} \E[ f_t'(V_t; \, X_1, X_2) ],  \qquad  \bar{\sfc}_t = \E[ g_t'(U_t; \tY) ].
     \label{eq:detOnsager}
 \end{equation}
This equivalence follows from an argument similar to  \cite[Remark 4.3]{Feng22AMPTutorial}.
\end{remark}

\subsection{GAMP as a method to compute the linear and spectral estimators}
\label{sec:GAMP_as_method}

Consider the GAMP iteration in \Cref{eq:gamp-eqn-general} with the initializer $\tv^0=0$, and the following choice of functions: 
\begin{equation}
\begin{split}
& g_0(u^0; y) = \sqrt{\delta} \cL(y),  \qquad f_1(v; \, \xone, \xtwo)= f(\xone, \xtwo), \\
 &  g_t(u; \, y) = \sqrt{\delta} \, u \, \cF(y), \quad f_{t+1}(v; \, \xone, \xtwo) = \frac{v}{\beta_{t+1}}, \quad t \ge 1,
   \label{eq:ft_gt_choice}
   \end{split}
\end{equation}
where $\cF: \reals \to \reals$ is bounded and Lipschitz,  $f: \reals \to \reals$ is Lipschitz,  and $\beta_{t+1}$ is a constant, defined iteratively for $t \ge 0$ via the state evolution equations below (\Cref{eq:beta_t_def}). To prove \Cref{thm:main-thm-joint-dist}, we will consider two different choices for the pair of functions $(f, \cF)$, in terms of the spectral preprocessing function $\cT$ (see \Cref{eq:F1x1,eq:F2x2}). 

With the above 
choice of $f_t, g_t$, the memory coefficients in \Cref{eq:GAMP_onsager} are given by
\begin{equation}\label{eq:coeffbct}
      \sfc_0= \sfb_1=0, \qquad
\sfc_t = \sqrt{\delta} \cdot \frac{1}{n}\sum_{i=1}^n \cF(y_i), \qquad \sfb_{t+1} =\frac{1}{\delta \beta_{t+1}}.
\end{equation}
Replacing the parameter $\sfc_t$  with its almost sure limit $\bar{\sfc}_t = \sqrt{\delta} \, \E[\cF(\tY)]$, the GAMP iteration becomes %
\begin{equation}\label{eq:newGAMP}
    \begin{split}
    & u^0 =0, \quad v^1= \Abar^\top \cL(y), \\
    & u^1= \frac{1}{\sqrt{\delta}} \Abar f(\xone, \xtwo), \quad v^2 = \frac{1}{\sqrt{\delta}} \Abar^\top F u^1 - \sqrt{\delta} \E[ \cF(\tY) ] f(\xone,\xtwo), \\
    & 
    u^{t} = \frac{1}{\sqrt{\delta} \, \beta_{t}} \paren{\Abar  v^{t} \, -  \, F u^{t-1}},  \quad  
    v^{t+1} =  \Abar^\top F u^t - \frac{\sqrt{\delta}}{\beta_t} \, \E[ \cF(\tY) ] \,  v^t, \qquad 
    t \ge 2,
    \end{split}
\end{equation}
where $F = \diag(\cF(y_1), \ldots, \cF(y_n))$.  With $f_t, g_t$ given by \Cref{eq:ft_gt_choice}, the initialization for the state evolution in \Cref{eq:muU_update} to \eqref{eq:SE_gen_init} is:
\begin{align}
     & \mu_{1,0} =\mu_{2,0}= \sigma_{U,0}^2=0, \nonumber \\
     &\chi_{1,1} = \delta \E[ G_1 \cL(\tY)], \quad 
     \chi_{2,1} = \delta \E[ G_2 \cL(\tY)], \quad 
     \sigma_{V,1}^2 = \delta \E[\cL(\tY)^2 ],      \nonumber \\
    & \mu_{1,1} = \frac{1}{\sqrt{\delta}}\expt{X_1 f(X_1, X_2)}, \quad \mu_{2,1} = \frac{1}{\sqrt{\delta}}\expt{X_2 f(X_1, X_2)},  \nonumber \\
     & \sigma_{U,1}^2 = \frac{1}{\delta} \expt{f(X_1, X_2)^2} - \mu_{1,1}^2 - \mu_{2,1}^2,
     \label{eq:SE_lin_init}
\end{align}
where the joint distribution of $(G_1, G_2, \tY)$  is given by \Cref{eq:G1G2Y_joint}. Furthermore, for $t \ge 1$:
\begin{align}
    & \chi_{1,t+1} = \delta \mu_{1,t} \, \E [ \cF(\tY)(G_1^2-1) ], \qquad 
     \chi_{2,t+1} = \delta \mu_{2,t} \,  \E [ \cF(\tY)(G_2^2-1) ], \nonumber \\
    & \sigma_{V,t+1}^2 =  \delta \paren{ \mu_{1,t}^2 \E [\cF(\tY)^2 G_1^2] + \mu_{2,t}^2 \E[\cF(\tY)^2 G_2^2]  +  \sigma_{U,t}^2 \E[ \cF(\tY)^2] } ,
    \label{eq:chi_sigmaV_lin} \\
    & \beta_{t+1} \coloneqq \sqrt{\chi_{1,t+1}^2 + \chi_{2,t+1}^2 + \sigma_{V,t+1}^2} \, , \label{eq:beta_t_def} \\ %
    & \mu_{1,t+1} = \frac{\chi_{1,t+1}}{\sqrt{\delta} \beta_{t+1}}, \qquad \mu_{2,{t+1}} = \frac{\chi_{2,t+1}}{\sqrt{\delta} \beta_{t+1}}, \qquad \sigma_{U,t+1}^2= \frac{\sigma_{V,t+1}^2}{\delta \beta_{t+1}^2}. \label{eq:mu_sigmaU_lin} 
\end{align}

First note that the iterate $v^1$ coincides with the linear estimator $\xlin$ in \Cref{eqn:def-lin-estimator}. We will show that in the high-dimensional limit the iterate $v^t$ is  aligned with an eigenvector of the matrix $M \coloneqq \Abar^\top F(\sqrt{\delta} \beta_{\infty}I_n + F)^{-1} \Abar $, as $t \to \infty$. (\Cref{lem:SE_fixed_pts} shows that  $\beta_{\infty} = \lim\limits_{t \to \infty} \beta_t$ is well-defined for our choices of $\cF$ and initializations.) For a heuristic justification of this claim, assume the iterates   $u^t, v^t$ converge to the limits $u^\infty, v^\infty$ in the sense that 
$\lim\limits_{t \to \infty} \lim\limits_{d \to \infty} \frac{1}{d} \normtwo{u^t - u^{\infty}}^2 =0$ and  $\lim\limits_{t \to \infty} \lim\limits_{d \to \infty} \frac{1}{d} \normtwo{v^t - v^{\infty}}^2 =0$. Then, from \Cref{eq:newGAMP} these limits satisfy
\begin{equation}
    u^\infty = \frac{1}{\sqrt{\delta} \, \beta_\infty} \paren{ \Abar  v^\infty \, -  \, F u^{\infty} },  \qquad 
    v^{\infty} =  \Abar^\top F u^\infty - \frac{\sqrt{\delta}}{\beta_\infty} \, \E[ \cF(\tY) ] \,  v^\infty,
\end{equation}
which after simplification, can be written as:
\begin{equation}
    v^{\infty}\left( 1 + \frac{\sqrt{\delta}}{\beta_\infty} \E[ \cF(\tY)] \right) = \Abar^{\top}F (\sqrt{\delta}\beta_{\infty}I_n + F)^{-1}\Abar  v^{\infty}.
    \label{eq:evector_eqn}
\end{equation}
Therefore, $v^{\infty}$ is an eigenvector of the matrix
$\Abar^{\top}F (\sqrt{\delta}\beta_{\infty}I_n + F)^{-1}\Abar $, and the GAMP iteration \Cref{eq:newGAMP} is effectively a power method.

We wish to obtain via GAMP the two leading eigenvectors of the matrix $\Abar^{\top}T\Abar $, so the heuristic above indicates that we should  choose 
\begin{align}
    \cF(y) &= \frac{c \sqrt{\delta} \beta_\infty \cT(y)}{1 - c \cT(y)} , \notag
\end{align}
so that $F (\sqrt{\delta}\beta_{\infty}I_n + F)^{-1} = c\, T$, for some constant $c$. 
\hl{For estimating the $i$-th signal, we fix the values of $ \beta_\infty $ and $ c $ by enforcing the following two constraints: }
\begin{align}
    1 &= \lim_{d\to\infty} \frac{\normtwo{\wt{v}^\infty}}{\sqrt{d}} = \lim_{d\to\infty} \frac{\normtwo{v^\infty}}{\beta_\infty \sqrt{d}} , \notag \\
    \frac{1}{\sqrt{\delta}} &= \lim_{d\to\infty} \frac{\normtwo{v^\infty}}{\sqrt{d}} = \sqrt{\delta} \expt{ \frac{c \sqrt{\delta} \beta_\infty \cT(\wt{Y})}{1 - c \cT(\wt{Y})} (G_i^2 - 1) } , \notag 
\end{align}
\hl{where the last equality in the second line is by state evolution (formally shown in \mbox{\Cref{lem:SE_fixed_pts}}). 
Upon simplifications, the above two conditions are equivalent to }
\begin{align}
&&
    \beta_\infty &= \frac{1}{\sqrt{\delta}} , &
    c &= \frac{1}{\lambda^*(\delta_i)} , & 
& \notag 
\end{align}
\hl{which in turn motivates the choice of $ \cF $: }
\begin{align}
    \cF(y) &= \frac{\cT(y)}{\lambda^*(\delta_i) - \cT(y)} . \notag
\end{align}

Formally, we analyze the iteration in \Cref{eq:newGAMP} with two choices for the function $\cF(y)$ and  initialization $\tv^0$: 
\begin{align}
    &\text{ Choice 1}: \quad \cF_1(y) \coloneqq \frac{\cT(y)}{\lambda^*(\delta_1) - \cT(y)}, \quad f(\xone, \xtwo)=\xone, \label{eq:F1x1} \\
     &\text{ Choice 2}: \quad  \cF_2(y) \coloneqq \frac{\cT(y)}{\lambda^*(\delta_2) - \cT(y)}, \quad f(\xone, \xtwo)=\xtwo. \label{eq:F2x2}
\end{align}
Here, we recall that, for $i\in\{1, 2\}$, $ \lambda^*(\delta_i) $ is the unique solution of $ \zeta(\lambda;\delta_i) = \phi(\lambda) $ (see page \pageref{def:lambdastard}).
 The initializations in \Cref{eq:F1x1,eq:F2x2} are not feasible in practice since they depend on the unknown signals $\xone$ and $\xtwo$, but this is not an issue as we use the GAMP in \Cref{eq:ft_gt_choice} only as a proof technique. 
 
 We now examine the state evolution recursion in \Cref{eq:chi_sigmaV_lin,eq:beta_t_def,eq:mu_sigmaU_lin} under each of these choices.
\paragraph{Choice 1} From \Cref{eq:SE_lin_init}, this corresponds to the initialization
 \begin{align}
     &\chi_{1,1} = \delta \E[ G_1 \cL(\tY)], \ 
     \chi_{2,1} = \delta \E[ G_2 \cL(\tY)], \ 
     \sigma_{V,1}^2 = \delta \E[\cL(\tY)^2 ], \ 
     \mu_{1,1} = \frac{1}{\sqrt{\delta}}, \ 
     \mu_{2,1} = \sigma_{U,1}^2 = 0.
     \label{eq:SElin_init_x1}
\end{align}
 For $t \ge 1$, the state evolution equations in \Cref{eq:chi_sigmaV_lin,eq:beta_t_def,eq:mu_sigmaU_lin} reduce to:
\begin{equation}
\begin{split}
     &  \chi_{1,t+1} = \delta \mu_{1,t} \, \E [ \cF_1(\tY)(G_1^2-1) ], \quad \sigma_{V,t+1}^2 = \delta \left( \mu_{1,t}^2 \E[ \cF_1(\tY)^2 G_1^2 ]  \, + \, \sigma_{U,t}^2 \E[ \cF_1(\tY)^2 ] \right), \\
     & \beta_{t+1} = \sqrt{\chi_{1,t+1}^2 + \sigma_{V,t+1}^2} \, , \qquad
      \mu_{1,t+1} = \frac{\chi_{1,t+1}}{\sqrt{\delta} \beta_{t+1}} \, , \qquad  \sigma_{U,t+1}^2= \frac{\sigma_{V,t+1}^2}{\delta \beta_{t+1}^2} \, ,  \\
\end{split}
\label{eq:mu1_chi1_lin}
\end{equation}
and $\mu_{2,t+1} = \chi_{2,t+1} =0$ for $t \ge 1$.
Using this in \Cref{prop:GAMP_SE}, we obtain that:
\begin{equation}
  \lim_{d \to \infty} \frac{\la \xone, v^{1} \ra}{d} = \chi_{1,1}, \; 
  \lim_{d \to \infty} \frac{\la \xtwo, v^{1} \ra}{d} = \chi_{2,1}, \; 
  \lim_{d \to \infty} \frac{\la \xone, v^{t+1} \ra}{d} = \chi_{1,t+1}, \; 
  \lim_{d \to \infty} \frac{\la \xtwo, v^{t+1} \ra}{d} = 0,
\end{equation}
for $t \ge 1$. Thus, when initialized with $f(\xone, \xtwo) =\xone$, the GAMP iterates $\{ v^{t+1} \}_{t \ge 1}$ are asymptotically uncorrelated with the signal $\xtwo$.

\paragraph{Choice 2} This corresponds to the initialization \begin{align}
     &\chi_{1,1} = \delta \E[ G_1 \cL(\tY)], \
     \chi_{2,1} = \delta \E[ G_2 \cL(\tY)], \ 
     \sigma_{V,1}^2 = \delta \E[\cL(\tY)^2 ], \ 
    \mu_{2,1} = \frac{1}{\sqrt{\delta}}, \ 
     \mu_{1,1}=\sigma_{U,1}^2 = 0.
     \label{eq:SElin_init_x2}
\end{align}
The state evolution  equations are:  $\mu_{1,t+1} = \chi_{1,t+1} =0$ for $t \ge 1$, and
\begin{equation}
\begin{split}
     &  \chi_{2,t+1} = \delta \mu_{2,t} \, \E [ \cF_2(\tY)(G_2^2-1) ], \quad \sigma_{V,t+1}^2 = \delta \left( \mu_{2,t}^2 \E[ \cF_2(\tY)^2 G_2^2 ]  \, + \, \sigma_{U,t}^2 \E[ \cF_2(\tY)^2 ] \right), \\
    &   \beta_{t+1} = \sqrt{\chi_{2,t+1}^2 + \sigma_{V,t+1}^2} \, , \qquad  
       \mu_{2,t+1} = \frac{\chi_{2,t+1}}{\sqrt{\delta} \beta_{t+1}}, \qquad  \sigma_{U,t+1}^2= \frac{\sigma_{V,t+1}^2}{\delta \beta_{t+1}^2} \, .
\end{split}
\label{eq:mu2_chi2_lin}
\end{equation}
Using this in \Cref{prop:GAMP_SE}, we obtain that for $t \ge 1$,
\begin{equation}
    \lim_{d \to \infty} \frac{\la \xone, v^{1} \ra}{d} = \chi_{1,1}, \;
    \lim_{d \to \infty} \frac{\la \xtwo, v^{1} \ra}{d} = \chi_{2,1}, \; 
    \lim_{d \to \infty} \frac{\la \xone, v^{t+1} \ra}{d} = 0, \;
    \lim_{d \to \infty} \frac{\la \xtwo, v^{t+1} \ra}{d} = \chi_{2,t+1}.
\end{equation}
The following lemma gives the fixed point of state evolution under choices 1 and 2.
\begin{lemma}[Limiting values of state evolution parameters] 
\leavevmode
Consider the state evolution recursion under choice $i \in \{1,2\}$. Assume assume that $\E [ \cF_i(\tY)(G_i^2-1) ]>0$ and $\delta >  \frac{\E [ \cF_i(\tY)^2 ]}{ (\E[ \cF_i(\tY)(G_i^2-1)])^2} $. Then, as $t\to \infty$ the state evolution parameters $(\chi_{i,t}, \sigma_{V,t}^2)$ converge to the fixed point $(\tilde{\chi}_i, \tilde{\sigma}_i^2 )$, where
\begin{equation}
\tilde{\chi}_i = \sqrt{\frac{\tilde{\beta}_i^2(\tilde{\beta}_i^2 -  \E[\cF_i(\tY)^2])}{\tilde{\beta}_i^2 + \E[\cF_i(\tY)^2G_i^2] - \E[\cF_i(\tY)^2] }}, \qquad  
\tilde{\sigma}_i^2 = \frac{\tilde{\beta}_i^2 \E[\cF_i(\tY)^2 G_i^2]}{ \tilde{\beta}_i^2 + \E[\cF_i(\tY)^2G_i^2] - \E[\cF_i(\tY)^2]}, 
\label{eq:FP1}
\end{equation}
and 
\begin{equation}
\tilde{\beta}_i^2 = \tilde{\chi}_i^2 + \tilde{\sigma}_i^2 =  \delta\, 
( \E[\cF_i(\tY)(G_i^2-1)])^2.
\label{eq:beta1_def}
\end{equation}
\label{lem:SE_fixed_pts}
\end{lemma}
\begin{proof}

The proof  is identical to that of Lemma 5.2 in \cite{mondelli2021optimalcombination}, which analyzes GAMP for a \emph{non-mixed} GLM with $f_t, g_t$ given by \Cref{eq:ft_gt_choice}.  The state evolution recursion under choice 1 in \Cref{eq:SElin_init_x1,eq:mu1_chi1_lin} has the same form for all values of $\alpha \in [1/2,1)$. 
The value of $\alpha$  affects the recursion only through  the joint distribution of
$(\wt{Y}, G_1) = (q(\eta G_1 + (1-\eta)G_2,\eps), G_1)$, where  $\eta \sim \bern(\alpha)$. The proof of Lemma 5.2 in \cite{mondelli2021optimalcombination} does not depend on this joint distribution and applies for any $\alpha$ such that the lower bound on $\delta$ in the statement of the first part  is satisfied. The argument for  choice 2, where the joint distribution determining the state evolution in \Cref{eq:SElin_init_x2,eq:mu2_chi2_lin}  is $(\wt{Y}, G_2) = (q(\eta G_1 + (1-\eta)G_2,\eps), G_2)$, is identical. 
\end{proof}

It is convenient to express the state evolution fixed points in \Cref{lem:SE_fixed_pts} in terms of the joint law of $(G, Y)$, where $Y = q(G, \eps)$, with $G \sim \normal(0,1)$ and  $\eps \sim P_{\eps}$ are independent. Recalling the joint law of $(\tY, G_1, G_2)$ given in \Cref{eq:G1G2Y_joint} and the definitions of $\cF_1,\cF_2$ in \Cref{eq:F1x1,eq:F2x2}, we have 
\begin{align}
    \E[\cF_1(\tY)] &= \E[\cF_1(Y)] = \E\left[ \frac{\cT(Y)}{\lambda^*(\delta_1) - \cT(Y)} \right], \notag \\ 
    \E[\cF_1(\tY)^2] &= \E[\cF_1(Y)^2] = \E\left[ \frac{\cT(Y)^2}{ (\lambda^*(\delta_1) - \cT(Y))^2} \right ], \notag \\
  \E[\cF_1(\tY)G_1^2] &= \alpha \E[\cF_1(q(G_1, \eps)) G_1^2 ] + (1-\alpha)  \E[\cF_1(q(G_2, \eps)) G_1^2 ] \notag \\
  &= \alpha \E \left[ \frac{\cT(Y) G^2}{\lambda^*(\delta_1) - \cT(Y)} \right] + (1-\alpha) \E \left[ \frac{\cT(Y)}{\lambda^*(\delta_1) - \cT(Y)} \right] \notag \\
  &= \frac{1}{\delta} + \E \left[ \frac{\cT(Y)}{\lambda^*(\delta_1) - \cT(Y)} \right],
  \label{eq:SE_GY1}
 \end{align}
where the last equality holds because $\E \left[ \frac{\cT(Y) (G^2-1)}{\lambda^*(\delta_1) - \cT(Y)} \right]  = \frac{1}{\delta_1}$ from \Cref{eqn:def-lami-star-supcrit-explicit}, and $ \delta_1 = \alpha \delta$. 
Similarly, we obtain
\begin{align}
   &  \E[\cF_2(\tY)]  = \E[\cF_2(Y)] = 
 \E\left[ \frac{\cT(Y)}{\lambda^*(\delta_2) - \cT(Y)} \right], \quad \E[\cF_2(\tY)^2] =
  \E\left[ \frac{\cT(Y)^2}{ (\lambda^*(\delta_2) - \cT(Y))^2} \right ], \nonumber \\
  & \E[\cF_2(\tY)G_2^2] 
  = \frac{1}{\delta} + \E \left[ \frac{\cT(Y)}{\lambda^*(\delta_2) - \cT(Y)} \right].
   \label{eq:SE_GY21}
\end{align}
Using \Cref{eq:SE_GY1,eq:SE_GY21}, the formula for $\tbeta_i^2$ in \Cref{eq:beta1_def} becomes:
\begin{align}
     \tbeta_i^2 =  \frac{1}{\delta},  \qquad i\in\{1,2\}.
     \label{eq:tbeta1_def}
\end{align}
 We similarly obtain 
\begin{align}
    \tilde{\chi}_i = \frac{\rho_i^{\spec}}{\sqrt{\delta}}, \quad \tilde{\sigma}_1^2 = \frac{1- (\rho_i^{\spec})^2}{\delta}, 
     \qquad i\in\{1,2\}.
    \label{eq:tchi_tsigma_formulas}
\end{align}
where $\rho_1^{\spec}, \rho_2^{\spec}$ are defined in \Cref{eqn:def-asymp-corr}.

\paragraph{Proof heuristic} Let us revisit the heuristic sanity-check in \Cref{eq:evector_eqn}. For $i \in \{1,2\}$, under choice $i$  with $\cF= \cF_i$, $F= F_i\coloneqq\diag(\cF_i(y_1), \ldots, \cF_i(y_n))$, and $\beta_{\infty} = \tilde{\beta}_i$, by using  the  formulas above for  $\tilde{\beta}_i$ and $\E[\cF_i(\tY)]$, \Cref{eq:evector_eqn} becomes:
 \begin{equation}
    v^{\infty}\left( 1 +  \delta \E\left[ \frac{\cT(Y)}{\lambda^*(\delta_i) - \cT(Y)} \right] \right) = \Abar^{\top}F_i ( I_n + F_i)^{-1}\Abar  v^{\infty} = \frac{1}{\lambda^*(\delta_i)} \Abar^{\top} T \Abar v^\infty,
    \label{eq:evector_eqn_1}
\end{equation}
where we recall that $T= \diag(\cT(y_1), \ldots, \cT(y_n))$. 
Therefore, with choice $i$, \Cref{eq:evector_eqn_1} suggests that the GAMP iterate converges to an eigenvector of $\Dbar = \Abar^\top T \Abar $ corresponding to the eigenvalue  $\lambda^*(\delta_i)\left( 1 +  \delta \E\left[ \frac{\cT(Y)}{\lambda^*(\delta_i) - \cT(Y)} \right] \right)$. 
Moreover, when $\lambda^*(\delta_i) > \ol{\lambda}(\delta)$, \Cref{thm:main-thm-eigval} and \Cref{rk:explicit-formula-eigval} tell us that the leading eigenvalue of $\Dbar$ converges to:
\begin{equation}\label{eq:limeig1}
    \lambda_i( \Dbar ) \stackrel{d \to \infty}{\longrightarrow} \lambda^*(\delta_i)\left( 1 +  \delta \E\left[ \frac{\cT(Y)}{\lambda^*(\delta_i) - \cT(Y)} \right] \right).
\end{equation}
%
Therefore, \Cref{eq:evector_eqn_1} indicates that the GAMP iterates under each choices $1$ and $2$ converge to the eigenvectors corresponding to the two largest eigenvalues of $\Dbar$, when $\lambda^*(\delta_1) > \lambda^*(\delta_2) > \ol{\lambda}(\delta)$. We now make this claim rigorous.

\subsection{Proof of \Cref{thm:main-thm-joint-dist}} \label{subsec:proof_main}

Consider the GAMP iteration in \Cref{eq:newGAMP} for $t \ge 2$. By substituting the expression for $u^t$  in the $v^{t+1}$ update,  the iteration can be rewritten as follows:
\begin{align}
 u^t &= \frac{1}{\sqrt{\delta} \, \beta_t} \paren{ \Abar  v^t \, -  \, F u^{t-1} }, 
\qquad  v^{t+1}  = \frac{1}{\sqrt{\delta} \beta_t}\sqrbrkt{ \paren{\Abar^\top F \Abar   - \delta \E[\cF(\tY)] \, I_d} v^t \, - \, \Abar^\top F^2 u^{t-1} }. \label{eq:GAMPrewrite1}
\end{align}
In the remainder of the proof, we will assume that $t \geq 2$. Define
\begin{align}
        e_1^t &= u^{t}-u^{t-1},\label{eq:err1}\\
        e_2^t &= v^{t+1}-v^{t}.\label{eq:err2}
\end{align}
By combining \Cref{eq:err1} with \Cref{eq:GAMPrewrite1}, we have
\begin{equation}\label{eq:ut1new}
    u^{t-1}=(F+\sqrt{\delta}\beta_tI_n)^{-1}\Abar v^t-\sqrt{\delta}\beta_t(F+\sqrt{\delta}\beta_tI_n)^{-1}e_1^t.
\end{equation}
Substituting the expression for $u^{t-1}$  in \Cref{eq:ut1new} into \Cref{eq:GAMPrewrite1} and recalling from \Cref{eq:SE_GY1,eq:SE_GY21} that $\expt{\cF(\wt{Y})} = \expt{\cF(Y)}$, we obtain:
\begin{align}
    v^{t+1}  &= \left(\Abar^\top F(F+\sqrt{\delta}\beta_t I_n)^{-1} \Abar   - \frac{\sqrt{\delta} \E[\cF(Y)]}{\beta_t} \, I_d \right) v^t +\Abar^\top F^2(F+\sqrt{\delta}\beta_tI_n)^{-1}e_1^t \nonumber \\
    &= \left(\Abar^\top F(F+I_n)^{-1} \Abar   -\delta \E[\cF(Y)] \, I_d \right) v^t  \notag \\
    &\quad +(1-\sqrt{\delta}\beta_t)\Abar^\top F(F+I_n)^{-1}(F+\sqrt{\delta}\beta_tI_n)^{-1} \Abar v^t  \nonumber  \\
& \quad   + \delta\E[\cF(Y)]\left(1-\frac{1}{\sqrt{\delta}\beta_t}\right) v^t+\Abar^\top F^2(F+\sqrt{\delta}\beta_t I_n)^{-1}e_1^t.
\label{eq:GAMPrewrite3}
\end{align}
Let
\begin{equation}\label{eq:GAMPrewrite4}
    e_3^t =  \left(\Abar^\top F(F+I_n)^{-1} \Abar   -(\delta \E[\cF(Y)]+1) \, I_d \right) v^t.
\end{equation}
Using this in \Cref{eq:GAMPrewrite3} along with \Cref{eq:err2}, we obtain
\begin{equation}\label{eq:deferr3}
    \begin{split}
        e_3^t&=e_2^t-(1-\sqrt{\delta}\beta_t)\Abar^\top F(F+I_n)^{-1}(F+\sqrt{\delta}\beta_tI_n)^{-1} \Abar v^t \\
& \quad   - \delta\E[\cF(Y)]\left(1-\frac{1}{\sqrt{\delta}\beta_t}\right) v^t-\Abar^\top F^2(F+\sqrt{\delta}\beta_tI_n)^{-1}e_1^t.
    \end{split}
\end{equation}

We now prove the two claims of \Cref{thm:main-thm-joint-dist} via choices 1 and 2, respectively. 
All the limits in the remainder of the proof hold almost surely, so we won't specify this explicitly.

\subsubsection{Proof of \Cref{eq:psiX1joint}}
Consider the GAMP algorithm with choice 1, as defined in \Cref{eq:F1x1}. With $\cF(y) = \cF_1(y)$, we have:
\begin{equation}
    F(F + I_n)^{-1} = \frac{1}{\lambda^*(\delta_1)}  T , \qquad \E[\cF(Y)] = \E\left[ \frac{\cT(Y)}{ \lambda^*(\delta_1) - \cT(Y)} \right].
    \label{eq:choice_1_params}
\end{equation}
Recalling the notation $\Dbar = \Abar^{\top} T \Abar $, let us decompose $v^t$ into a component in the direction of $\eigvone$ plus an orthogonal component $r_1^t$: 
\begin{equation}\label{eq:projvt1}
    v^t = \xi_{1,t}\,\eigvone+r_1^t,
\end{equation}
where $\xi_{1,t}=\la v^t, \eigvone \ra$.  Substituting \Cref{eq:projvt1} in the definition of $e_3^t$ in \Cref{eq:GAMPrewrite4} and using \Cref{eq:choice_1_params}, we obtain
\begin{multline}
  \left( \frac{\Dbar}{\lambda^*(\delta_1)} -   \E \left[   \frac{ \delta \cT(Y)}{ \lambda^*(\delta_1) - \cT(Y)}  +  1 \right] I_d \right) r_1^t \\
  =  e_3^t \, + \, \xi_{1,t} \left(   \delta \E \left[  \frac{\cT(Y)}{ \lambda^*(\delta_1) - \cT(Y)} \right] +  1 - \frac{\lambda_1(\Dbar)}{\lambda^*(\delta_1)}\right) \eigvone .
  \label{eq:r1t_exp}
\end{multline}

The idea of the proof is to prove that $ \lim\limits_{t \to \infty}\lim\limits_{d \to \infty} \| r_1^t \|_2^2/d = 0$, which from \Cref{eq:projvt1} implies that  the GAMP iterate is aligned with $\eigvone$ in the limit. To show this, we first claim that for all sufficiently large $n$:%
\begin{align}
     \normtwo{ \left( \frac{\Dbar}{\lambda^*(\delta_1)} -   \E \left[   \frac{ \delta \cT(Y)}{ \lambda^*(\delta_1) - \cT(Y)}  +  1 \right] I_d \right) r_1^t}  \ge C  \normtwo{r_1^t},
\label{eq:r1t_LB}
\end{align}
for some  constant $C >0$ that does not depend on $n$. We then consider the right side of \Cref{eq:r1t_exp} and show that under choice 1:
\begin{align}
    \lim_{t \to \infty} \lim_{d \to \infty} \, \frac{1}{\sqrt{d}} \normtwo{e_3^t \, + \, \xi_{1,t} \left(   \delta \E \left[  \frac{\cT(Y)}{ \lambda^*(\delta_1) - \cT(Y)} \right] +  1 - \frac{\lambda_1(\Dbar)}{\lambda^*(\delta_1)}\right) \eigvone}  =0.
    \label{eq:xi1_e3_lim}
\end{align}
We now derive the result in \Cref{eq:psiX1joint}
using  \Cref{eq:r1t_LB,eq:xi1_e3_lim}, deferring the proofs of  these claims to the end of the section. Using \Cref{eq:r1t_LB,eq:xi1_e3_lim} in \Cref{eq:r1t_exp}, we have that 
\begin{equation}
    \lim_{t \to \infty}\lim_{d \to \infty} \frac{ \normtwo{ r_1^t }^2}{d} =0.
    \label{eq:r1t_lim}
\end{equation}
From the decomposition of $v^t$ in \Cref{eq:projvt1}, we have
\begin{equation}
\normtwo{v^t}^2 = \xi_{1,t}^2 \, + \, \normtwo{r_{1}^t}^2 \, ,
\label{eq:vtnorm_decomp}
\end{equation}
since $r_{1}^t$ is orthogonal to $\eigvone$ and $\normtwo{\eigvone}=1$. From \Cref{prop:GAMP_SE},  we have
\begin{align}
 \lim_{d \to \infty}   \frac{ \normtwo{v^t}^2}{d} = \E[V_t^2]=\beta_t^2, \qquad t\ge1.
\end{align}
Moreover, from \Cref{lem:SE_fixed_pts} and \Cref{eq:tbeta1_def}, under choice 1,  $\lim\limits_{t \to \infty} \beta_t^2 = \tbeta_1^2 = \frac{1}{\delta}$. Therefore,
\begin{equation}
    \lim_{t \to \infty}  \lim_{d \to \infty} \frac{ \normtwo{v^t}^2}{d} = \frac{1}{\delta}.
    \label{eq:vt_def}
\end{equation}
Combining this with \Cref{eq:vtnorm_decomp,eq:r1t_lim} yields
\begin{equation}
   \lim_{t \to \infty}  \lim_{d \to \infty} \frac{\xi_{1,t}^2}{d} = \frac{1}{\delta}.
   \label{eq:xi1lim}
\end{equation}
 Using \Cref{eq:xi1lim,eq:r1t_lim} in \Cref{eq:projvt1}, and recalling the definition of $x^{\spec}_1$ from the statement of \Cref{thm:main-thm-joint-dist}, we have
\begin{align}
 \lim_{t\to \infty} \lim_{d \to \infty} \frac{\| \sqrt{\delta} \, v^t -  x_1^{\spec} \|_2}{\sqrt{d}} =0. 
  \label{eq:vt_x1spec_diff}
\end{align}
  For any $\PL(2)$ function $\Psi: \reals^3 \to \reals$, by an application of Cauchy-Schwarz inequality,  we have that \cite[Lemma 7.24]{Feng22AMPTutorial} 
\begin{align}
   &  \abs{\frac{1}{d} \sum_{i=1}^d \Psi( \ol{x}^*_{1,i}, x^{\lin}_i,  x^{\spec}_{1,i})
    - \frac{1}{d} \sum_{i=1}^d \Psi( \ol{x}^*_{1,i}, x^{\lin}_i,  \sqrt{\delta} v^{t}_{i})}  \nonumber \\
    & \le C 
    \frac{\|\sqrt{\delta} v^t - x_1^{\spec}   \|_2}{\sqrt{d}}
    \left(1 + \frac{\| \xone \|_2}{\sqrt{d}}   + \frac{\| x^{\lin} \|_2}{\sqrt{d}} + \frac{\| x_1^{\spec} \|_2}{\sqrt{d}} + \frac{\| v^t \|_2}{\sqrt{d}} \right).
    \label{eq:Psi_diff1}
\end{align}
We have that $\| \xone \|_2 = \| x^{\lin} \|_2 = \| x_1^{\spec} \|_2  =\sqrt{d}$, by the definitions in the theorem statement. Therefore, using \Cref{eq:vt_def,eq:vt_x1spec_diff} in \Cref{eq:Psi_diff1}, we obtain:
\begin{align}
    \lim_{d \to \infty} \, \abs{\frac{1}{d} \sum_{i=1}^d \Psi( \ol{x}^*_{1,i}, x^{\lin}_i,  x^{\spec}_{1,i})
    - \frac{1}{d} \sum_{i=1}^d \Psi( \ol{x}^*_{1,i}, x^{\lin}_i, \sqrt{\delta} v^{t}_{i})} = 0.
\end{align}
Recall from \Cref{eq:newGAMP} that the GAMP iterate $v^1 = \xlin$, and $x^{\lin} = \sqrt{d} \, \xlin/\normtwo{\xlin}$.  From   \Cref{prop:GAMP_SE}, we have that
$\lim\limits_{d \to \infty} \frac{\normtwo{\xlin}}{d} = \sqrt{\E[V_1^2]}$. Using
 \Cref{prop:GAMP_SE} again, we have that 
\begin{align}
    \lim_{d \to \infty} \frac{1}{d} \sum_{i=1}^d \Psi\left( \ol{x}^*_{1,i}, x^{\lin}_i, \sqrt{\delta} v^{t}_{i} \right) =   \expt{\Psi\left(X_1, \, \frac{V_1}{\sqrt{\E[V_1^2]}}
 ,\,  \sqrt{\delta} V_t \right)  } .
 \label{eq:Psi_conv1}
\end{align}
From the definitions of $V_1, V_t$ in \Cref{eq:UtVt_def}, and the state evolution equations for choice 1 in \Cref{eq:SElin_init_x1,eq:mu1_chi1_lin}, we have 
\begin{align}
    & V_1 = \chi_{1,1} X_1 + \chi_{2,1} X_2 + W_{V,1}, \quad 
V_t= \chi_{1,t} X_1 \,   +  \, W_{V,t},  \quad t \ge 2.
\end{align}
Here $W_{V,1} \sim \normal(0, \, \delta \E[\cL(\tY)^2 ] )$ and $W_{V,t} \sim \normal(0, \sigma_{V,t}^2)$ are independent of $(X_1, X_2)$, and from \Cref{eq:WV_corr,eq:ft_gt_choice}, their covariance is given by 
 \begin{equation}
     \begin{split}
             \expt{W_{V,1}W_{V,t}}= \delta \expt{ U_{t-1} \cL(\tY) \cF(\tY)} = \delta \alpha \mu_{1, t-1}  \expt{ G \cL(Y) \cF(Y)},
     \end{split}
 \end{equation}
 where in the last line we have used that $\mu_{2,t-1} =0$ under choice 1.
Hence, for $t \ge 2$, \Cref{eq:Psi_conv1} becomes
\begin{align}
    & \lim_{d \to \infty} \frac{1}{d} \sum_{i=1}^d \Psi\left( \ol{x}^*_{1,i}, x^{\lin}_i, \sqrt{\delta} v^{t}_{i} \right) \notag \\
    &= \expt{\Psi\left(X_1, \, \frac{\chi_{1,1} X_1 + \chi_{2,1} X_2 + W_{V,1}}{\sqrt{\chi_{1,1}^2 + \chi_{2,1}^2 + \delta \E[\cL(Y)^2]}} ,\, \sqrt{\delta} (\chi_{1,t} X_1   +  W_{V,t}) \right) }.
 \label{eq:Psi_conv2}
\end{align}
To obtain the result in \Cref{eq:psiX1joint}, we take $t \to \infty$  on both sides above and show that 
\begin{equation}
\begin{split}
      &   \lim_{t \to \infty} \expt{\Psi\left(X_1, \, \frac{\chi_{1,1} X_1 + \chi_{2,1} X_2 + W_{V,1}}{\sqrt{\chi_{1,1}^2 +  \chi_{2,1}^2 + \delta \E[\cL(Y)^2  ]}}
 ,\, \sqrt{\delta}(\chi_{1,t} X_1   +  W_{V,t}) \right)  } \\
&  = 
 \expt{\Psi\left(X_1, \, \frac{\chi_{1,1} X_1 + \chi_{2,1} X_2 + W_{V,1}}{\sqrt{\chi_{1,1}^2 +  \chi_{2,1}^2 + \delta \E[\cL(Y)^2  ]}}
 ,\, \sqrt{\delta}(\tilde{\chi}_{1} X_1   +  W_{V, \infty}) \right)  },
\end{split}
\label{eq:limExpfinal}
\end{equation}
where $(W_{V,1}, W_{V, \infty})$ are jointly Gaussian with
\begin{equation}
W_{V, \infty} \sim \normal( 0, \tilde{\sigma}_{1}^2),  \qquad 
    \expt{W_{V,1} W_{V, \infty} } = \tilde{\chi}_1 \delta \alpha  \expt{ G \cL(Y) \cF(Y)}.
\end{equation}
Here $\tilde{\chi}_{1}$ and $\tilde{\sigma}_{1}^2$ are given by 
\Cref{eq:tchi_tsigma_formulas}. Using \Cref{lem:SE_fixed_pts}, we have
\begin{equation}
    \begin{split}
       &  \lim_{t \to \infty} \expt{W_{V,t}^2} = \lim_{t \to \infty} \sigma_{V,t}^2 = \tilde{\sigma}_1^2 = \expt{W_{V, \infty}^2},  \\
        &  \lim_{t \to \infty} \expt{W_{V,1}W_{V,t}}  = \delta \alpha  \expt{ G \cL(Y) \cF(Y)} \lim_{t \to \infty} \mu_{1, t-1}
        =       \tilde{\chi}_1   \delta \alpha  \expt{ G \cL(Y) \cF(Y)}  = \expt{W_{V,1} W_{V, \infty} } , 
    \end{split}
    \label{eq:rv_conv}
\end{equation}
where in the second line, we have used the formula for $\mu_{1, t-1}$ from \Cref{eq:mu1_chi1_lin} and that $\lim\limits_{t \to \infty} \beta_t =1/\sqrt{\delta}$ (from \Cref{eq:tbeta1_def}).  \Cref{eq:rv_conv} implies that the sequence of zero mean jointly Gaussian pairs $(W_{V,1}, W_{V,t})_{t \geq 1}$ converges in distribution to the jointly Gaussian pair $(W_{V,1}, W_{V, \infty})$. To show \Cref{eq:limExpfinal}, we use Lemma 4.5 in \cite{dumbgen2011}. We apply this result taking $Q_t$  to be the distribution of   
$$\left(X_1, \, \frac{\chi_{1,1} X_1 + \chi_{2,1} X_2 + W_{V,1}}{\sqrt{\chi_{1,1}^2 +  \chi_{2,1}^2 + \delta \E[\cL(Y)^2  ]}}
 ,\, \sqrt{\delta}( \chi_{1,t} X_1   +  W_{V,t}) \right), \ 
 $$
 and  $Q$ to be the distribution of
$$\left(X_1, \, \frac{\chi_{1,1} X_1 + \chi_{2,1} X_2 + W_{V,1}}{\sqrt{\chi_{1,1}^2 +  \chi_{2,1}^2 + \delta \E[\cL(Y)^2  ]}}
 ,\, \sqrt{\delta}(\tilde{\chi}_{1} X_1   +  W_{V, \infty}) \right). $$ 
Since $\chi_{1,t} \to \tilde{\chi}_1$ and the limits in \Cref{eq:rv_conv} hold, the sequence  $(Q_t)_{t \geq 2}$ converges weakly to $Q$. In our case, $\Psi: \reals^3 \to \reals$ is PL($2$),  and therefore $\Psi(a,b,c) \leq C'(1 + \abs{a}^2 + \abs{b}^2+\abs{c}^2)$, for all $(a,b,c) \in \reals^3$ for some constant $C'$. Choosing $h(a,b,c) = \abs{a}^2 + \abs{b}^2 + \abs{c}^2$, we have $\frac{\abs{\Psi}}{1 + h} \leq C'$. Furthermore, $\int h \, \de Q_t$ is a linear combination of $\{  \chi_{1,t}^2, \sigma_{V,t}^2\}$, with coefficients that do not depend on $t$. The integral $\int h \, \de Q$ has the same form, except that  $\chi_{1,t}, \sigma_{V,t}$ are replaced by $\tilde{\chi}_{1}, \tilde{\sigma}_{1} $, respectively. Since $\chi_{1,t} \to \tilde{\chi}_{1}$, $\sigma_{V,t} \to \tilde{\sigma}_{1}$, we have that $\lim_{t \to \infty} \int h \, \de Q_t = \int h \, \de Q$.
Therefore, by applying Lemma 4.5 in \cite{dumbgen2011}, we have that
\begin{equation}
    \lim_{t \to \infty} \int \Psi \, \de Q_t = \int \Psi \, \de Q,
    \label{eq:Psi_Qint_conv}
\end{equation}
which is equivalent to  \Cref{eq:limExpfinal}. 
From \Cref{eq:tchi_tsigma_formulas}, we recall  that $\tilde{\chi}_1 = \frac{\rho_1^{\spec}}{\sqrt{\delta}}$, $\tilde{\sigma}^2_1 = \frac{1 - (\rho_1^{\spec})^2 }{\delta}$. Using these and the formulas for $\chi_{1,1}, \chi_{2,1}$  from \Cref{eq:SElin_init_x1} in  \Cref{eq:limExpfinal}, and taking $t \to \infty$ in \Cref{eq:Psi_conv2}  yields the result in \Cref{eq:psiX1joint}. 

It remains to prove \Cref{eq:r1t_LB,eq:xi1_e3_lim}.

\paragraph{Proof of \Cref{eq:r1t_LB}} We recall from \Cref{thm:main-thm-eigval} and \Cref{rk:explicit-formula-eigval} that when $\lambda^*(\delta_1)  > \ol{\lambda}(\delta)$,
the top eigenvalue of $\Dbar \coloneqq \Abar^\top T \Abar $ converges almost surely to 
\begin{equation}
\begin{split}
        & \lim_{d \to \infty} \lambda_1( \Dbar )  = \lambda^*(\delta_1)\left( 1 +  \delta \E\left[ \frac{\cT(Y)}{\lambda^*(\delta_1) - \cT(Y)} \right] \right).  \\
        %
\end{split}
\label{eq:eigD_lims}
\end{equation}
Moreover, when $\lambda^*(\delta_1)  > \ol{\lambda}(\delta)$, \Cref{thm:main-thm-eigval}
also guarantees a strict separation between the first and second eigenvalues, i.e., 
\begin{equation}
    \lim_{d \to \infty} \lambda_1( \Dbar ) >   \lim_{d \to \infty} \lambda_2( \Dbar) = \zeta(\lambda^*(\delta_2); \delta) \ge  \lim_{d \to \infty} \lambda_3( \Dbar ) = \zeta(\ol{\lambda}(\delta); \delta).
    \label{eq:eigD_lims2}
\end{equation}
Let 
\begin{align}
    M_1 \coloneqq \frac{\Dbar}{\lambda^*(\delta_1)} -   \E \left[   \frac{ \delta \cT(Y)}{ \lambda^*(\delta_1) - \cT(Y)}  +  1 \right] I_d .
    \label{eq:M1def}
\end{align}
As $M_1$ is symmetric, it can be written as $M_1 = Q \Lambda Q^{\top}$, with $Q$  an orthogonal matrix consisting of the eigenvectors of $M_1$ and $\Lambda$ a diagonal matrix with the eigenvalues. Note that the eigenvectors of $M_1$ are the same as those of $\Dbar$ and its eigenvalues are:
\begin{equation}
\lambda_i(M_1) = \frac{\lambda_i(\Dbar)}{\lambda^*(\delta_1)} -   \E \left[   \frac{ \delta \cT(Y)}{ \lambda^*(\delta_1) - \cT(Y)}  +  1 \right], \quad i=1, \ldots,d.
\label{eq:M1_eigs}
\end{equation}
Since $r_1^t$ is orthogonal to $\eigvone = v_1(M_1)$, we have   $M_1 r_1^t = Q \Lambda' Q^{\top}r_1^t$, where $\Lambda'$ is obtained from $\Lambda$ by replacing $\lambda_1(M_1)$ with any other value. Here we replace $\lambda_1(M_1)$ by $\lambda_2(M_1)$. We therefore have
\begin{align}
    \normtwo{M_1 r_1^t}^2 & = \normtwo{ Q \Lambda' Q^{\top} r_1^t}^ 2 
     \ge \normtwo{r_1^t}^2 \, \min_{ s\in\bbS^{d-1} } \normtwo{ Q \Lambda' Q^{\top} s}^2 \notag\\
    &= \normtwo{r_1^t}^2 \, \min_{ s\in\bbS^{d-1} }\langle s, Q (\Lambda')^2 Q^{\top} s \rangle = \normtwo{r_1^t}^2  \, \lambda_d( Q (\Lambda')^2 Q^{\top}),
    \label{eq:norm_M1r1}
\end{align}
where the last equality follows from the variational characterization of the smallest eigenvalue of a symmetric matrix (Courant--Fischer theorem). Note that 
\begin{equation}
    \lambda_d( Q (\Lambda')^2 Q^{\top}) 
    =\lambda_d( (\Lambda')^2 ) = \min_{i \in \{2, \ldots, d\}} \lambda_i(M_1)^2.
    \label{eq:min_lambdai_M1}
\end{equation}
From the formula for $\lambda_i(M_1)$ in \Cref{eq:M1_eigs} and the limiting eigenvalues of $\Dbar$ in \Cref{eq:eigD_lims,eq:eigD_lims2}, we have
\begin{equation}
    \lim_{d \to \infty} \lambda_1(M_1) =0, \qquad 
    \lim_{d \to \infty}  \, \min_{i\in \{2, \ldots, d\}} \lambda_i(M_1)^2 = C >0,
    \label{eq:lim_lambdai_M1}
\end{equation}
for a universal constant $C$. Combining \Cref{eq:norm_M1r1,eq:min_lambdai_M1,eq:lim_lambdai_M1} shows that the lower bound in  \Cref{eq:r1t_LB} holds for all sufficiently large $n$.

\paragraph{Proof of \Cref{eq:xi1_e3_lim}} Since $\normtwo{\eigvone}=1$, by the triangle inequality we have
\begin{align}
    & \normtwo{e_3^t \, + \, \xi_{1,t} \left(   \delta \E \left[  \frac{\cT(Y)}{ \lambda^*(\delta_1) - \cT(Y)} \right] +  1 - \frac{\lambda_1(\Dbar)}{\lambda^*(\delta_1)}\right) \eigvone} \nonumber \\
    & \le \normtwo{e_3^t} \, + \, 
     \abs{\xi_{1,t}} \cdot \abs{   \delta \E \left[  \frac{\cT(Y)}{ \lambda^*(\delta_1) - \cT(Y)} \right] +  1 - \frac{\lambda_1(\Dbar)}{\lambda^*(\delta_1)}}. \label{eq:trineq1}
\end{align}
From \Cref{eq:xi1lim,eq:eigD_lims} we have:
\begin{align}
    \lim_{t \to \infty} \lim_{d \to \infty} \, \frac{\abs{\xi_{1,t}}}{\sqrt{d}} = \frac{1}{\sqrt{\delta}}, \qquad
    \lim_{d \to \infty} \, \abs{   \delta \E \left[  \frac{\cT(Y)}{ \lambda^*(\delta_1) - \cT(Y)} \right] +  1 - \frac{\lambda_1(\Dbar)}{\lambda^*(\delta_1)}}=0.
    \label{eq:xit_lim}
\end{align}
Therefore, the second term in \Cref{eq:trineq1} converges to $0$.
For the term $\normtwo{e_3^t}$, using the triangle inequality in the expression  in \Cref{eq:deferr3} we obtain:
\begin{equation}\label{eq:e3_trianeq}
    \begin{split}
         \normtwo{e_3^t}   \le &  \, \normtwo{e_2^t} + \abs{1-\sqrt{\delta}\beta_t} \normop{\Abar } \normop{F(F+I_n)^{-1}(F+\sqrt{\delta}\beta_tI_n)^{-1}} \normop{\Abar } \normtwo{v^t} \\
&    + \delta \abs{\E[\cF(Y)]\left(1-\frac{1}{\sqrt{\delta}\beta_t}\right)} \normtwo{v^t} + \normop{\Abar } \normop{F^2(F+\sqrt{\delta}\beta_tI_n)^{-1}} \normtwo{e_1^t}.
    \end{split}
\end{equation}
Here we have used the fact that $\normtwo{Mv} \le \normop{M} \normtwo{v}$ for any matrix $M$ and vector $v$, and  that the operator norm is sub-multiplicative.

Since $\Abar $ is i.i.d.\ Gaussian, its operator norm is bounded almost surely as $n$ grows. With $\cF(y)$ given by choice 1 in \Cref{eq:F1x1}, the diagonal matrices in \Cref{eq:e3_trianeq} become:
\begin{align}
   &  F(F+I_n)^{-1}(F+\sqrt{\delta}\beta_t I_n)^{-1}=
    \frac{1}{ \lambda^*(\delta_1)}\,  T[\lambda^*(\delta_1) I_n - T][\lambda^*(\delta_1)\sqrt{\delta} \beta_t I_n  + (1- \sqrt{\delta}\beta_t)T]^{-1}, \nonumber \\
   &  F^2(F+\sqrt{\delta}\beta_tI_n)^{-1} =
   T^2[\lambda^*(\delta_1) I_n - T]^{-1}[\lambda^*(\delta_1)\sqrt{\delta} \beta_t I_n  + (1- \sqrt{\delta}\beta_t)T]^{-1}. \label{eq:Fdiag_matrices}
\end{align}
Recalling that $\beta_t >0$ for $t > 0$, $\lim\limits_{t \to \infty} \sqrt{\delta}\beta_t =1$ and that $\cT(\cdot)$ is bounded, the operator norms of  the diagonal matrices in \Cref{eq:Fdiag_matrices} are both bounded. 

\Cref{prop:GAMP_SE} and \Cref{lem:SE_fixed_pts} together imply that 
\begin{align}
    \label{eq:vt_lim}
     \lim_{t \to \infty} \lim_{d \to \infty} \frac{\normtwo{v^t}^2}{d} = \lim_{t \to \infty} \beta_t^2 = \frac{1}{\delta}.
\end{align}
Recalling that $e_1^t= u^t-u^{t-1} $ and $e_2^t=v^{t+1}-v^t$, using \Cref{prop:GAMP_SE} and \Cref{lem:SE_fixed_pts} we can also show \cite[Lemma 5.3]{mondelli2021optimalcombination} that
\begin{align}
    \lim_{t \to \infty} \lim_{n \to \infty} \frac{\normtwo{e_1^t}^2}{n} = 0, \qquad     \lim_{t \to \infty} \lim_{d \to \infty} \frac{\normtwo{e_2^t}^2}{d} = 0.
    \label{eq:e1t_e2t_lims}
\end{align}
Using \Cref{eq:Fdiag_matrices,eq:vt_lim,eq:e1t_e2t_lims} in \Cref{eq:e3_trianeq} shows that $ \lim\limits_{t \to \infty} \lim\limits_{d \to \infty} \normtwo{e_3^t}/\sqrt{d}=0$ which, together with \Cref{eq:trineq1,eq:xit_lim}, completes the proof of \Cref{eq:xi1_e3_lim}.

\subsubsection{Proof of \Cref{eq:psiX2joint}} 

With choice 2, as defined in \Cref{eq:F2x2}, we have $\cF(y) = \cF_2(y)$, which yields
\begin{equation}
    F(F + I_n)^{-1} = \frac{1}{\lambda^*(\delta_2)}  T, \qquad \E[\cF(Y)] = \E\left[ \frac{\cT(Y)}{ \lambda^*(\delta_2) - \cT(Y)} \right].
    \label{eq:choice_2_params}
\end{equation}
We decompose the GAMP iterate $v^t$ into a component in the direction of $\eigvtwo$ plus an orthogonal component $r_2^t$:
\begin{equation}\label{eq:projvt2}
    v^t = \xi_{2,t} \eigvtwo+r_2^t,
\end{equation}
where $\xi_{2,t}=\la v^t, \eigvtwo \ra$. Substituting \Cref{eq:projvt2} in the definition of $e_3^t$ in \Cref{eq:GAMPrewrite4} and using \Cref{eq:choice_2_params}, we obtain
\begin{multline}
   \left( \frac{\Dbar}{\lambda^*(\delta_2)} -   \E \left[   \frac{ \delta \cT(Y)}{ \lambda^*(\delta_2) - \cT(Y)}  +  1 \right] I_d \right) r_2^t \\
   =  e_3^t \,  + \,   \xi_{2,t} \left( 1+ \delta \E \left[  \frac{\cT(Y)}{ \lambda^*(\delta_2) - \cT(Y)} \right] 
   - \frac{\lambda_2(\Dbar)}{\lambda^*(\delta_2)} \right) \eigvtwo .
   \label{eq:r2t_exp}
\end{multline}
We  show that $ \lim\limits_{t \to \infty}\lim\limits_{d \to \infty} \| r_2^t \|^2/d = 0$, which from \Cref{eq:projvt2} implies that  the GAMP iterate is aligned with $\eigvtwo$ in the limit. To show this, we first claim that for all sufficiently large $n$:%
\begin{align}
     \normtwo{ \left( \frac{\Dbar}{\lambda^*(\delta_2)} -   \E \left[   \frac{ \delta \cT(Y)}{ \lambda^*(\delta_2) - \cT(Y)}  +  1 \right] I_d \right) r_2^t}  \ge C  \normtwo{r_2^t},
\label{eq:r2t_LB}
\end{align}
for some  constant $C >0$. We then show that under choice 2:
\begin{align}
    \lim_{t \to \infty} \lim_{d \to \infty} \, \frac{1}{\sqrt{d}} \normtwo{e_3^t \, + \, \xi_{2,t} \left(   \delta \E \left[  \frac{\cT(Y)}{ \lambda^*(\delta_2) - \cT(Y)} \right] +  1 - \frac{\lambda_2(\Dbar)}{\lambda^*(\delta_2)}\right) \eigvtwo}  =0.
    \label{eq:xi2_e3_lim}
\end{align}
Given the claims in  \Cref{eq:r2t_LB,eq:xi2_e3_lim}, the result in \Cref{eq:psiX2joint} is obtained using the same steps as \Cref{eq:r1t_lim} to \eqref{eq:Psi_Qint_conv}, by replacing $\xone, x_1^{\spec}, r_{1}^t, \alpha, \xi_{1,t}, \tilde{\chi}_1, \tilde{\sigma}_1$ with $\xtwo, x_2^{\spec}, r_{2}^t, (1-\alpha), \xi_{2,t}, \tilde{\chi}_2, \tilde{\sigma}_2$, respectively.

The proof of \Cref{eq:r2t_LB} is along the same lines as  that of \Cref{eq:r1t_LB}. Other than replacing  notation as above, the only change is in the argument from \Cref{eq:M1def} to \eqref{eq:lim_lambdai_M1}. Here we define the matrix $M_2 \coloneqq \frac{\Dbar}{\lambda^*(\delta_2)} -   \E \left[   \frac{ \delta \cT(Y)}{ \lambda^*(\delta_2) - \cT(Y)}  +  1 \right]I_d$, which can be written as $M_2 = Q \bar{\Lambda} Q^{\top}$, where  $\bar{\Lambda}$ is a diagonal matrix containing the eigenvalues of $M_2$, and $Q$ is an orthogonal matrix with the  eigenvectors. The eigenvectors of $M_2$ are the same as those of $\Dbar$ and its eigenvalues are:
\begin{equation}
\lambda_i(M_2) = \frac{\lambda_i(\Dbar)}{\lambda^*(\delta_2)} -   \E \left[   \frac{ \delta \cT(Y)}{ \lambda^*(\delta_2) - \cT(Y)}  +  1 \right], \quad i=1, \ldots,d.
\label{eq:M2_eigs}
\end{equation}
\Cref{thm:main-thm-eigval} and \Cref{rk:explicit-formula-eigval} guarantee that when $\lambda^*(\delta_1) > \lambda^*(\delta_2)  > \ol{\lambda}(\delta)$, there is strict separation between the top three
three eigenvalues of $\Dbar \coloneqq \Abar^\top T \Abar $. The limits of these eigenvalues are: 
\begin{equation}
\begin{split}
         \lim_{d \to \infty} \lambda_1( \Dbar )  &= \lambda^*(\delta_1)\left( 1 +  \delta \E\left[ \frac{\cT(Y)}{\lambda^*(\delta_1) - \cT(Y)} \right] \right)  \\
    &   >   \lim_{d \to \infty} \lambda_2( \Dbar ) = \lambda^*(\delta_2)\left( 1 +  \delta \E\left[ \frac{\cT(Y)}{\lambda^*(\delta_2) - \cT(Y)} \right] \right) \\
        &   > \lim_{d \to \infty} \lambda_3( \Dbar ) = \bar{\lambda}(\delta)\left( 1 +  \delta \E\left[ \frac{\cT(Y)}{\bar{\lambda}(\delta) - \cT(Y)} \right] \right).
\end{split}
\label{eq:eigD_lims3}
\end{equation}
Since $r_2^t$ is orthogonal to $\eigvtwo = v_2(M_2)$, we have   $M_2 r_2^t = Q \bar{\Lambda}' Q^{\top}r_1^t$, where $\bar{\Lambda}'$ is obtained from $\bar{\Lambda}$ by replacing the \emph{second}  eigenvalue $\lambda_2(M_2)$ with any other value. Here we replace $\lambda_2(M_2)$ by $\lambda_1(M_2)$. Then, using the eigenvalue limits in \Cref{eq:eigD_lims3} together with arguments analogous to \Cref{eq:norm_M1r1,eq:min_lambdai_M1,eq:lim_lambdai_M1}, we obtain:
\begin{equation}
    \lim_{d \to \infty} \lambda_2(M_2) =0, \qquad 
    \lim_{d \to \infty}  \, \min_{i \ne 2} \lambda_i(M_2)^2 = C >0,
    \label{eq:lim_lambdai_M2}
\end{equation}
for a universal constant $C$.

The proof of \Cref{eq:xi2_e3_lim} is essentially identical to that of \Cref{eq:xi1_e3_lim}, and is omitted. This completes the proof of \Cref{thm:main-thm-joint-dist}.


\begin{remark}[Adapting the argument to $\alpha = 1/2$]
\label{rk:alpha-half-amp}
To obtain the result mentioned in \Cref{rk:alpha-half-master}, we
analyze a pair of GAMP algorithms with the same design of denoisers and initializers as in choices 1 and 2. In particular, one can show that $v^t$ converges to a pair of linearly independent vectors in the span of $ v_1(\Dbar) $ and $ v_2(\Dbar) $ under choices 1 and 2. 
To prove the claim, under choice 1, we decompose $ v^t $ into the projection onto $ \spn\curbrkt{v_1(\Dbar), v_2(\Dbar)} $ and the orthogonal component $ r_1^t $. 
Via a similar analysis, \Cref{eq:r1t_lim} can be shown to hold, provided that $ \lambda^*(\delta/2) > \ol\lambda(\delta) $, since this condition ensures the existence of a spectral gap (cf.\ \Cref{rk:alpha-half-rmt}). 
Hence, $ v^t $ converges to a vector in $ \spn\curbrkt{v_1(\Dbar), v_2(\Dbar)} $. 
Furthermore, \Cref{eq:Psi_conv1} continues to hold by state evolution (\Cref{prop:GAMP_SE}). 
As a result, $ v^t $ converges to a vector $ \tv_1 $ whose limiting empirical distribution has the law of $ \tilde{\chi}_1 X_1 + W_{V,\infty} $, with $ W_{V,\infty} $ independent of $ (X_1,X_2) $. 
Similarly, under choice 2, $ v^t $ converges to another vector $ \tv_2 $ in $ \spn\curbrkt{v_1(\Dbar), v_2(\Dbar)} $ whose limiting empirical distribution has the law of $ \tilde{\chi}_2 X_2 + W_{V,\infty}' $, with $ W_{V,\infty}' $ independent of $ (X_1,X_2) $. By recognizing that the inner products of $ \tv_1 , \tv_2 $ with $x_1^*$ differ, 
one readily obtains that $ \tv_1,\tv_2 $ are linearly independent.
Therefore, in the high-dimensional limit, the GAMP iterates recover $ \spn\curbrkt{v_1(\Dbar),v_2(\Dbar)} $. 
At this point, we can find the vector in $\spn\curbrkt{v_1(\Dbar),v_2(\Dbar)}$ with the desired limiting joint law as in \Cref{eq:psiX1joint,eq:psiX2joint} by matching its correlation with the linear estimator $ x^{\lin} $ via 
\Cref{eqn:xspec-alpha-half}. 
This last step of grid search can be effectively carried out when the overlap attained by $x^{\lin}$ is non-zero, i.e., $ \expt{G\cL(Y)}\ne0 $. 
\end{remark}

\section{Bayes-optimal combination (proof of \Cref{cor:opt-combo})}\label{app:bayes-opt-comb}

In the proof, we only consider combined estimators for the estimation of $x_1^*$. 
The arguments for the estimation of $x_2^*$ are similar, and therefore omitted. 
For any $ C_1\in\cC_1 $ (the latter set is defined in \Cref{eqn:set-comb-fun-1}), consider the combined estimator $ x_1^{\comb} \coloneqq C_1(x^{\lin}, x_1^{\spec}) $. 
By \Cref{thm:main-thm-joint-dist}, we have
\begin{align}
    \lim_{d\to\infty} \frac{\abs{\inprod{x_1^{\comb}}{x_1^*}}}{\normtwo{x_1^{\comb}}\normtwo{x_1^*}} &= \frac{\abs{\expt{X_1 C_1(X^{\lin}, X_1^{\spec})}}}{\sqrt{\expt{C_1(X^{\lin}, X_1^{\spec})^2}}} . \notag 
\end{align}
almost surely. 
Here we use the fact that $ \expt{X_1^2} = 1 $ which follows from $ \xone\in\sqrt{d}\,\bbS^{d-1} $. 
Now, the optimality of the conditional expectation function $ C_1^* $ follows from the Cauchy--Schwarz inequality:
\begin{align}
    \frac{\abs{\expt{X_1 C_1(X^{\lin}, X_1^{\spec})}}}{\sqrt{\expt{C_1(X^{\lin}, X_1^{\spec})^2}}} 
    &= \frac{\abs{\expt{\expt{X_1 \condon X^{\lin}, X_1^{\spec}} C_1(X^{\lin}, X_1^{\spec})}}}{\sqrt{\expt{C_1(X^{\lin}, X_1^{\spec})^2}}} \notag \\
    &\le \sqrt{\expt{\expt{X_1 \condon X^{\lin}, X_1^{\spec}}^2}}, \label{eqn:bayes-opt-bound} 
\end{align}
with equality in  \Cref{eqn:bayes-opt-bound} if $C_1=C_1^*$.


We then compute $ \expt{X_1 \condon X^{\lin}, X_1^{\spec}} $ from the joint distribution of $ (X_1, X^{\lin}, X^{\spec}) $ given by \Cref{eqn:def-asymp-corr,eqn:def-rv-xlin-xspec}. 
Under \Cref{itm:assump-signal-distr}, we have $ (X_1,X_2)\sim\cN(0,1)^{\ot2} $. 
Using this it can be verified that $(X_1, X^{\lin}, X^{\spec})$ are jointly Gaussian with zero mean and the following covariance matrix:
\begin{align}
    \begin{bmatrix}
    1 & \rho_1^{\lin} & \rho_1^{\spec} \\
    \rho_1^{\lin} & 1 & \rho_1^{\lin}\rho_1^{\spec} + \expt{W^{\lin}W_1^{\spec}} \\
    \rho_1^{\spec} & \rho_1^{\lin}\rho_1^{\spec} + \expt{W^{\lin}W_1^{\spec}} & 1
    \end{bmatrix} . \notag 
\end{align}
Let  $\nu_1 = \rho_1^{\lin} \rho_1^{\spec} + \expt{W^{\lin} W_1^{\spec}}$. 
Using the covariance structure above, we obtain that $ X_1 $ conditioned on $ (X^{\lin}, X_1^{\spec}) $ is a Gaussian random variable with mean 
\begin{align}
    \wt{\mu} &\coloneqq \begin{bmatrix}
    \rho_1^{\lin} & \rho_1^{\spec}
    \end{bmatrix} \begin{bmatrix}
    1 & \nu_1 \\ \nu_1 & 1
    \end{bmatrix}^{-1} \begin{bmatrix}
    X^{\lin} \\ X_1^{\spec}
    \end{bmatrix} \notag 
\end{align}
and variance
\begin{align}
    \wt{\sigma}^2 &\coloneqq 1 - \begin{bmatrix}
    \rho_1^{\lin} & \rho_1^{\spec}
    \end{bmatrix} \begin{bmatrix}
    1 & \nu_1 \\ \nu_1 & 1
    \end{bmatrix}^{-1} \begin{bmatrix}
    \rho_1^{\lin} \\ \rho_1^{\spec}
    \end{bmatrix} . \notag 
\end{align}
Therefore, the Bayes-optimal combined estimator is given by
\begin{align}
    \expt{X_1\condon X^{\lin}, X_1^{\spec}} &= \wt{\mu}
    = \frac{1}{1 - \nu_1^2} \sqrbrkt{\paren{\rho_1^{\lin} - \rho_1^{\spec}\nu_1} X^{\lin} + \paren{\rho_1^{\spec} - \rho_1^{\lin}\nu_1} X_1^{\spec}} , \notag 
\end{align}
which agrees with the expression in \Cref{eqn:opt-combinator-1}. 
Finally, the explicit formulas of the overlaps given by the Bayes-optimal combined estimators can be obtained from \Cref{eqn:bayes-opt-bound,eqn:opt-combinator-1} via elementary algebraic manipulations. 

\section{Additional proofs for linear estimator (proof of \Cref{lem:linear-optimal-overlap})}
\label{sec:linear-estimator-pf}



With the characterization of the limiting overlaps   of the linear estimator 
in \Cref{lem:linear-overlap}, we can  maximize them over the choice of the preprocessing function $\cL\colon\bbR\to\bbR $.   For $ i\ge0 $, let
\begin{align}
    m_i(y) &\coloneqq \expt{G^i p(y|G)} , \label{eqn:def-mi}
\end{align}
where $ G\sim\cN(0,1) $ and $ p(y|g) $ is defined in \Cref{eq:condlaw}. 
Using $m_0,m_1$, the squared limiting overlap between $\xlin$ and $x_1^*$ in \Cref{eqn:overlap-lin-1-main} can be expressed in the following way:
\begin{multline}
    \frac{\alpha^2 \expt{G\cL(Y)}^2}{(\alpha^2+(1-\alpha)^2) \expt{G\cL(Y)}^2 + \expt{\cL(Y)^2}/\delta} 
    = \paren{\frac{\alpha^2+(1-\alpha)^2}{\alpha^2} + \frac{1}{\alpha^2\delta} \cdot \frac{\expt{\cL(Y)^2}}{\expt{G\cL(Y)}^2}}^{-1} \\ 
    = \paren{\frac{\alpha^2+(1-\alpha)^2}{\alpha^2} + \frac{1}{\alpha^2\delta} \cdot \frac{\int_{\supp(Y)} m_0(y) \cL(y)^2 \diff y}{\paren{\int_{\supp(Y)} m_1(y) \cL(y) \diff y}^2}}^{-1} , \label{eqn:overlap-using-m0m1} 
\end{multline}
provided $ \int_{\supp(Y)} m_1(y) \cL(y) \diff y \ne0 $ and $ \expt{|G\cL(Y)|}<\infty $. 
The optimization of overlap can be formalized as the following maximization problem:
\begin{align}
    \paren{\ollinone}^2 &\coloneqq \sup_{\cL\colon\bbR\to\bbR} \paren{\frac{\alpha^2+(1-\alpha)^2}{\alpha^2} + \frac{1}{\alpha^2\delta} \cdot \frac{\int_{\supp(Y)} m_0(y) \cL(y)^2 \diff y}{\paren{\int_{\supp(Y)} m_1(y) \cL(y) \diff y}^2}}^{-1} \notag \\
    &\phantom{\coloneqq} \suchthat \int_{\supp(Y)} m_1(y) \cL(y) \diff y \ne0 \label{eqn:ol-lin-constraint-1} \\
    &\phantom{\coloneqq \suchthat} \expt{|G\cL(Y)|}<\infty . \label{eqn:ol-lin-constraint-2} 
\end{align}
Therefore, maximizing $\ollinone$ is equivalent to solving the following minimization problem:
\begin{align}
    \optlin &\coloneqq \inf_{\cL\colon\bbR\to\bbR} \frac{\int_{\supp(Y)} m_0(y) \cL(y)^2 \diff y}{\paren{\int_{\supp(Y)} m_1(y) \cL(y) \diff y}^2} \quad  \suchthat \quad \text{\Cref{eqn:ol-lin-constraint-1,eqn:ol-lin-constraint-2}} . \notag 
\end{align}
This optimization problem has been studied in \cite[Appendix C.1]{mondelli2021optimalcombination}. 
In particular, under the condition 
\begin{align}
    \int_{\supp(Y)} \frac{m_1(y)^2}{m_0(y)} \diff y \in(0,\infty) \label{eqn:lin-regularity-condition}
\end{align}
(which is equivalent to \Cref{eqn:cond-lin-eff}), 
we have
$  \optlin = \paren{\int_{\supp(Y)} \frac{m_1(y)^2}{m_0(y)} \diff y}^{-1}$,
attained by $ \cL^*\colon\bbR\to\bbR $ defined as (cf.\ \cite[Eqn.\ (C.4)]{mondelli2021optimalcombination}) 
\begin{align}
    \cL^*(y) = \frac{m_1(y)}{m_0(y)} , \label{eqn:opt-lin-preprocessor}
\end{align}
which satisfies $ \int_{\supp(Y)} m_1(y) \cL^*(y) \diff y > 0 $ and $ \expt{|G\cL^*(Y)|}<\infty $. 

Therefore, the value of the original problem $ \ollinone $ we are interested in is given by 
\begin{align}
    \paren{\ollinone}^2 &= \paren{\frac{\alpha^2+(1-\alpha)^2}{\alpha^2} + \frac{1}{\alpha^2\delta} \cdot \frac{1}{\int_{\supp(Y)} \frac{m_1(y)^2}{m_0(y)} \diff y}}^{-1} . \label{eqn:opt-overlp-lin-1}
\end{align}
Analogously, the optimal (over the choice of $\cL\colon\bbR\to\bbR$) limiting overlap between $\xlin$ and $x_2^*$ equals
\begin{align}
    \paren{\ollintwo}^2 &= \paren{\frac{\alpha^2+(1-\alpha)^2}{(1-\alpha)^2} + \frac{1}{(1-\alpha)^2\delta} \cdot \frac{1}{\int_{\supp(Y)} \frac{m_1(y)^2}{m_0(y)} \diff y}}^{-1} , \label{eqn:opt-overlp-lin-2}
\end{align}
which is also achieved by $\cL^*\colon\bbR\to\bbR$ defined in \Cref{eqn:opt-lin-preprocessor}.

\section{Additional proofs for spectral estimator (\Cref{thm:opt-spec})}\label{sec:spec-estimator-pf}

\subsection{Optimization of spectral threshold}
\label{sec:opt-threshold}

Let us consider weak recovery of $x_1^*$. 
(The analysis for the recovery of $x_2^*$ is completely analogous and is therefore omitted.) For a given preprocessing function $\cT\colon\bbR\to\bbR$, we know from \Cref{thm:overlap-spectral} and \Cref{rk:cnvan} that  weak recovery of $ x_1^* $ is possible (i.e., $\rho_1^{\spec} >0$) when $ \lambda^*(\delta_1) > \ol\lambda(\delta) $. 
This condition is equivalent to $ \phi(\ol\lambda(\delta)) > \zeta(\ol\lambda(\delta); \delta_1) = \psi(\ol\lambda(\delta); \delta_1) $, or more explicitly, 
\begin{align}
    \expt{\frac{Z}{\ol\lambda(\delta) - Z}(G^2 - 1)} &> \frac{1}{\delta_1} = \frac{1}{\alpha\delta}. \label{eqn:optthr-cond1} 
\end{align}
Here we recall that $Z=\cT(Y)$, and $ \ol\lambda(\delta) $ satisfies $\psi'(\ol\lambda(\delta);\delta) = 0$, or equivalently (see \Cref{rk:explicit-formulas}), it is the solution to
\begin{align}
    \expt{\paren{\frac{Z}{\ol\lambda(\delta) - Z}}^2} &= \frac{1}{\delta} . \label{eqn:optthr-cond2}
\end{align}
\Cref{eqn:optthr-cond1}  assumes that 
$ \ol\lambda(\delta)>0 $, which is satisfied as $ \ol\lambda(\delta)>\sup\supp(\cT(Y)) $ by definition (cf.\ \Cref{eqn:def-lam-bar}) and the RHS is strictly positive by \Cref{itm:assump-preproc-spec}.  
Due to scaling invariance, we claim that $ \ol\lambda(\delta) $ can be assumed to be $1$. 
Indeed, both \Cref{eqn:optthr-cond1,eqn:optthr-cond2} depend on $ \cT\colon\bbR\to\bbR $ only through the ratio $ \frac{\cT(Y)}{\ol\lambda(\delta) - \cT(Y)} $,  therefore 
any given $\cT$ and the corresponding $ \ol\lambda(\delta) $ derived via \Cref{eqn:optthr-cond2} can be replaced with $ \cT/\ol\lambda(\delta) $ and $1$, respectively. 
\Cref{eqn:optthr-cond1,eqn:optthr-cond2} then become
\begin{align}
    \expt{\frac{Z}{1 - Z}(G^2 - 1)} > \frac{1}{\alpha\delta} ,  \qquad   \expt{\paren{\frac{Z}{1 - Z}}^2} = \frac{1}{\delta} .\label{eqn:optthr-cond1-2} 
\end{align}
%
For convenience, let $ \Gamma(y) \coloneqq \frac{\cT(y)}{1 - \cT(y)}$.
The expectations in \Cref{eqn:optthr-cond1-2} can then be written as
\begin{align}
    \expt{\paren{\frac{Z}{1 - Z}}^2}
    = \expt{\paren{\frac{\cT(Y)}{1 - \cT(Y)}}^2} 
    = \expt{\Gamma(Y)^2} 
    &= \int_{\supp(Y)} \expt{p(y|G)}\cdot\Gamma(y)^2 \diff y , \label{eqn:expand1}
\end{align}
where $G \sim \cN(0,1)$. Similarly
\begin{align}
    \expt{\frac{Z}{1 - Z}(G^2 - 1)}
    &= \int_{\supp(Y)} \expt{p(y|G)(G^2 - 1)}\cdot\Gamma(y) \diff y . \label{eqn:expand2}
\end{align}
So the conditions in \Cref{eqn:optthr-cond1-2} can be further written as
\begin{align}
    \int_{\supp(Y)} \expt{p(y|G)}\cdot\Gamma(y)^2 \diff y &= \frac{1}{\delta} , \label{eqn:optthr-cond1-3}
\end{align}
and
\begin{align}
    \int_{\supp(Y)} \expt{p(y|G)(G^2 - 1)}\cdot\Gamma(y) \diff y &> \frac{1}{\alpha\delta} . \label{eqn:optthr-cond2-3}
\end{align}
In the above form, one can apply Cauchy--Schwarz inequality to \Cref{eqn:optthr-cond2-3} given the equality condition in \Cref{eqn:optthr-cond1-3}. Hence, we can upper bound the LHS of \Cref{eqn:optthr-cond2-3} as
\begin{align}
    & \int_{\supp(Y)} \expt{p(y|G)(G^2 - 1)}\cdot \Gamma(y) \diff y \notag \\
    &= \int_{\supp(Y)} \sqrt{\expt{p(y|G)}}\cdot\Gamma(y) \cdot \frac{\expt{p(y|G)(G^2 - 1)}}{\sqrt{\expt{p(y|G)}}} \diff y \notag \\
    &   \le \sqrt{\int_{\supp(Y)} \expt{p(y|G)}\cdot\Gamma(y)^2 \diff y} \cdot\sqrt{\int_{\supp(Y)} \frac{\expt{p(y|G)(G^2 - 1)}^2}{\expt{p(y|G)}} \diff y}  \notag \\
    & = \frac{1}{\sqrt{\delta}} \cdot \frac{1}{\alpha\sqrt{\delta^*_1}}, \label{eqn:optthr-cond3-3}
\end{align}
where the last equality is obtained  using \Cref{eqn:optthr-cond1-3} and by defining
\begin{align}
    \delta^*_1 &\coloneqq \frac{1}{\alpha^2\int_{\supp(Y)} \frac{\expt{p(y|G)(G^2 - 1)}^2}{\expt{p(y|G)}} \diff y} \label{eqn:def-delta-1-star} 
\end{align}
Combining \Cref{eqn:optthr-cond2-3,eqn:optthr-cond3-3}, we obtain the condition $ \delta > \delta^*_1 $.

So far we have shown that if $ \lambda^*(\delta_1) > \ol\lambda(\delta) $ under a  preprocessing function $ \cT\colon\bbR\to\bbR $, then it must be the case that $ \delta > \delta^*_1 $. 
In what follows, we show that whenever $ \delta>\delta^*_1 $, one can find a preprocessing function $ \wt\cT_1^*\colon\bbR\to\bbR $ that achieves equality in both \Cref{eqn:optthr-cond1-3,eqn:optthr-cond3-3}, guaranteeing that $ \lambda^*(\delta_1) > \ol\lambda(\delta) $. 

Consider any $ \delta > \delta^*_1 $. 
Our goal is to find a function $ \wt\cT^*_1\colon\bbR\to\bbR $ satisfying
\begin{align}
    \expt{\paren{\frac{\wt\cT_1^*(Y)}{1 - \wt\cT_1^*(Y)}}^2} &= \frac{1}{\delta} , \label{eqn:find-optthr-1}
\end{align}
and
\begin{align}
    \expt{\frac{\wt\cT_1^*(Y)}{1 - \wt\cT^*_1(Y)}(G^2 - 1)} = \frac{1}{\alpha\sqrt{\delta}\sqrt{\delta^*_1}}  > \frac{1}{\alpha\delta}. \label{eqn:find-optthr-2-strong}
\end{align}
Note that according to \Cref{eqn:find-optthr-1}, the corresponding $ \ol\lambda(\delta) $ associated with $ \wt\cT^*_1 $ (to be constructed) is equal to $1$. 
%
Constructing such a $ \wt\cT_1^* $ is equivalent to constructing a function $\cT_1^*$, which we  define via
\begin{align}
    \frac{\wt\cT_1^*(y)}{1 - \wt\cT_1^*(y)} &= \sqrt{\frac{\delta^*_1}{\delta}}\cdot\frac{\cT_1^*(y)}{1 - \cT_1^*(y)} \label{eqn:tilde-t1star} 
\end{align}
for every $ y\in\bbR $. 
Now \Cref{eqn:find-optthr-1,eqn:find-optthr-2-strong} are equivalent to 
\begin{align}
    \expt{\paren{\frac{\cT_1^*(Y)}{1 - \cT_1^*(Y)}}^2} &= \frac{1}{\delta^*_1} , \qquad  \expt{\frac{\cT_1^*(Y)}{1 - \cT_1^*(Y)}(G^2 - 1)} = \frac{1}{\alpha\delta^*_1}  .\label{eqn:find-optthr-1-2}
\end{align}
Before proceeding, let us define the following functions for convenience: 
\begin{align}
    \Gamma_1^*(y) &\coloneqq \frac{\cT_1^*(y)}{1 - \cT_1^*(y)} ,  \qquad    
    m_0(y) \coloneqq \expt{p(y|G)}, \qquad m_2(y) 
    \coloneqq \expt{p(y|G)G^2}. \label{eqn:gamma1star} 
\end{align}
With the above notation, $ \delta^*_1 $ in \Cref{eqn:def-delta-1-star} can be written as
\begin{align}
    \delta^*_1 &= \frac{1}{\alpha^2\int_{\supp(Y)} \frac{(m_2(y) - m_0(y))^2}{m_0(y)} \diff y} , \label{eqn:def-delta1-star-pf}
\end{align}
and the LHSs of \Cref{eqn:find-optthr-1-2} can be written as 
\begin{align}
    \expt{\paren{\frac{\cT_1^*(Y)}{1 - \cT_1^*(Y)}}^2} &= \int_{\supp(Y)} \Gamma_1^*(y)^2 m_0(y) \diff y \notag
\end{align}
and
\begin{align}
    \expt{\frac{\cT_1^*(Y)}{1 - \cT_1^*(Y)}(G^2 - 1)} &= \int_{\supp(Y)} \Gamma_1^*(y) (m_2(y) - m_0(y)) \diff y \notag 
\end{align}
respectively, using \Cref{eqn:expand1,eqn:expand2}. 
Then, \Cref{eqn:find-optthr-1-2} becomes
\begin{align}
    \int_{\supp(Y)} \Gamma_1^*(y)^2 m_0(y) \diff y &= \alpha^2\int_{\supp(Y)} \frac{(m_2(y) - m_0(y))^2}{m_0(y)} \diff y \label{eqn:find-optthr-1-3}
\end{align}
and 
\begin{align}
    \int_{\supp(Y)} \Gamma_1^*(y) (m_2(y) - m_0(y)) \diff y &= \alpha\int_{\supp(Y)} \frac{(m_2(y) - m_0(y))^2}{m_0(y)} \diff y .
    \label{eqn:find-optthr-2-3}
\end{align}
By inspecting \Cref{eqn:find-optthr-1-3,eqn:find-optthr-2-3}, we conclude that the following choice of $ \Gamma_1^*\colon\bbR\to\bbR $ meets both conditions:
\begin{align}
    \Gamma_1^*(y) &\coloneqq \alpha\cdot\frac{m_2(y) - m_0(y)}{m_0(y)} . \notag 
\end{align}
By \Cref{eqn:gamma1star}, this gives a choice of $ \cT_1^* $: 
\begin{align}
    \cT_1^*(y) &= \frac{\Gamma_1^*(y)}{1 + \Gamma_1^*(y)}
    = \frac{\alpha\cdot\frac{m_2(y) - m_0(y)}{m_0(y)}}{1 + \alpha\cdot\frac{m_2(y) - m_0(y)}{m_0(y)}}
    = 1 - \frac{1}{\alpha\cdot\frac{m_2(y)}{m_0(y)} + (1-\alpha)} . \label{eqn:opt-spec-T-thr} 
\end{align}
Using the relation in \Cref{eqn:tilde-t1star}, we can then determine $ \wt\cT_1^*(y) $:
\begin{align}
\wt\cT_1^*(y) &= 1 - \frac{1}{1 + \sqrt{\frac{\delta^*_1}{\delta}}\cdot\frac{\cT_1^*(y)}{1 - \cT_1^*(y)}} 
= 1 - \frac{1}{\sqrt{\frac{\delta^*_1}{\delta}} \paren{\alpha\cdot\frac{m_2(y)}{m_0(y)} + (1-\alpha)} + \paren{1 - \sqrt{\frac{\delta^*_1}{\delta}}\cdot}} . \label{eqn:opt-spec-tilde-T}
\end{align}

We have found a candidate function $ \wt\cT_1^*\colon\bbR\to\bbR $ that satisfies both \Cref{eqn:find-optthr-1,eqn:find-optthr-2-strong}. 
It remains to verify that this function meets \Cref{itm:assump-preproc-spec}. 
In \Cref{sec:opt-spec-overlap} (see \Cref{eqn:inf-lb,eqn:sup-lb,eqn:sup-ub}), we will show  that $\cT_1^*$ satisfies
\begin{align}
    0 < \sup_{y\in\supp(Y)} \cT_1^*(y) < 1 , \quad 
    \inf_{y\in\supp(Y)} \cT_1^*(y) > -\infty . \label{eqn:T-prop-to-prove}
\end{align}
Since $ \cT_1^*(y) < 1 $ for every $y\in\supp(Y)$, writing 
\begin{align}
    \wt{\cT}_1^*(y) &= 1 - \frac{1}{\frac{\sqrt{\delta_1^*/\delta}}{1 - \cT_1^*(y)} + 1 - \sqrt{\delta_1^*/\delta}} , \notag 
\end{align} 
we see that $ \wt{\cT}_1^*(y) $ increases as $ \cT_1^*(y) $ increases. 
Therefore, \Cref{eqn:T-prop-to-prove} implies
\begin{align}
    0 < \sup_{y\in\supp(Y)} \wt{\cT}_1^*(y) < 1 , \quad 
    \inf_{y\in\supp(Y)} \wt{\cT}_1^*(y) > 1 - \frac{1}{1 - \sqrt{\delta_1^*/\delta}} ,
\end{align}
which certifies that $\wt{\cT}_1^*$ satisfies \Cref{itm:assump-preproc-spec}. 

\subsection{Optimization of spectral overlap}
\label{sec:opt-spec-overlap}

We will now identify the optimal preprocessing function that achieves the largest asymptotic overlap with $ x_1^* $ whenever $ \lambda^*(\delta_1) > \ol\lambda(\delta) $.
We again only focus on $ x_1^* $, as the argument for $ x_2^* $ is analogous. 
Recall from \Cref{thm:overlap-spectral} 
that the squared overlap induced by a generic preprocessing function converges almost surely to
\begin{align}
    \lim_{d\to\infty} \frac{\inprod{v_1(D)}{x_1^*}^2}{\normtwo{v_1(D)^2}^2 \normtwo{x_1^*}^2}
    &
    = \frac{1}{\alpha}\cdot\frac{\frac{1}{\delta} - \expt{\paren{\frac{Z}{\lambda^*(\delta_1) - Z}}^2}}{\frac{1}{\delta_1} + \expt{\paren{\frac{Z}{\lambda^*(\delta_1) - Z}}^2(G^2 - 1)}}= \frac{1}{\alpha}\cdot\frac{\psi'(\lambda^*(\delta_1);\delta)}{\psi'(\lambda^*(\delta_1);\delta_1) - \phi'(\lambda^*(\delta_1))}, \notag
\end{align}
where the second equality readily follows after some manipulations.  Let $ \cF $ denote the set of feasible preprocessing functions:
\begin{align}
    \cF &\coloneqq \curbrkt{\cT\colon\bbR\to\bbR : 
    \inf_{y\in\supp(Y)} \cT(y) > -\infty , \;
    0 < \sup_{y\in\supp(Y)} \cT(y) < \infty , \;
    \prob{\cT(Y) = 0} < 1
    } , \label{eqn:feasible-t} 
\end{align}
where $Y = q(G,\eps)$ and $ (G,\eps)\sim\cN(0,1)\otimes P_\eps $. 
The variational problem we would like to solve is 
\begin{align}
    \mathsf{OL}_1^2 &= \sup_{\cT_1\in\cF} \, \frac{1}{\alpha}\cdot\frac{\psi'(\lambda^*(\delta_1);\delta)}{\psi'(\lambda^*(\delta_1);\delta_1) - \phi'(\lambda^*(\delta_1))} \notag \\
    & \quad\suchthat\quad \zeta(\lambda^*(\delta_1); \delta_1) = \phi(\lambda^*(\delta_1)) \notag \\
    &= \frac{1}{\alpha} \sup_{\cT_1\in\cF} \frac{\psi'(\lambda^*(\delta_1);\delta)}{\psi'(\lambda^*(\delta_1);\delta_1) - \phi'(\lambda^*(\delta_1))} \notag \\
    & \phantom{=}\suchthat\quad \psi(\lambda^*(\delta_1);\delta_1) = \phi(\lambda^*(\delta_1)), \quad  \psi'(\lambda^*(\delta_1);\delta) > 0 \label{eqn:cond-opt-overlap-for-rk} 
\end{align}
where $ \mathsf{OL}_1^2 $ is the squared overlap for the first signal. 
The first constraint in \Cref{eqn:cond-opt-overlap-for-rk} is by the definition of $ \lambda^*(\delta_1) $ and the second one is an equivalent characterization of the spectral threshold. 
We claim that $ \lambda^*(\delta_1) $ can be assumed without loss of generality to be $1$. 
To see this, according to the explicit formulas for $\phi'(\cdot)$, $ \psi'(\cdot; \delta_1) $, and $ \lambda^*(\delta_1) $ (see \Cref{eqn:phi-deriv,eqn:derivative-psi,eqn:def-lami-star-supcrit-explicit}), we note that both the objective and constraints of the optimization problem $ \mathsf{OL}_1^2 $ depend on $ \cT_1\colon\bbR\to\bbR $ only through the ratio $ \frac{\cT_1(Y)}{\lambda^*(\delta_1) - \cT_1(Y)} $. 
Therefore, any $ \cT_1 $ and the corresponding $ \lambda^*(\delta_1) $ computed via \Cref{eqn:derivative-psi} can be replaced without affecting anything with $ \cT_1/\lambda^*(\delta_1) $ and $1$, respectively. 
Recall that $\Gamma_1\colon\bbR\to\bbR$ is defined as
\begin{align}
    \Gamma_1(y) &\coloneqq \frac{\cT_1(y)}{1 - \cT_1(y)} = \frac{1}{1 - \cT_1(y)} - 1 . \label{eqn:gamma-t} 
\end{align}
Since $ \lambda^*(\delta_1) = 1 $ and $ \lambda^*(\delta_1) $ is the solution to \Cref{eqn:def-lami-star-supcrit-explicit} in $ (\sup\supp(Z),\infty) $, 
we need an additional constraint on $ \cT_1 $: 
\begin{align}
    1 > \sup\supp(\cT_1(Y)) = \sup_{y\in\supp(Y)} \cT_1(y) , \label{eqn:sup-t1-ub} 
\end{align}
which, in light of \Cref{eqn:gamma-t}, translates to the following constraint on $ \Gamma_1 $:
\begin{align}
    \sup_{y\in\supp(Y)} \Gamma_1(y) &> -1 . \label{eqn:gamma-cstr-3}
\end{align}
Let $ \cG $ be the feasible set of $ \Gamma_1\colon\bbR\to\bbR $. 
To give a precise definition of $\cG$, we now translate the constraints on $ \cT_1 $ to constraints on $\Gamma_1$. 
According to \Cref{eqn:sup-t1-ub} and the second equality in \Cref{eqn:gamma-t}, $\Gamma_1(y)$ increases as $\cT_1(y)$ increases in $(-\infty,1)$. 
Therefore, the constraints $ \sup\limits_{y\in\supp(Y)} \cT(y) \in (0,\infty) $ and  $ \inf\limits_{y\in\supp(Y)} \cT_1(y) > -\infty $ translate to 
\begin{align}
    \sup_{y\in\supp(Y)} \Gamma_1(y) &\in (0,\infty), \qquad   \inf_{y\in\supp(Y)} \Gamma_1(y) > -1 .\label{eqn:gamma-cstr-1}
\end{align}
\Cref{eqn:gamma-cstr-1,eqn:gamma-cstr-3} yield the following definition of $\cG$:
\begin{align}
    \cG &\coloneqq \curbrkt{\Gamma\colon\bbR\to\bbR : 
    \inf_{y\in\supp(Y)} \Gamma(y) > -1 , \; 
    0 < \sup_{y\in\supp(Y)} \Gamma(y) < \infty , \;
    \prob{\Gamma(Y) = 0} < 1
    } . \label{eqn:def-feasible-set-gamma} 
\end{align}

Using the explicit representations of various functionals and variables, we can then write the optimization problem $ \mathsf{OL}_1^2 $ as
\begin{align}
    \mathsf{OL}_1^2 &= \sup_{\cT_1\in\cF} \frac{\frac{1}{\delta} - \expt{\paren{\frac{Z}{\lambda^*(\delta_1) - Z}}^2}}{\frac{1}{\delta} + \alpha \expt{\paren{\frac{Z}{\lambda^*(\delta_1) - Z}}^2(G^2 - 1)}} \notag \\
    &= \sup_{\Gamma_1\in\cG} \paren{\frac{\frac{1}{\delta} + \alpha\int_{\supp(Y)} \Gamma_1(y)^2 m_2(y) \diff y - \alpha\int_{\supp(Y)} \Gamma_1(y)^2 m_0(y) \diff y}{\frac{1}{\delta} - \int_{\supp(Y)}\Gamma_1(y)^2 m_0(y) \diff y}}^{-1} \notag \\
    &= \sup_{\Gamma_1\in\cG} \paren{\frac{\frac{1-\alpha}{\delta} + \alpha \int_{\supp(Y)} \Gamma_1(y)^2 m_2(y) \diff y}{\frac{1}{\delta} - \int_{\supp(Y)} \Gamma_1(y)^2 m_0(y) \diff y} + \alpha}^{-1} , \notag 
\end{align}
subject to the conditions
\begin{align}
    \expt{\frac{Z}{\lambda^*(\delta_1) - Z}(G^2 - 1)} &= \frac{1}{\alpha\delta} , \qquad 
    \frac{1}{\delta} > \expt{\paren{\frac{Z}{\lambda^*(\delta_1) - Z}}^2} , \label{eqn:two-cond} 
\end{align}
which can be alternatively written as follows using the notation in \Cref{sec:opt-threshold}:
\begin{align}
\frac{1}{\alpha\delta} &= \int_{\supp(Y)} \Gamma_1(y) (m_2(y)-m_0(y)) \diff y , \qquad 
\frac{1}{\delta} > \int_{\supp(Y)} \Gamma_1(y)^2 m_0(y) \diff y . \label{eqn:cond-opt-overlap}
\end{align}
Note that in the first identity in \Cref{eqn:two-cond}, we use $ \lambda^*(\delta_1)\ne0 $ which holds since $ \lambda^*(\delta_1)>\sup\supp(Z)>0 $ by \Cref{itm:assump-preproc-spec}.  
We observe that to solve $ \mathsf{OL}_1^2 $, it suffices to solve the following minimization problem: 
\begin{align}
    \mathsf{OPT}_1 &\coloneqq  \inf_{\Gamma_1\in\cG} \,  \paren{\frac{\frac{1-\alpha}{\delta} + \alpha \int_{\supp(Y)} \Gamma_1(y)^2 m_2(y) \diff y}{\frac{1}{\delta} - \int_{\supp(Y)} \Gamma_1(y)^2 m_0(y) \diff y}}  \notag \\
    &\phantom{\coloneqq} \suchthat\quad \text{\Cref{eqn:cond-opt-overlap}} . \notag 
\end{align}
Recall the standard fact that the minimum value of a function (subject to constraints) is the smallest level $\beta$ such that the $\beta$-level set (after taking the intersection with the constraint set) is non-empty. 
For any given $\beta>0$, define $ \cL_\beta $ as the set of $\Gamma_1\in\cG$ which induces an objective value at most $\beta$ and satisfies all conditions of $ \mathsf{OPT}_1 $: 
\begin{align}
    \cL_\beta &\coloneqq \curbrkt{\Gamma_1\in\cG : \begin{array}{l}
        \frac{\frac{1-\alpha}{\delta} + \alpha \int_{\supp(Y)} \Gamma_1(y)^2 m_2(y) \diff y}{\frac{1}{\delta} - \int_{\supp(Y)} \Gamma_1(y)^2 m_0(y) \diff y} \le \beta \\
        \frac{1}{\alpha\delta} = \int_{\supp(Y)} \Gamma_1(y) (m_2(y)-m_0(y)) \diff y \\
        \frac{1}{\delta} > \int_{\supp(Y)} \Gamma_1(y)^2 m_0(y) \diff y
    \end{array}} . \notag 
\end{align}
The first constraint describes the $\beta$-level set of the original objective of $\mathsf{OPT}_1$. 
Then $\mathsf{OPT}_1$ can be further written as 
\begin{align}
\mathsf{OPT}_1 &\coloneqq 
\inf\curbrkt{\beta>0 : \cL_\beta \ne\emptyset} . 
\label{eqn:beta-opt1}
\end{align}

The first constraint in $\cL_\beta$ is equivalent to:
\begin{align}
    \frac{1-\alpha}{\delta} + \alpha\int_{\supp(Y)} \Gamma_1(y)^2 m_2(y) \diff y &\le \frac{\beta}{\delta} - \beta \int_{\supp(Y)} \Gamma_1(y)^2 m_0(y) \diff y, \notag 
\end{align}
or
\begin{align}
    \int_{\supp(Y)} \Gamma_1(y)^2 (\alpha m_2(y) + \beta m_0(y)) \diff y &\le \frac{\beta - (1 - \alpha)}{\delta}, \label{eqn:before-divide-beta} 
\end{align}
since the third condition guarantees that the denominator of the LHS of the first inequality is positive. 
We also claim that $ \beta-(1-\alpha)>0 $ and divide both sides by it to obtain
\begin{align}
    \int_{\supp(Y)} \Gamma_1(y)^2 \frac{\alpha m_2(y) + \beta m_0(y)}{\beta - (1 - \alpha)} \diff y &\le \frac{1}{\delta} . \label{eqn:levelset-cond1-equiv} 
\end{align}
To see why $ \beta-(1-\alpha)>0 $, recall that $\beta$ is a possible value of $ \mathsf{OPT}_1 $ which in turn has the following relation to $ \mathsf{OL}_1^2 $:
\begin{align}
    \mathsf{OL}_1^2 &= \frac{1}{\mathsf{OPT}_1 + \alpha} . \label{eqn:ol-opt} 
\end{align}
Since $ 0\le \mathsf{OL}_1^2\le 1 $, this implies $ \beta\ge1-\alpha $. 
Furthermore, if $ \beta = 1-\alpha $, \Cref{eqn:before-divide-beta} implies that $\Gamma_1(y) = 0$ for almost every $y\in\supp(Y)$. 
However, since $ \Gamma_1\in\cG $, this cannot happen according to the third condition in the definition of $\cG$ (cf.\ \Cref{eqn:def-feasible-set-gamma}). 
Therefore $ \beta>1 - \alpha $. 

The following observation can further simplify the description of the set $ \cL_\beta $. 
The third (inequality) constraint can be dropped since $ m_2(y) , m_0(y)\ge0 $ and 
\begin{align}
\int_{\supp(Y)} \Gamma_1(y)^2 \frac{\alpha m_2(y) + \beta m_0(y)}{\beta - (1 - \alpha)} \diff y
\ge \int_{\supp(Y)} \Gamma_1(y)^2 m_0(y) \diff y . \notag 
\end{align}
Therefore the third constraint has already been guaranteed to be true given the first (inequality) constraint which is, as we just argued, equivalent to \Cref{eqn:levelset-cond1-equiv}. 

Now, with the above observations, we arrive at the following equivalent description of the $\beta$-level set $ \cL_\beta $:
\begin{align}
    \cL_\beta &= \curbrkt{\Gamma_1\in\cG : \begin{array}{l}
        \int_{\supp(Y)} \Gamma_1(y)^2 \frac{\alpha m_2(y) + \beta m_0(y)}{\beta - (1 - \alpha)} \diff y \le \frac{1}{\delta} \\
        \frac{1}{\alpha\delta} = \int_{\supp(Y)} \Gamma_1(y) (m_2(y)-m_0(y)) \diff y
    \end{array}} . \notag 
\end{align}
We observe that for any fixed $\beta>0$, $\cL_\beta\ne\emptyset$ if and only if $ \mathsf{INF}_1^{(\beta)} \le \frac{1}{\delta} $ where $ \mathsf{INF}_1^{(\beta)} $ is the value of the following constrained minimization problem:
\begin{align}
    \mathsf{INF}_1^{(\beta)} &\coloneqq \inf_{\Gamma_1\in\cG} \int_{\supp(Y)} \Gamma_1(y)^2 \frac{\alpha m_2(y) + \beta m_0(y)}{\beta - (1 - \alpha)} \diff y \notag \\
    &\phantom{\coloneqq} \suchthat\quad  \frac{1}{\alpha\delta} = \int_{\supp(Y)} \Gamma_1(y) (m_2(y)-m_0(y)) \diff y . \notag 
\end{align}

We turn to compute the value of $\mathsf{INF}_1^{(\beta)}$. 
Though $\mathsf{INF}_1^{(\beta)}$ appears as a constrained variational problem over function space, one can obtain its optimum (and the corresponding optimizer) from a different perspective by casting it as a \emph{linear program} over a certain Hilbert space. 
Specifically, motivated by the form of the objective of $\mathsf{INF}_1^{(\beta)}$, we define the following inner product on the function space: 
\begin{align}
    \inprod{f}{g}_{\beta, \alpha} &\coloneqq \int_{\supp(Y)} f(y)g(y) \frac{\alpha m_2(y) + \beta m_0(y)}{\beta - (1 - \alpha)} \diff y . \notag 
\end{align}
This induces the norm $\norm{\beta,\alpha}{f} = \sqrt{\inprod{f}{f}_{\beta,\alpha}}$
With the above notation, $ \mathsf{INF}_1^{(\beta)} $ can be written as 
\begin{align}
    \mathsf{INF}_1^{(\beta)} &= \inf_{\Gamma_1\in\cG} \norm{\beta,\alpha}{\Gamma_1}^2 \notag \\
    &\phantom{=} \suchthat \quad \inprod{\Gamma_1}{\frac{m_2 - m_0}{\frac{\alpha m_2 + \beta m_0}{\beta - (1-\alpha)}}}_{\beta,\alpha} = \frac{1}{\alpha\delta} . \notag 
\end{align}
In words, $\mathsf{INF}_1^{(\beta)}$ outputs the smallest norm of $\Gamma_1$ whose linear projection onto a given vector (viewed as an element in the defined Hilbert space) 
$ \frac{m_2 - m_0}{\frac{\alpha m_2 + \beta m_0}{\beta - (1-\alpha)}} $ is fixed. 
It is now geometrically clear that the minimizer $\Gamma_1^{(\beta)}$ must be aligned with the vector $\frac{m_2 - m_0}{\frac{\alpha m_2 + \beta m_0}{\beta - (1-\alpha)}}$.
Therefore $\Gamma_1^{(\beta)} = a^* \cdot \frac{m_2 - m_0}{\frac{\alpha m_2 + \beta m_0}{\beta - (1-\alpha)}}  $,
where the scalar $a^*\in\bbR$ is uniquely determined from the equality condition:
\begin{align}
    \int_{\supp(Y)} a^* \cdot \frac{m_2(y) - m_0(y)}{\frac{\alpha m_2(y) + \beta m_0(y)}{\beta - (1-\alpha)}} \cdot (m_2(y)-m_0(y)) \diff y = \frac{1}{\alpha\delta} , \notag
\end{align}
i.e., 
\begin{align}
    a^* &= \paren{\alpha\delta(\beta - (1-\alpha)) \int_{\supp(Y)} \frac{(m_2(y) - m_0(y))^2}{\alpha m_2(y) + \beta m_0(y)} \diff y}^{-1} . \notag
\end{align}
Let
\begin{align}
    f_\alpha(\beta) &\coloneqq (\beta - (1-\alpha)) \int_{\supp(Y)} \frac{(m_2(y) - m_0(y))^2}{\alpha m_2(y) + \beta m_0(y)} \diff y . \notag  
\end{align}
We have found the minimizer of $\mathsf{INF}_1^{(\beta)}$:
\begin{align}
    \Gamma_1^{(\beta)} &= \frac{1}{\alpha\delta f_\alpha(\beta)} (\beta - (1-\alpha)) \frac{m_2 - m_0}{\alpha m_2 + \beta m_0} , \notag 
\end{align}
and the resulting minimum value is given by
\begin{align}
    \mathsf{INF}_1^{(\beta)} &= \int_{\supp(Y)} \Gamma_1^{(\beta)}(y)^2 \frac{\alpha m_2(y) + \beta m_0(y)}{\beta - (1 - \alpha)} \diff y \notag \\
    &= \int_{\supp(Y)} \paren{\frac{1}{\alpha\delta f_\alpha(\beta)} (\beta - (1-\alpha)) \frac{m_2(y) - m_0(y)}{\alpha m_2(y) + \beta m_0(y)}}^2 \frac{\alpha m_2(y) + \beta m_0(y)}{\beta - (1 - \alpha)} \diff y \notag \\
    &= \frac{1}{(\alpha\delta f_\alpha(\beta))^2} \int_{\supp(Y)} (\beta - (1-\alpha)) \frac{(m_2(y) - m_0(y))^2}{\alpha m_2(y) + \beta m_0(y)} \diff y \,  = \,  \frac{1}{\alpha^2\delta^2f_\alpha(\beta)} . \notag 
\end{align}
It follows that 
\begin{align}
    \cL_\beta\ne\emptyset & \quad \iff \quad \mathsf{INF}_1^{(\beta)} \le\frac{1}{\delta} \quad \iff \quad \frac{1}{\alpha^2\delta^2f_\alpha(\beta)} \le \frac{1}{\delta} 
   \quad  \iff \quad f_\alpha(\beta) \ge \frac{1}{\alpha^2\delta}. \notag 
\end{align}
Recalling \Cref{eqn:beta-opt1}, the value of $\mathsf{OPT}_1$ is therefore equal to 
\begin{align}
    \mathsf{OPT}_1 &\coloneqq \inf\curbrkt{\beta>0 : f_\alpha(\beta) \ge \frac{1}{\alpha^2\delta}} . \notag 
\end{align}
Writing $f_\alpha$ as
\begin{align}
    f_\alpha(\beta) &= (\beta - (1-\alpha)) \int_{\supp(Y)} \frac{(m_2(y) - m_0(y))^2}{\alpha m_2(y) + \beta m_0(y)} \diff y 
    = \int_{\supp(Y)} \frac{(m_2(y) - m_0(y))^2}{\frac{\alpha m_2(y)}{\beta - (1-\alpha)} + \frac{m_0(y)}{1 - \frac{1-\alpha}{\beta}}} \diff y , \notag 
\end{align}
we see that $f_\alpha$ is increasing in $\beta$. 
This implies that $\mathsf{OPT}_1$ is equal to the critical $\beta_1^*(\delta,\alpha)$ that solves the following equation
\begin{align}
    f_\alpha(\beta_1^*(\delta,\alpha)) &= \frac{1}{\alpha^2 \delta} . \label{eqn:beta-fixed}
\end{align}

Putting our findings together, we have shown that the value of $\mathsf{OPT}_1$ equals $\beta_1^*(\delta,\alpha)$ satisfying \Cref{eqn:beta-fixed}, or more explicitly, 
\begin{align}
    (\beta_1^*(\delta,\alpha) - (1-\alpha)) \int_{\supp(Y)} \frac{(m_2(y) - m_0(y))^2}{\alpha m_2(y) + \beta_1^*(\delta,\alpha) m_0(y)} \diff y = \frac{1}{\alpha^2 \delta} , \label{eqn:opt-val-beta-star-fp} 
\end{align}
and the corresponding optimizer is given by
\begin{align}
    \Gamma_1^* = \Gamma_1^{(\beta_1^*(\delta,\alpha))} &= \frac{1}{\alpha\delta f_\alpha(\beta_1^*(\delta,\alpha))} (\beta_1^*(\delta,\alpha) - (1-\alpha)) \frac{m_2 - m_0}{\alpha m_2 + \beta_1^*(\delta,\alpha) m_0} \notag \\
    &= \alpha (\beta_1^*(\delta,\alpha) - (1-\alpha)) \frac{m_2 - m_0}{\alpha m_2 + \beta_1^*(\delta,\alpha) m_0} , \label{eqn:opt-gamma}
\end{align}
where the second equality follows from \Cref{eqn:opt-val-beta-star-fp}. 

In light of the relation between $\mathsf{OL}_1^2$ and $\mathsf{OPT}_1$ (cf.\ \Cref{eqn:ol-opt}) and the relation between $ \Gamma_1 $ and $ \cT_1 $ (cf.\ \Cref{eqn:gamma-t}), it is straightforward to translate results in \Cref{eqn:opt-val-beta-star-fp,eqn:opt-gamma} to the original problem $\mathsf{OL}_1^2$ we are interested in. 
Indeed, the value of $\mathsf{OL}_1^2$ equals
\begin{align}
    \mathsf{OL}_1^2 &= \frac{1}{\beta_1^*(\delta,\alpha) + \alpha} \label{eqn:opt-overlap-formula} 
\end{align}
and is achieved by
\begin{align}
    \cT_1^* &= \frac{\Gamma_1^*}{1 + \Gamma_1^*} = \frac{\alpha (\beta_1^*(\delta,\alpha) - (1-\alpha)) \frac{m_2 - m_0}{\alpha m_2 + \beta_1^*(\delta,\alpha) m_0}}{1 + \alpha (\beta_1^*(\delta,\alpha) - (1-\alpha)) \frac{m_2 - m_0}{\alpha m_2 + \beta_1^*(\delta,\alpha) m_0}} \notag \\
    &= \frac{\alpha(\beta_1^*(\delta,\alpha) - (1-\alpha)) (m_2 - m_0)}{\alpha (\beta_1^*(\delta,\alpha) + \alpha) m_2 + (1-\alpha)(\beta_1^*(\delta,\alpha) + \alpha) m_0} . \notag 
\end{align}
Recall that a multiplicative scaling of $\cT_1^*$ does not change its performance in terms of spectral threshold and overlap. 
Therefore, we multiply the above expression by $\frac{\beta_1^*(\delta,\alpha) + \alpha}{\beta_1^*(\delta,\alpha) - (1-\alpha)}$ and redefine $\cT_1^*$ as
\begin{align}
    \cT_1^* &= \frac{m_2 - m_0}{m_2 + \frac{1-\alpha}{\alpha} m_0} 
    = 1 - \frac{1}{\alpha\cdot\frac{m_2}{m_0} + (1-\alpha)} . \label{eqn:opt-preproc-formula} 
\end{align}
 We observe that $\cT_1^*$ in \Cref{eqn:opt-preproc-formula} above is the same as that in \Cref{eqn:opt-spec-T-thr} obtained in \Cref{sec:opt-threshold}.  This is discussed in \Cref{rk:tilting} below.

Finally, we claim that the supremum in $ \mathsf{OL}_1^2 $ can be \emph{achieved}, by verifying that $ \cT_1^* $ meets \Cref{itm:assump-preproc-spec}. 
Indeed, letting $ (G,\eps)\sim\cN(0,1)\otimes P_\eps $, we have
\begin{align}
    \inf_{y\in\supp(\cT_1^*(q(G,\eps)))} \cT_1^*(y) &\ge 1 - \frac{1}{1-\alpha} = -\frac{\alpha}{1-\alpha} > -\infty , \label{eqn:inf-lb}
\end{align}
provided $ \alpha<1 $. 
Also, it trivially holds that
\begin{align}
    \sup_{y\in\supp(\cT_1^*(q(G,\eps)))} \cT_1^*(y) &< 1 . \label{eqn:sup-ub} 
\end{align}
We then verify $ \prob{\cT_1^*(q(G,\eps)) = 0} < 1 $. 
To this end, observe that $m_2$ cannot be identically equal to $m_0$, otherwise $ \delta_1^* = \delta_2^* = \infty $ (cf.\ \Cref{eqn:def-delta1-star-pf}). 
Therefore $ m_2/m_0 $ is not constantly $1$ and $ \cT_1^* $ is not constantly $0$. 
Finally, we verify that 
\begin{align}
    \sup_{y\in\supp(\cT_1^*(q(G,\eps)))} \frac{m_2(y)}{m_0(y)} &> 1 , \label{eqn:m2-larger-than_m0}
\end{align}
which implies 
\begin{align}
    \sup_{y\in\supp(\cT_1^*(q(G,\eps)))} \cT_1^*(y) &> 1 - \frac{1}{\alpha + (1-\alpha)} = 0 . \label{eqn:sup-lb} 
\end{align}
\Cref{eqn:m2-larger-than_m0} follows since $ m_2\not\equiv m_0 $ and by \Cref{eqn:m2-m0-int-1}, 
\begin{align}
    \int_{\sup(\cT_1^*(q(G,\eps)))} (m_2(y) - m_0(y) )\diff y = 0 , \notag 
\end{align}
hence there must exist $y\in\sup(\cT_1^*(q(G,\eps)))$ such that $ m_2(y)>m_0(y) $. 

\begin{remark}[Coincidence of $\cT_1^*$ in \Cref{sec:opt-threshold,sec:opt-spec-overlap}]
\label{rk:tilting}
As noted in the proofs, \Cref{eqn:opt-spec-T-thr,eqn:opt-preproc-formula} coincide. 
The first part of \Cref{sec:opt-threshold}  exhibits a lower bound $ \delta_1^{\spec} \ge \delta_1^* $ (defined in \Cref{eqn:def-delta-1-star}), whereas the second part (from \Cref{eqn:find-optthr-1} onward) shows $ \delta_1^{\spec} \le \delta_1^* $. 
The upper bound can be alternatively obtained in \Cref{sec:opt-spec-overlap} by substituting in the condition $ \psi'(\lambda^*(\delta_1);\delta)>0 $ (cf.\ \Cref{eqn:cond-opt-overlap-for-rk}) the function $ \cT_1^* $ and recognizing that the condition is equivalent to $ \delta > \delta_1^* $. 
This recognition is, however, not entirely obvious and we find it more transparent to directly derive the upper bound in the second part of \Cref{sec:opt-threshold}. 
The price is that a slightly tilted version of $\cT_1^*$ (see $\wt\cT_1^*$ in \Cref{eqn:opt-spec-tilde-T}) is constructed. 
The tilting is an artifact of the proof technique and we expect that $\cT_1^*$ simultaneously minimizes the spectral threshold and maximizes the limiting overlap above the threshold. 
\end{remark}

\section{Universal lower bound on spectral threshold (missing proof in \Cref{rk:univ-lb-spec-thr})}
\label{sec:lower-bound-spec-thr}

We show that for $i \in \{1,2\}$, we have $ \delta^{\spec}_i \ge \frac{1}{2\alpha_i^2}$ for \emph{any} mixed GLM. 
This follows from the upper bound
\begin{align}
    \int_{\supp(Y)} \frac{(m_2(y) - m_0(y))^2}{m_0(y)} \diff y &\le 2,\label{eqn:upperbound-to-prove}
\end{align}
  in view of \Cref{eqn:spec-bound-1}. Here the functions $m_2$ and $m_0$ are defined in \Cref{eqn:gamma1star}.

A similar argument has been made in \cite[Appendix A]{lu2020phase} for the non-mixed \emph{complex} generalized linear model where the output $y_i$ depends on the linear measurement $ g_i = \inprod{a_i}{x^*} $ through its \emph{modulus}: $ y_i \sim p(\cdot\mid |g_i|) $. 
Here we provide a similar argument for the real case without requiring the modulus. 
Recalling the definition of $m_2$ and $m_0$ (cf.\ \Cref{eqn:gamma1star}), we have
\begin{align}
    \int_{\supp(Y)} \frac{(m_2(y) - m_0(y))^2}{m_0(y)} \diff y
    &= \int_{\supp(Y)} \frac{m_2(y)^2}{m_0(y)} \diff y - 2 \int_{\supp(Y)} m_2(y) \diff y + \int_{\supp(Y)} m_0(y) \diff y \notag \\
    &= \int_{\supp(Y)} \frac{m_2(y)^2}{m_0(y)} \diff y - 1 , \notag 
\end{align}
since 
\begin{align}
\begin{split}
    \int_{\supp(Y)} m_2(y) \diff y &= \int_{\supp(Y)} \expt{p(y|G)G^2} \diff y = \expt{G^2} = 1 , \\
    \int_{\supp(Y)} m_0(y) \diff y &= \int_{\supp(Y)} \expt{p(y|G)} \diff y = 1 . 
\end{split}
\label{eqn:m2-m0-int-1}
\end{align}
To show \Cref{eqn:upperbound-to-prove}, it then suffices to show $ \int \frac{m_2^2}{m_0}\le3 $. 
Using the Cauchy--Schwarz inequality, we can bound $m_2^2$ as follows:
\begin{align}
    m_2(y)^2 = \paren{\int_\bbR f(g)p(y|g)g^2 \diff g}^2 
   &  = \paren{\int_\bbR \sqrt{f(g)p(y|g)}g^2 \cdot \sqrt{f(g)p(y|g)} \diff g}^2 \notag \\
    &\le \paren{\int_\bbR f(g)p(y|g) g^4 \diff g} \cdot \paren{\int_\bbR f(g)p(y|g) \diff g} \notag \\
    &= \paren{\int_\bbR f(g)p(y|g) g^4 \diff g} \cdot m_0(y) , \notag 
\end{align}
where $ f(g) $ denotes the standard Gaussian density function. 
The desired bound then follows:
\begin{align}
    \int_{\supp(Y)} \frac{m_2(y)^2}{m_0(y)} \diff y
    &\le \int_{\supp(Y)}\int_\bbR f(g)p(y|g) g^4 \diff g\diff y \notag \\
    &= \int_\bbR \sqrbrkt{f(g)\paren{\int_{\supp(Y)} p(y|g) \diff y}g^4} \diff g = \expt{G^4} = 3 . \notag 
\end{align}

\section{State evolution of GAMP for mixed GLMs (proof of \Cref{prop:GAMP_SE})} \label{sec:se-gamp-mixed-glm}

Recall the GAMP iteration in \Cref{eq:gamp-eqn-general}:
\begin{align}
\begin{split}
    u^t &= \frac{1}{\sqrt{\delta}} \Abar f_t(v^t; \, \xone, \xtwo) - \sfb_t \, g_{t-1}(u^{t-1};y),  \qquad
    v^{t+1} = \frac{1}{\sqrt{\delta}} \Abar^\top g_{t}(u^t;y) - \sfc_t  \, f_t(v^t;\, \xone, \xtwo), 
\end{split}
\label{eq:gamp-eqn-det}
\end{align}
where the memory coefficients  are $\sfb_t =\frac{1}{n}\sum_{i=1}^d f_t'(v_i^t;\, \ol{x}_{1,i}^*, \ol{x}_{2,i}^*)$ and  
$\sfc_t = \frac{1}{n}\sum_{i=1}^n g_t'(u_i^t; y_i)$. Using the initialization $\tv^0$, the iteration starts with  $u^0= \frac{1}{\sqrt{\delta}}\Abar \tv^0$.

To prove the proposition, we rewrite \Cref{eq:gamp-eqn-det} as an AMP iteration with matrix-valued iterates. This matrix-valued AMP is designed to be a special case of an abstract AMP iteration for which a state evolution result has been established in \cite{javanmard2013state, Feng22AMPTutorial}. This state evolution result is then translated to obtain the results in \Cref{eq:psiG,eq:psiX}.
 (See also \cite{GBAMP21} for an analysis of a general graph-based AMP iteration that includes AMP with matrix-valued iterates.) 
 Given the iteration in \Cref{eq:gamp-eqn-det},  for $t \ge 1$, let
\begin{align}
  e^t \coloneqq \begin{bmatrix} g_1 & g_2 & u^t  \end{bmatrix} , \qquad   
  h^{t+1} \coloneqq  \, & v^{t+1} - \chi_{1,t+1}\xone   - \chi_{2,t+1}\xtwo \, ,
  \label{eq:etht1_def}
\end{align}
where we recall that $g_1 = \Abar \xone$, $g_2 = \Abar \xtwo$, and $\chi_{1,t}, \chi_{2,t}$ are the state evolution parameters computed via the recursion in \Cref{eq:UtVt_def,eq:muU_update,eq:G1G2Y_joint}. 
We also introduce the functions $ \brf_t: \reals^3 \to \reals^3$ and $\brg_t:\reals^5 \to \reals$, defined as:
\begin{align}
   & \brf_t(h^t; \, \xone , \xtwo) = \begin{bmatrix} \sqrt{\delta} \xone, & \sqrt{\delta} \xtwo, & f_t(h^t+ \chi_{1,t}\xone   + \chi_{2,t}\xtwo; \xone,\xtwo )  \end{bmatrix}, \\
    &  \brg_t(e^t; \, \veta, \veps) = g_t(e^t_3; \, q(\veta \odot e^t_1 + (1-\veta) \odot e^t_2,  \veps)).
\end{align}
Here, $\odot$ denotes element-wise multiplication, $\brf_t$ and $\brg_t$  act row-wise on their matrix-valued inputs and $e^t_{j}$ denotes the $j$-th column of $e^t \in \reals^{n \times 3}$. We also recall the notation $\veta=(\eta_1, \ldots, \eta_n)$, $\veps=(\eps_1, \ldots, \eps_n)$ and that $y = q( \veta \odot g_{1} + (1-\veta) \odot g_2, \,  \veps) =  q(\veta \odot e^t_1 + (1-\veta) \odot e^t_2, \,  \veps) $.  With these definitions, we claim that the AMP iteration in \Cref{eq:gamp-eqn-det} is equivalent to the following one:
\begin{equation}
\begin{split}
          & e^t  = \frac{1}{\sqrt{\delta}} \Abar \brf_t(h^t; \, \xone , \xtwo) \, - \, \brg_{t-1}(e^{t-1}; \, \veta, \veps) \sfB_t^{\top}, \\
      & h^{t+1}  =  \frac{1}{\sqrt{\delta}} \Abar^{\top} \brg_t(e^t; \, \veta, \veps)
      \, - \, \brf_t(h^t; \, \xone , \xtwo) \sfC_t^{\top}, 
\end{split}
\label{eq:et_ht1_def}
\end{equation}
where $\sfB_t \in \reals^{3 \times 1}$ and $\sfC_t \in \reals^{1 \times 3}$ are given by:
\begin{align}
    & \sfB_t = \begin{bmatrix} 0  & 0 &  \frac{1}{n}\sum_{i=1}^d f_t'(h_i^t + \chi_{1,t}\ol{x}^*_{1,i} + \chi_{2,t}\ol{x}^*_{2,i}; \ol{x}_{1,i}^*,\ol{x}_{2,i}^*) \end{bmatrix} ^{\top}, \nonumber \\
    &  \sfC_t = \begin{bmatrix}
    \E[\partial_{1} g_t(U_t; \, q(\eta G_1 + (1-\eta) G_2,  \eps))]  \\ 
    \E[\partial_{2} g_t(U_t; \, q(\eta G_1 + (1-\eta) G_2, \eps))]  \\ 
    \frac{1}{n}\sum_{i=1}^n g_t'(u_i^t; \, q(\eta_i g^{1}_i + (1-\eta_i) g^{2}_i, \epsilon_i) )]
    \end{bmatrix}^\top . \label{eq:sBsC_def}
\end{align}
Here $\partial_{1} g_t$ and $\partial_{2} g_t$ refer to the partial derivatives of $g_t(u; \, q( \eta g_1 + \eta g_2, \eps) )$ with respect to  $g_1$ and $g_2$, respectively, $(G_1, G_2, \eta, \eps) \sim \normal(0,1) \otimes \normal(0,1) \otimes \bern(\alpha) \otimes P_{\epsilon} $, and $U_t$ is defined as in \Cref{eq:UtVt_def}.  The iteration in \Cref{eq:et_ht1_def} is initialized with $e^0= [g_1 \quad g_2 \quad \frac{1}{\sqrt{\delta}}\Abar\tv^0 ]$  .

The equivalence between \Cref{eq:gamp-eqn-det,eq:et_ht1_def} can be seen by substituting in \Cref{eq:et_ht1_def}   the definitions of $e^t$ and $h^{t+1}$ from \Cref{eq:etht1_def}, and the fact that by Stein's lemma,  $\chi_{1,t+1}$ and $\chi_{2,t+1}$  defined in \Cref{eq:UtVt_def,eq:muU_update,eq:G1G2Y_joint} can be expressed as \cite[Lemma 4.1]{Feng22AMPTutorial}:
\begin{equation}
  \chi_{1,t+1} = \sqrt{\delta}\, \E[\partial_{1} g_t(U_t; q(\eta G_1 + (1-\eta) G_2,  \eps))] , 
  \;
  \chi_{2,t+1} = \sqrt{\delta}\, \E[\partial_{2} g_t(U_t; q(\eta G_1 + (1-\eta) G_2,  \eps))].
\end{equation}
The recursion in \Cref{eq:et_ht1_def} is a special case of the abstract AMP recursion with matrix-valued iterates for which a state evolution result has been established \cite{javanmard2013state}, \cite[Sec.\ 6.7]{Feng22AMPTutorial}. The standard form of the abstract AMP recursion uses empirical estimates (instead of expected values) for the first two entries of $\sfC_t$ in \Cref{eq:sBsC_def}. However,  the state evolution result remains valid for the recursion in \Cref{eq:et_ht1_def} (see Remark 4.3 of \cite{Feng22AMPTutorial}). This result states that the empirical distributions of the rows of $e^t$ and $h^{t+1}$ converge to the Gaussian distributions  $\normal(0, \Sigma_t)$ and $\normal(0, \Omega_{t+1})$, respectively.  The covariances $\Sigma_t \in \reals^{3 \times 3}$ and $\Omega_{t+1}   \in \reals$ are defined by the following state evolution recursion:
\begin{align}
    &\Sigma_{t} = \frac{1}{\delta} \E[\brf_t(G^{\omega}_t; \,  X_1, X_2) \brf_t(G^{\omega}_t; \,  X_1, X_2)^{\top}] \label{eq:Sigmat_rec} \\
    &= {\footnotesize
    \begin{bmatrix}
    1 & 0 &  \frac{\E[ X_1 f_t( G^{\omega}_t + \chi_{1,t}X_1 + \chi_{2,t}X_2; X_1,X_2) ]}{\sqrt{\delta}} \\
    0 &  1 & \frac{\E[ X_2 f_t( G^{\omega}_t + \chi_{1,t}X_1 + \chi_{2,t}X_2; X_1,X_2) ]}{\sqrt{\delta}} \\
    \frac{\E[ X_1 f_t( G^{\omega}_t + \chi_{1,t}X_1 + \chi_{2,t}X_2; X_1,X_2) ]}{\sqrt{\delta}}  & \frac{\E[ X_2 f_t( G^{\omega}_t + \chi_{1,t}X_1 + \chi_{2,t}X_2; X_1,X_2) ]}{\sqrt{\delta}}  & \frac{\E[(f_t( G^{\omega}_t + \chi_{1,t}X_1 + \chi_{2,t}X_2; X_1,X_2))^2]}{\delta}  
    \end{bmatrix}
    } , \nonumber \\
    &\Omega_{t+1} = \E[ (\brg_t(G^{\sigma}_t; \, \eta, \epsilon))^2 ]
     = \E[(g_t(G^{\sigma}_{t,3}; \, 
     q(G^{\sigma}_{t,1}, G^{\sigma}_{t,2}, \eta, \epsilon)))^2], \label{eq:Omegat_rec}
\end{align}
where $G^{\sigma}_t \equiv (G^{\sigma}_{t,1}, G^{\sigma}_{t,2}, G^{\sigma}_{t,3} ) \sim \normal(0, \Sigma_t)$ is independent of $(\eta, \eps) \sim  \bern(\alpha) \otimes P_{\epsilon} $,
and $(G^{\omega}_t, X_1, X_2) \sim \normal(0, \Omega_t) \otimes \normal(0,1) \otimes \normal(0,1)$. The recursion is initialized with
\begin{equation}
    \Sigma_0 =  \begin{bmatrix}
    1 & 0 & \frac{1}{\sqrt{\delta}}\E[  F_0(X_1, X_2) X_1 ] \\
    0 & 1 & \frac{1}{\sqrt{\delta}}\E[  F_0(X_1, X_2) X_2 ]  \\
    \frac{1}{\sqrt{\delta}}\E[  F_0(X_1, X_2) X_1 ]  & \frac{1}{\sqrt{\delta}}\E[  F_0(X_1, X_2) X_2 ]  & 
    \frac{1}{\delta} \E[  (F_0(X_1, X_2))^2 ]
    \end{bmatrix}.
    \label{eq:Sigma0}
\end{equation}

The sequences $(G^{\sigma}_t)_{t \geq 0}$ and 
$(G^{\omega}_{t+1})_{t \geq 0}$ are each jointly Gaussian with the following covariance structure:
\begin{equation}
\begin{split}
    &G^{\sigma}_{t,1} = G_1, \qquad 
    G^{\sigma}_{t,2} = G_2, \quad \forall \, t \ge 0 \quad \text{ where } (G_1, G_2) \sim \normal(0,1) \otimes \normal(0,1), \\
    &\expt{G^{\sigma}_{0,3}G^{\sigma}_{t,3}} = \frac{1}{\delta}\expt{F_0(X_1, X_2) f_t(G^{\omega}_t + \chi_{1,t}X_1 + \chi_{2,t}X_2; X_1,X_2)}, \quad t \ge 1,
    \end{split}
    \label{eq:Gsigma_Gomega}
\end{equation}
and for $r, t \ge 1$:
\begin{subequations}
\begin{multline}
    \expt{G^{\omega}_{r} G^{\omega}_t } =
    \E[g_{r-1}(G^{\sigma}_{r-1,3}; \, 
     q(\eta G^{\sigma}_{r-1,1} + (1-\eta) G^{\sigma}_{r-1,2},  \epsilon) \\
     \times g_{t-1}(G^{\sigma}_{t-1,3}; \, 
     q(\eta G^{\sigma}_{t-1,1} + (1 - \eta) G^{\sigma}_{t-1,2}, \epsilon)], 
\end{multline} 
\begin{multline}
    \expt{G^{\sigma}_{r,3}G^{\sigma}_{t,3}} =\frac{1}{\delta} \expt{f_r(G^{\omega}_r + \chi_{1,r}X_1 + \chi_{2,r}X_2; X_1,X_2) \, f_t(G^{\omega}_t + \chi_{1,t}X_1 + \chi_{2,t}X_2; X_1,X_2)}.
\end{multline}
\label{eq:Grt_corr}
\end{subequations}

The following proposition follows from the state evolution result in \cite{javanmard2013state}, \cite[Sec.\ 6.7]{Feng22AMPTutorial} for AMP with  matrix-valued iterates.  
\begin{proposition}[State Evolution]
With setup and assumptions of \Cref{prop:GAMP_SE}, consider the AMP recursion in \Cref{eq:et_ht1_def}.  The following holds almost surely for  any $\PL(2)$ functions $\Psi: \reals^{t+1}  \to \reals$, $\Phi: \reals^{t+3}  \to \reals$, for $t \geq 0$:
\begin{align}
& \lim_{n \to \infty}\frac{1}{n} \sum_{i=1}^n \Psi( e^t_i, e^{t-1}_i, \ldots, e^0_{i} ) = \E[ \Psi( G^{\sigma}_t, \, G^{\sigma}_{t-1}, \, \ldots, G^{\sigma}_0) ], \label{eq:psiGsigma} \\
& \lim_{d \to \infty} \frac{1}{d} \sum_{i=1}^d \Phi( \ol{x}^*_{1,i}, \ol{x}^*_{2,i}, h^{t+1}_i, h^{t}_i, \ldots, h^1_i) = \E[ \Phi(X_1, X_2, G^{\omega}_{t+1}, \, G^{\omega}_{t}, \, \ldots, G^{\omega}_1) ], \label{eq:psiGomega}
\end{align}
where the joint distributions of $(G^{\sigma}_t, \, G^{\sigma}_{t-1}, \, \ldots, G^{\sigma}_0)$ and $(G^{\omega}_{t+1}, \, G^{\omega}_{t}, \, \ldots, G^{\omega}_1))$ are as given in \Cref{eq:Sigmat_rec,eq:Omegat_rec,eq:Sigma0,eq:Gsigma_Gomega,eq:Grt_corr}.
\label{prop:GAMP_matrix_SE}
\end{proposition}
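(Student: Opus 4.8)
\textbf{Proof proposal for \Cref{prop:GAMP_matrix_SE}.}

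The plan is to verify that the matrix-valued recursion in \Cref{eq:et_ht1_def} is literally an instance of the abstract matrix-AMP iteration analyzed in \cite{javanmard2013state} (see also \cite[Sec.\ 6.7]{Feng22AMPTutorial}), and then to transcribe the abstract state evolution conclusion into the form stated in \Cref{eq:psiGsigma,eq:psiGomega}. Concretely, the abstract iteration takes a Gaussian matrix $\Abar$ with i.i.d.\ $\normal(0,1/d)$ entries, a sequence of Lipschitz (row-separable) denoisers producing $\reals^{n\times q}$- and $\reals^{d\times q}$-valued iterates, and Onsager correction matrices defined through empirical averages of the denoiser Jacobians. Here $q=3$. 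The first step is to check the structural requirements: the functions $\brf_t\colon\reals^3\to\reals^3$ and $\brg_t\colon\reals^5\to\reals$ act row-wise, and each is Lipschitz because $f_t,g_t$ are Lipschitz by hypothesis and $q$ appears only through the fixed (Lipschitz, by \Cref{itm:assump-noise-distr} and the model) data vector $y$ which is built once and for all from $\veta,\veps,g_1,g_2$; the coordinate maps $(\xone,\xtwo)$ and $(\veta,\veps)$ play the role of the ``side information'' vectors that the abstract framework permits as extra arguments to the denoisers. The Onsager matrices $\sB_t,\sC_t$ in \Cref{eq:sBsC_def} are exactly the empirical-Jacobian matrices required, except that the first two entries of $\sC_t$ have been replaced by their expectations; by \cite[Remark 4.3]{Feng22AMPTutorial} (invoked also in \Cref{rem:det_Onsager}) this substitution does not affect the state evolution, since those empirical averages concentrate on the stated expectations. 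Finally one checks the initialization: $e^0=[g_1\ g_2\ \tfrac{1}{\sqrt\delta}\Abar\tv^0]$ has rows whose empirical distribution converges to $\normal(0,\Sigma_0)$ with $\Sigma_0$ as in \Cref{eq:Sigma0}, which is precisely the hypothesis \Cref{ass:gamp-init} translated into a statement about the covariance of $(\Abar\xone,\Abar\xtwo,\Abar\tv^0)$; here one uses that $\tv^0$ is independent of $\Abar$ and that $\xone,\xtwo$ are i.i.d.\ uniform on $\sqrt d\,\bbS^{d-1}$ so their empirical distributions converge to $\normal(0,1)$ and they are asymptotically jointly Gaussian with $\tv^0$ through the $F_0$ relation.

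Given that all hypotheses of the abstract theorem hold, its conclusion states that for any $\PL(2)$ test functions, the empirical distributions of the rows of $(e^t,\ldots,e^0)$ and of $(\xone,\xtwo,h^{t+1},\ldots,h^1)$ converge almost surely to the Gaussian laws determined by the recursion \Cref{eq:Sigmat_rec}--\Cref{eq:Grt_corr}. The second step is simply to identify those recursions. Expanding $\brf_t(G^\omega_t;X_1,X_2)=[\sqrt\delta X_1,\ \sqrt\delta X_2,\ f_t(G^\omega_t+\chi_{1,t}X_1+\chi_{2,t}X_2;X_1,X_2)]$ and computing the $3\times3$ outer-product expectation divided by $\delta$ yields the block matrix in \Cref{eq:Sigmat_rec}, using $\E[X_i^2]=1$, $\E[X_1X_2]=0$, and the independence of $G^\omega_t$ from $(X_1,X_2)$. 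Likewise $\Omega_{t+1}=\E[(\brg_t)^2]$ unfolds to \Cref{eq:Omegat_rec} once one notes $q(G^\sigma_{t,1},G^\sigma_{t,2},\eta,\eps)$ is the scalar analogue of the data $y$. The cross-covariances \Cref{eq:Gsigma_Gomega,eq:Grt_corr} are the off-diagonal entries of the joint covariance over iteration indices that the abstract theorem produces verbatim. Thus \Cref{eq:psiGsigma,eq:psiGomega} are exactly the abstract conclusion after renaming variables.

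The main obstacle — and the only genuinely nontrivial point — is the bookkeeping needed to confirm that the ``with side information'' version of the abstract matrix-AMP theorem applies with the particular way $y$ enters. Specifically, $\brg_t$ depends on $e^t$ not only through its third column but also, via $y=q(\veta\odot e^t_1+(1-\veta)\odot e^t_2,\veps)$, through its first two columns $e^t_1=g_1$, $e^t_2=g_2$, which are themselves iterates (indeed fixed iterates, as $\brf_t$ reproduces $\sqrt\delta\xone,\sqrt\delta\xtwo$ in the first two slots and the Onsager coefficients there vanish). One must check that treating $g_1,g_2$ as dynamical components of $e^t$ — rather than as external side information — is consistent: the abstract framework allows denoisers to depend on all past iterate columns, and since the first two columns are stationary ($e^t_1=e^0_1$, $e^t_2=e^0_2$ for all $t$, because the corresponding rows of $\sB_t$ are zero), their limiting law is the fixed $\normal(0,1)^{\otimes2}$ marginal, independent of everything generated later. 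A clean way to present this is to note that the construction of $\brf_t,\brg_t,\sB_t,\sC_t$ is designed precisely so that the first two coordinates form a ``trivial'' sub-iteration carrying the two signals, and then cite the general matrix-AMP result; I would state this identification carefully and defer the routine verification of Lipschitzness and of the covariance algebra. With that in hand, \Cref{prop:GAMP_matrix_SE} follows, and \Cref{prop:GAMP_SE} is then obtained by applying \Cref{prop:GAMP_matrix_SE} to the $\PL(2)$ functions $\Psi,\Phi$ that extract the relevant coordinates, recalling the definitions \Cref{eq:etht1_def} of $e^t,h^{t+1}$ and matching $G^\sigma_{t,3}\leftrightarrow U_t$, $G^\omega_{t+1}\leftrightarrow W_{V,t+1}$, $\chi_{i,t}\leftrightarrow\chi_{i,t}$.
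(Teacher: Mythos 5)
Your proposal is correct and follows essentially the same route as the paper: the paper proves this proposition simply by observing that \Cref{eq:et_ht1_def} is a special case of the abstract matrix-valued AMP of \cite{javanmard2013state}, \cite[Sec.\ 6.7]{Feng22AMPTutorial}, with the replacement of the first two (empirical) entries of $\sC_t$ by their expectations justified via \cite[Remark 4.3]{Feng22AMPTutorial} — exactly the two points you verify, with your additional bookkeeping (stationarity of the first two columns carrying $g_1,g_2$, the initialization matching \Cref{eq:Sigma0} via \Cref{ass:gamp-init}) being a more explicit spelling-out of the same reduction.
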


Recall the definitions of $e^t, h^{t+1}$ from \Cref{eq:etht1_def}. Then, the results in \Cref{eq:psiGsigma,eq:psiGomega} imply that for any $\PL(2)$ function $\Phi: \reals^{t+3} \to \reals$, we have for $t \ge 0$:
\begin{align}
    & \lim_{d \to \infty} \frac{1}{d} \sum_{i=1}^d \Phi( \ol{x}^*_{1,i}, \ol{x}^*_{2,i}, v^{t+1}_i,  \ldots, v^1_i) \nonumber \\
    & \qquad = \E[ \Phi(X_1, X_2, \chi_{1,t+1}X_1 + \chi_{2,t+1}X_2 + G^{\omega}_{t+1}, \,  \ldots, \, 
    \chi_{1,1}X_1 + \chi_{2,1}X_2  +G^{\omega}_1) ] \label{eq:thm_result_vi} \\
    & \lim_{n \to \infty}\frac{1}{n} \sum_{i=1}^n \Phi( g_{1,i}, g_{2,i},  u^t_{i}, \ldots, u^0_{i} )
    =\expt{\Phi(G_1, G_2, G^{\sigma}_{t,3}, \ldots, G^{\sigma}_{0,3} )}. \label{eq:thm_result_ui}
\end{align}
Recalling $G^{\sigma}_{t,1}=G_1$ and $G^{\sigma}_{t,2}=G_2$ for $t \ge 0$, using \Cref{eq:Sigmat_rec} we can write
\begin{multline}
    G^{\sigma}_{t,3} =
    \expt{G^{\sigma}_{t,3} \mid G^{\sigma}_{t,1}, G^{\sigma}_{t,2}} \, + \,  W^{\sigma}_t  = \frac{1}{\sqrt{\delta}} \expt{X_1 f_t( G^{\omega}_t + \chi_{1,t}X_1 + \chi_{2,t}X_2; X_1,X_2)}  G_1 \notag \\
    +  \,  \frac{1}{\sqrt{\delta}} 
    \expt{ X_2 f_t( G^{\omega}_t + \chi_{1,t}X_1 + \chi_{2,t}X_2; X_1,X_2) } G_2 \, + \, W^{\sigma}_t, \notag
\end{multline} 
where $W^{\sigma}_t$ is a zero-mean Gaussian independent of $(G_1, G_2)$ with variance 
\begin{multline}
    \expt{(W^{\sigma}_t)^2} = \frac{1}{\delta}  \E[(f_t( G^{\omega}_t + \chi_{1,t}X_1 + \chi_{2,t}X_2; X_1,X_2))^2] \\
    - \frac{1}{\delta}  (\expt{X_1 f_t( G^{\omega}_t + \chi_{1,t}X_1 + \chi_{2,t}X_2; X_1,X_2)} )^2 
    -  \frac{1}{\delta}  ( \expt{ X_2 f_t( G^{\omega}_t + \chi_{1,t}X_1 + \chi_{2,t}X_2; X_1,X_2) })^2. \notag
\end{multline}  
To complete the proof,  for $t \ge 0$, we define:
\begin{equation}
\begin{split}
    & W_{U,t} \coloneqq W^{\sigma}_t, \quad  W_{V,t+1}\coloneqq G^{\omega}_{t+1}, \\
    & \mu_{1,t} \coloneqq \frac{1}{\sqrt{\delta}} \E[ X_1 f_t( G^{\omega}_t + \chi_{1,t}X_1 + \chi_{2,t}X_2; X_1,X_2) ], \\
    & \mu_{2,t} \coloneqq  \frac{1}{\sqrt{\delta}} \E[ X_2 f_t( G^{\omega}_t + \chi_{1,t}X_1 + \chi_{2,t}X_2; X_1,X_2) ],  \\
    & U_t\coloneqq G^{\sigma}_{3,t} = \mu_{1,t} G_1 + \mu_{2,t} G_2 + W_{U,t}, \quad V_{t+1} = \chi_{1,t}X_1 + \chi_{2,t}X_2 + W_{V, t+1}.
    \end{split} \notag
\end{equation}
Using these in the convergence statements in \Cref{eq:thm_result_vi,eq:thm_result_ui} gives the  result of \Cref{prop:GAMP_SE}.

\section{Auxiliary lemmas} \label{sec:aux}


\begin{lemma}
\label{lem:properties-phi-psi}
Consider the setting of \Cref{sec:prelim} and let \Cref{itm:assump-preproc-spec} hold. 
Then the following properties of $ \phi(\lambda),\psi(\lambda;\Delta)$ hold. 
\begin{enumerate}
    \item \label{itm:property-1} $ \phi(\cdot) $ is strictly decreasing;
    \item \label{itm:property-2} For any $ \Delta>0 $, $ \psi(\cdot;\Delta) $ is strictly convex in the first argument; 
    \item \label{itm:property-3} For any $ \lambda>\sup\supp(Z) $, $ \psi(\lambda;\cdot) $ is strictly decreasing in the second argument.
\end{enumerate}
\end{lemma}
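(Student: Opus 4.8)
\textbf{Proof plan for \Cref{lem:properties-phi-psi}.}

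The three claims amount to checking signs of derivatives of $\phi$ and $\psi$, so the plan is to compute these derivatives explicitly and argue positivity/negativity using \Cref{itm:assump-preproc-spec}. Recall that $Z = \cT(Y)$ with $Y$ as in \Cref{eqn:def-rv-y}, and that \Cref{itm:assump-preproc-spec} guarantees $\cT$ is bounded with $0 < \sup\supp(Z) < \infty$ and $\prob{Z=0}<1$; in particular, for every $\lambda > \sup\supp(Z)$ the quantity $\lambda - Z$ is almost surely positive and bounded away from $0$, so all the expectations below (and their derivatives, obtained by dominated convergence, differentiating under the integral sign) are finite and well-defined on $(\sup\supp(Z),\infty)$.

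For \Cref{itm:property-1}, the plan is to write $\phi(\lambda) = \lambda\,\expt{\tfrac{ZG^2}{\lambda-Z}}$ and rearrange as $\phi(\lambda) = \expt{\tfrac{\lambda Z}{\lambda - Z}G^2} = \expt{\big(\tfrac{Z}{\lambda - Z} + 1 - 1 + \tfrac{Z}{\lambda-Z}\big)\cdots}$; more cleanly, note $\tfrac{\lambda Z}{\lambda - Z} = \tfrac{Z}{1 - Z/\lambda}$, which is pointwise strictly decreasing in $\lambda$ on $(\sup\supp(Z),\infty)$ for each fixed value of $Z$ (using $Z>0$ on a set of positive probability and $Z\ge 0$... — here one must be slightly careful since $\cT$ need not be nonnegative). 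The robust way is to differentiate directly:
\begin{align}
\phi'(\lambda) &= \expt{\frac{ZG^2}{\lambda - Z}} + \lambda\,\expt{\frac{\partial}{\partial\lambda}\frac{ZG^2}{\lambda - Z}}
= \expt{\frac{ZG^2}{\lambda - Z}} - \lambda\,\expt{\frac{ZG^2}{(\lambda - Z)^2}}
= -\,\expt{\frac{Z^2 G^2}{(\lambda - Z)^2}}. \notag
\end{align}
Since $Z$ is not almost surely zero and $G^2$ is independent of $Z$ with $\expt{G^2}=1>0$, the last expectation is strictly positive, hence $\phi' < 0$ on $(\sup\supp(Z),\infty)$, proving \Cref{itm:property-1}. (This computation presumably already appears in \cite{mondelli-montanari-2018-fundamental}; I would cite it or reproduce the one line.)

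For \Cref{itm:property-2} and \Cref{itm:property-3}, I would similarly differentiate $\psi(\lambda;\Delta) = \lambda\big(\tfrac1\Delta + \expt{\tfrac{Z}{\lambda - Z}}\big)$. For convexity in $\lambda$, a direct computation gives
\begin{align}
\frac{\partial}{\partial\lambda}\psi(\lambda;\Delta) &= \frac1\Delta + \expt{\frac{Z}{\lambda - Z}} - \lambda\expt{\frac{Z}{(\lambda-Z)^2}}
= \frac1\Delta - \expt{\Big(\frac{Z}{\lambda - Z}\Big)^2}, \notag \\
\frac{\partial^2}{\partial\lambda^2}\psi(\lambda;\Delta) &= 2\,\expt{\frac{Z^2}{(\lambda-Z)^3}}. \notag
\end{align}
The plan is then to observe that $\lambda - Z > 0$ almost surely for $\lambda > \sup\supp(Z)$, hence $(\lambda - Z)^3 > 0$ a.s., and $Z^2 \ge 0$ with $\prob{Z^2 > 0} > 0$, so the second derivative is strictly positive — giving strict convexity and \Cref{itm:property-2}. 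For \Cref{itm:property-3}, since $\psi(\lambda;\Delta) = \lambda/\Delta + \lambda\,\expt{Z/(\lambda-Z)}$ and only the first term depends on $\Delta$, we get $\partial_\Delta \psi(\lambda;\Delta) = -\lambda/\Delta^2 < 0$ for $\lambda > \sup\supp(Z) > 0$, which is \Cref{itm:property-3}. The main (mild) obstacle is purely bookkeeping: justifying differentiation under the expectation, which follows because on any compact subinterval $[\lambda_0,\lambda_1] \subset (\sup\supp(Z),\infty)$ the integrands and their $\lambda$-derivatives are bounded by constants times $G^2$ (integrable), using boundedness of $\cT$ and the uniform lower bound $\lambda - Z \ge \lambda_0 - \sup\supp(Z) > 0$. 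I expect no genuine difficulty here; the whole lemma is a sequence of one-line derivative computations plus sign inspection.
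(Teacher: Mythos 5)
Your proposal is correct and follows essentially the same route as the paper: compute $\phi'$, $\partial_\lambda\psi$, $\partial^2_\lambda\psi$, $\partial_\Delta\psi$ explicitly and read off the signs using \Cref{itm:assump-preproc-spec}, exactly as in the paper's proof (your extra remark justifying differentiation under the expectation is fine but not needed there). One slip to fix: in the argument for \Cref{itm:property-1} you assert that $G^2$ is independent of $Z$, which is false since $Z=\cT(q(G,\eps))$ depends on $G$; the strict inequality $\expt{Z^2G^2/(\lambda-Z)^2}>0$ nonetheless holds simply because $G\neq 0$ almost surely, so the integrand is strictly positive on the event $\{Z\neq 0\}$, which has positive probability by $\prob{Z=0}<1$.
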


\begin{proof}
The proof follows by checking the derivatives. 
For $ \phi $, we have
\begin{align}
\frac{\diff}{\diff\lambda}\phi(\lambda) &= \expt{\frac{ZG^2}{\lambda-Z}} - \lambda\expt{\frac{ZG^2}{(\lambda-Z)^2}}
= - \expt{\paren{\frac{ZG}{\lambda-Z}}^2} < 0 , \label{eqn:phi-deriv}
\end{align}
The last strict inequality holds since $Z$ is not almost surely zero, i.e., $\prob{Z = 0}<1$ in \Cref{itm:assump-preproc-spec}. 
\Cref{itm:property-1} of \Cref{lem:properties-phi-psi} then follows. 
For $ \psi $, we have
\begin{align}
\frac{\partial}{\partial\lambda}\psi(\lambda;\Delta) &= \frac{1}{\Delta} + \expt{\frac{Z}{\lambda - Z}} - \lambda \expt{\frac{Z}{(\lambda-Z)^2}} 
= \frac{1}{\Delta} - \expt{\frac{Z^2}{(\lambda-Z)^2}} , \label{eqn:derivative-psi} 
\end{align}
and
\begin{align}
\frac{\partial^2}{\partial\lambda^2}\psi(\lambda;\Delta) &= 2\expt{\frac{Z^2}{(\lambda-Z)^3}} . \notag 
\end{align}
Since $ \lambda>\sup\supp(Z) $ and $Z$ is not almost surely zero, $ \psi''(\cdot;\Delta)>0 $ and therefore \Cref{itm:property-2} of \Cref{lem:properties-phi-psi} holds. 
Finally, \Cref{itm:property-3} of \Cref{lem:properties-phi-psi} is obvious, provided $ \lambda>\sup\supp(Z)>0 $ where the second inequality is guaranteed by \Cref{itm:assump-preproc-spec}. 
\end{proof}


\begin{lemma}[Explicit formulas]
\label{rk:explicit-formulas}
Fix any $\Delta>0$. 
The parameter $ \ol\lambda(\Delta) $ satisfies 
\begin{align}
    \expt{\paren{\frac{Z}{\ol\lambda(\Delta) - Z}}^2} &= \frac{1}{\Delta} . 
    \label{eqn:def-lam-bar-explicit}
\end{align}
If $ \lambda^*(\Delta) > \ol\lambda(\Delta) $, the parameter $ \lambda^*(\Delta) $ satisfies
\begin{align}
    \expt{\frac{Z(G^2 - 1)}{\lambda^*(\Delta) - Z}} &= \frac{1}{\Delta} . 
    \label{eqn:def-lami-star-supcrit-explicit}
\end{align}

\end{lemma}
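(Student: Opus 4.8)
\textbf{Proof plan for \Cref{rk:explicit-formulas}.}

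The plan is to unwind the three layers of definitions --- $\ol\lambda(\Delta)$ via \Cref{eqn:def-lam-bar}, $\zeta$ via \Cref{eqn:def-zeta-i}, and $\lambda^*(\Delta)$ as the solution of $\zeta(\lambda;\Delta)=\phi(\lambda)$ (see p.~\pageref{def:lambdastard}) --- and then simplify the resulting derivative conditions using the formulas for $\phi'$ and $\psi'$ recorded in the proof of \Cref{lem:properties-phi-psi}. The proof is entirely computational once the right characterizations are invoked, so I would keep it short.

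First, for the characterization of $\ol\lambda(\Delta)$: by definition $\ol\lambda(\Delta)=\argmin_{\lambda>\sup\supp(Z)}\psi(\lambda;\Delta)$, and by \Cref{itm:property-2} of \Cref{lem:properties-phi-psi} the map $\psi(\cdot;\Delta)$ is strictly convex, so $\ol\lambda(\Delta)$ is the unique critical point, i.e.\ $\partial_\lambda\psi(\ol\lambda(\Delta);\Delta)=0$. Substituting the explicit derivative from \Cref{eqn:derivative-psi},
\begin{align}
    \frac{\partial}{\partial\lambda}\psi(\lambda;\Delta) &= \frac{1}{\Delta} - \expt{\frac{Z^2}{(\lambda-Z)^2}}, \notag
\end{align}
and setting it to zero at $\lambda=\ol\lambda(\Delta)$ gives exactly \Cref{eqn:def-lam-bar-explicit}. (One should note in passing that this matches \Cref{eqn:lam-bar-rmt} in \Cref{lem:bulk-of-sum}, so the two notions of $\ol\lambda$ coincide, as already remarked in \Cref{rk:stieltjes-equals-psi}.)

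Second, for $\lambda^*(\Delta)$ in the regime $\lambda^*(\Delta)>\ol\lambda(\Delta)$: here $\max\{\lambda^*(\Delta),\ol\lambda(\Delta)\}=\lambda^*(\Delta)$, so by \Cref{eqn:def-zeta-i} we have $\zeta(\lambda^*(\Delta);\Delta)=\psi(\lambda^*(\Delta);\Delta)$, and moreover for $\lambda$ in a neighborhood of $\lambda^*(\Delta)$ the function $\zeta(\cdot;\Delta)$ agrees with $\psi(\cdot;\Delta)$. Hence the defining equation $\zeta(\lambda;\Delta)=\phi(\lambda)$ becomes $\psi(\lambda;\Delta)=\phi(\lambda)$ near $\lambda^*(\Delta)$; since $\psi(\cdot;\Delta)$ is convex and $\phi$ is strictly decreasing (\Cref{itm:property-1} of \Cref{lem:properties-phi-psi}), the difference $\psi(\cdot;\Delta)-\phi(\cdot)$ is strictly convex, and if it vanishes at the point $\lambda^*(\Delta)>\ol\lambda(\Delta)$ which lies to the right of the minimizer of $\psi$, then $\psi'(\lambda^*(\Delta);\Delta) \geq 0 > \phi'(\lambda^*(\Delta))$, confirming the consistency of the setup. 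The required identity is obtained by writing out $\psi(\lambda;\Delta)=\phi(\lambda)$ explicitly: by \Cref{eqn:def-psi-fn} and \Cref{eqn:def-phi-fn},
\begin{align}
    \lambda\paren{\frac{1}{\Delta}+\expt{\frac{Z}{\lambda-Z}}} &= \lambda\,\expt{\frac{ZG^2}{\lambda-Z}},\notag
\end{align}
and dividing by $\lambda>\sup\supp(Z)>0$ (strictly positive by \Cref{itm:assump-preproc-spec}) then rearranging gives $\frac{1}{\Delta}=\expt{\frac{ZG^2}{\lambda-Z}}-\expt{\frac{Z}{\lambda-Z}}=\expt{\frac{Z(G^2-1)}{\lambda-Z}}$ evaluated at $\lambda=\lambda^*(\Delta)$, which is \Cref{eqn:def-lami-star-supcrit-explicit}.

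There is no serious obstacle here --- the only mild subtlety is ensuring that, in the supercritical regime, one is genuinely allowed to replace $\zeta$ by $\psi$ in the fixed-point equation, which follows directly from the case distinction built into \Cref{eqn:def-zeta-i} together with the hypothesis $\lambda^*(\Delta)>\ol\lambda(\Delta)$; and making sure the division by $\lambda$ is legitimate, which is where \Cref{itm:assump-preproc-spec} (positivity of $\sup\supp(Z)$, hence $\lambda>0$) is used. Everything else is bookkeeping with the derivative formulas \Cref{eqn:phi-deriv,eqn:derivative-psi} already established.
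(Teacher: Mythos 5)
Your proposal is correct and follows essentially the same route as the paper's proof: characterize $\ol\lambda(\Delta)$ as the critical point of the convex function $\psi(\cdot;\Delta)$ via \Cref{eqn:derivative-psi}, and in the regime $\lambda^*(\Delta)>\ol\lambda(\Delta)$ replace $\zeta$ by $\psi$ in the fixed-point equation $\zeta(\lambda;\Delta)=\phi(\lambda)$ and divide by $\lambda>0$ to rearrange into \Cref{eqn:def-lami-star-supcrit-explicit}. The extra remarks (uniqueness of the critical point, legitimacy of the division via \Cref{itm:assump-preproc-spec}) are harmless elaborations of the same argument.
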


\begin{proof}
Since $ \ol\lambda(\Delta) $ (cf.\ \Cref{eqn:def-lam-bar}) is the minimum point of $ \psi(\cdot;\Delta) $, it satisfies 
\begin{align}
    \left.\frac{\partial}{\partial\lambda}\psi(\lambda;\Delta)\right|_{\lambda = \ol\lambda(\Delta)} &= 0 . \notag
\end{align}
This gives \Cref{eqn:def-lam-bar-explicit} according to \Cref{eqn:derivative-psi}. 

Under the condition $\lambda^*(\Delta) > \ol\lambda(\Delta)$, we have $ \zeta(\lambda^*(\Delta);\Delta) = \psi(\lambda^*(\Delta);\Delta) $ and $ \lambda^*(\Delta) $ satisfies the fixed point equation $ \psi(\lambda^*(\Delta);\Delta) = \phi(\lambda^*(\Delta)) $ which in turn can be written more explicitly as \Cref{eqn:def-lami-star-supcrit-explicit}. 
\end{proof}

\begin{remark}
\Cref{eqn:def-lami-star-supcrit-explicit} is often used with $ \Delta = \delta_i $ ($i\in\{1,2\}$) under the condition $ \lambda^*(\delta_i) > \ol\lambda(\delta) $. 
This is legitimate since $ \ol\lambda(\delta) > \ol\lambda(\delta_i) $ (see, e.g., \Cref{eqn:lambda-bar-order}) and the latter condition is stronger than the one in \Cref{rk:explicit-formulas}. 
\end{remark}

\end{document}